\numberwithin{equation}{section}
\newcommand{\R}{\mathbb{R}}
\newcommand{\E}{\mathcal{E}}
\newcommand{\G}{\mathcal{G}}
\newcommand{\T}{\mathbb{T}}
\newcommand{\C}{\mathbb{C}}
\newcommand{\Z}{\mathbb{Z}}
\newcommand{\eps}{\epsilon}
\newcommand{\va}{\vartheta}
\numberwithin{equation}{section} 
\newcommand{\lv}{b(0)}
\newcommand{\uv}{b(1)}
\newtheorem{theorem}{Theorem}[section]
\newtheorem{lemma}[theorem]{Lemma}
\newtheorem{proposition}[theorem]{Proposition}
\newtheorem{remark}[theorem]{Remark}
\begin{document}

\title{Nonlinear inviscid damping near monotonic shear flows}

\author{Alexandru D. Ionescu}
\address{Princeton University}
\email{aionescu@math.princeton.edu}

\author{Hao Jia}
\address{University of Minnesota}
\email{jia@umn.edu}

\thanks{The first author was supported in part by NSF grant DMS-1600028.  The second author was supported in part by DMS-1600779 .}

\begin{abstract}
We prove nonlinear asymptotic stability of a large class of monotonic shear flows among solutions of the 2D Euler equations in the channel $\mathbb{T}\times[0,1]$. More precisely, we consider shear flows $(b(y),0)$ given by a function $b$ which is Gevrey smooth, strictly increasing, and linear outside a compact subset of the interval $(0,1)$ (to avoid boundary contributions which are incompatible with inviscid damping). We also assume that the associated linearized operator satisfies a suitable spectral condition, which is needed to prove linear inviscid damping. 

Under these assumptions, we show that if $u$ is a solution which is a small and Gevrey smooth perturbation of such a shear flow $(b(y),0)$ at time $t=0$, then the velocity field $u$ converges strongly to a nearby shear flow as the time goes to infinity. This is the first nonlinear asymptotic stability result for Euler equations around general steady solutions for which the linearized flow cannot be explicitly solved.
\end{abstract}

\maketitle

\setcounter{tocdepth}{1}

\tableofcontents

\section{Introduction}

In this paper we continue our investigation of asymptotic stability of solutions of the two dimensional incompressible Euler equation in a channel. More precisely we consider solutions $u:[0,\infty)\times\T\times[0,1]\to\mathbb{R}^2$ of the equation
\begin{equation}\label{euler}
\partial_tu+u\cdot\nabla u+\nabla p=0,\qquad {\rm div}\,u=0,
\end{equation}
with the boundary condition $u^y|_{y=0,\,1}\equiv 0$. Letting
$\omega:=-\partial_yu^x+\partial_xu^y$
be the vorticity field, the equation (\ref{euler}) can be written in vorticity form as
\begin{equation}\label{Euler}
  \partial_t\omega+u\cdot\nabla \omega=0,\qquad u=\nabla^{\perp}\psi=(-\partial_y\psi, \partial_x\psi),
\end{equation}
for $(x,y)\in \mathbb{T}\times [0,1], t\ge0$, where the stream function $\psi$ is determined through 
\begin{equation}\label{stream}
 \Delta \psi=\omega,\quad{\rm on}\,\,\mathbb{T}\times[0,1],\qquad \psi(x,0)\equiv 0,\,\psi(x,1)\equiv C_0,
\end{equation}
where $C_0$ is a constant preserved by the flow. 

The two dimensional incompressible Euler equation is globally well-posed for smooth initial data, by the classical result of Wolibner \cite{Wolibner}. See also \cite{Yudovich1, Yudovich2} for global well-posedness results with rough initial data, such as $L^\infty$ vorticity. The long time behavior of general solutions is however very difficult to understand, due to the lack of a global relaxation mechanism.

A more realistic goal is to study the global nonlinear dynamics of solutions that are close to steady states of the 2D Euler equation. Coherent structures, such as shear flows and vortices, are particularly important in the study of the 2D Euler equation, since precise numerical simulations and physical experiments show that they tend to form dynamically and become the dominant feature of the solution for a long time.

The study of stability property of these steady states is a classical subject and a fundamental problem in hydrodynamics. Early investigations were started by Kelvin \cite{Kelvin}, Orr \cite{Orr}, Rayleigh \cite{Ray}, Taylor \cite{Taylor}, among many others, with a focus on mode stability. Later, more detailed understanding of the general spectral properties and suitable linear decay estimates were also obtained, see \cite{Faddeev,Stepin}. In the direction of nonlinear results, Arnold \cite{Arnold} proved a general stability criteria, using the energy Casimir method, but this method does not give asymptotic information on the global dynamics.

The full nonlinear asymptotic stability problem has only been investigated in recent years, starting with the remarkable work of Bedrossian--Masmoudi \cite{BM}, who proved inviscid damping and nonlinear stability in the simplest case of perturbations of the Couette flow on $\T\times\R$. 

Motivated by this result, the linearized equations around other stationary solutions were investigated intensely in the last few years, and linear inviscid damping and decay was proved in many cases of physical interest, see for example \cite{Bed2,Zillinger3,Grenier,JiaL,dongyi,Dongyi2, Dongyi3,Zillinger1,Zillinger2}. However, it also became clear that there are major difficulties in passing from linear to nonlinear stability, such as the presence of ``resonant times" in the nonlinear problem, which require refined Fourier ana\-lysis techniques, and the fact that the final state of the flow is determined dynamically by the global evolution and cannot be described in terms of the initial data.

In this paper we close this gap and establish inviscid damping and full nonlinear asymptotic stability for a general class of monotone shear flows, which are not close to the Couette flow. We hope that the general framework we develop here can be adapted to establish nonlinear asymptotic stability in other outstanding open problems involving 2D or 3D Euler and Navier-Stokes equations, such as the stability of smooth radially decreasing vortices in 2D.

\subsection{The main theorem} We consider a perturbative regime for the Euler equation (\ref{euler}), with velocity field given by $(b(y),0))+u(x,y)$ and vorticity given by $-b'(y)+\omega$. 

To state our main theorem we define the Gevrey spaces $\mathcal{G}^{\lambda,s}\big(\mathbb{T}\times \R\big)$ as the spaces of $L^2$ functions $f$ on 
$\mathbb{T}\times \R$ defined by the norm
\begin{equation}\label{Gev}
\|f\|_{\G^{\lambda,s}(\mathbb{T}\times \R)}:=\big\|e^{\lambda \langle k,\xi\rangle^s}\widetilde{f}(k,\xi)\big\|_{L^2_{k,\xi}}<\infty,\qquad s\in(0,1], \lambda>0.
\end{equation}
In the above $(k,\xi)\in \mathbb{Z}\times\mathbb{R}$ and $\widetilde{f}$ denotes the Fourier transform of $f$ in $(x,y)$. More generally, for any interval $I\subseteq\R$ we define the Gevrey spaces $\mathcal{G}^{\lambda,s}\big(\mathbb{T}\times I\big)$ by
\begin{equation}\label{Gev2}
\|f\|_{\G^{\lambda,s}(\mathbb{T}\times I)}:=\|Ef\|_{\G^{\lambda,s}(\mathbb{T}\times \R)},
\end{equation}
where $Ef(x):=f(x)$ if $x\in I$ and $Ef(x):=0$ if $x\notin I$.

Concerning the background shear flow $b\in C^{\infty}(\mathbb{R})$, our main assumptions are the following:
\medskip

$(A)$ For some $\vartheta_0\in(0,1/10]$ and $\beta_0>0$
\begin{equation}\label{IntB}
\vartheta_0\leq b'(y)\leq 1/\vartheta_0\text{ for }y\in [0,1]\qquad {\rm and}\qquad b''(y)\equiv 0\,\,{\rm for}\,\,y\notin [2\vartheta_0,1-2\vartheta_0],
\end{equation}
and 
\begin{equation}\label{IntB1}
\|b\|_{L^{\infty}(0,1)}+\|b''\|_{\mathcal{G}^{\beta_0,1/2}}\leq 1/\vartheta_0.
\end{equation}

$(B)$ The associated linearized operator $L_k:L^2(0,1)\to L^2(0,1), k\in\mathbb{Z}\backslash\{0\}$, given by
\begin{equation}\label{IntB2}
L_kf= b(y)f-b''(y)\varphi_k, \qquad\text{ where }\,\,\,\partial_y^2\varphi_k-k^2\varphi_k=f,\,\,\,\varphi_k(0)=\varphi_k(1)=0,
\end{equation}
has no discrete eigenvalues and therefore, by the general theory of Fredholm operators, the spectrum of $L_k$ is purely continuous spectrum $[b(0),b(1)]$ for all $k\in\mathbb{Z}\backslash\{0\}$.
\medskip

The spectral condition $(B)$ is a qualitative condition, and we need to make it quantitative in order to link it to the perturbation theory. For this we define, for any $k\in\mathbb{Z}\setminus\{0\}$,
\begin{equation}\label{T1.1}
\|f\|_{H^1_k(\R)}:=\|f\|_{L^2(\R)}+|k|^{-1}\|f'\|_{L^2(\R)}.
\end{equation}
The following quantitative bounds were proved in \cite[Lemmas 3.1 and 3.2]{JiaG}.

\begin{lemma}\label{T5}
Assume $\varphi\in H^{10}$ is supported in $[\vartheta_0/4,1-\vartheta_0/4]$. For $k\in\mathbb{Z}\backslash\{0\},y_0\in[0,1],\epsilon\in[-1/4,1/4]\backslash\{0\}$ and any $f\in L^2(0,1)$ we define the operator
\begin{equation}\label{T1}
T_{k,y_0,\epsilon}f(y):=\int_{\R}\varphi(y)G_k(y,z)\frac{b''(z)f(z)}{b(z)-b(y_0)+i\epsilon}\,dz,
\end{equation}
where $G_k$ is the Green function associated to the operator $-\partial_y^2+k^2$ on $[0,1]$ (see \eqref{elli11} for explicit formulas).
Then there is $\kappa>0$ such that, for any $f\in H^1_k(\mathbb{R})$,
\begin{equation}\label{T6}
\|T_{k,y_0,\epsilon}f\|_{H^1_k(\R)}\lesssim |k|^{-1/3}\|f\|_{H^1_k(\R)},\qquad \|f+T_{k,y_0,\epsilon}f\|_{H^1_k(\R)}\ge \kappa \|f\|_{H^1_k(\R)},
\end{equation}
uniformly in $y_0\in[0,1]$, $ k\in\mathbb{Z}\backslash\{0\}$, and $\epsilon$ sufficiently small. 
\end{lemma}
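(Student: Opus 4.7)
The plan is to treat the two estimates in parallel, exploiting the ``resolvent $\circ$ singular multiplier'' structure of $T_{k,y_0,\epsilon}$. Explicit formulas for $G_k$ on $[0,1]$ give the pointwise bounds $|G_k(y,z)|\lesssim |k|^{-1}e^{-|k||y-z|}$ and $|\partial_y G_k(y,z)|\lesssim e^{-|k||y-z|}$, so that $g\mapsto \int G_k(y,\cdot)g\,dz$ gains essentially one derivative in the $H^1_k$ scale. The singular multiplier, in view of $b'\ge\vartheta_0$ and the support of $b''$ in $[2\vartheta_0,1-2\vartheta_0]$, is controlled after the change of variables $w=b(z)$ by a Cauchy-type kernel in $w$, to which one can apply Plemelj-type estimates up to bounded error.

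For the first estimate, I would do a dyadic decomposition around the critical point $y_0$. Writing $1=\chi_{\mathrm{near}}+\chi_{\mathrm{far}}$ with $\chi_{\mathrm{near}}$ localizing to $|z-y_0|\lesssim |k|^{-1/3}$, the far piece has denominator bounded below by $\vartheta_0|k|^{-1/3}$ and can be handled by Schur's test using the $|k|^{-1}$ bound on $G_k$ together with the rapid off-diagonal decay; this produces a factor $|k|^{-1}\cdot |k|^{1/3}=|k|^{-2/3}\lesssim |k|^{-1/3}$ acting on $\|f\|_{L^2}$. The near piece is treated by subtracting $f(y_0)\chi(z)$ for a suitable normalized cutoff: the difference $f(z)-f(y_0)\chi(z)$ vanishes at $y_0$ and is controlled by the $H^1_k$ norm via Sobolev embedding, cancelling the singularity, while the constant term produces an explicit Cauchy integral whose size is logarithmic in $\epsilon$ but absorbed by the smallness of the support and the $|k|^{-1}$ from $G_k$; the choice of threshold $|k|^{-1/3}$ is optimal when these two contributions are balanced, yielding $\|T_{k,y_0,\epsilon}f\|_{H^1_k}\lesssim |k|^{-1/3}\|f\|_{H^1_k}$.

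For the lower bound, split by $|k|$. If $|k|\ge K$ with $K$ large enough that the implicit constant times $|k|^{-1/3}\le 1/2$, the triangle inequality directly gives $\|f+T_{k,y_0,\epsilon}f\|_{H^1_k}\ge\|f\|_{H^1_k}/2$. For $1\le |k|\le K$, argue by contradiction: if $\kappa$ does not exist, extract sequences $f_n,k_n,y_{0,n},\epsilon_n$ with $\|f_n\|_{H^1_{k_n}}=1$ and $\|f_n+T_{k_n,y_{0,n},\epsilon_n}f_n\|_{H^1_{k_n}}\to 0$, and after passing to a subsequence assume $k_n\equiv k_*$, $y_{0,n}\to y_{0,*}$, $\epsilon_n\to\epsilon_*\in[-1/4,1/4]$, $f_n\to f_*$ strongly in $L^2$ by Rellich. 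For $\epsilon_*\ne 0$, standard smoothness of the operator in parameters yields $f_*+T_{k_*,y_{0,*},\epsilon_*}f_*=0$; but the spectral parameter $b(y_{0,*})-i\epsilon_*$ lies off the continuous spectrum of $L_{k_*}$, forcing $f_*=0$, and then the equation itself together with the first estimate recovers $\|f_n\|_{H^1_{k_n}}\to 0$, a contradiction. For $\epsilon_*=0$, the limit equation is interpreted by Plemelj as the generalized eigenfunction equation for $L_{k_*}$ at the embedded spectral value $b(y_{0,*})$, and hypothesis $(B)$ (no point spectrum, together with the quantitative structure of the limiting absorption operator) rules out nontrivial $H^1_{k_*}$ solutions.

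The main obstacle is the $\epsilon\to 0$ limit in the contradiction argument: the operator $T_{k,y_0,\epsilon}$ is genuinely singular in this limit, and one must both interpret $(b(z)-b(y_0))^{-1}$ distributionally and show that the hypothesis $(B)$ — which only asserts absence of discrete eigenvalues — translates into absence of ``resonances'' for the integral equation. This is the key point where the spectral condition is quantitatively linked to the perturbative theory.
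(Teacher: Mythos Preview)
The paper does not give its own proof of this lemma; it simply cites \cite{JiaG}, Lemmas~3.1 and~3.2. Your outline is in fact the strategy used there: pointwise Green's function bounds plus a near/far splitting for the first inequality, and the large-$|k|$/small-$|k|$ dichotomy with a compactness--contradiction argument for the second. So your approach is aligned with the cited reference.

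Two points worth correcting. First, the ``constant term'' in your near-piece analysis does \emph{not} have size $\log(1/\epsilon)$: the integral $\int_{|z-y_0|<\ell} b''(z)\,(b(z)-b(y_0)+i\epsilon)^{-1}\,dz$ is bounded uniformly in $\epsilon$, since after the monotone change of variable $w=b(z)$ it becomes a Cauchy integral of a smooth function over a bounded interval, whose real part is a principal value and whose imaginary part tends to a multiple of the point value. If it were genuinely $\log(1/\epsilon)$, the estimate would fail. Second, in the compactness step you use Rellich to get $f_n\to f_*$ in $L^2$, but to pass to the limit in $T_{k_*,y_{0,n},\epsilon_n}f_n$ when $\epsilon_n\to 0$ you also need the trace $f_n(y_{0,n})$ to converge; this is where working in $H^1_k$ (rather than $L^2$) is essential, and one should upgrade the convergence to strong $H^1_k$ a posteriori from the equation.

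Your closing paragraph correctly isolates the real content: one must show that a nontrivial $H^1_k$ solution of $(I+T_{k_*,y_{0,*},0^\pm})f_*=0$ forces an embedded eigenvalue of $L_{k_*}$ at $b(y_{0,*})$, contradicting hypothesis~$(B)$. This step requires some regularity theory for the Rayleigh equation (to show the putative generalized eigenfunction is genuinely in $L^2$), and is where the qualitative spectral assumption becomes quantitative. That is precisely what \cite{JiaG} carries out.
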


In our case, the function $\varphi$ will be a fixed Gevrey cutoff function, $\varphi(y)=\Psi(b(y))$, where $\Psi$ is defined in \eqref{rec0}. The parameter $\kappa>0$ in \eqref{T6} will be one of the parameters that determine the smallness of the perturbation in our main theorem.

 For any function $H(x,y)$ let $\langle H\rangle(y)$ denote the average of $H$ in $x$. Our main result in this paper is the following theorem:

\begin{theorem}\label{maintheoremINTRO}
Assume that $\beta_0,\vartheta_0,\kappa>0$ are constants as defined in \eqref{IntB}, \eqref{IntB1}, and \eqref{T6}. Then there are constants $\beta_1=\beta_1(\beta_0,\vartheta_0,\kappa)>0$  and $\overline{\eps}=\overline{\eps}(\beta_0,\vartheta_0,\kappa)>0$ such that the following statement is true:

Assume that the initial data $\omega_0$ has compact support in $\T\times [2\vartheta_0,1-2\vartheta_0]$, and satisfies 
\begin{equation}\label{Eur0}
\|\omega_0\|_{\G^{\beta_0,1/2}(\mathbb{T}\times \R)}=\epsilon\leq\overline{\epsilon},\qquad \int_{\T} \omega_0(x,y)\,dx=0\,\,\text{ for any }y\in[0,1].
\end{equation}
Let $\omega:[0,\infty)\times\mathbb{T}\times [0,1]\to\mathbb{R}$ denote the global smooth solution to the Euler equation
 \begin{equation}\label{IEur1}
\begin{cases}
 &\partial_t\omega+b(y)\partial_x\omega-b''(y)\partial_x\psi+u\cdot\nabla\omega=0,\\
 &u=(u^x,u^y)=(-\partial_y\psi,\partial_x\psi),\qquad \Delta\psi=\omega,\qquad \psi(t,x,0)=\psi(t,x,1)=0.
\end{cases}
 \end{equation}
Then we have the following conclusions:

(i) For all $t\ge 0$, ${\rm supp}\,\omega(t)\subseteq \mathbb{T}\times[\vartheta_0,1-\vartheta_0]$.

(ii) There exists $F_{\infty}(x,y) \in \G^{\beta_1,1/2}$ with ${\rm supp}\,F_{\infty}\subseteq \mathbb{T}\times [\vartheta_0,1-\vartheta_0]$ such that for all $t\ge 0$,
\begin{equation}\label{convergence}
\left\|\omega(t,x+tb(y)+\Phi(t,y),y)-F_{\infty}(x,y)\right\|_{\G^{\beta_1,1/2}(\mathbb{T}\times[0,1])} \lesssim_{\beta_0,\vartheta_0,\kappa}\frac{\epsilon}{\langle t\rangle},
\end{equation}
where
\begin{equation}\label{DefPhi}
\Phi(t,y):=\int_0^t\langle u^x\rangle(\tau,y)\,d\tau.
\end{equation}

(iii) We define the smooth functions $\psi_\infty,u_\infty:[0,1]\to\mathbb{R}$ by
\begin{equation}\label{AsymPhi2}
\partial_y^2\psi_\infty=\langle F_\infty\rangle,\qquad \psi_\infty(0)=\psi_\infty(1)=1,\qquad u_{\infty}(y):=-\partial_y\psi_\infty.
\end{equation}
Then the velocity field $u=(u^x,u^y)$ satisfies
\begin{equation}\label{convergenceofux}
\left\|\langle u^x\rangle(t,y)-u_{\infty}(y)\right\|_{\G^{\beta_1,1/2}(\mathbb{T}\times [0,1])}\lesssim_{\beta_0,\vartheta_0,\kappa}\frac{\epsilon}{\langle t\rangle^2},
\end{equation}

\begin{equation}\label{convergencetomean}
\left\|u^x(t,x,y)-\langle u^x\rangle(t,y)\right\|_{L^{\infty}(\mathbb{T}\times [0,1])}\lesssim_{\beta_0,\vartheta_0,\kappa}\frac{\epsilon}{\langle t\rangle},
\end{equation}

\begin{equation}\label{convergenceuy}
\left\|u^y(t,x,y)\right\|_{L^{\infty}(\mathbb{T}\times [0,1])}\lesssim_{\beta_0,\vartheta_0,\kappa}\frac{\epsilon}{\langle t\rangle^2}.
\end{equation}
\end{theorem}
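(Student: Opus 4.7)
The plan is to straighten the shear dynamics by a nonlinear change of variables, reduce matters to a bootstrap estimate for the vorticity profile in a time-dependent weighted Gevrey space, and then extract the asymptotic profile $F_\infty$ and the sharp velocity decay. Concretely, define $\Phi(t,y)$ as in \eqref{DefPhi} and introduce coordinates $z=x-tb(y)-\Phi(t,y)$, $v=b(y)$; let $f(t,z,v):=\omega(t,x,y)$ be the vorticity profile in these coordinates. The pure shear transport $b(y)\partial_x$ is eliminated, and $f$ satisfies a nonlinear equation of the schematic form
\begin{equation*}
\partial_t f + U(t,z,v)\cdot\nabla_{z,v}f = b''(y)\partial_z\psi,
\end{equation*}
with $U$ a small velocity built from $f$ via a nonlocal operator that, in each nonzero $z$-Fourier mode $k$, involves the Green function of $-\partial_y^2+k^2$ conjugated by the time-dependent phase $e^{-ik(tb(y)+\Phi)}$; the spectral hypothesis $(B)$, quantified by Lemma \ref{T5}, is what makes this inversion well-behaved. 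Split $f=\langle f\rangle+f_{\neq}$; the equation for $\langle f\rangle$ is quadratic in the perturbation and couples with $\Phi$ through an elliptic relation in $y$.

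\emph{Weighted norm and bootstrap.} Following the strategy of Bedrossian--Masmoudi, pick a Gevrey multiplier $A(t,k,\xi)=e^{\lambda(t)\langle k,\xi\rangle^{1/2}}M(t,k,\xi)$, where $\lambda(t)$ decreases slowly from $\beta_0$ to $\beta_1>0$ and $M$ is a resonance weight concentrated near the critical times $t\approx\xi/k$, designed to ``prepay'' the losses generated by echoes. Impose on $[0,T]$ a bootstrap of the form $\|Af_{\neq}\|_{L^2}+\|A\langle f\rangle\|_{L^2}+\|A\partial_v\Phi\|_{L^2}\lesssim\epsilon$ and improve it to a constant strictly smaller than the bootstrap constant. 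The energy identity produces a favourable Cauchy--Kowalewski term from $\dot\lambda<0$ which, together with a paraproduct decomposition, must absorb the transport commutator and the reaction $b''\partial_z\psi$.

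\emph{Main obstacle.} The hardest step is the Gevrey bound for $\psi$ in terms of $f$. Mode-by-mode in $z$, $\psi$ is obtained by inverting an operator whose inverse is built from the resolvent of $L_k$ studied in Lemma \ref{T5}. One represents this resolvent via a contour integral across the continuous spectrum $[b(0),b(1)]$, exploiting that $b\in\mathcal{G}^{\beta_0,1/2}$ and that $b''$ is supported in $[2\vartheta_0,1-2\vartheta_0]$, and commutes the multiplier $A$ past the resolvent by showing that the critical-layer singularities at points $y_0$ with $b(y_0)=\xi/k$ are precisely those absorbed by the weight $M$. This commutator analysis is what ties the spectral constant $\kappa$ to the smallness threshold $\overline{\epsilon}$ and fixes the final radius $\beta_1$; I expect it to occupy the bulk of the paper and to be the main source of technical difficulty, since, unlike in the Couette case, the linearized stream function cannot be computed explicitly and the resonant structure must be read off from the resolvent.

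\emph{Extraction of $F_\infty$ and decay.} Once the bootstrap closes, $\lambda(t)\to\beta_1$ and $\partial_t f$ is of size $\epsilon\langle t\rangle^{-1}$ in a slightly weaker Gevrey norm (the reaction $b''\partial_z\psi$ gains a factor of $\langle t\rangle^{-1}$ from inviscid damping of $\psi$ applied within the bootstrap bounds), so $f(t)$ is Cauchy in $\mathcal{G}^{\beta_1,1/2}$ at rate $\epsilon/\langle t\rangle$, giving $F_\infty$ and \eqref{convergence}. The remaining bounds are immediate from the stream-function representation: for $k\ne 0$, $\widehat{\psi}(t,k,\cdot)$ oscillates at rate $\sim kt$ in $y$, and integration by parts against the Gevrey-smooth $f_{\neq}$ yields \eqref{convergencetomean} and \eqref{convergenceuy}; the Gevrey convergence $\langle f\rangle\to\langle F_\infty\rangle$ combined with \eqref{AsymPhi2} yields \eqref{convergenceofux}; and the support conclusion (i) is a standard consequence of the transport structure once $u^y$ is shown to decay integrably in time.
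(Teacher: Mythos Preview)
Your broad architecture—renormalizing coordinates, running a bootstrap with time-dependent Gevrey multipliers of Bedrossian--Masmoudi type, then extracting $F_\infty$—matches the paper. But there is a genuine gap in how you propose to close the bootstrap, and a related misidentification of where the spectral theory enters.

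You say the Cauchy--Kowalewski term from $\dot\lambda<0$ together with a paraproduct decomposition should absorb the reaction $b''\partial_z\psi$, and that the hard step is the ``Gevrey bound for $\psi$ in terms of $f$'' via the resolvent of $L_k$. Two things are off here. First, the elliptic relation between $\psi$ (or $\phi$) and $f$ is just $\Delta\psi=\omega$, i.e.\ mode-by-mode $(-k^2+\partial_y^2)\psi_k=\omega_k$; its inverse is the explicit Green kernel $G_k$, not the resolvent of $L_k$, and the corresponding Gevrey bound (Lemma~\ref{lm:elli3} in the paper) is essentially routine. Second, and more seriously, $b''\partial_z\psi$ is a \emph{linear} term with an $O(1)$ coefficient. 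In the energy identity for $\|Af\|^2$ it produces a contribution of the same order as the bootstrap quantity itself—there is no small parameter to absorb it, and the bootstrap does not improve from $\epsilon_1^2$ to $\epsilon_1^2/2$. The paper states this explicitly: ``This extra linear term can not be treated as a perturbation if $b''$ is not assumed small.''

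The paper's resolution is a device you are missing: an auxiliary profile
\[
F^{\ast}(t,z,v):=F(t,z,v)-\int_0^t B''_0(v)\,\partial_z\phi'(s,z,v)\,ds,
\]
with $\phi'$ solving the elliptic problem with \emph{frozen} coefficients $B'_0,B''_0$. By construction $\partial_tF^\ast$ contains only $B''\partial_z\phi-B''_0\partial_z\phi'=B''_\ast\partial_z\phi+B''_0\partial_z(\phi-\phi')$, both quadratically small, so $F^\ast$ closes perturbatively (Proposition~\ref{ImprovBoots2}). The difference $g:=F-F^\ast$ then satisfies the linear inhomogeneous system $\partial_t g_k-ikB''_0\psi_k=H_k$ with source $H_k$ controlled by $F^\ast$. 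It is for this \emph{linearized time evolution} $e^{itL_k}$—not for the elliptic inversion—that the spectral representation and the resolvent bounds of Lemma~\ref{T5} are used (Proposition~\ref{B20}, Section~\ref{PropD}): one expresses the solution through generalized eigenfunctions and commutes the weight $A_k$ past them, which is precisely the commutator estimate you sketch, but applied to $F-F^\ast$. Two smaller points: the paper takes $v(t,y)=b(y)+t^{-1}\Phi(t,y)$ rather than your $v=b(y)$ (same $z$, different $v$), which forces several extra coordinate functions into the bootstrap; and your decay count is off by one power—$b''\partial_z\psi\sim\epsilon\langle t\rangle^{-2}$, giving $\|\partial_tF\|\lesssim\epsilon\langle t\rangle^{-2}$, which integrates to the $\epsilon/\langle t\rangle$ rate in \eqref{convergence}, whereas $\langle t\rangle^{-1}$ would not integrate.
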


\subsection{Remarks}\label{remarksintro} We discuss now some of the assumptions and the conclusions of Theorem \ref{maintheoremINTRO}.

(1) The equation \eqref{IEur1} for the vorticity deviation is equivalent to the original Euler equations \eqref{euler}--\eqref{stream}. The condition $\int_{\T}\omega_0(x,y)\,dx=0$ can be imposed without loss of generality, because we may replace the shear flow $b(y)$ by the nearby shear flow $b(y)+\langle u_0^x\rangle(y)$. In fact, since $\partial_y\langle\partial_y\psi\rangle=\langle\omega\rangle$, this condition is equivalent to 
\begin{equation}\label{Intro7}
\langle u^x_0\rangle (y)=0\qquad\text{ for any }y\in[0,1].
\end{equation}
These identities only hold for the initial data, and are not propagated by the flow \eqref{IEur1}. However, as we show in (\ref{rea8}) below, we have 
\begin{equation*}
\langle u^x\rangle(t,y)\equiv 0\qquad\text{ for }\qquad y\in [0,1]\setminus[\vartheta_0,1-\vartheta_0]\text{ and }t\in[0,T],
\end{equation*}
as long as the vorticity $\omega$ is supported in $[0,T]\times\mathbb{T}\times[\vartheta_0,1-\vartheta_0]$. In particular, $\langle u^x\rangle(t,y)-u_{\infty}(y)$ is compactly supported in $[\vartheta_0,1-\vartheta_0]$.

(2) The assumption on the compact support of $\omega_0$ is likely necessary to prove scattering in Gevrey spaces. Indeed, Zillinger  \cite{Zillinger2} showed that scattering does not hold in high Sobolev spaces unless one assumes that the vorticity vanishes at high order at the boundary. This is due to what is called ``boundary effect", which is not consistent with inviscid damping. This boundary effect can also be seen clearly in \cite{JiaL} as the main asymptotic term for the stream function. 

Understanding quantitatively the boundary effect in the context of asymptotic stability of Euler or Navier-Stokes equations is an interesting topic by itself, but we will not address it here. 

The assumption on the support of $b''$ is necessary to preserve the compact support of $\omega(t)$ in $\mathbb{T}\times[\vartheta_0,1-\vartheta_0]$, due to the nonlocal term $b''(y)\partial_x\psi$ in \eqref{IEur1}. In principle, one could hope to remove this strong assumption (and replace it with a milder decay assumption) by working in the infinite cylinder $\mathbb{T}\times\R$ domain instead of the finite channel $\mathbb{T}\times [0,1]$, but this would be at the expense of considering solutions of infinite energy.

(3) There are a large class of shear flows $b$ satisfying our assumptions. For instance,  for any $b(y)$ which satisfies $|b'|\ge 1$ and $|b'''|<1$, then the spectrum of $L_k$ consists entirely of the continuous spectrum $[b(0),b(1)]$ for $k\in\mathbb{Z}\backslash\{0\}$.

(4) The Gevrey regularity assumption \eqref{Eur0} on the initial data $\omega_0$ is likely sharp. See the recent construction of nonlinear instability of Deng--Masmoudi \cite{Deng} for the Couette flow in slightly larger Gevrey spaces, and the more definitive counter-examples to inviscid damping in low Sobolev spaces by Lin--Zeng \cite{ZhiWu}. 

(5) At the qualitative level, our main conclusion \eqref{convergence} shows that the vorticity $\omega$ converges weakly to the function $\langle F_\infty\rangle(y)$. This is consistent with a far-reaching conjecture regarding the long time behavior of the $2D$ Euler equation, see \cite{SverakNotes}, which predicts that for general generic solutions the vorticity field converges, as $t\to\infty$, {\it weakly but not strongly} in $L^2_{{\rm loc}}$ to a steady state. Proving such a conjecture for general solutions is, of course, well beyond the current PDE techniques, but the nonlinear asymptotic stability results we have so far in \cite{BM, IOJI, IOJI2} are consistent with this conjecture.

(6) There are several parameters in our proof, and we summarize their roles here. The parameters $\beta_0,\vartheta_0,\kappa>0$ (the structural constants of the problem) are assumed fixed, and implicit constants in inequalities like $A\lesssim B$ are allowed to depend on these parameters. We will later fix a constant $\delta_0>0$ sufficiently small depending on these parameters, as part of the construction of our main weights, see \eqref{reb10.5}. 

The weights will also depend on a small parameter $\delta>0$, much smaller than $\delta_0$, which is needed at many places, such as in commutator estimates using inequalities like \eqref{S1intro}. We will use the general notation $A\lesssim_\delta B$ to indicate inequalities where the implicit constants may depend on $\delta$. Finally, the parameters $\eps$ and $\eps_1=\eps^{2/3}$, which bound the size of the perturbation, are assumed to be much smaller than $\delta$.

\subsection{Previous work and related results}

The study of stability properties of shear flows and vortices is one of the most important problems in hydrodynamics, and has a long history starting with work of Kelvin \cite{Kelvin}, Rayleigh \cite{Ray}, and Orr \cite{Orr}. The problem is well motivated physically, since numerical simulations and physical experiments, such as those of \cite{Ba1,Ba2,Benzi,Brachet,McW,McW1,Santanqelo}, show that coherent structures tend to form and become the dominant feature of incompressible 2D Euler evolutions. This indicates a reverse cascade of energy from high to low frequencies, which is in sharp contrast to the 3D situation, where it is expected that energy flow from small frequencies to high frequencies until the dissipation scale.  

We refer also to the recent papers \cite{KiselevSverak,ZhiWu2} for other interesting results concerning the dynamics of solutions of the 2D Euler equations.


Our main topic in this paper is asymptotic stability. Nonlinear asymptotic stability results are difficult for the $2D$ incompressible Euler equation, because the rate of stabilization is slow, the convergence of the vorticity field holds only in the weak sense, and the nonlinear effect is strong. In a recent remarkable paper Bedrossian--Masmoudi \cite{BM} proved the first nonlinear asymptotic stability result, showing that small perturbations of the Couette flow on the infinite cylinder $\T\times\R$ converge weakly to nearby shear flows. This result was extended by the authors \cite{IOJI} to the finite channel $\T\times[0,1]$, in order to be able to consider solutions with finite energy. In \cite{IOJI2} the authors also proved asymptotic stability of point vortex solutions in $\mathbb{R}^2$, showing that small and Gevrey smooth perturbations converge to a smooth radial profile, and the position of the point vortex stabilizes rapidly and forms the center of the final radial profile. These three results appear to be the only known results on nonlinear asymptotic stability of stationary solutions for the Euler equations. 

A key common feature of these stability results is that the steady states are simple explicit functions, and, more importantly, the associated linearized flow can be solved explicitly. 

To expand the stability theory to more general steady states, one can first consider the linearized equation and prove inviscid damping of linear solutions. The linear evolution problem has been investigated intensely in the last few years, in particular around general shear flows and vortices, see for example \cite{Grenier,JiaL,Zillinger1,Zillinger2,dongyi}. In particular, Wei-Zhang-Zhao \cite{dongyi} proved optimal decay rate of the stream function for the linearized problem near monotone shear flows, and Bedrossian-Coti Zelati-Vicol \cite{Bed2} obtained sharp decay estimates for general vortices with decreasing profile.  We also refer the reader to important developments for the linear inviscid damping in the case of non-monotone shear flows \cite{Dongyi2, Dongyi3} and circular flows \cite{Bed2,Zillinger3}.

There is a large gap, however, between linear and nonlinear theory. As we know, even in the simplest case of the Couette flow, to prove nonlinear stability one needs to bound the contribution of the so-called ``resonant times", which can only be detected by working in the Fourier space, in a specific coordinate system. This requires refined Fourier analysis techniques, which are not compatible with the natural spectral theory of the variable-coefficient linearized problems associated to general shear flows and vortices. In addition, nonlinear decay comes at the expense of loss of regularity, and one needs a subtle interplay of energy functionals with suitable weights (in the Fourier space) to successfully close the argument.

This gap was bridged in part by the second author in \cite{JiaG}, who proved a precise linear result, which combined Fourier analysis and spectral analysis, and provided accurate estimates that are compatible with nonlinear analysis. In this paper we close this gap completely in one important case, namely the case of monotone shear flows satisfying a suitable spectral assumption.

The problem of nonlinear inviscid damping we consider here is connected to the well-known Landau damping effect for Vlasov-Poisson equations, and we refer to the celebrated work of Mouhot--Villani \cite{Villani} for the physical background and more references. Inviscid damping is a very subtle mechanism of stability, and has only been proved rigorously in 2D for Euler-type equations. It can also be viewed as the limiting case of the Navier-Stokes equation with small viscosity $\nu>0$. In the presence of viscosity, one can have more robust stability results for initial data that is sufficiently small relative to $\nu$, which exploit the enhanced dissipation due to the mixing of the fluid. See \cite{Bed4,Bed7,Dongyi4} and references therein. Moreover, in the limit $\nu\to 0$ and if there is boundary then the boundary layer becomes an important issue, and there are significant additional difficulties. We refer the interested reader to \cite{Bed11,Qchen} for more details and further references.

\subsection{Main ideas}\label{subsection:reviewBedMas} We describe now some of the main ideas involved in the proof.

\subsubsection{Renormalization and time-dependent energy functionals} These are two key ideas introduced by Bedrossian--Masmoudi \cite{BM} in the case of Couette flow. 

As in \cite{BM} and \cite{IOJI,IOJI2}, we make a nonlinear change of variable, and define $z,v$ by 
\begin{equation}\label{newCoordInt}
v(t,y):=b(y)+\frac{1}{t}\int_0^t\langle u^x\rangle(s,y)\,ds,\qquad z(t,y):=x-tv(t,y).
\end{equation}
The main point is to remove the terms containing the non-decaying components $b(y)\partial_x\omega$ and  $\langle u^x\rangle\partial_x\omega$ from the evolution equation satisfied by the renormalized vorticity. 
Denote
\begin{equation}\label{Irea1}
F(t,z,v):=\omega(t,x,y),\qquad \phi(t,z,v):=\psi(t,x,y).
\end{equation}
Under this change of variable, the equation (\ref{IEur1}) becomes
\begin{equation}\label{Imain}
 \partial_tF-B''\partial_z\phi-V'\partial_vP_{\neq 0}\phi\,\partial_zF+(\dot{V}+V'\partial_z\phi)\,\partial_vF=0,
\end{equation}
where $P_{\neq 0}$ is projection off the zero mode. The coefficients $B'', V', \dot{V}, V'$ are suitable coordinate functions, connected to the change of variable \eqref{newCoordInt}, see \eqref{rea1}-\eqref{rea3'} for the precise definitions.

The main idea is to control the regularity of $F$ for all $t\ge0$, as well as all the other quantities such as $V',V'',B'',\dot{V},\phi$, using a bootstrap argument involving nine time dependent energy functionals and space-time norms. These norms depend on a family of weights 
\begin{equation}\label{weiIntro}
A_k(t,\xi),\,\,A_{NR}(t,\xi),\,\,A_R(t,\xi),\qquad k\in\mathbb{Z},\xi\in\R,
\end{equation}
which have to be designed carefully to control the nonlinearities (in particular, the difficult ``reaction term" $\partial_v\mathbb{P}_{\neq0}\phi\cdot\partial_zF$ in \eqref{Imain}, which cannot be estimated using standard weights due to loss of derivatives around certain ``resonant times"). See the longer discussion in \cite{BM} and \cite[Section 1.3]{IOJI}. 

The special weights we use here are the same as the weights we used in our earlier work \cite{IOJI,IOJI2}, and we rely on many estimates proved in these papers. Our weights are refinements of the weights of \cite{BM}, but depend on an additional small parameter $\delta$ which gives critical flexibility at several stages of the argument (such as inequalities like \eqref{S1intro} below which are needed for commutator estimates).
  
\subsubsection{The auxiliary nonlinear profile} In the case of general shear flows, an essential new difficulty that is not present in the Couette case or the point vortex case, is the additional linear term $B''(t,v)\partial_z\phi$ in \eqref{Imain}. This extra linear term can not be treated as a perturbation if $b''$ is not assumed small. We deal with this basic issue in two steps: first we define an auxiliary nonlinear profile $F^{\ast}(t)$ given by 
\begin{equation}\label{InF-F}
F^{\ast}(t,z,v)= F(t,z,v)-\int_0^tB''(0,v)\partial_z\phi'(s,z,v)\,ds.
\end{equation}
Thus $F^{\ast}$ takes into account the linear effect accumulated up to time $t$ and can be bounded perturbatively. The function $\phi'$ is a small but crucial modification of $\phi$, obtained by freezing the coefficients of the elliptic equation defining stream functions at time $t=0$, in order to keep these coefficients very smooth. See \eqref{Ph1}-\eqref{reb14} for the precise definitions. 

The modified profile $F^{\ast}$ now evolves in a perturbative fashion, and can be bounded using the method in \cite{IOJI,IOJI2}. However, this construction leads to loss of symmetry in the transport terms $V'\partial_vP_{\neq 0}\phi\,\partial_zF$ and $(\dot{V}+V'\partial_z\phi)\,\partial_vF$, since the main perturbative variable is now $F^{\ast}$. This loss of symmetry causes a derivative loss, so we need to prove stronger bounds on $F-F^{\ast}$ than on the variables $F, F^{\ast}$, as described in \eqref{rec1''}.

\subsubsection{Control of the full profile} We still need to recover the bounds on $F$ and the improved bounds on $F-F^{\ast}$. Since the bounds on $F^{\ast}$ are already proved, it suffices to prove the improved bounds \eqref{boot3} for $F-F^{\ast}$. 

This is a critical step where we need to use our main spectral assumption and the precise estimates on the linearized flow. To link $F-F^{\ast}$ with the linearized flow, we define an auxiliary function $\phi^{\ast}$, which can be approximately viewed as a stream function associated with $F^{\ast}$, see \eqref{B27} for the precise definitions. Now setting $g=F-F^{\ast}$, $\varphi:=\phi'-\phi^{\ast}$, the functions $g$ and $\varphi$ satisfy the inhomogeneous linear system with trivial initial data
\begin{equation}\label{Intg}
\begin{split}
&\partial_tg-B''_0(v)\partial_z\varphi=H,\qquad g(0,z,v)=0,\\
&B'_0(v)^2(\partial_v-t\partial_z)^2\varphi+B''_0(v)(\partial_v-t\partial_z)\varphi+\partial_z^2\varphi=g(t,z,v),
\end{split}
\end{equation}
where $(t,z,v)\in[0,\infty)\times\mathbb{T}\times[b(0),b(1)]$. The functions $B'_0(v)=B'(0,v)$ and $B''_0(v)=B''(0,v)$ are time-independent, very smooth, and can be expressed in terms of the original shear flow $b$. The source term $H$ is given by $H=B''_0(v)\partial_z\phi^{\ast}$.

The function $\phi^{\ast}$ is determined by the auxiliary profile $F^\ast$. Since we have already proved quadratic bounds on the profile $F^\ast$, we can use elliptic estimates to prove quadratic bounds on $\phi^{\ast}$, and then on the source term $H$. Therefore, we can think of \eqref{Intg} as a linear inhomogeneous system with trivial initial data, and attempt to adapt the linear theory to our situation. 

Decomposing in modes, conjugating by $e^{-ikvt}$, and using Duhamel's formula, we can further reduce to the study of the homogeneous initial-value problem 
\begin{equation}\label{B1intro}
\begin{split}
&\partial_tg_k+ikvg_k-ikB''_0\varphi_k=0,\qquad g_k(0,v)=X_k(v)e^{-ikav},\\
&(B'_0)^2\partial_v^2\varphi_k+B_0''(v)\partial_v\varphi_k-k^2\varphi_k=g_k,\qquad \varphi_k(b(0))=\varphi_k(b(1))=0.
\end{split}
\end{equation}
for $(t,v)\in[0,\infty)\times[b(0),b(1)]$, where $k\in\mathbb{Z}\setminus\{0\}$ and $a\in\R$.

\subsubsection{Analysis of the  linearized flow}   The equation \eqref{B1intro} was analyzed, at least when $a=0$, by Wei--Zhang--Zhao in \cite{dongyi} and by the second author in \cite{JiaG}. We follow the approach in \cite{JiaG}. The main idea is to use the spectral representation formula and reduce the analysis of the linearized flow to the analysis of generalized eigenfunctions corresponding to the continuous spectrum. More precisely, given data $X_k$ smooth and satisfying ${\rm supp}\,X_k\subseteq[b(\vartheta_0),b(1-\vartheta_0)]$ we find a representation formula
\begin{equation}\label{B7intro}
\widetilde{g_k}(t,\xi)=\widetilde{X_k}(\xi+kt+ka)+ik\int_0^t\int_{\R}\widetilde{B_0''}(\zeta)\widetilde{\,\,\Pi_k'}(\xi+kt-\zeta-k\tau,\xi+kt-\zeta,a)\,d\zeta\,d\tau
\end{equation}
for the solution $g_k$ of the linear evolution equation \eqref{B1intro}, where $\Pi_k'(\xi,\eta,a)$ can be expressed in terms of a family of generalized eigenfunctions. As proved in \cite{JiaG}, these eigenfunctions cannot be calculated explicitly, but can be estimated very precisely in the Fourier space,
\begin{equation}\label{B5intro}
\left\|(|k|+|\xi|)W_k(\eta+ka)\widetilde{\Pi'_k}(\xi,\eta,a)\right\|_{L^2_{\xi,\eta}}\lesssim_{\delta} \big\|W_k(\eta)\widetilde{X_k}(\eta)\big\|_{L^2_\eta},
\end{equation}
for any $a\in\mathbb{R}$, where $W_k$ is a family of weights satisfying smoothness properties of the type
\begin{equation}\label{S1intro}
\left|W_k(\xi)-W_k(\eta)\right|\lesssim e^{2\delta_0\langle\xi-\eta\rangle^{1/2}}W_k(\eta)\Big[\frac{C(\delta)}{\langle k,\eta\rangle^{1/8}}+\sqrt{\delta}\Big]\qquad \text{ for any }\xi,\eta\in\mathbb{R}.
\end{equation}

The inequality \eqref{S1intro} holds for standard weights, like polynomial weights $W_k(\xi)=(1+|\xi|^2)^{N/2}$, which correspond to Sobolev spaces, or exponential weights $W_k(\xi)=e^{\lambda\langle\xi\rangle^s}$, $s<1/2$, which correspond to Gevrey spaces. More importantly, it also holds for our carefully designed weights $A_k(t,\xi)$, as we have already seen in \cite{IOJI2}. This allows us to adapt and incorporate the linear theory, and close the argument.

\subsection{Organization} The rest of the paper is organized as follows. In section 2 we renormalize the variables using a nonlinear change of coordinates and set up the main bootstrap Proposition \ref{MainBootstrap}.  In section 3 we collect some lemmas concerning Gevrey spaces and describe in detail our main weights $A_k$, $A_R$, and $A_{NR}$. In section 4 we prove several bilinear estimates and, more importantly, an elliptic estimate that can be applied many times to control stream-like functions. In sections 5-7 we prove the main bootstrap Proposition \ref{MainBootstrap}. In section 8 we prove the main estimates on the linear flow, by adapting the analysis in \cite{JiaG}. Finally, in section 9 we use the main bootstrap proposition to complete the proof of Theorem \ref{maintheoremINTRO}.

\section{The main bootstrap proposition}

\subsection{Renormalization and the new equations}\label{sec:variables}

Assume that $\omega:[0,T]\times\mathbb{T}\times[0,1]$ is a sufficiently smooth solution of the system
\begin{equation}\label{Eur1}
\begin{split}
&\partial_t\omega+b(y)\partial_x\omega-b''(y)\partial_x\psi+u\cdot\nabla\omega=0,\\
& (u^x,u^y)=(-\partial_y\psi,\partial_x\psi),\qquad\Delta\psi=\omega,\qquad \psi(t,x,1)=\psi(t,x,0)=0,
\end{split}
\end{equation} 
which is supported in $\mathbb{T}\times[\va_0,1-\va_0]$ at all times $t\in[0,T]$, satisfying $\|\langle\omega\rangle(t)\|_{H^{10}}\ll 1$ and
\begin{equation}\label{Eur1.1}
\int_{\mathbb{T}}u^x(0,x,y)\,dx=0\qquad\text{ for any }y\in[0,1].
\end{equation}
Using \eqref{Eur1}--\eqref{Eur1.1} it is easy to show that 
\begin{equation}\label{rea8}
\int_{\mathbb{T}}u^x(t,x,y)\,dx\equiv0\qquad\text{ for any }t\in[0,T]\text{ and }y\in[0,\vartheta_0]\cup [1-\vartheta_0, 1].
\end{equation}
Indeed, since $u^x=-\partial_y\psi$ and $\Delta\psi=\omega$, we have $\partial_y\langle u^x\rangle=-\langle\omega\rangle$. We also have $\partial_t\langle u^x\rangle=\langle\omega\partial_x\psi\rangle$ (see the proof of \eqref{conv2.3} below), and the desired identities \eqref{rea8} follow using the support assumption on $\omega$. 

As in \cite{BM, IOJI, IOJI2}, we make the nonlinear change of variables 
\begin{equation}\label{changeofvariables1}
 v=b(y)+\frac{1}{t}\int_0^t\big<u^x\big>(\tau,y)\,d\tau,\qquad z=x-tv.
\end{equation}
The point of this change of variables is to eliminate two of the non-decaying terms in the evolution equation in \eqref{Eur1}, namely the terms $b(y)\partial_x\omega$ and $\langle u^x\rangle \partial_x\omega$. 

Then we define the functions
\begin{equation}\label{rea1}
F(t,z,v):=\omega(t,x,y),\qquad \phi(t,z,v):=\psi(t,x,y),
\end{equation}
\begin{equation}\label{rea3}
V'(t,v):=\partial_yv(t,y),\qquad V''(t,v):=\partial_{yy}v(t,y),\qquad \dot{V}(t,v):=\partial_tv(t,y), 
\end{equation}
\begin{equation}\label{rea3'}
B'(t,v):=\partial_yb(y),\qquad B''(t,v):=\partial_{yy}b(y).
\end{equation}
Using \eqref{rea8}, we have
\begin{equation}\label{rea10}
v\in[b(0),b(1)]\,\,\text{ and }\,\,{\rm supp}\,F(t)\subset \mathbb{T}\times[b(\vartheta_0),\,b(1-\vartheta_0)]\,\,\text{ for any }\,\,t\in[0,T].
\end{equation}
The evolution equation in (\ref{Eur1}) becomes
\begin{equation}\label{main}
 \partial_tF-B''\partial_z\phi-V'\partial_vP_{\neq 0}\phi\,\partial_zF+(\dot{V}+V'\partial_z\phi)\,\partial_vF=0,
\end{equation}
where $P_{\neq 0}$ is projection off the zero mode, i.e., for any function $H(t,z,v)$
\begin{equation}\label{Pneq0}
P_{\neq 0}H(t,z,v)=H(t,z,v)-\langle H\rangle(t,v).
\end{equation}
Moreover, we have
\begin{equation}\label{rea12}
\partial_x\psi=\partial_z\phi,\qquad \partial_y\psi=V'(\partial_v\phi-t\partial_z\phi)=V'(\partial_v-t\partial_z)\phi,
\end{equation}
therefore
\begin{equation}\label{rea12.1}
\partial_{xx}\psi=\partial_{zz}\phi;\quad \partial_{yy}\psi=(V')^2(\partial_v-t\partial_z)^2\phi+V''(\partial_v-t\partial_z)\phi.
\end{equation}
Recalling the equation $\Delta\psi=\omega$, we see that $\phi$ satisfies
\begin{equation}\label{eq:Delta_t}
\partial_z^2\phi+(V')^2(\partial_v-t\partial_z)^2\phi+V''(\partial_v-t\partial_z)\phi=F,
\end{equation}
with $\phi(t,x,b(0))=\phi(t,x,b(1))=0$ for any $t\in[0,T]$ and $x\in\mathbb{T}$. 

We also need to establish equations for the functions $V',V'',\dot{V},B',B''$ associated to the change of variables. Using \eqref{changeofvariables1} and the observation $-\partial_y\big<u^x\big>=\langle\omega\rangle$, we have
\begin{equation}\label{rea15}
\begin{split}
\partial_yv(t,y)&=b'(y)-\frac{1}{t}\int_0^t\big<\omega\big>(\tau,y)\,d\tau,\\
\partial_tv(t,y)&=\frac{1}{t}\Big[-\frac{1}{t}\int_0^t\big<u^x\big>(\tau,y)\,d\tau+\big<u^x\big>(t,y)\Big],\\
\partial_y\partial_tv(t,y)&=\frac{1}{t}\Big[\frac{1}{t}\int_0^t\big<\omega\big>(\tau,y)\,d\tau-\big<\omega\big>(t,y)\Big].
\end{split}
\end{equation}
Thus
\begin{equation}\label{rea15.5}
-\frac{1}{t}\int_0^t\big<\omega\big>(\tau,y)d\tau=V'(t,v(t,y))-b'(y).
\end{equation}
By the chain rule it follows that
\begin{equation}\label{rea16}
\partial_t\big[t(V'(t,v)-B'(t,v))\big]+t\dot{V}(t,v)\partial_v\big[V'(t,v)-B'(t,v)\big]=-\big<F\big>(t,v):=-\frac{1}{2\pi}\int_{\mathbb{T}}F(t,z,v)\,dz.
\end{equation}
We notice that
\begin{equation}\label{rea17}
\partial_y(\partial_tv(t,y))=\partial_y\big[\dot{V}(t,v(t,y))\big]=V'(t,v(t,y))\partial_v\dot{V}(t,v(t,y)).
\end{equation}
Hence, using the last identity in \eqref{rea15} and the identities \eqref{rea15.5} and \eqref{rea17}, we have
\begin{equation}\label{rea18}
tV'(t,v)\partial_v\dot{V}(t,v)=B'(t,v)-V'(t,v)-\big<F\big>(t,v).
\end{equation}

We derive now our main evolution equations. It follows from \eqref{rea16} and \eqref{rea18} that
\begin{equation}\label{rea19}
\partial_t(V'-B')=V'\partial_v\dot{V}-\dot{V}\partial_v(V'-B').
\end{equation}
Set
\begin{equation}\label{rea19.5}
\mathcal{H}:=tV'\partial_v\dot{V}=B'-V'-\big<F\big>.
\end{equation}
Using \eqref{rea19} and \eqref{main} we calculate
\begin{equation*}
\begin{split}
\partial_t\mathcal{H}&=-\partial_t(V'-B')-\big<\partial_tF\big>\\
&=-V'\partial_v\dot{V}+\dot{V}\partial_v(V'-B')-V'\big<\partial_vP_{\neq 0}\phi\,\partial_zF\big>+\big<(\dot{V}+V'\partial_z\phi)\,\partial_vF\big>.
\end{split}
\end{equation*}
Using again \eqref{rea19.5} and simplifying, we get 
\begin{equation*}
\partial_t\mathcal{H}=-\frac{\mathcal{H}}{t}-\dot{V}\partial_v\mathcal{H}-V'\big<\partial_vP_{\neq0}\phi\,\partial_zF\big>+V'\big<\partial_z\phi\,\partial_vF\big>.
\end{equation*}
Finally, using \eqref{rea3'} we have
\begin{equation}\label{rea19.8}
\partial_tB'(t,v)+\dot{V}\partial_vB'(t,v)=\partial_tB''(t,v)+\dot{V}\partial_vB''(t,v)=0.
\end{equation}

We summarize our calculations so far in the following:

\begin{proposition}\label{ChangedEquations} Assume $\omega:[0,T]\times\mathbb{T}\times[0,1]\to\mathbb{R}$ is a sufficiently smooth solution of the system \eqref{Eur1}--\eqref{Eur1.1} on some time interval $[0,T]$. Assume that $\omega(t)$ is supported in $\mathbb{T}\times[\va_0,1-\va_0]$ and that $\|\langle\omega\rangle(t)\|_{H^{10}}\ll 1$  for all $t\in[0,T]$. Then 
\begin{equation}\label{rea20.5}
\big<u^x\big>(t,y)=0\qquad\text{ for any }t\in[0,T]\text{ and }y\in [0,\vartheta_0]\cup [1-\vartheta_0, 1].
\end{equation}

We define the change-of-coordinates functions $(z,v):\mathbb{T}\times[0,1]\to\mathbb{T}\times[b(0),b(1)]$, 
 \begin{equation}\label{rea20}
 v:=b(y)+\frac{1}{t}\int_0^t\big<u^x\big>(\tau,y)\,d\tau,\qquad z:=x-tv,
 \end{equation}
and the new variables $F,\phi:[0,T]\times\mathbb{T}\times[b(0),b(1)]\to\mathbb{R}$ and $V',V'',\dot{V},B',B'',\mathcal{H}:[0,T]\times[b(0),b(1)]\to\mathbb{R}$, 
\begin{equation}\label{rea21}
F(t,z,v):=\omega(t,x,y),\qquad \phi(t,z,v):=\psi(t,x,y),
\end{equation}
\begin{equation}\label{rea22}
V'(t,v):=\partial_yv(t,y),\qquad V''(t,v)=\partial_{yy}v(t,y),\qquad \dot{V}(t,v)=\partial_tv(t,y), 
\end{equation}
\begin{equation}\label{rea22'}
B'(t,v):=\partial_yb(y),\qquad B''(t,v):=\partial_{yy}b(y),
\end{equation}
\begin{equation}\label{rea23}
\mathcal{H}(t,v):=tV'(t,v)\partial_v\dot{V}(t,v)=B'(t,v)-V'(t,v)-\langle F\rangle(t,v).
\end{equation}
Then $V'(t,v)\geq \vartheta_0/2$. Moreover, the new variables $F$, $V'-B'$, $\dot{V}$, and $\mathcal{H}$ are supported in $[0,T]\times\mathbb{T}\times[b(\vartheta_0),b(1-\vartheta_0)]$ and satisfy the evolution equations
\begin{equation}\label{rea23.1}
 \partial_tF-B''\partial_z\phi=V'\partial_vP_{\neq 0}\phi\,\partial_zF-(\dot{V}+V'\partial_z\phi)\,\partial_vF,
\end{equation}
\begin{equation}\label{rea23.8}
\partial_tB'(t,v)+\dot{V}\partial_vB'(t,v)=\partial_tB''(t,v)+\dot{V}\partial_vB''(t,v)=0,
\end{equation}
\begin{equation}\label{rea24}
\partial_t(V'-B')+\dot{V}\partial_v(V'-B')=\mathcal{H}/t,
\end{equation}
\begin{equation}\label{rea25}
\partial_t\mathcal{H}+\dot{V}\partial_v\mathcal{H}=-\mathcal{H}/t-V'\big<\partial_vP_{\neq0}\phi\,\partial_zF\big>+V'\big<\partial_z\phi\,\partial_vF\big>.
\end{equation}
The variables $\phi$, $V''$, and $\dot{V}$ satisfy the elliptic-type identities
\begin{equation}\label{rea26}
\partial_z^2\phi+(V')^2(\partial_v-t\partial_z)^2\phi+V''(\partial_v-t\partial_z)\phi=F,
\end{equation}
\begin{equation}\label{rea27}
\partial_v\dot{V}=\mathcal{H}/(tV'),\qquad \dot{V}(t,b(0))=\dot{V}(t,b(1))=0,\qquad V''=V'\partial_vV'.
\end{equation}
\end{proposition}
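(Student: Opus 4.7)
First, I establish \eqref{rea20.5} by a time-derivative argument. Starting from the Euler equation $\partial_tu^x+u\cdot\nabla u^x+\partial_xp=0$, taking the $x$-average, and using $\langle\partial_xp\rangle=0$ together with the divergence-free condition, the identity $u^y=\partial_x\psi$, and $\omega=-\partial_yu^x+\partial_xu^y$, one readily derives $\partial_t\langle u^x\rangle(t,y)=\langle\omega\,\partial_x\psi\rangle(t,y)$. The support assumption on $\omega$ forces the right-hand side to vanish on $[0,\vartheta_0]\cup[1-\vartheta_0,1]$, so that $\langle u^x\rangle$ is constant in $t$ there; combined with \eqref{Eur1.1} this gives \eqref{rea20.5}. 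With this in hand, $V'(t,v(t,y))=b'(y)-t^{-1}\int_0^t\langle\omega\rangle(\tau,y)\,d\tau\geq\vartheta_0/2$ follows from the smallness of $\langle\omega\rangle$, so that $y\mapsto v(t,y)$ is a diffeomorphism of $[0,1]$ onto $[b(0),b(1)]$ (the endpoints are fixed by \eqref{rea20.5}). The stated supports of $F$, $V'-B'$, $\dot V$, and $\mathcal H$ then follow: $F$ and $\langle F\rangle$ from the support of $\omega$; $V'-B'=-t^{-1}\int_0^t\langle\omega\rangle$ from the support of $\langle\omega\rangle$; $\dot V$ from the fact that $v(t,y)=b(y)$ identically for $y\in[0,\vartheta_0]\cup[1-\vartheta_0,1]$; and $\mathcal H=B'-V'-\langle F\rangle$ from the previous three.

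Next, I derive \eqref{rea23.1} by the chain rule. Writing $\omega(t,x,y)=F(t,z,v)$ and $\psi(t,x,y)=\phi(t,z,v)$, one computes $\partial_t\omega=\partial_tF+(-v-t\dot V)\partial_zF+\dot V\partial_vF$, $\partial_x\omega=\partial_zF$, $\partial_y\omega=V'(\partial_v-t\partial_z)F$, and likewise for $\psi$. Substituting into \eqref{Eur1} and collecting terms, the coefficient of $\partial_zF$ reduces to $b-v-t\dot V-V'\partial_v\phi$ after the two $tV'\partial_z\phi\,\partial_zF$ contributions (from $u^x\partial_x\omega$ and $u^y\partial_y\omega$) cancel. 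The critical simplification is the identity $t\dot V=\langle u^x\rangle+(b-v)$, read off from \eqref{rea15}, which gives $b-v-t\dot V=-\langle u^x\rangle$; finally, since $\langle u^x\rangle(t,y)=-V'\langle\partial_v\phi\rangle(t,v)$ (using $\langle\partial_z\phi\rangle=0$), the coefficient collapses to $-V'\partial_vP_{\neq0}\phi$, giving \eqref{rea23.1}. The elliptic identity \eqref{rea26} is obtained immediately by substituting the chain-rule expressions \eqref{rea12}--\eqref{rea12.1} into $\Delta\psi=\omega$.

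For the auxiliary equations: $B'(t,v(t,y))=b'(y)$ and $B''(t,v(t,y))=b''(y)$ are $t$-independent, so differentiating in $t$ at fixed $y$ yields \eqref{rea23.8}. Differentiating $V'(t,v(t,y))=\partial_yv(t,y)$ in $y$ gives $V'\cdot\partial_vV'=V''$, the second half of \eqref{rea27}. Differentiating the same identity in $t$ produces $\partial_tV'+\dot V\partial_vV'=\partial_y\dot V(t,v(t,y))=V'\partial_v\dot V$, which, combined with the algebraic identity $tV'\partial_v\dot V=\mathcal H$ (this is \eqref{rea18}, which follows from \eqref{rea15.5} and \eqref{rea17}) and the analogous transport equation for $B'$, yields \eqref{rea24}. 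The boundary conditions $\dot V(t,b(0))=\dot V(t,b(1))=0$ follow from Step~1 via $v(t,0)=b(0)$, $v(t,1)=b(1)$. Finally, \eqref{rea25} is obtained by differentiating $\mathcal H=B'-V'-\langle F\rangle$ in $t$, using \eqref{rea23.8} and \eqref{rea24} for the first two summands and the $z$-average of \eqref{rea23.1} for the third.

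The only substantive step is the cancellation in the second paragraph that reduces the $\partial_zF$ coefficient to $-V'\partial_vP_{\neq0}\phi$: this is precisely where the three components of the nonlinear change of variables $v=b+t^{-1}\int_0^t\langle u^x\rangle\,d\tau$ conspire exactly, and it is the whole point of choosing this particular change of variables. Every other claim is a routine chain-rule or algebraic manipulation, so I anticipate no real obstacle beyond bookkeeping.
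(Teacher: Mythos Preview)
Your proposal is correct and follows essentially the same approach as the paper: the paper's proof of this proposition is precisely the sequence of chain-rule computations preceding the proposition statement (equations \eqref{rea8}--\eqref{rea19.8}), and you have recapitulated them faithfully, including the key cancellation $b-v-t\dot V=-\langle u^x\rangle=V'\partial_v\langle\phi\rangle$ that produces the $P_{\neq 0}$ in \eqref{rea23.1}.
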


\subsection{Energy functionals and the bootstrap proposition}\label{weightsdef} The main idea of the proof is to estimate the increment of suitable energy functionals, which are defined using special weights. For simplicity, we use exactly the same weights $A_{NR}$, $A_R$, $A_k$ as in our earlier papers \cite{IOJI,IOJI2}, so we can use some of their properties proved there. These weights are defined by 
\begin{equation}\label{reb11}
A_{NR}(t,\xi):=\frac{e^{\lambda(t)\langle\xi\rangle^{1/2}}}{b_{NR}(t,\xi)}e^{\sqrt{\delta}\langle\xi\rangle^{1/2}},\qquad A_R(t,\xi):=\frac{e^{\lambda(t)\langle\xi\rangle^{1/2}}}{b_R(t,\xi)}e^{\sqrt{\delta}\langle\xi\rangle^{1/2}},
\end{equation}
and
\begin{equation}\label{reb12}
A_k(t,\xi):=e^{\lambda(t)\langle k,\xi\rangle^{1/2}}\Big(\frac{e^{\sqrt{\delta}\langle\xi\rangle^{1/2}}}{b_k(t,\xi)}+e^{\sqrt{\delta}|k|^{1/2}}\Big),
\end{equation}
where $k\in\mathbb{Z}$, $t\in[0,\infty)$, $\xi\in\mathbb{R}$. The function $\lambda:[0,\infty)\to[\delta_0,3\delta_0/2]$ is defined by
\begin{equation}\label{reb10.5}
\lambda(0)=\frac{3}{2}\delta_0,\,\,\,\,\lambda'(t)=-\frac{\delta_0\sigma_0^2}{\langle t\rangle^{1+\sigma_0}},
\end{equation}
where $\delta_0>0$ is a fixed parameter and $\sigma_0=0.01$. In particular, $\lambda$ is decreasing on $[0,\infty)$, and the functions $A_{NR},\,A_R,\,A_k$ are also decreasing in $t$. The parameter $\delta>0$, which appears also in the weights $b_R,\,b_{NR},\,b_k$, is to be taken sufficiently small, depending only on the structural parameters $\delta_0$, $\vartheta_0$, and $\kappa$.

The precise definitions of the weights $b_{NR},\,b_R,\,b_k$ are very important; the details are provided in section \ref{weights}. For now we note that these functions are essentially increasing in $t$ and satisfy 
\begin{equation}\label{reb13}
e^{-\delta\sqrt{|\xi|}}\leq b_R(t,\xi)\leq b_k(t,\xi)\leq b_{NR}(t,\xi)\leq 1,\qquad\text{ for any }t,\xi,k.
\end{equation}
In other words, the weights $1/b_{NR},\,1/b_R,\,1/b_k$ are small when compared to the main factors $e^{\lambda(t)\langle\xi\rangle^{1/2}}$ and $e^{\lambda(t)\langle k,\xi\rangle^{1/2}}$ in \eqref{reb11}--\eqref{reb12}. However, their relative contributions are important as they are used to distinguish between resonant and non-resonant times. 

Assume that $\omega:[0,T]\times\mathbb{T}\times[0,1]\to\mathbb{R}$ is as in Proposition \ref{ChangedEquations} and define the functions $F, \phi, V', V'', \dot{V}, B', B'', \mathcal{H}$ as in \eqref{rea21}--\eqref{rea23}. To construct useful energy functionals we need to modify the functions $V', B', B''$ which are not ``small", so we define the new variables
\begin{equation}\label{VB5}
\begin{split}
&B'_0(v):=B'(0,v)=(\partial_yb)(b^{-1}(v)),\qquad B''_0(v):=B''(0,v)=(\partial_y^2b)(b^{-1}(v)),\\
&V'_{\ast}:=V'-B'_0,\qquad B'_{\ast}:=B'-B'_0,\qquad B''_{\ast}:=B''-B''_0.
\end{split}
\end{equation}

Our main goal is to control the functions $F$ and $\phi$. For this we need to consider two auxiliary functions $F^\ast$ and $\phi'$. We define first the function $\phi'(t,z,v):[0,T]\times\mathbb{T}\times[b(0),b(1)]\to\mathbb{R}$ as the unique solution to the equation (see Lemma \ref{lm:elli3} for existence and uniqueness)
\begin{equation}\label{Ph1}
\partial_z^2\phi'+(B'_0)^2(\partial_v-t\partial_z)^2\phi'+B''_0(\partial_v-t\partial_z)\phi'=F,\qquad \phi'(t,b(0))=\phi'(t,b(1))=0,
\end{equation}
on $\mathbb{T}\times[b(0),b(1)]$. Then we define the modified profile
\begin{equation}\label{reb14}
F^{\ast}(t,z,v):=F(t,z,v)-B_0''(v)\int_0^t\partial_z\phi'(\tau,z,v)\,d\tau,
\end{equation}
and the renormalized stream functions
\begin{equation}\label{defgellip} 
\begin{split}
\Theta(t,z,v)&:=(\partial_z^2+(\partial_v-t\partial_z)^2)\left(\Psi(v)\,\phi(t,z,v)\right),\\
\Theta^{\ast}(t,z,v)&:=(\partial_z^2+(\partial_v-t\partial_z)^2)\left(\Psi(v)\,(\phi(t,z,v)-\phi'(t,z,v))\right),
\end{split}
\end{equation}
 where $\Psi:\mathbb{R}\to[0,1]$ is a Gevrey class cut-off function, satisfying
\begin{equation}\label{rec0}
\begin{split}
&\big\|e^{\langle\xi\rangle^{3/4}}\widetilde{\Psi}(\xi)\big\|_{L^\infty}\lesssim 1,\\
&{\rm supp}\,\Psi\subseteq \big[b(\va_0/4),b(1-\va_0/4)\big], \quad\Psi\equiv 1\text{ in }\big[b(\va_0/3),b(1-\va_0/3)\big].
 \end{split}
\end{equation}

Our bootstrap argument is based on controlling simultaneously energy functionals and space-time integrals. Let $\dot{A}_Y(t,\xi):=(\partial_t A_Y)(t,\xi)\leq 0$, $Y\in\{NR,R,k\}$, and define, for any $t\in[0,T]$,
\begin{equation}\label{rec1}
\begin{split}
\mathcal{E}_f(t)&:=\sum_{k\in \mathbb{Z}}\int_{\R}A_k^2(t,\xi)\big|\widetilde{f}(t,k,\xi)\big|^2\,d\xi,\qquad f\in\{F,F^\ast\},\\
\mathcal{B}_f(t)&:=\int_1^t\sum_{k\in \mathbb{Z}}\int_{\R}|\dot{A}_k(s,\xi)|A_k(s,\xi)\big|\widetilde{f}(s,k,\xi)\big|^2\,d\xi ds,
\end{split}
\end{equation}
\begin{equation}\label{rec1''}
\begin{split}
\mathcal{E}_{F-F^{\ast}}(t)&:=\sum_{k\in \mathbb{Z}^\ast}\int_{\R}(1+\langle k,\xi\rangle/\langle t\rangle) A_k^2(t,\xi)\big|\widetilde{(F-F^{\ast})}(t,k,\xi)\big|^2\,d\xi,\\
\mathcal{B}_{F-F^{\ast}}(t)&:=\int_1^t\sum_{k\in \mathbb{Z}^\ast}\int_{\R}(1+\langle k,\xi\rangle/\langle s\rangle)|\dot{A}_k(s,\xi)|A_k(s,\xi)\big|\widetilde{(F-F^{\ast})}(s,k,\xi)\big|^2\,d\xi ds,
\end{split}
\end{equation}
\begin{equation}\label{rec3.5}
\begin{split}
\mathcal{E}_{\Phi}(t)&:=\sum_{k\in \mathbb{Z}^\ast}\int_{\R}A_k^2(t,\xi)\frac{|k|^2\langle t\rangle^2}{|\xi|^2+|k|^2\langle t\rangle^2}\big|\widetilde{\Phi}(t,k,\xi)\big|^2\,d\xi,\qquad\Phi\in\{\Theta,\Theta^\ast\},\\
\mathcal{B}_{\Phi}(t)&:=\int_1^t\sum_{k\in \mathbb{Z}^\ast}\int_{\R}|\dot{A}_k(s,\xi)|A_k(s,\xi)\frac{|k|^2\langle s\rangle^2}{|\xi|^2+|k|^2\langle s\rangle^2}\big|\widetilde{\Phi}(s,k,\xi)\big|^2\,d\xi ds,
\end{split}
\end{equation}
\begin{equation}\label{rec2}
\begin{split}
\mathcal{E}_{g}(t)&:=\int_{\R}A_R^2(t,\xi)\big|\widetilde{g}(t,\xi)\big|^2\,d\xi,\qquad g\in\{V'_{\ast}, B'_{\ast}, B''_{\ast}\},\\
\mathcal{B}_{g}(t)&:=\int_1^t\int_{\R}|\dot{A}_R(s,\xi)|A_R(s,\xi)\big|\widetilde{g}(s,\xi)\big|^2\,d\xi ds,
\end{split}
\end{equation}
\begin{equation}\label{rec3}
\begin{split}
\mathcal{E}_{\mathcal{H}}(t)&:=\mathcal{K}^{2}\int_{\R}A_{NR}^2(t,\xi)\big(\langle t\rangle/\langle\xi\rangle\big)^{3/2}\big|\widetilde{\mathcal{H}}(t,\xi)\big|^2\,d\xi,\\
\mathcal{B}_{\mathcal{H}}(t)&:=\mathcal{K}^{2}\int_1^t\int_{\R}|\dot{A}_{NR}(s,\xi)|A_{NR}(s,\xi)\big(\langle s\rangle/\langle\xi\rangle\big)^{3/2}\big|\widetilde{\mathcal{H}}(s,\xi)\big|^2\,d\xi ds,
\end{split}
\end{equation}
where $\mathbb{Z}^\ast:=\mathbb{Z}\setminus \{0\}$ and $\mathcal{K}\geq 1$ is a large constant that depends only on $\delta$.

Our main bootstrap proposition is the following: 

\begin{proposition}\label{MainBootstrap}
Assume $T\geq 1$ and $\omega\in C([0,T]:\G^{2\delta_0,1/2})$ is a sufficiently smooth solution of the system \eqref{Eur1}--\eqref{Eur1.1}, with the property that $\omega(t)$ is supported in $\mathbb{T}\times[\va_0,1-\va_0]$ and that $\|\langle\omega\rangle(t)\|_{H^{10}}\ll 1$  for all $t\in[0,T]$. Define $F,F^{\ast},\Theta, \Theta^{\ast}\,B'_{\ast},B''_{\ast},V'_{\ast},\mathcal{H}$ as above. Assume that $\eps_1$ is sufficiently small (depending on $\delta$),
\begin{equation}\label{boot1}
\sum_{g\in\{F,\,F^{\ast},\,F-F^{\ast},\,\Theta,\,\Theta^{\ast},\,V'_{\ast},\,B'_{\ast},\,B''_{\ast},\,\mathcal{H}\}}\mathcal{E}_g(t)\leq\eps_1^3\qquad\text{ for any }t\in[0,1],
\end{equation}
and
\begin{equation}\label{boot2}
\sum_{g\in\{F,\,F^{\ast},\,F-F^{\ast},\,\Theta,\,\Theta^{\ast},\,V'_{\ast},\,B'_{\ast},\,B''_{\ast},\,\mathcal{H}\}}\big[\mathcal{E}_g(t)+\mathcal{B}_g(t)\big]\leq\eps_1^2\qquad\text{ for any }t\in[1,T].
\end{equation}
Then for any $t\in[1,T]$ we have the improved bounds
\begin{equation}\label{boot3}
\sum_{g\in\{F,\,F^{\ast},\,F-F^{\ast},\,\Theta,\,\Theta^{\ast},\,V'_{\ast},\,B'_{\ast},\,B''_{\ast},\,\mathcal{H}\}}\big[\mathcal{E}_g(t)+\mathcal{B}_g(t)\big]\leq\eps_1^2/2.
\end{equation}
Moreover, we also have the stronger bounds for $t\in[1,T]$
\begin{equation}\label{boot3'}
\sum_{g\in\{F,\,\Theta\}}\big[\mathcal{E}_g(t)+\mathcal{B}_g(t)\big]\lesssim_{\delta}\eps_1^3.
\end{equation}
\end{proposition}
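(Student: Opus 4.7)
The strategy is to derive differential energy inequalities for each of the nine functionals appearing in \eqref{boot2}, using the evolution equations in Proposition \ref{ChangedEquations} together with the equations for the auxiliary variables $F^\ast$, $\Theta$, and $\Theta^\ast$. Differentiating $\E_g(t)$ in time produces a negative ``CK'' term whose time integral is exactly $\B_g(t)$ (from $\dot{A}_Y\leq 0$), balanced against a sum of nonlinear interaction terms coming from the transport and reaction nonlinearities in \eqref{rea23.1}, \eqref{rea24}, \eqref{rea25}, together with the linear source term $B''\partial_z\phi$ in \eqref{rea23.1}. The goal is to show that all these interaction terms are either absorbable by a small constant times the CK terms or summable in time with integrated bound $O_\delta(\eps_1^3)$, so that after integration from $1$ to $t$ and combination with the initial bound \eqref{boot1}, the improvement \eqref{boot3} follows.

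I would start with the profile $F^\ast$, which absorbs the non-perturbative linear term: differentiating the definition \eqref{reb14} and using \eqref{rea23.1} one obtains
\begin{equation*}
\partial_tF^\ast = B''_\ast\partial_z\phi + B_0''\partial_z(\phi-\phi') + V'\partial_vP_{\neq0}\phi\,\partial_zF - (\dot V + V'\partial_z\phi)\,\partial_vF,
\end{equation*}
where the first term is quadratically small through $B''_\ast$, and the second is quadratic through the elliptic comparison between $\phi$ and $\phi'$, which identifies $\phi-\phi'$ with $\Theta^\ast/\Psi$ up to lower-order corrections controlled by the $V'_\ast,B'_\ast,B''_\ast$ energies. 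The remaining transport and reaction contributions are bounded by the bilinear estimates of Section~4 applied in the $A_k$-norm, using the smoothness property \eqref{S1intro} to commute the weights through products. The genuinely delicate subcase is the reaction term $V'\partial_vP_{\neq0}\phi\,\partial_zF$: there the frequency regime producing loss is precisely the resonant one where the ratio $b_k/b_{NR}$ is small, and the design of the weights \eqref{reb11}--\eqref{reb12} provides a CK gain from $\partial_tb_k$ that dominates the loss, exactly as in \cite{IOJI,IOJI2}. The elliptic identity \eqref{rea26} and its frozen counterpart \eqref{Ph1} enter through the elliptic estimates of Section~4, which tie $\phi$, $\phi'$ to the renormalized stream-like objects $\Theta$, $\Theta^\ast$ and thus to $F$, $F^\ast$.

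The coordinate-function energies $\E_{V'_\ast}$, $\E_{B'_\ast}$, $\E_{B''_\ast}$, $\E_{\mathcal H}$ are handled similarly but are simpler: equations \eqref{rea23.8}, \eqref{rea24}, \eqref{rea25} are transport-type driven by $\dot V$ (which is controlled through $\mathcal H$ and the elliptic identity \eqref{rea27}) with quadratic forcing $\mathcal H/t$ or averaged bilinear products of $\phi$ and $F$. The $1/t$ damping in $\partial_t\mathcal H = -\mathcal H/t+\cdots$ is exactly matched to the extra weight $(\langle t\rangle/\langle\xi\rangle)^{3/2}$ in \eqref{rec3} to produce a favorable sign, and the large constant $\mathcal K$ in \eqref{rec3} absorbs the constants generated by the bilinear estimates on $\langle\partial_vP_{\neq0}\phi\,\partial_zF\rangle$ and $\langle\partial_z\phi\,\partial_vF\rangle$. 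The functions $V'_\ast,B'_\ast,B''_\ast$ live at the $A_R$-scale because they depend only on $v$ (zero-mode), so the non-resonant/resonant distinction collapses to the $A_R$-weight.

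The main obstacle, and the genuinely new ingredient beyond \cite{IOJI,IOJI2}, is recovering the bound on $F-F^\ast$ with the improved weight $(1+\langle k,\xi\rangle/\langle t\rangle)$ of \eqref{rec1''}, which amounts to a half-derivative gain and is the quantity needed to close the loss of symmetry in the transport nonlinearities once the main variable is $F^\ast$. For this step I would invoke the linear analysis of Section~8: the pair $(g,\varphi)=(F-F^\ast,\phi'-\phi^\ast)$ solves an inhomogeneous system of the form \eqref{Intg} with zero initial data and source $H=B_0''\partial_z\phi^\ast$, whose quadratic smallness follows from the already-established bound on $\E_{F^\ast}$ via elliptic estimates on $\phi^\ast$. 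Mode decomposing, conjugating out $e^{-ikvt}$, and applying Duhamel's formula reduce the estimate to the spectral representation \eqref{B7intro}, whose key input is the eigenfunction bound \eqref{B5intro} furnished by the spectral assumption $(B)$ together with Lemma~\ref{T5}. The compatibility of the weights $A_k(t,\xi)$ with \eqref{S1intro}, made possible by the parameter $\delta$ inserted in \eqref{reb11}--\eqref{reb12}, is exactly what lets us run the linear estimate in the $A_k$-norm and convert the bound on $H$ into the gain-of-derivative bound on $F-F^\ast$. Once \eqref{boot3} is proved for all nine quantities, the stronger bound \eqref{boot3'} on $\E_F$ and $\E_\Theta$ is obtained by combining the bounds on $\E_{F^\ast}$ and $\E_{F-F^\ast}$ (and the corresponding elliptic estimate for $\Theta$) and tracking the powers of $\eps_1$ more carefully through the argument.
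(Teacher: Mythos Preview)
Your proposal is correct and follows essentially the same route as the paper: energy estimates with CK terms for the coordinate functions $V'_\ast,B'_\ast,B''_\ast,\mathcal H$ using the transport equations \eqref{rea23.8}--\eqref{rea25}; an elliptic comparison giving $\Theta^\ast$ quadratic smallness; the $F^\ast$ energy estimate splitting the transport terms as $\partial F=\partial F^\ast+\partial(F-F^\ast)$ so that the first piece is symmetrized and the second uses the stronger $(1+\langle k,\xi\rangle/\langle t\rangle)$-weighted control on $F-F^\ast$; and finally the inhomogeneous linear analysis (spectral representation plus the eigenfunction bound \eqref{B5intro}) to close $F-F^\ast$ from the already-improved $F^\ast$. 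The only organizational difference is that the paper treats the blocks in the order coordinate functions $\to\Theta^\ast\to F^\ast\to F-F^\ast,F,\Theta$, since the improved $\Theta^\ast$ bound is needed for the $B_0''\partial_z(\phi-\phi')$ term in the $F^\ast$ equation and the improved $F^\ast$ bound feeds the source $H=B_0''\partial_z\phi^\ast$ in the linear problem for $F-F^\ast$.
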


The proof of Proposition \ref{MainBootstrap} is the main part of this paper, and covers sections \ref{SecLem}--\ref{PropD}.  In section \ref{ProofMainThm} we then show how to use this proposition to prove the main theorem.

\subsection{The variables of the bootstrap argument}\label{RemBoots}
Our argument outlined in Proposition \ref{MainBootstrap} involves control of nine quantities. We explain now the roles of these quantities:

(1) The main variables are the  vorticity profile $F$ and the differentiated stream function $\Theta$. Our primary goal is to prove global bounds on these quantities.

(2) The functions $F^\ast$ and $\Theta^\ast$ are auxiliary variables, and we analyze them as an intermediate step to controlling the main variables $F$ and $\Theta$. The function $F^\ast$ satisfies a better transport equation than $F$, without any other linear terms, while the function $\Theta^\ast$ satisfies a better elliptic equation than $\Theta$, again without linear terms in the right-hand side.

(3) A significant component of the proof is to control the function $F-F^\ast$, which allows us to pass from the modified profile $F^\ast$ to the true profile $F$. This is based on the theory of the linearized equation in Gevrey spaces, as developed by the second author in \cite{JiaG}, and requires the spectral assumption $(B)$ on the shear flow. We remark that the bootstrap control on the variable $F-F^\ast$ is slightly stronger than on the variables $F$ and $F^\ast$ separately, which is needed to compensate for the lack of symmetry in some of the transport terms.

(4) The functions $V'_\ast$, $B'_\ast$, and $B''_\ast$ are connected to the change of variables $y\to v$.  These functions appear in many of the nonlinear terms in the equations, so it is important to control their smoothness precisely, as part of a combined bootstrap argument, in a way that is consistent with the smoothness of the functions $F$ and $\Theta$.

(5) Finally, the function $\mathcal{H}$, which decays in time, encodes the convergence of the system as $t\to\infty$. This function decays at a rate of $\langle t\rangle^{-3/4}$, in a weaker topology, which shows that the function $\partial_v\dot{V}$ decays fast at an integrable rate of $\langle t\rangle^{-7/4}$, again in a weaker topology.

\section{Gevrey spaces and the weights $A_k$, $A_R$, and $A_{NR}$}\label{SecLem}

\subsection{Gevrey spaces}\label{appendix} We summarize here some general properties of the Gevrey spaces of functions. To perform certain algebraic operations, it is very useful to have a related definition in the physical space. For any domain $D\subseteq\T\times\R$ (or $D\subseteq\R$) and parameters $s\in(0,1)$ and $M\geq 1$ we define the spaces
\begin{equation}\label{Gevr2}
\widetilde{\mathcal{G}}^{s}_M(D):=\big\{f:D\to\mathbb{C}:\,\|f\|_{\widetilde{\mathcal{G}}^{s}_M(D)}:=\sup_{x\in D,\,m\geq 0,\,|\alpha|\leq m}|D^\alpha f(x)|M^{-m}(m+1)^{-m/s}<\infty\big\}.
\end{equation}
We start with a lemma connecting the spaces $\mathcal{G}^{\mu,s}$ and $\widetilde{\mathcal{G}}^s_M$. 

\begin{lemma}\label{lm:Gevrey}
(i) Suppose that $s\in(0,1)$, $K>1$, and $f\in C^{\infty}(\mathbb{T}\times \mathbb{R})$ with ${\rm supp}\,f\subseteq \mathbb{T}\times[-L,L]$ satisfies the bounds $\|f\|_{\widetilde{\mathcal{G}}^{s}_K(\mathbb{T}\times \mathbb{R})}\leq 1$.
Then there is $\mu=\mu(K,s)>0$ such that 
\begin{equation}\label{gevreyP}
\big|\widetilde{f}(k,\xi)\big|\lesssim_{K,s} Le^{-\mu|k,\xi|^s}\qquad \text{ for all } k\in\mathbb{Z},\,\xi\in \R.
\end{equation}

(ii) Conversely, if $\mu>0$ and $s\in(0,1)$, then there is $K=K(s,\mu)>1$ such that
\begin{equation}\label{eq:four}
\big\|f\big\|_{\widetilde{\mathcal{G}}^{s}_K(\mathbb{T}\times \mathbb{R})}\lesssim_{\mu,s} \big\|f\big\|_{\mathcal{G}^{\mu,s}(\mathbb{T}\times \mathbb{R})}.
\end{equation}
\end{lemma}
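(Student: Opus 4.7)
The plan is to prove the two implications by standard Fourier-analytic arguments, using integration by parts for (i) and Fourier inversion with an optimization for (ii); no deep input is required.

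\smallskip
\textbf{Part (i).} First I would reduce to bounding $|\widetilde f(k,\xi)|$ for $|k,\xi|$ large, since the statement is trivial when $|k,\xi|\lesssim 1$ because $\|f\|_{L^\infty}\leq 1$ and $\widetilde f(k,\xi)$ is bounded by $L\|f\|_{L^\infty}$ (using the compact support in $y$). For any nonnegative integers $m_1,m_2$ with $k\neq 0$ and $\xi\neq 0$, integration by parts $m_1$ times in $x$ and $m_2$ times in $y$ (the boundary terms vanish thanks to $y$-support in $[-L,L]$ and periodicity in $x$) yields
\begin{equation*}
(ik)^{m_1}(i\xi)^{m_2}\widetilde f(k,\xi)=\int_{\mathbb{T}\times\mathbb{R}}\partial_x^{m_1}\partial_y^{m_2}f(x,y)\,e^{-ikx-iy\xi}\,dx\,dy.
\end{equation*}
Using the Gevrey bound $\|\partial_x^{m_1}\partial_y^{m_2}f\|_{L^\infty}\leq K^{m_1+m_2}(m_1+m_2+1)^{(m_1+m_2)/s}$ and the $y$-support, the right-hand side is bounded by $CL\,K^m(m+1)^{m/s}$ for $m=m_1+m_2$. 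Choosing $m_1=m$ in the $|k|\geq|\xi|$ regime and $m_2=m$ in the opposite regime, I obtain
\begin{equation*}
|k,\xi|^m|\widetilde f(k,\xi)|\lesssim L\cdot K^m(m+1)^{m/s}\qquad\text{for every }m\geq 0.
\end{equation*}
Optimizing, set $m$ to be the largest integer less than $c\,|k,\xi|^s$ with $c=(eK)^{-s}/s$. A direct calculation (take logarithms and use $\log(m+1)\leq s\log|k,\xi|-s\log K-1$ for this choice) shows that $K^m(m+1)^{m/s}|k,\xi|^{-m}\leq e^{-m/s}\leq e^{-\mu|k,\xi|^s}$ for some $\mu=\mu(K,s)>0$, which gives \eqref{gevreyP}.

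\smallskip
\textbf{Part (ii).} Start from Fourier inversion. For any multi-index $\alpha=(\alpha_1,\alpha_2)$ with $|\alpha|\leq m$ and any $(x,y)\in\mathbb{T}\times\mathbb{R}$,
\begin{equation*}
|D^\alpha f(x,y)|\leq \sum_{k\in\mathbb{Z}}\int_{\mathbb{R}}|k|^{\alpha_1}|\xi|^{\alpha_2}|\widetilde f(k,\xi)|\,d\xi\leq \sum_{k\in\mathbb{Z}}\int_{\mathbb{R}}\langle k,\xi\rangle^m|\widetilde f(k,\xi)|\,d\xi.
\end{equation*}
I then split the integrand as $\langle k,\xi\rangle^m e^{-\mu\langle k,\xi\rangle^s/2}\cdot \langle k,\xi\rangle^{-N}\cdot \langle k,\xi\rangle^N e^{-\mu\langle k,\xi\rangle^s/2}\cdot e^{\mu\langle k,\xi\rangle^s}|\widetilde f|$, for a fixed large $N=N(s)$ making $\langle k,\xi\rangle^{-N}$ summable/integrable on $\mathbb{Z}\times\mathbb{R}$. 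Pulling the first factor out in $L^\infty$ and applying Cauchy--Schwarz to the remaining two, I get
\begin{equation*}
|D^\alpha f(x,y)|\lesssim_{\mu,s}\Bigl(\sup_{R\geq 0}R^m e^{-\mu R^s/2}\Bigr)\cdot\|f\|_{\mathcal{G}^{\mu,s}(\mathbb{T}\times\mathbb{R})},
\end{equation*}
after absorbing $\langle k,\xi\rangle^N e^{-\mu\langle k,\xi\rangle^s/2}$ into a harmless constant depending on $\mu,s,N$. The supremum is achieved at $R^s=2m/(\mu s)$, giving the value $(2m/(\mu s))^{m/s}e^{-m/s}$. This is bounded by $K^m(m+1)^{m/s}$ for $K=K(\mu,s)$ sufficiently large, and part (ii) follows.

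\smallskip
\textbf{Expected difficulty.} Both parts are essentially calculus exercises. The only mildly delicate point is tracking the dependence of $\mu$ (resp.\ $K$) on the other parameters so as to preserve the Gevrey exponent $s$, which follows from the standard fact that the function $R\mapsto R^m e^{-cR^s}$ is maximized where $R^s\sim m$, so that $\sup_R R^m e^{-cR^s}$ scales like $m^{m/s}$ up to an $m$-th power of a constant depending on $c$ and $s$. There is no substantial obstacle.
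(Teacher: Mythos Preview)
Your proof is correct and follows the standard elementary approach that the paper itself alludes to (the paper does not prove the lemma directly but refers to \cite[Subsection A.1]{IOJI} for an ``elementary proof''). Part (i) via integration by parts and optimizing the integer $m\sim c|k,\xi|^s$, and part (ii) via Fourier inversion plus the calculus fact $\sup_{R\geq 0}R^m e^{-cR^s}\lesssim C^m m^{m/s}$, are exactly the expected arguments; your exposition of the splitting in (ii) is slightly convoluted (the factor $\langle k,\xi\rangle^N e^{-\mu\langle k,\xi\rangle^s/2}$ is simply bounded uniformly, after which Cauchy--Schwarz applies to $\langle k,\xi\rangle^{-N}$ against $e^{\mu\langle k,\xi\rangle^s}|\widetilde f|$), but the logic is sound.
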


Using this lemma one can construct cutoff functions in Gevrey spaces: for any points $a'<a\leq b<b'\in\mathbb{R}$ and any $s\in (0,1)$ there are functions $\Psi$ supported in $[a',b']$, equal to $1$ in $[a,b]$, and satisfying $\big|\widetilde{\Psi}(\xi)\big|\lesssim e^{-\langle\xi\rangle^s}$ for any $\xi\in\mathbb{R}$. See \cite[Subsection A.1]{IOJI} for an explicit construction of such functions, as well as an elementary proof of Lemma \ref{lm:Gevrey}. We use several functions of this type in the proof of our main theorem.

The physical space characterization of Gevrey functions is useful when studying compositions and algebraic operations of functions.

\begin{lemma}\label{GPF} (i) Assume  $s\in(0,1)$, $M\geq 1$, and $f_1,f_2\in \widetilde{\mathcal{G}}^{s}_M(D)$. Then $f_1f_2\in\widetilde{\mathcal{G}}^{s}_{M'}(D)$ and
\begin{equation*}
\|f_1f_2\|_{\widetilde{\mathcal{G}}^{s}_{M'}(D)}\lesssim \|f_1\|_{\widetilde{\mathcal{G}}^{s}_{M}(D)}\|f_2\|_{\widetilde{\mathcal{G}}^{s}_{M}(D)}
\end{equation*}
for some $M'=M'(s,M)\geq M$. Similarly, if $f_1\geq 1$ in $D$ then $\|(1/f_1)\|_{\widetilde{\mathcal{G}}^{s}_{M'}(D)}\lesssim 1$.

(ii) Suppose $s\in(0,1)$, $M\geq 1$, $I_1\subseteq \R$ is an interval, and $g:\mathbb{T}\times I_1\to \mathbb{T}\times I_2$ satisfies
\begin{equation}\label{gbo1}
|D^\alpha g(x)|\leq M^m(m+1)^{m/s}\qquad \text{ for any }x\in\T\times I_1,\,m\geq 1,\text{ and }|\alpha|\in [1,m].
\end{equation}
If $K\geq 1$ and $f\in \widetilde{G}^s_K(\T\times I_2)$ then $f\circ g\in \widetilde{G}^s_L(\T\times I_1)$ for some $L=L(s,K,M)\geq 1$ and
\begin{equation}\label{Ffgc}
\left\|f\circ g\right\|_{\widetilde{G}^s_L(\T\times I_1)}\lesssim_{s,K,M} \left\|f\right\|_{\widetilde{G}^s_K(\T\times I_2)}.
\end{equation}

(iii) Assume $s\in(0,1)$, $L\in[1,\infty)$, $I,J\subseteq\mathbb{R}$ are open intervals, and $g:I\to J$ is a smooth bijective map satisfying, for any $m\geq 1$,
\begin{equation}\label{gbo2}
|D^\alpha g(x)|\leq L^m(m+1)^{m/s}\qquad \text{ for any }x\in I\text{ and }|\alpha|\in [1,m].
\end{equation}
If $|g'(x)|\geq \rho>0$ for any $x\in I$ then the inverse function $g^{-1}:J\to I$ satisfies the bounds
\begin{equation}\label{gbo2.1}
|D^\alpha (g^{-1})(x)|\leq M^m(m+1)^{m/s}\qquad \text{ for any }x\in J\text{ and }|\alpha|\in [1,m],
\end{equation}
for some constant $M=M(s,L,\rho)\geq L$.
\end{lemma}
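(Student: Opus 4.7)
\textbf{Proof plan for Lemma~\ref{GPF}.}

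For the product bound in (i), I would expand by the Leibniz rule,
$$|D^\alpha(f_1 f_2)(x)| \le \sum_{\beta \le \alpha}\binom{\alpha}{\beta}\,|D^\beta f_1(x)|\,|D^{\alpha-\beta}f_2(x)|,$$
and insert the pointwise bounds $|D^\gamma f_j(x)| \le \|f_j\|_{\widetilde{\mathcal{G}}^s_M}\,M^{|\gamma|}(|\gamma|+1)^{|\gamma|/s}$ obtained by taking $m=|\gamma|$ in \eqref{Gevr2}. The elementary inequality $(j+1)^{j/s}(n-j+1)^{(n-j)/s}\le C(s)^n(n+1)^{n/s}$, provable via Stirling, combined with $\binom{n}{j}\le 2^n$ and summation in $j$, absorbs the combinatorial factor into a new constant $M'=C(s)M$. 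For the reciprocal bound with $f_1\ge 1$, I would differentiate $f_1\cdot(1/f_1)=1$ to obtain the recursion
$$D^\alpha(1/f_1) = -\frac{1}{f_1}\sum_{0<\beta\le\alpha}\binom{\alpha}{\beta}(D^\beta f_1)\,D^{\alpha-\beta}(1/f_1),$$
and argue by induction on $|\alpha|$, starting from $|1/f_1|\le 1$ and invoking the already-proved product estimate at each step.

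For the composition statement (ii), I would apply the Fa\`a di Bruno formula
$$D^n(f\circ g)(x) = \sum_{k=1}^n (D^k f)(g(x))\,B_{n,k}\bigl(g'(x),\ldots,g^{(n-k+1)}(x)\bigr),$$
where $B_{n,k}$ is the partial exponential Bell polynomial. Inserting $|D^k f(g(x))| \le \|f\|_{\widetilde{\mathcal{G}}^s_K}\,K^k(k+1)^{k/s}$ and the hypothesis $|D^j g(x)|\le M^j(j+1)^{j/s}$ from \eqref{gbo1}, the desired estimate reduces to the combinatorial bound
$$\sum_{k=1}^n K^k(k+1)^{k/s}\sum_{\substack{\sum_j m_j=k\\ \sum_j j m_j=n}}\frac{n!}{\prod_j m_j!\,(j!)^{m_j}}\prod_j\bigl[M(j+1)^{1/s}\bigr]^{jm_j}\lesssim L^n(n+1)^{n/s},$$
valid for some $L=L(s,K,M)$. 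This is the classical Gevrey composition inequality, which follows from $(j+1)^{j/s}\le C(s)^j(j!)^{1/s}$ combined with the Stirling-number estimate $S(n,k)\le \binom{n}{k}k^{n-k}$ bounding the number of set partitions of $\{1,\dots,n\}$ into $k$ blocks.

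For (iii), I would prove $|D^n(g^{-1})(y)|\le M^n(n+1)^{n/s}$ by induction on $n$. The case $n=1$ is immediate from $(g^{-1})'(y)=1/g'(g^{-1}(y))$ and $|g'|\ge \rho$. For $n\ge 2$, I would differentiate $g'(g^{-1}(y))\cdot(g^{-1})'(y)=1$ a total of $n-1$ times using Leibniz, isolate the leading term $g'(g^{-1}(y))\cdot D^n(g^{-1})(y)$, and express the remainder as a sum of products $D^j(g'\circ g^{-1})(y)\cdot D^{n-j}(g^{-1})(y)$ for $1\le j\le n-1$. The quantity $D^j(g'\circ g^{-1})$ is controlled at the correct Gevrey scale by part (ii): the Gevrey bound on $g'$ follows from \eqref{gbo2} (replacing $m$ by $m+1$ and absorbing the minor shift into a larger constant), while $g^{-1}$ only enters through its derivatives of order strictly less than $n$, which are controlled by the inductive hypothesis. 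Dividing by $g'(g^{-1}(y))$, whose reciprocal is bounded by $1/\rho$ and itself satisfies a Gevrey bound via part (i) applied to $g'$, closes the induction and yields \eqref{gbo2.1}.

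The main obstacle is the combinatorial estimate underlying~(ii): one must organize the Bell-polynomial sum so that the final growth is genuinely $(n+1)^{n/s}$ rather than a higher power, which requires the interplay of $S(n,k)\le\binom{n}{k}k^{n-k}$ with Stirling's formula $(j+1)^{j/s}\le C(s)^j(j!)^{1/s}$. Once this is secured, parts~(i) and~(iii) are routine inductions that invoke~(i) and~(ii) only for derivatives of order strictly less than the target order.
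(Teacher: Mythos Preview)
Your plan is correct and is precisely the ``elementary means'' the paper has in mind: the paper does not supply its own argument for this lemma but simply states that it follows from the definition~\eqref{Gevr2} and refers to \cite[Theorems~6.1 and~3.2]{Yamanaka} for the general Gevrey composition and product estimates. Your Leibniz/Fa\`a di Bruno/induction outline is exactly the standard route recorded there.

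Two small points worth tightening. First, in~(ii) the map $g$ goes between two-dimensional domains $\mathbb{T}\times I_1\to\mathbb{T}\times I_2$, so you need the multivariate Fa\`a di Bruno formula rather than the one-variable version you wrote; the combinatorics are the same in spirit but the indexing is over multi-indices. Second, in~(iii) you cannot invoke~(ii) as a black box on $g'\circ g^{-1}$, since~(ii) requires a Gevrey bound on the inner function with a fixed constant, whereas your inductive hypothesis only controls $D^j(g^{-1})$ for $j\le n-1$. You correctly note that only these lower-order derivatives appear in the expansion of $D^j(g'\circ g^{-1})$ for $j\le n-1$, but to close the induction with a constant $M$ independent of $n$ you must fix $M$ large enough at the outset (depending on $s,L,\rho$) and verify that the recursion propagates the bound $M^n(n+1)^{n/s}$; this requires the same Bell-polynomial estimate as in~(ii), applied with the inductive constant $M$ in place of the hypothesis constant, and checking that the resulting inequality is self-consistent for $M$ sufficiently large.
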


Lemma \ref{GPF} can be proved by elementary means using just the definition \eqref{Gevr2}. See also \cite[Theorems 6.1 and 3.2]{Yamanaka} for more general estimates on functions in Gevrey spaces.

\subsection{The weights $A_{NR},\,A_R,\,A_k$}\label{weights} We summarize here the construction of our main imbalanced weights $A_R,\,A_{NR},\, A_k$ in \cite{IOJI}. We start by defining the functions $w_{NR},w_R:[0,\infty)\times\mathbb{R}\to [0,1]$, which model the non-resonant and resonant growth. Assume that $\delta>0$ is small, $\delta\ll \delta_0$. For $|\eta|\leq\delta^{-10}$ we define simply
\begin{equation}\label{reb1}
w_{NR}(t,\eta):=1,\qquad w_R(t,\eta):=1.
\end{equation}
For $\eta>\delta^{-10}$ we define $k_0(\eta):=\lfloor\sqrt{\delta^3\eta}\rfloor$. For $l\in\{1,\ldots,k_0(\eta)\}$ we define
\begin{equation}\label{reb2}
t_{l,\eta}:=\frac{1}{2}\big(\frac{\eta}{l+1}+\frac{\eta}{l}\big),\qquad t_{0,\eta}:=2\eta,\qquad I_{l,\eta}:=[t_{l,\eta},\,t_{l-1,\eta}].
\end{equation}
Notice that $|I_{l,\eta}|\approx \eta/l^2$ and
\begin{equation*}
\delta^{-3/2}\sqrt{\eta}/2\leq t_{k_0(\eta),\eta}\leq\ldots\leq t_{l,\eta}\leq\eta/l\leq t_{l-1,\eta}\leq\ldots\leq t_{0,\eta}=2\eta.
\end{equation*}

We define
\begin{equation}\label{reb3}
w_{NR}(t,\eta):=1,\,w_{R}(t,\eta):=1\qquad\text{ if }\,\,t\geq t_{0,\eta}=2\eta.
\end{equation}
Then we define, for $k\in\{1,\ldots,k_0(\eta)\}$,
\begin{equation}\label{reb5}
\begin{split}
w_{NR}(t,\eta)&:=\Big(\frac{1+\delta^2|t-\eta/k|}{1+\delta^2|t_{k-1,\eta}-\eta/k|}\Big)^{\delta_0}w_{NR}(t_{k-1,\eta},\eta)\qquad\text{ if }t\in[\eta/k,t_{k-1,\eta}],\\
w_{NR}(t,\eta)&:=\Big(\frac{1}{1+\delta^2|t-\eta/k|}\Big)^{1+\delta_0}w_{NR}(\eta/k,\eta)\qquad\text{ if }t\in[t_{k,\eta},\eta/k].
\end{split}
\end{equation}
We define also the weight $w_R$ by the formula
\begin{equation}\label{reb5.5}
w_R(t,\eta):=
\begin{cases}
w_{NR}(t,\eta)\frac{1+\delta^2|t-\eta/k|}{1+\delta^2\eta/(8k^2)}\qquad&\text{ if }|t-\eta/k|\leq\eta/(8k^2)\\
w_{NR}(t,\eta)\qquad&\text{ if }t\in I_{k,\eta},\,|t-\eta/k|\geq\eta/(8k^2),
\end{cases}
\end{equation}
for any $k\in\{1,\ldots,k_0(\eta)\}$ and notice that for $t\in I_{k,\eta}$,
\begin{equation}\label{reb8}
\frac{\partial_tw_{NR}(t,\eta)}{w_{NR}(t,\eta)}\approx\frac{\partial_tw_R(t,\eta)}{w_R(t,\eta)}\approx \frac{\delta^2}{1+\delta^2\left|t-\eta/k\right|}.
\end{equation}

It is easy to see that, for $\eta>\delta^{-10}$,
\begin{equation}\label{reb8.6}
\begin{split}
&w_{NR}(t_{k_0(\eta),\eta},\eta)=w_R(t_{k_0(\eta),\eta},\eta)\in[X_\delta(\eta)^4,X_\delta(\eta)^{1/4}],\qquad X_\delta(\eta):=e^{-\delta^{3/2}\ln(\delta^{-1})\sqrt\eta}.
\end{split}
\end{equation}
For small values of $t=(1-\beta)t_{k_0(\eta),\eta}$, $\beta\in[0,1]$, we define $w_{NR}$ and $w_R$ by the formulas 
\begin{equation}\label{reb9}
w_{NR}(t,\eta)=w_R(t,\eta):=(e^{-\delta\sqrt\eta})^\beta w_{NR}(t_{k_0(\eta),\eta},\eta)^{1-\beta}.
\end{equation}

If $\eta<-\delta^{-10}$, then we define $w_R(t,\eta):=w_R(t,|\eta|)$, $w_{NR}(t,\eta):=w_{NR}(t,|\eta|)$ and $I_{k,\eta}:=I_{-k,-\eta}$. To summarize, the resonant intervals $I_{k,\eta}$ are defined for $(k,\eta)\in\mathbb{Z}\times\mathbb{R}$ satisfying $|\eta|>\delta^{-10}$, $1\leq |k|\leq  \sqrt{\delta^3|\eta|}$, and $\eta/k>0$. 

Finally, we define the weights $w_k(t,\eta)$ by the formula
\begin{equation}\label{eq:resonantweight}
w_k(t,\eta):=\left\{\begin{array}{lll}
w_{NR}(t,\eta)&{\rm \,if\,}&t\not\in I_{k,\eta},\\
w_R(t,\eta)&{\rm \,if\,}&t\in I_{k,\eta}.
\end{array}\right.
\end{equation}
If particular $w_k(t,\eta)=w_{NR}(t,\eta)$ unless $|\eta|>\delta^{-10}$, $1\leq |k|\leq  \sqrt{\delta^3|\eta|}$, $\eta/k>0$, and $t\in I_{k,\eta}$. 

The functions $w_{NR}$, $w_{R}$ and $w_k$ have the right size but lack optimal smoothness in the frequency parameter $\eta$, mainly due to the jump discontinuities of the function $k_0(\eta)$. This smoothness is important in symmetrization arguments (energy control of the transport terms) and in commutator arguments. To correct this problem we fix $\varphi: \R \to [0,1]$ an even smooth function supported in $[-8/5,8/5]$ and equal to $1$ in $[-5/4,5/4]$ and let $d_0:=\int_\mathbb{R}\varphi(x)\,dx$. For $k\in\mathbb{Z}$ and $Y\in \{NR,R,k\}$ let
\begin{equation}\label{dor1}
\begin{split}
b_Y(t,\xi)&:=\int_\R w_Y(t,\rho)\varphi\Big(\frac{\xi-\rho}{L_{\delta'}(t,\xi)}\Big)\frac{1}{d_0L_{\delta'}(t,\xi)}\,d\rho,\\
L_{\delta'}(t,\xi)&:=1+\frac{\delta'\langle\xi\rangle}{\langle\xi\rangle^{1/2}+\delta' t},\qquad\delta'\in[0,1].
\end{split}
\end{equation}
The length $L_{\delta'}(t,\xi)$ in \eqref{dor1} is chosen to optimize the smoothness in $\xi$ of the functions $b_Y(t,.)$, while not changing significantly the size of the weights. The parameter $\delta'$ is fixed sufficiently small, depending only on $\delta$. 

We can now finally define our main weights $A_{NR}$, $A_R$, and $A_k$. We define first the decreasing function $\lambda:[0,\infty)\to[\delta_0,3\delta_0/2]$ by
\begin{equation}\label{dor2}
\lambda(0)=\frac{3}{2}\delta_0,\,\,\,\,\lambda'(t)=-\frac{\delta_0\sigma_0^2}{\langle t\rangle^{1+\sigma_0}},
\end{equation}
for small positive constant $\sigma_0$ (say $\sigma_0=0.01$). Then we define
\begin{equation}\label{dor3}
A_R(t,\xi):=\frac{e^{\lambda(t)\langle\xi\rangle^{1/2}}}{b_R(t,\xi)}e^{\sqrt{\delta}\langle\xi\rangle^{1/2}},\qquad A_{NR}(t,\xi):=\frac{e^{\lambda(t)\langle\xi\rangle^{1/2}}}{b_{NR}(t,\xi)}e^{\sqrt{\delta}\langle\xi\rangle^{1/2}},
\end{equation}
and, for any $k\in\mathbb{Z}$,
\begin{equation}\label{dor4}
A_k(t,\xi):=e^{\lambda(t)\langle k,\xi\rangle^{1/2}}\Big(\frac{e^{\sqrt{\delta}\langle\xi\rangle^{1/2}}}{b_k(t,\xi)}+e^{\sqrt{\delta}|k|^{1/2}}\Big).
\end{equation}
We record the simple inequalities
\begin{equation}\label{dor4.1}
\begin{split}
&e^{\lambda(t)\langle\xi\rangle^{1/2}}\leq A_{NR}(t,\xi)\leq A_R(t,\xi)\leq e^{\lambda(t)\langle\xi\rangle^{1/2}}e^{2\sqrt\delta\langle\xi\rangle^{1/2}},\\
&e^{\lambda(t)\langle k,\xi\rangle^{1/2}}\leq A_k(t,\xi)\leq 2e^{\lambda(t)\langle k,\xi\rangle^{1/2}}e^{2\sqrt\delta\langle k,\xi\rangle^{1/2}},
\end{split}
\end{equation}
for any $k\in\mathbb{Z}$, $t\geq 0$, and $\xi\in\mathbb{R}$.

\subsubsection{Properties of the weights} We collect now several bounds on these weights, which are proved either in \cite{IOJI} or in \cite{IOJI2}. In these papers we prove many more properties of the weights, but we summarize here only the ones that we need explicitly in this paper.

We start with some bounds on $w_Y$ and $b_Y$, see \cite[Lemmas 7.1 and 7.2]{IOJI} for the proof.

\begin{lemma}\label{comparisonweights}
For $t\geq 0$, $\xi,\eta\in\R$, $k\in\Z$, and $Y\in\{NR,R,k\}$ we have
\begin{equation}\label{eq:comparisonweights1}
\frac{w_{Y}(t,\xi)}{w_{Y}(t,\eta)}\lesssim_\delta e^{\sqrt{\delta} |\eta-\xi|^{1/2}},
\end{equation}
\begin{equation}\label{dor20}
b_Y(t,\xi)\approx_\delta w_Y(t,\xi),\qquad |\partial_\xi b_Y(t,\xi)|\lesssim_\delta b_Y(t,\xi)\frac{1}{L_{\delta'}(t,\xi)}.
\end{equation}
\end{lemma}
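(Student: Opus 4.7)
The plan is to prove \eqref{eq:comparisonweights1} by a case analysis based on the size of $|\xi-\eta|$ relative to $|\eta|$, and then to deduce \eqref{dor20} from \eqref{eq:comparisonweights1} together with a direct analysis of the convolution in \eqref{dor1}.

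First I would establish a crude pointwise lower bound $w_Y(t,\zeta) \gtrsim_\delta e^{-\delta\langle\zeta\rangle^{1/2}}$, valid for all $t\geq 0$, $\zeta\in\R$, and $Y\in\{NR,R,k\}$. Using the definitions \eqref{reb3}--\eqref{reb9} one checks that $w_Y$ is a product of explicit factors of the form $(1+\delta^2|t-\zeta/l|)^{\pm\delta_0}$ or $(1+\delta^2|t-\zeta/l|)^{\pm(1+\delta_0)}$ together with a small-$t$ interpolant bounded below by $X_\delta(\zeta) = e^{-\delta^{3/2}\ln(\delta^{-1})\sqrt{|\zeta|}}$; the bound follows since $\delta^{3/2}\ln(\delta^{-1}) \ll \delta$ for $\delta$ small. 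Combined with the trivial upper bound $w_Y\leq 1$, this immediately proves \eqref{eq:comparisonweights1} in the regime $|\xi-\eta|\geq |\eta|/4$, since then $\sqrt{|\eta|}\lesssim |\xi-\eta|^{1/2}$ and the constant in $\lesssim_\delta$ absorbs the factor.

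The interesting regime is $|\xi-\eta|\leq |\eta|/4$, where $|\xi|\approx |\eta|$ and $k_0(\xi)=k_0(\eta)$ except at finitely many transition values, each of which contributes only a bounded multiplicative factor that can be absorbed into $\lesssim_\delta$. Under this assumption I would match the intervals $I_{l,\xi}$ with $I_{l,\eta}$ for $1\leq l\leq k_0(\eta)$ and write $\log w_{NR}(t,\zeta) = \sum_{l\geq 1} R_l(t,\zeta)$, where each $R_l$ records the logarithm of the factor from the $l$-th piece of the recursive definition \eqref{reb5}. A direct computation of $\partial_\zeta R_l$ yields
\begin{equation*}
|\partial_\zeta R_l(t,\zeta)| \lesssim \frac{\delta_0\,\delta^2/l}{1+\delta^2|t-\zeta/l|},
\end{equation*}
from which, after integrating in $\zeta$ between $\xi$ and $\eta$ and summing in $l\leq k_0(\eta)\lesssim \sqrt{\delta^3|\eta|}$, one obtains $|\log w_{NR}(t,\xi)-\log w_{NR}(t,\eta)| \lesssim \delta|\xi-\eta|/\sqrt{|\eta|}\lesssim \sqrt\delta|\xi-\eta|^{1/2}$, using $|\xi-\eta|\leq |\eta|/4$ in the last step. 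The $R$-variant is handled by the same argument, the extra factor in \eqref{reb5.5} contributing at most a bounded ratio since it equals $1$ on the boundary of its support.

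For \eqref{dor20}, the equivalence $b_Y \approx_\delta w_Y$ follows by inserting \eqref{eq:comparisonweights1} into the convolution \eqref{dor1}: for any $\rho$ with $\varphi((\xi-\rho)/L_{\delta'}(t,\xi))\neq 0$ we have $|\xi-\rho|\lesssim L_{\delta'}(t,\xi)$, and a separate check (using $L_{\delta'}\leq 1+\delta'\langle\xi\rangle^{1/2}$ together with the close-frequency argument above restricted to this window) shows $w_Y(t,\rho) \approx_\delta w_Y(t,\xi)$ inside the window, so the average equals $w_Y(t,\xi)$ up to a constant. For the derivative bound, I would differentiate under the integral; $\partial_\xi$ falls either on $\varphi((\xi-\rho)/L_{\delta'})$ or on the prefactor $1/L_{\delta'}$, each case producing a factor $L_{\delta'}^{-1}$ times a quantity of size $O(1)$ (using $|\partial_\xi L_{\delta'}|\lesssim 1$, which follows directly from the definition). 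Using $b_Y\approx_\delta w_Y$ to bound the resulting integrals then yields $|\partial_\xi b_Y|\lesssim b_Y/L_{\delta'}$.

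The main obstacle is the close-frequency estimate when $t$ lies in different resonant intervals relative to $\xi$ and $\eta$, e.g.\ $t\in I_{l,\eta}$ but $t\notin I_{l,\xi}$. Near the boundary points $t_{l,\zeta}$ the different recursive branches of \eqref{reb5} must match up, and one must verify that the piecewise construction is continuous in $\zeta$ up to harmless factors bounded uniformly in $\delta$. This bookkeeping, carried out in detail in \cite[Lemmas 7.1, 7.2]{IOJI}, reduces to checking at each junction that the newly activated factor equals $1$ at the transition, which is immediate from the explicit formulas in \eqref{reb5}--\eqref{reb5.5}.
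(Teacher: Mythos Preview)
The paper does not give its own proof of this lemma; it simply refers to \cite[Lemmas 7.1 and 7.2]{IOJI}, and your sketch follows that same route (crude global lower bound to handle the far-frequency regime, a derivative estimate on $\log w_Y$ for the close-frequency regime, then the convolution analysis for $b_Y$), even citing the same reference for the bookkeeping.

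One small correction: the displayed bound $|\partial_\zeta R_l(t,\zeta)| \lesssim \delta_0\delta^2 l^{-1}(1+\delta^2|t-\zeta/l|)^{-1}$ is the right shape only for the single \emph{active} index $l=l_t$ with $t\in I_{l_t,\zeta}$. For the \emph{completed} indices $l<l_t$ the factor $R_l$ no longer depends on $t$; from \eqref{reb5} one computes $R_l=-(1+\delta_0)\log(1+\delta^2\zeta/(2l(l+1)))-\delta_0\log(1+\delta^2\zeta/(2l(l-1)))$, whose $\zeta$-derivative is $\approx -(1+2\delta_0)/\zeta$ rather than your formula. Summing this over $l\leq k_0(\zeta)\approx\sqrt{\delta^3\zeta}$ gives $|\partial_\zeta\log w_{NR}|\lesssim \delta^{3/2}/\sqrt{\zeta}$ plus the single active term, and then integrating over $[\eta,\xi]$ with $|\xi-\eta|\le|\eta|/4$ indeed yields $\lesssim \delta^{3/2}|\xi-\eta|/\sqrt{|\eta|}+O(\log\langle\xi-\eta\rangle)\lesssim \sqrt\delta|\xi-\eta|^{1/2}$, which is your claimed conclusion. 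So the strategy is right; only the intermediate formula needs this adjustment. It is also this sharper close-frequency Lipschitz bound (not \eqref{eq:comparisonweights1} itself) that is needed to show $w_Y(t,\rho)\approx_\delta w_Y(t,\xi)$ on the mollification window $|\rho-\xi|\lesssim L_{\delta'}(t,\xi)$, exactly as you indicate.
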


We recall now several bounds on the main weights $A_{NR}, A_R, A_k$, see \cite[Lemma 7.3]{IOJI}.

\begin{lemma}\label{A_kA_ell}
(i) Assume $t\in[0,\infty)$, $k\in\mathbb{Z}$, and $Y\in\{NR,R,k\}$. If $\xi,\eta\in\mathbb{R}$ satisfy $|\eta|\geq |\xi|/4$ (or $|(k,\eta)|\geq|(k,\xi)|/4$ if $Y=k$) then
\begin{equation}\label{vfc25}
\frac{A_Y(t,\xi)}{A_Y(t,\eta)}\lesssim_\delta e^{0.9\lambda(t)|\xi-\eta|^{1/2}}.
\end{equation}

(ii) Assume $t\in[0,\infty)$, $k,\ell\in\mathbb{Z}$ and $\xi,\eta\in\mathbb{R}$ satisfy $|(\ell,\eta)|\geq |(k,\xi)|/4$. If $t\not\in I_{k,\xi}$ or if $t\in I_{k,\xi}\cap I_{\ell,\eta}$, then
\begin{equation}\label{vfc26}
\frac{A_k(t,\xi)}{A_\ell(t,\eta)}\lesssim_\delta e^{0.9\lambda(t)|(k-\ell,\xi-\eta)|^{1/2}}.
\end{equation}
If $t\in I_{k,\xi}$ and $t\not\in I_{\ell,\eta}$, then
\begin{equation}\label{vfc27}
\frac{A_k(t,\xi)}{A_\ell(t,\eta)}\lesssim_\delta \frac{|\xi|}{k^2}\frac{1}{1+\big|t-\xi/k\big|} e^{0.9\lambda(t)|(k-\ell,\xi-\eta)|^{1/2}}.
\end{equation}
\end{lemma}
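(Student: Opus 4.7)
The plan is to decompose each weight ratio into (a) the main Gevrey exponential $e^{\lambda(t)\langle\cdot\rangle^{1/2}}$, (b) the sub-leading exponential $e^{\sqrt\delta\langle\cdot\rangle^{1/2}}$, and (c) the ratio of the smoothed resonant weights $1/b_Y$, and to bound each piece separately using Lemma \ref{comparisonweights} and elementary calculus applied to $\langle\cdot\rangle^{1/2}$. The spare room between the $\sqrt\delta$-size losses and the target constant $0.9\lambda(t)$ is supplied by the hypothesis $\sqrt\delta\ll\delta_0\leq\lambda(t)$.

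For part (i), I would first establish the elementary inequality that if $|\eta|\geq|\xi|/4$ then
\begin{equation*}
\langle\xi\rangle^{1/2}-\langle\eta\rangle^{1/2}\leq \tfrac{5}{6}|\xi-\eta|^{1/2},
\end{equation*}
by case analysis on the signs of $\xi,\eta$; the worst configuration is $\xi,\eta$ of the same sign with $|\eta|=|\xi|/4$, where the ratio approaches $(1-1/2)/\sqrt{3/4}=1/\sqrt{3}<5/6$ as $|\xi|\to\infty$, while for small $|\xi|$ the flatness of $\langle\cdot\rangle^{1/2}$ near the origin makes the ratio even smaller. Writing
\begin{equation*}
\frac{A_Y(t,\xi)}{A_Y(t,\eta)}=e^{(\lambda(t)+\sqrt\delta)(\langle\xi\rangle^{1/2}-\langle\eta\rangle^{1/2})}\cdot\frac{b_Y(t,\eta)}{b_Y(t,\xi)},
\end{equation*}
this bounds the first factor by $e^{(5/6)(\lambda(t)+\sqrt\delta)|\xi-\eta|^{1/2}}$, while Lemma \ref{comparisonweights} gives $b_Y(t,\eta)/b_Y(t,\xi)\lesssim_\delta e^{\sqrt\delta|\xi-\eta|^{1/2}}$. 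Since $\sqrt\delta\ll\delta_0\leq\lambda(t)$, the product fits inside $e^{0.9\lambda(t)|\xi-\eta|^{1/2}}$, yielding \eqref{vfc25}. For $Y=k$ the same argument applies with $\langle k,\cdot\rangle$ in place of $\langle\cdot\rangle$, since the addend $e^{\sqrt\delta|k|^{1/2}}$ in \eqref{dor4} is dominated by $e^{\sqrt\delta\langle\xi\rangle^{1/2}}/b_k$ (using $b_k\leq 1$), so $A_k$ and $e^{\lambda\langle k,\xi\rangle^{1/2}}e^{\sqrt\delta\langle\xi\rangle^{1/2}}/b_k$ are comparable up to a factor of $2$.

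For part (ii), the same reduction isolates the ratio $b_\ell(t,\eta)/b_k(t,\xi)$ as the only new quantity to be controlled. Under the first bullet: if $t\notin I_{k,\xi}$ then $b_k(t,\xi)=b_{NR}(t,\xi)$ and $b_\ell(t,\eta)\leq b_{NR}(t,\eta)$, so Lemma \ref{comparisonweights} directly gives $\lesssim_\delta e^{\sqrt\delta|(k-\ell,\xi-\eta)|^{1/2}}$; if $t\in I_{k,\xi}\cap I_{\ell,\eta}$, both weights reduce to their $b_R$ variants and Lemma \ref{comparisonweights} again suffices. The remaining case $t\in I_{k,\xi}$, $t\notin I_{\ell,\eta}$ is the heart of \eqref{vfc27}: I would factor
\begin{equation*}
\frac{b_\ell(t,\eta)}{b_k(t,\xi)}=\frac{b_{NR}(t,\eta)}{b_{NR}(t,\xi)}\cdot\frac{b_{NR}(t,\xi)}{b_R(t,\xi)},
\end{equation*}
handle the first factor via Lemma \ref{comparisonweights}, and bound the second using the explicit identity \eqref{reb5.5}, which in the deep resonant zone $|t-\xi/k|\leq|\xi|/(8k^2)$ gives $w_{NR}(t,\xi)/w_R(t,\xi)=(1+\delta^2|\xi|/(8k^2))/(1+\delta^2|t-\xi/k|)\lesssim_\delta |\xi|/k^2\cdot 1/(1+|t-\xi/k|)$; outside the deep zone but inside $I_{k,\xi}$ the ratio is $1$, while the target quantity is itself bounded below by a positive constant (since $|t-\xi/k|\lesssim|\xi|/k^2$ on $I_{k,\xi}$). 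The passage from $w_Y$ to $b_Y$ is justified by \eqref{dor20}.

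\textbf{Main obstacle.} The subtle step is this transfer from the explicit $w$-weights to the smoothed versions $b_Y$ in the last case of part (ii): the smoothing in \eqref{dor1} is carried out over a window of width $L_{\delta'}(t,\xi)$, and one must verify that $\delta'$ is chosen small enough relative to $\delta$ so that this window is much narrower than the deep resonant zone $|t-\xi/k|\leq|\xi|/(8k^2)$; otherwise the sharp ratio $w_{NR}/w_R$ would be smeared by the convolution, losing the decisive $|\xi|/k^2\cdot 1/(1+|t-\xi/k|)$ gain.
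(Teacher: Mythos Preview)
Your approach is sound and, modulo two small repairs, constitutes a complete proof. Note that the paper does not prove this lemma here; it is quoted from \cite[Lemma 7.3]{IOJI}, so there is no paper-specific argument to compare against.

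Two points need tightening:

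\textbf{The addend $e^{\sqrt\delta|k|^{1/2}}$.} Your claim that this term is always dominated by $e^{\sqrt\delta\langle\xi\rangle^{1/2}}/b_k(t,\xi)$ is false: take $\xi=0$ and $|k|$ large, and use $b_k\geq e^{-\delta\sqrt{|\xi|}}$ from \eqref{reb13} to see that the first term is bounded while the addend is not. The clean fix is the elementary inequality $\frac{a_1+a_2}{b_1+b_2}\leq\max(a_1/b_1,a_2/b_2)$ for positive reals. With $a_1=e^{\sqrt\delta\langle\xi\rangle^{1/2}}/b_k(t,\xi)$, $a_2=e^{\sqrt\delta|k|^{1/2}}$, $b_1=e^{\sqrt\delta\langle\eta\rangle^{1/2}}/b_\ell(t,\eta)$, $b_2=e^{\sqrt\delta|\ell|^{1/2}}$, one gets $a_2/b_2\leq e^{\sqrt\delta|k-\ell|^{1/2}}$, while $a_1/b_1$ is exactly the quantity you analyze. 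In the case $t\in I_{k,\xi}$, $t\notin I_{\ell,\eta}$ the factor $|\xi|/k^2\cdot(1+|t-\xi/k|)^{-1}$ is bounded below by a universal constant on $I_{k,\xi}$ (since $|t-\xi/k|\lesssim|\xi|/k^2$ and $|\xi|/k^2\geq\delta^{-3}$ there), so $a_2/b_2$ is absorbed into the target bound as well. The same device handles the $Y=k$ case of part (i).

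\textbf{The smoothing concern.} Your ``main obstacle'' is not an obstacle. You do not need any relation between $\delta'$ and the width of the resonant zone: \eqref{dor20} already gives $b_Y(t,\xi)\approx_\delta w_Y(t,\xi)$ pointwise, so $b_\ell(t,\eta)/b_k(t,\xi)\approx_\delta w_\ell(t,\eta)/w_k(t,\xi)$, and you may work directly with the unsmoothed weights $w_{NR},w_R$ and the explicit formula \eqref{reb5.5}, paying only a $\delta$-dependent multiplicative constant. (Your displayed identity $b_\ell/b_k=b_{NR}/b_{NR}\cdot b_{NR}/b_R$ should accordingly be read as an $\approx_\delta$ via the $w$-weights, not a literal equality.)
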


In some commutator estimates we need an additional property of the weights $A_k$, which is proved in \cite[Lemma 7.5]{IOJI2}.

\begin{lemma}\label{A-A}
There is a constant constant $C_0(\delta)\gg 1$ such that if $\xi,\eta\in\mathbb{R}$, $t\geq 0$, $k\in\mathbb{Z}$, and $\langle\xi-\eta\rangle\leq (\langle k,\xi\rangle+\langle k,\eta\rangle)/8$ then
 \begin{equation}\label{A-A2}
\big|A_k(t,\xi)-A_k(t,\eta)\big|\lesssim  A_R(t,\xi-\eta)A_k(t,\eta) e^{-(\lambda(t)/40)\langle \xi-\eta\rangle^{1/2}}\Big[\frac{C_0(\delta)}{\langle k,\xi\rangle^{1/8}}+\sqrt{\delta}\Big].
\end{equation}

\end{lemma}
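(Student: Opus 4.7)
My plan is to telescope $A_k$ into its two natural factors and split the analysis into two regimes based on the size of $\langle\xi-\eta\rangle$. Writing
$$A_k(t,\xi) = e^{\lambda(t)\langle k,\xi\rangle^{1/2}}\,M_k(t,\xi),\qquad M_k(t,\xi):=\frac{e^{\sqrt{\delta}\langle\xi\rangle^{1/2}}}{b_k(t,\xi)}+e^{\sqrt{\delta}|k|^{1/2}},$$
I would decompose
$$A_k(t,\xi) - A_k(t,\eta) = \bigl(e^{\lambda(t)\langle k,\xi\rangle^{1/2}} - e^{\lambda(t)\langle k,\eta\rangle^{1/2}}\bigr) M_k(t,\xi) + e^{\lambda(t)\langle k,\eta\rangle^{1/2}} \bigl(M_k(t,\xi) - M_k(t,\eta)\bigr).$$
Under the hypothesis $\langle\xi-\eta\rangle\leq(\langle k,\xi\rangle+\langle k,\eta\rangle)/8$, the quantities $\langle k,\xi\rangle$ and $\langle k,\eta\rangle$ are comparable and $|k,\eta|\geq|k,\xi|/4$, so Lemma \ref{A_kA_ell}(i) applies. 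I fix a threshold $C(\delta)>0$ chosen so that $e^{-c_0\lambda(t)\langle\xi-\eta\rangle^{1/2}}\leq\sqrt{\delta}$ whenever $\langle\xi-\eta\rangle\geq C(\delta)$, where $c_0:=39/40-0.9=3/40$.

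\emph{Large separation regime.} For $\langle\xi-\eta\rangle\geq C(\delta)$, the bound \eqref{vfc25} with $Y=k$ yields $A_k(t,\xi)\lesssim A_k(t,\eta)\,e^{0.9\lambda(t)\langle\xi-\eta\rangle^{1/2}}$, whence by the triangle inequality $|A_k(t,\xi)-A_k(t,\eta)|\lesssim A_k(t,\eta)\,e^{0.9\lambda(t)\langle\xi-\eta\rangle^{1/2}}$. Since $A_R(t,\xi-\eta)\geq e^{\lambda(t)\langle\xi-\eta\rangle^{1/2}}$ by \eqref{dor4.1}, we have $A_R(t,\xi-\eta)\,e^{-\lambda(t)\langle\xi-\eta\rangle^{1/2}/40}\geq e^{39\lambda(t)\langle\xi-\eta\rangle^{1/2}/40}$, so the ratio of the left-hand side of \eqref{A-A2} to $A_R(t,\xi-\eta)A_k(t,\eta)e^{-\lambda(t)\langle\xi-\eta\rangle^{1/2}/40}$ is at most $e^{-c_0\lambda(t)\langle\xi-\eta\rangle^{1/2}}\leq\sqrt{\delta}$ by the choice of $C(\delta)$. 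This closes the case using only the $\sqrt{\delta}$ term in the bracket.

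\emph{Small separation regime.} For $\langle\xi-\eta\rangle\leq C(\delta)$, both $|\xi-\eta|$ and all factors $e^{c\langle\xi-\eta\rangle^{1/2}}$ are bounded by $O_\delta(1)$, and $M_k(t,\xi)\approx_\delta M_k(t,\eta)$ via Lemma \ref{comparisonweights}. The mean value theorem combined with $|\partial_s\langle k,s\rangle^{1/2}|\lesssim 1/\langle k,s\rangle^{1/2}$ and $\langle k,\xi\rangle\approx\langle k,\eta\rangle$ gives
$$\bigl|e^{\lambda(t)\langle k,\xi\rangle^{1/2}}-e^{\lambda(t)\langle k,\eta\rangle^{1/2}}\bigr|\lesssim_\delta\frac{|\xi-\eta|}{\langle k,\xi\rangle^{1/2}}\,e^{\lambda(t)\langle k,\eta\rangle^{1/2}}\lesssim_\delta\frac{C_0(\delta)}{\langle k,\xi\rangle^{1/8}}\,e^{\lambda(t)\langle k,\eta\rangle^{1/2}},$$
using the trivial bound $1/\langle k,\xi\rangle^{1/2}\leq 1/\langle k,\xi\rangle^{1/8}$ together with $|\xi-\eta|\leq C(\delta)^{1/2}$. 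For the second bracket, $\partial_\xi(e^{\sqrt{\delta}\langle\xi\rangle^{1/2}}/b_k)$ splits into a piece of size $\sqrt{\delta}/\langle\xi\rangle^{1/2}\cdot M_k$, which contributes a direct $\sqrt{\delta}$ gain after using $|\xi-\eta|/\langle\xi\rangle^{1/2}\lesssim 1$, and a piece of size $M_k/L_{\delta'}(t,\xi)$ from Lemma \ref{comparisonweights}, whose contribution over $|\xi-\eta|$ integrates to $\lesssim C_0(\delta)\,M_k$ and is absorbable into the $C_0(\delta)/\langle k,\xi\rangle^{1/8}$ term (after possibly enlarging $C_0(\delta)$). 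The sum fits inside the target since $A_R(t,\xi-\eta)\,e^{-\lambda(t)\langle\xi-\eta\rangle^{1/2}/40}\geq 1$.

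The principal obstacle is the careful bookkeeping of constants: calibrating the threshold $C(\delta)\sim(\log(1/\delta)/\delta_0)^2$ and the constant $C_0(\delta)$ so that the two regimes meet cleanly, and ensuring that the polynomial losses of the form $|\xi-\eta|^\alpha$ produced by the mean value theorem are absorbed either into the exponential $e^{\sqrt{\delta}\langle\xi-\eta\rangle^{1/2}}$ built into $A_R$ or into the $\sqrt{\delta}+C_0(\delta)/\langle k,\xi\rangle^{1/8}$ bracket. A secondary subtlety is that $b_k$ is only smooth on the length scale $L_{\delta'}(t,\xi)\geq 1$; when $|\xi-\eta|>L_{\delta'}$ within the small-separation regime, I would either iterate the derivative bound along a finite chain of length-$L_{\delta'}$ subintervals or fall back on the global comparison \eqref{eq:comparisonweights1}, producing an error that is absorbed either into $C_0(\delta)/\langle k,\xi\rangle^{1/8}$ or into the $\sqrt{\delta}$ term.
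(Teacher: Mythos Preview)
The paper does not give a proof here; it cites \cite[Lemma~7.5]{IOJI2}. Your two-regime strategy---large separation via \eqref{vfc25}, small separation via the mean value theorem---is the natural one, and your large-separation argument is sound: the gap between the exponent $0.9$ in \eqref{vfc25} and $39/40$ in the target produces a factor $e^{-(3/40)\lambda(t)\langle\xi-\eta\rangle^{1/2}}$, which beats $\sqrt\delta$ once $\langle\xi-\eta\rangle$ exceeds a $\delta$-dependent threshold (the $\lesssim_\delta$ constant from \eqref{vfc25} is absorbed by enlarging that threshold).

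The genuine gap is in your treatment of the $b_k$ piece in the small-separation regime. You bound the contribution of $\partial_\xi(1/b_k)$ by $|\xi-\eta|/L_{\delta'}(t,\xi)\cdot M_k$ via \eqref{dor20}, observe that this is at most $C(\delta)\,M_k$ since $L_{\delta'}\geq 1$, and then assert it ``is absorbable into the $C_0(\delta)/\langle k,\xi\rangle^{1/8}$ term.'' That step requires a factor of $\langle k,\xi\rangle^{-1/8}$ you have not produced. For a concrete obstruction, take $\xi\gg\delta^{-10}$ and $t=\xi$ (so $t\in I_{1,\xi}$): then $L_{\delta'}(t,\xi)=1+\delta'\langle\xi\rangle/(\langle\xi\rangle^{1/2}+\delta't)\approx 2$, the weight $b_k$ is genuinely varying, and \eqref{dor20} gives only $|\partial_\xi b_k/b_k|\lesssim_\delta 1$ with no decay in $\langle k,\xi\rangle$ and no $\sqrt\delta$. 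Your fallbacks do not help: iterating over length-$L_{\delta'}$ subintervals still accumulates an $O_\delta(1)$ loss, and \eqref{eq:comparisonweights1} controls only the \emph{ratio} $w_k(t,\xi)/w_k(t,\eta)$, which does not make the \emph{difference} $1/b_k(t,\xi)-1/b_k(t,\eta)$ small relative to $1/b_k(t,\eta)$. Closing this regime requires returning to the explicit formulas \eqref{reb5}--\eqref{reb5.5} and using that inside each $I_{l,\xi}$ the $\xi$-dependence of $w_Y$ enters only through $\xi/l$ and $\delta^2(t-\xi/l)$, so that $\partial_\xi\log w_Y$ carries extra smallness (of order $\delta^2$ or $1/l$) beyond what \eqref{dor20} records. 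That finer structure is where the gain in \eqref{A-A2} actually comes from, and your sketch does not reach it.
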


To control the space-time integrals defined in \eqref{rec1}--\eqref{rec3} we also need estimates on the time derivatives of the weights $A_Y$. 

\begin{lemma}\label{lm:CDW}
(i) For all $t\ge 0,$ $\rho\in\mathbb{R}$, and $Y\in\{NR,R\}$ we have
\begin{equation}\label{TLX3.5}
\frac{-\dot{A}_Y(t,\rho)}{A_Y(t,\rho)}\approx_\delta\left[\frac{\langle\rho\rangle^{1/2}}{\langle t\rangle^{1+\sigma_0}}+\frac{\partial_tw_Y(t,\rho)}{w_Y(t,\rho)}\right].
\end{equation}
and, for any $k\in\Z$,
\begin{equation}\label{eq:A_kxi}
\frac{-\dot{A}_k(t,\rho)}{A_k(t,\rho)}\approx_\delta\left[\frac{\langle k,\rho\rangle^{1/2}}{\langle t\rangle^{1+\sigma_0}}+\frac{\partial_tw_k(t,\rho)}{w_k(t,\rho)}\frac{1}{1+e^{\sqrt\delta(|k|^{1/2}-\langle\rho\rangle^{1/2})}w_k(t,\rho)}\right].
\end{equation}
In particular, if $k\in\mathbb{Z}^\ast$, $t\geq 0$ and $\rho\in\mathbb{R}$ then
\begin{equation}\label{newAdot}
|(\dot{A}_k/A_k)(t,\rho)|\gtrsim_\delta \langle t-\rho/k\rangle^{-1-\sigma_0}.
\end{equation}

(ii) For all $t\ge 0,$ $\xi,\,\eta\in\mathbb{R}$, and $Y\in\{NR,R\}$ we have
\begin{equation}\label{vfc30}
\big|(\dot{A}_Y/A_Y)(t,\xi)\big|\lesssim_\delta \big|(\dot{A}_Y/A_Y)(t,\eta)\big|e^{4\sqrt{\delta}|\xi-\eta|^{1/2}}.
\end{equation}
Moreover, if $k,\ell\in\mathbb{Z}$ then
\begin{equation}\label{eq:CDW}
\big|(\dot{A}_k/A_k)(t,\xi)\big|\lesssim_\delta \big|(\dot{A}_\ell/A_{\ell})(t,\eta)\big|e^{4\sqrt{\delta}|k-\ell,\xi-\eta|^{1/2}}.
\end{equation}
Finally, if $\rho\in\mathbb{R}$ and $k\in\mathbb{Z}$ satisfy $|k|\leq\langle\rho\rangle+10$ then
\begin{equation}\label{vfc30.5}
\big|(\dot{A}_k/A_k)(t,\rho)\big|\approx_\delta\big|(\dot{A}_{NR}/A_{NR})(t,\rho)\big|\approx_\delta\big|(\dot{A}_R/A_R)(t,\rho)\big|.
\end{equation}
\end{lemma}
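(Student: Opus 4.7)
My plan is to prove Lemma~\ref{lm:CDW} by direct computation from the definitions of $A_{NR}$, $A_R$, $A_k$ in Section~\ref{weights}, reducing everything to properties of $\lambda(t)$ and of the weights $b_Y$, $w_Y$ that are already recorded in Lemmas~\ref{comparisonweights}--\ref{A-A}.

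For part (i), logarithmic differentiation of \eqref{dor3} gives
\[
\frac{\dot{A}_Y(t,\rho)}{A_Y(t,\rho)} = \lambda'(t)\langle\rho\rangle^{1/2} - \frac{\partial_t b_Y(t,\rho)}{b_Y(t,\rho)} \qquad (Y\in\{NR,R\}).
\]
Both terms on the right are nonpositive, so I may take absolute values and add. Using \eqref{dor2} for $\lambda'$, the pointwise comparison $b_Y\approx_\delta w_Y$ from \eqref{dor20}, and the fact that $b_Y$ is a convolution of $w_Y$ with a $t$-independent mollifier (which gives $\partial_t b_Y/b_Y\approx_\delta \partial_t w_Y/w_Y$), I obtain \eqref{TLX3.5}. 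For $A_k$ defined by \eqref{dor4}, the same procedure yields
\[
-\frac{\dot{A}_k(t,\rho)}{A_k(t,\rho)} = -\lambda'(t)\langle k,\rho\rangle^{1/2} + \frac{\partial_t b_k/b_k}{1 + b_k(t,\rho)\,e^{\sqrt{\delta}(|k|^{1/2}-\langle\rho\rangle^{1/2})}},
\]
which after replacing $b_k$ by $w_k$ gives \eqref{eq:A_kxi}. The lower bound \eqref{newAdot} follows by dropping the nonnegative second term: if $\langle t\rangle\leq 2\langle t-\rho/k\rangle$ the bound is immediate from $\langle k,\rho\rangle^{1/2}\geq 1$; otherwise $|\rho/k|\geq\langle t\rangle/2$ forces $\langle\rho\rangle\geq|k|\langle t\rangle/2$ and $\langle t-\rho/k\rangle\approx\langle\rho\rangle/|k|$, after which a direct estimate yields $\langle\rho\rangle^{3/2+\sigma_0}\gtrsim|k|^{1+\sigma_0}\langle t\rangle^{1+\sigma_0}$, using $|k|\geq 1$.

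For part (ii), I split $-\dot{A}_Y/A_Y$ into the two summands of \eqref{TLX3.5}. The term $|\lambda'(t)|\langle\xi\rangle^{1/2}$ is controlled via $\langle\xi\rangle^{1/2}\leq\langle\eta\rangle^{1/2}+|\xi-\eta|^{1/2}\lesssim e^{\sqrt\delta|\xi-\eta|^{1/2}}\langle\eta\rangle^{1/2}$. For the resonant term $\partial_t w_Y/w_Y$, formula \eqref{reb8} expresses it as $\delta^2/(1+\delta^2|t-\rho/k|)$ on each resonant interval $I_{k,\rho}$, with analogous explicit expressions from \eqref{reb5} on non-resonant pieces, so comparing $\xi$ and $\eta$ reduces to tracking how the intervals $I_{k,\rho}$ depend on $\rho$ and $k$. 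The mollification defining $b_Y$ from $w_Y$ at length scale $L_{\delta'}$ in \eqref{dor1} smooths the jumps across interval boundaries at a rate compatible with the Gevrey factor $e^{4\sqrt\delta|\xi-\eta|^{1/2}}$. The final comparison \eqref{vfc30.5} is then immediate: when $|k|\leq\langle\rho\rangle+10$ the exponent $\sqrt\delta(|k|^{1/2}-\langle\rho\rangle^{1/2})$ is bounded above by $O(\sqrt\delta)$, and since $w_k\leq 1$ the quantity $e^{\sqrt\delta(|k|^{1/2}-\langle\rho\rangle^{1/2})}w_k(t,\rho)$ is bounded above by a $\delta$-independent constant, so the correction factor in \eqref{eq:A_kxi} lies between two constants; combined with $\partial_t w_k/w_k\approx_\delta\partial_t w_R/w_R\approx_\delta \partial_t w_{NR}/w_{NR}$ (a consequence of \eqref{reb8} and the fact that $w_k$ coincides with one of $w_R$ or $w_{NR}$), this yields \eqref{vfc30.5}.

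The main obstacle is the two-variable smoothness estimate \eqref{eq:CDW} across transitions between resonant and non-resonant regions, where different explicit formulas govern $\partial_t w_k/w_k$ on either side of a boundary $\partial I_{k,\xi}$. I would verify that the jump between these formulas is absorbed by $e^{4\sqrt\delta|(k-\ell,\xi-\eta)|^{1/2}}$, using the fact that the lengths $|I_{k,\xi}|\approx|\xi|/k^2$ and the controlled discrepancy between $w_{NR}$ and $w_R$ on $I_{k,\xi}$ (see \eqref{reb5.5}) both fit within a square-root Gevrey envelope. This is exactly the same bookkeeping used in \cite[Section 7]{IOJI} and \cite[Section 7]{IOJI2} to establish the size bounds \eqref{vfc25}--\eqref{vfc27} for $A_k$ itself, and I would follow that strategy closely, transferring the analysis from $A_k$ to $\dot{A}_k/A_k$ via the explicit representation above.
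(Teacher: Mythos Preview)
Your overall strategy (logarithmic differentiation, reducing $b_Y$ to $w_Y$, and invoking the explicit formulas \eqref{reb5}--\eqref{reb8}) matches the approach in \cite[Lemma~7.4]{IOJI}, which the paper simply cites for everything except \eqref{newAdot}. Two points deserve correction.

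First, a minor inaccuracy: the mollifier in \eqref{dor1} is \emph{not} $t$-independent, since the length $L_{\delta'}(t,\xi)=1+\delta'\langle\xi\rangle/(\langle\xi\rangle^{1/2}+\delta' t)$ depends on $t$. Differentiating $b_Y$ in $t$ therefore produces an extra term from the kernel, and one must check that $|\partial_t L_{\delta'}|/L_{\delta'}$ is dominated by $\partial_t w_Y/w_Y + \langle\rho\rangle^{1/2}\langle t\rangle^{-1-\sigma_0}$. This is routine but should not be swept under ``$t$-independent''.

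Second, and more seriously, your argument for \eqref{newAdot} has a genuine gap. You drop the second (resonant) summand in \eqref{eq:A_kxi} and try to get \eqref{newAdot} from the $\lambda'$-term alone. In the regime $\langle t\rangle>2\langle t-\rho/k\rangle$ you claim $\langle t-\rho/k\rangle\approx\langle\rho\rangle/|k|$, but this is false: take $t=\rho/k$ exactly, so $\langle t-\rho/k\rangle=1$ while $\langle\rho\rangle/|k|$ can be arbitrarily large. In this near-resonant regime the first term only gives $\langle k,\rho\rangle^{1/2}\langle t\rangle^{-1-\sigma_0}\approx |k|^{1/2}\langle t\rangle^{-1/2-\sigma_0}$, which does \emph{not} dominate $\langle t-\rho/k\rangle^{-1-\sigma_0}$ when $|t-\rho/k|$ is small and $t$ is large. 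The paper's proof handles exactly this case by keeping the second term: when $|\rho/k|\geq\delta^{-12}$ and $|t-\rho/k|\leq|\rho|/(10|k|)$ one has $t\in I_{l,\rho}$ for some $l$, so by \eqref{reb8} (or \eqref{reb9} for the sub-resonant range)
\[
\frac{\partial_t w_k(t,\rho)}{w_k(t,\rho)}\gtrsim_\delta\frac{\partial_t w_{NR}(t,\rho)}{w_{NR}(t,\rho)}\gtrsim_\delta\frac{1}{\langle t-\rho/k\rangle},
\]
and the denominator $1+e^{\sqrt\delta(|k|^{1/2}-\langle\rho\rangle^{1/2})}w_k(t,\rho)$ in \eqref{eq:A_kxi} is $O_\delta(1)$ because $|k|\lesssim\delta^3|\rho|$ on resonant intervals. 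This is the step you are missing; the resonant contribution is essential and cannot be discarded.
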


\begin{proof}All the estimates except for \eqref{newAdot} are proved in Lemma 7.4 in \cite{IOJI}. To prove \eqref{newAdot}, we use first \eqref{eq:A_kxi}, thus $\big|(\dot{A}_k/A_k)(t,\rho)\big|\gtrsim_\delta\langle k,\rho\rangle^{1/2}\langle t\rangle^{-1-\sigma_0}$, and the bounds \eqref{newAdot} follow unless $|\rho/k|\geq \delta^{-12}$ and $|t-\rho/k|\leq |\rho|/(10|k|)$. In this case we use the second term in \eqref{eq:A_kxi}, thus
\begin{equation*}
|(\dot{A}_k/A_k)(t,\rho)|\gtrsim_\delta\frac{\partial_tw_k(t,\rho)}{w_k(t,\rho)}\gtrsim_\delta\frac{\partial_tw_{NR}(t,\rho)}{w_{NR}(t,\rho)}.
\end{equation*}
In view of \eqref{reb8} and \eqref{reb9}, this suffices to prove \eqref{newAdot} in the remaining range.
\end{proof}

To control commutators in the space-time integrals we need to also regularize the weights $|\dot{A}_Y/A_Y|$. We start by defining
\begin{equation}\label{muD}
\mu^{\#}(t,\xi):=
\begin{cases}
                            0&{\rm if}\,\,|\xi|\leq \delta^{-10}\text{ or if }\,\,|\xi|>\delta^{-10}\,{\rm and }\,\,t>2|\xi|,\\
                            \delta^2& {\rm if}\,\,|\xi|>\delta^{-10} \,\,{\rm and}\,\,t<t_{k_0(\xi),\xi},\\
                            \frac{\delta^2}{1+\delta^2|t-\xi/k|}&{\rm if}\,\,|\xi|>\delta^{-10}\,\,{\rm and}\,\,t\in I_{k,\xi}, \,\,k\in\{1,2,\dots,k_0(\xi)\},
\end{cases}
\end{equation}
for $t\ge 0$ and $\xi\geq 0$. Compare with the formulas \eqref{reb8}. Then we define $\mu^\#(t,\xi):=\mu^\#(t,|\xi|)$ if $\xi\leq 0$ and regularize the weight, as in \eqref{dor1}, 
 \begin{equation}\label{muaD}
 \mu^\ast(t,\xi):=\int_{\mathbb{R}}\mu^{\#}(t,\rho)\frac{1}{d_0L_{\delta'}(t,\xi)}\varphi\bigg(\frac{\xi-\rho}{L_{\delta'}(t,\xi)}\bigg)\,d\rho,\qquad L_{\delta'}(t,\xi):=1+\frac{\delta'\langle\xi\rangle}{\langle\xi\rangle^{1/2}+\delta' t}.
 \end{equation}
Finally, we define, motivated by the formulas \eqref{TLX3.5}--\eqref{eq:A_kxi},
 \begin{equation}\label{mu1}
 \begin{split}
 \mu_k(t,\xi)&:=\frac{\langle k,\xi\rangle^{1/2}}{\langle t\rangle^{1+\sigma_0}}+\frac{\mu^{\ast}(t,\xi)}{1+e^{\sqrt{\delta}(|k|^{1/2}-\langle\xi\rangle^{1/2})}b_k(t,\xi)},\\
 \mu_R(t,\xi)&:=\frac{\langle\xi\rangle^{1/2}}{\langle t\rangle^{1+\sigma_0}}+\mu^{\ast}(t,\xi).
 \end{split}
 \end{equation}
We record below the main properties of the weights $\mu_R$ and $\mu_k$. 

\begin{lemma}\label{Lmu1}
(i) For $t\ge0, \xi\in\R, k\in\mathbb{Z}$, we have
\begin{equation}\label{mudA1}
\mu_k(t,\xi)\approx_{\delta}\big|(\dot{A}_k/A_k)(t,\xi)\big|\qquad \mathrm{ and }\qquad\mu_R(t,\xi)\approx_{\delta}\big|(\dot{A}_R/A_R)(t,\xi)\big|.
\end{equation}

(ii) Assume that $\xi,\eta\in\mathbb{R}$, $k\in\mathbb{Z}$, and $t\ge 0$. Then
\begin{equation}\label{Genmu}
\mu_k(t,\xi)\lesssim_\delta\mu_k(t,\eta)e^{6\sqrt\delta|\xi-\eta|^{1/2}}.
\end{equation}
Moreover, if $\langle\xi-\eta\rangle\leq (\langle k,\xi\rangle+\langle k,\eta\rangle)/8$, then there is $C_1(\delta)\gg 1$ such that
\begin{equation}\label{Lmu3.2}
\big|\mu_k(t,\xi)-\mu_k(t,\eta)\big|\lesssim \langle\xi-\eta\rangle\mu_k(t,\eta)\,e^{4\sqrt{\delta}|\xi-\eta|^{1/2}}\Big[\frac{C_1(\delta)}{\langle k,\xi\rangle^{1/8}}+\sqrt{\delta}\Big].
\end{equation}
\end{lemma}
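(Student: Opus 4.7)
\medskip

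\textbf{Plan.} All three assertions of Lemma \ref{Lmu1} will be derived from direct estimates on the mollified weight $\mu^{\ast}$, paralleling the treatment of the weights $b_Y$ in Lemma \ref{comparisonweights}. The strategy is: first verify that $\mu^{\#}$ is an exact pointwise model of $\partial_tw_Y/w_Y$, then check that the mollification in \eqref{muaD} preserves both size (for part (i)) and smoothness (for part (ii)).

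\smallskip

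\emph{Step 1 (Proof of (i)).} The definition \eqref{muD} is designed so that, in each of the four regimes (small $|\xi|$, $t>2|\xi|$, $t<t_{k_0(\xi),\xi}$, and $t\in I_{k,\xi}$), one has $\mu^{\#}(t,\xi)\approx\partial_tw_{NR}(t,\xi)/w_{NR}(t,\xi)\approx\partial_tw_R(t,\xi)/w_R(t,\xi)$; for the first two regimes both sides vanish, for the third both are $\approx\delta^2$ via \eqref{reb9}, and for the fourth we invoke \eqref{reb8}. The mollification length $L_{\delta'}(t,\xi)$ is shorter than the scales on which $w_Y$ varies, so mollifying both sides of this pointwise comparison yields $\mu^{\ast}(t,\xi)\approx_\delta\partial_tw_Y(t,\xi)/w_Y(t,\xi)$ (after using $b_Y\approx_\delta w_Y$ from \eqref{dor20}). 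Comparing \eqref{mu1} with the right-hand sides of \eqref{TLX3.5} and \eqref{eq:A_kxi} then gives \eqref{mudA1}; the factor $1/[1+e^{\sqrt\delta(|k|^{1/2}-\langle\xi\rangle^{1/2})}b_k]$ in the $k$-version of $\mu_k$ is inserted precisely to match the corresponding factor in \eqref{eq:A_kxi}.

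\smallskip

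\emph{Step 2 (Proof of \eqref{Genmu}).} The first summand of \eqref{mu1} obviously satisfies the estimate since $|\langle k,\xi\rangle^{1/2}-\langle k,\eta\rangle^{1/2}|\leq|\xi-\eta|^{1/2}$. For the $\mu^{\ast}$ term, changing variables $\rho\mapsto\rho+(\xi-\eta)$ in the convolution \eqref{muaD} reduces matters to bounding the pointwise ratio $\mu^{\#}(t,\rho+(\xi-\eta))/\mu^{\#}(t,\rho)$, which is controlled by a polynomial factor in $\langle\xi-\eta\rangle$ arising from the ratio of the denominators $1+\delta^2|t-(\rho+\xi-\eta)/k|$ and $1+\delta^2|t-\rho/k|$; this is absorbed by $e^{6\sqrt\delta|\xi-\eta|^{1/2}}$. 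The small mismatch between $L_{\delta'}(t,\xi)$ and $L_{\delta'}(t,\eta)$ is handled as in the proof of \eqref{eq:comparisonweights1}. The denominator in \eqref{mu1} obeys an analogous comparison bound, using the estimate \eqref{dor20} on $b_k$ and the elementary bound $|e^{\sqrt\delta(|k|^{1/2}-\langle\xi\rangle^{1/2})}-e^{\sqrt\delta(|k|^{1/2}-\langle\eta\rangle^{1/2})}|\lesssim\sqrt\delta\,\langle\xi-\eta\rangle^{1/2}e^{\sqrt\delta|\xi-\eta|^{1/2}}$.

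\smallskip

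\emph{Step 3 (Proof of \eqref{Lmu3.2}).} This is the central quantitative estimate and will be obtained by the mean value theorem after bounding $\partial_\xi\mu_k$. Differentiating the convolution \eqref{muaD} and using the derivative bound on $\varphi$ gives
\[
|\partial_\xi\mu^{\ast}(t,\xi)|\lesssim\frac{\mu^{\ast}(t,\xi)}{L_{\delta'}(t,\xi)}.
\]
Since $L_{\delta'}(t,\xi)\gtrsim 1$ always and $L_{\delta'}(t,\xi)\gtrsim\delta'\langle\xi\rangle^{1/2}$ when $\delta' t\lesssim\langle\xi\rangle^{1/2}$, one has $1/L_{\delta'}(t,\xi)\lesssim C(\delta)\langle k,\xi\rangle^{-1/4}$ in the regime where $\mu^{\#}$ can be large (the resonant regime $t\in I_{k,\xi}$ with $|\xi|>\delta^{-10}$), while in the pre-resonant regime $\mu^{\ast}\lesssim\delta^2\ll\sqrt\delta$ so the bound $1/L_{\delta'}\lesssim 1$ is sufficient. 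Derivatives of the $\langle k,\xi\rangle^{1/2}/\langle t\rangle^{1+\sigma_0}$ term yield the much stronger factor $\langle k,\xi\rangle^{-1/2}$; the derivative of the denominator in the second term of $\mu_k$ is controlled using \eqref{dor20} for $b_k$ and the chain rule for the exponential. Under the hypothesis $\langle\xi-\eta\rangle\leq(\langle k,\xi\rangle+\langle k,\eta\rangle)/8$, integrating along the segment from $\eta$ to $\xi$ never leaves the regime where $\langle k,\cdot\rangle$ is comparable to $\langle k,\eta\rangle$, and combining the derivative bound with \eqref{Genmu} yields \eqref{Lmu3.2}.

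\smallskip

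\emph{Main obstacle.} The delicate point in Step 3 is verifying that the error splits cleanly into the two pieces $C_1(\delta)/\langle k,\xi\rangle^{1/8}$ and $\sqrt\delta$: the first must come from resonant frequencies where the convolution length $L_{\delta'}$ is large enough to gain $\langle k,\xi\rangle^{-1/8}$ (with significant slack), while the second must cover the pre-resonant regime where $\mu^{\#}$ itself has intrinsic size $\delta^2$ but $L_{\delta'}$ offers no gain. Handling the non-resonant/resonant split in the definition of $w_k$ (which is discontinuous across the boundary of the intervals $I_{k,\xi}$) requires care, but this is absorbed after mollification since $b_k$ and $\mu^\ast$ already incorporate a smoothing at the correct scale.
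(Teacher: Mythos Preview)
The paper does not prove this lemma here; it simply refers to \cite[Lemmas 7.6 and 7.7]{IOJI2}. Your outline is along the right lines and is presumably close to what appears there: part (i) is correct as sketched (compare \eqref{muD} with \eqref{reb8} and \eqref{reb9}, then match \eqref{mu1} with \eqref{TLX3.5}--\eqref{eq:A_kxi}), and for \eqref{Genmu} an even shorter route is to combine part (i) directly with \eqref{eq:CDW}.

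There is, however, a concrete error in Step 3. Your assertion that $1/L_{\delta'}(t,\xi)\lesssim C(\delta)\langle k,\xi\rangle^{-1/4}$ throughout the resonant regime is false. When $t\in I_{1,\xi}$ (so $t\approx|\xi|$) one has $\delta't\gg\langle\xi\rangle^{1/2}$ and hence
\[
L_{\delta'}(t,\xi)=1+\frac{\delta'\langle\xi\rangle}{\langle\xi\rangle^{1/2}+\delta't}\approx 1+\frac{\langle\xi\rangle}{t}\approx 2,
\]
so the mollification scale provides no decay in $\langle k,\xi\rangle$ whatsoever, and the crude bound $|\partial_\xi\mu^\ast|\lesssim\mu^\ast/L_{\delta'}$ gives only $|\partial_\xi\mu^\ast|\lesssim\mu^\ast$, which does not yield \eqref{Lmu3.2}.

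What actually supplies the $\sqrt\delta$ term in this regime is the intrinsic smoothness of $\mu^\#$ itself: within each resonant piece $\mu^\#(t,\rho)=\delta^2/(1+\delta^2|t-\rho/k'|)$ one computes $|\partial_\rho\mu^\#|=(\mu^\#)^2/k'\leq\delta^2\mu^\#$, and $\mu^\#$ is continuous across the interior transitions $t=t_{k',\rho}$. One therefore integrates by parts in the convolution \eqref{muaD}, moving $\partial_\xi$ onto $\mu^\#$ rather than the kernel, to obtain $|\partial_\xi\mu^\ast|\lesssim\delta^2\mu^\ast$ modulo boundary terms from the genuine jumps of $\mu^\#$ (at $|\rho|=t/2$, at $|\rho|=\delta^{-10}$, and across integer jumps of $k_0(\rho)$). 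These jump contributions are small --- of order $1/t$ or $\delta^2$ --- and are absorbed by the $C_1(\delta)/\langle k,\xi\rangle^{1/8}$ term, using that the first summand $\langle k,\xi\rangle^{1/2}/\langle t\rangle^{1+\sigma_0}$ of $\mu_k$ dominates there. Your ``Main obstacle'' paragraph has the two mechanisms swapped: in the pre-resonant regime one in fact has $L_{\delta'}\approx\delta'\langle\xi\rangle^{1/2}$ large (since $t\lesssim\delta^{-3/2}\langle\xi\rangle^{1/2}$ and $\delta'$ is chosen $\ll\delta^{3/2}$), and moreover $\mu^\#\equiv\delta^2$ is constant there, so that regime is harmless.
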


In other words, the weights $\mu_Y$ are proportional to the weights $|\dot{A}_Y/A_Y|$, but have better smoothness properties. See \cite[Lemmas 7.6 and 7.7]{IOJI2} for the proofs. 

\subsection{Bilinear estimates} To bound nonlinear terms we need bilinear estimates involving the weights. Many such estimates are proved in \cite[Section 8]{IOJI}. We use all of these bilinear estimates in this paper as well, since our proof here contains all the difficulties of the proof for the Couette flow treated in \cite{IOJI}. In addition, we need four more bilinear estimates to deal with the new terms in the equation \eqref{rea23.1} for $F$, which we prove in this section. 

We start with a lemma that is used many times in this paper. See \cite[Lemmas 8.2 and 8.3]{IOJI} for the proofs.

\begin{lemma}\label{OldBilin}
(i) For any $t\in[0,\infty)$, $\alpha\in[0,4]$, $\xi,\eta\in\R$, and $Y\in\{NR,R\}$ we have
\begin{equation}\label{TLX4}
\langle\xi\rangle^{-\alpha}A_Y(t,\xi)\lesssim_\delta \langle\xi-\eta\rangle^{-\alpha}A_Y(t,\xi-\eta)\langle\eta\rangle^{-\alpha}A_Y(t,\eta)e^{-(\delta_0/20)\min(\langle\xi-\eta\rangle,\langle \eta\rangle)^{1/2}}
\end{equation}
and
\begin{equation}\label{DtVMulti}
\big|(\dot{A}_Y/A_Y)(t,\xi)\big|\lesssim_\delta \big\{\big|(\dot{A}_Y/A_Y)(t,\xi-\eta)\big|+\big|(\dot{A}_Y/A_Y)(t,\eta)\big|\big\}e^{4\sqrt\delta\min(\langle\xi-\eta\rangle,\langle \eta\rangle)^{1/2}}.
\end{equation}

(ii) For any $t\in[0,\infty)$, $\xi,\eta\in\R$, and $k\in\mathbb{Z}$ we have
\begin{equation}\label{TLX7}
A_k(t,\xi)\lesssim_\delta A_R(t,\xi-\eta)A_k(t,\eta)e^{-(\delta_0/20)\min(\langle\xi-\eta\rangle,\langle k,\eta\rangle)^{1/2}}
\end{equation}
and
\begin{equation}\label{vfc30.7}
\big|(\dot{A}_k/A_k)(t,\xi)\big|\lesssim_\delta \big\{\big|(\dot{A}_R/A_R)(t,\xi-\eta)\big|+\big|(\dot{A}_k/A_k)(t,\eta)\big|\big\}e^{12\sqrt\delta\min(\langle\xi-\eta\rangle,\langle k,\eta\rangle)^{1/2}}.
\end{equation}
\end{lemma}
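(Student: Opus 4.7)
The strategy is to reduce each of the four bilinear estimates to the pointwise ratio bounds in Lemmas \ref{A_kA_ell} and \ref{lm:CDW}, combined with the naked lower bounds in \eqref{dor4.1}. The general mechanism is the following: whenever one can verify $A(\xi)/A(\eta)\lesssim_\delta e^{0.9\lambda(t)\langle\xi-\eta\rangle^{1/2}}$ for one of our weights $A$, one multiplies and divides by $A(\xi-\eta)$ and applies the lower bound $A(\xi-\eta)\geq e^{\lambda(t)\langle\xi-\eta\rangle^{1/2}}$ from \eqref{dor4.1} to produce a leftover factor $e^{-0.1\lambda(t)\langle\xi-\eta\rangle^{1/2}}\leq e^{-(\delta_0/10)\langle\xi-\eta\rangle^{1/2}}$. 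Half of this gain is kept as the advertised $e^{-(\delta_0/20)\langle\xi-\eta\rangle^{1/2}}$ and half is spent absorbing the discrepancy between $\langle\xi\rangle^{-\alpha}$ and $\langle\eta\rangle^{-\alpha}\langle\xi-\eta\rangle^{-\alpha}$, which is harmless since $\langle\xi\rangle\sim\langle\eta\rangle$ when $|\xi|\geq|\eta|/2$ and $\langle\xi-\eta\rangle\sim\langle\eta\rangle$ otherwise (in the latter regime the exponential gain swamps any polynomial in $\langle\eta\rangle$).

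For \eqref{TLX4} I would exploit the symmetry of the right-hand side under $\eta\leftrightarrow\xi-\eta$ to reduce to the case $\langle\xi-\eta\rangle\leq\langle\eta\rangle$, so that $|\eta|\geq|\xi|/2\geq|\xi|/4$ and \eqref{vfc25} applies; the mechanism above then closes the bound. The estimate \eqref{DtVMulti} is immediate from \eqref{vfc30}: apply it once at $\eta$ and once at $\xi-\eta$ and keep the version with the smaller exponent on the right, giving the $\min$.

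The inequality \eqref{TLX7} is the most delicate, since the output weight $A_k$ is split asymmetrically between an $A_R$ factor on the low mode $\xi-\eta$ and an $A_k$ factor on the high mode $\eta$; here I would split into cases based on which of $\langle\xi-\eta\rangle$ and $\langle k,\eta\rangle$ realizes the minimum. When $\langle\xi-\eta\rangle\leq\langle k,\eta\rangle$ one has $|(k,\eta)|\geq|(k,\xi)|/4$ and Lemma \ref{A_kA_ell}(ii) with $\ell=k$ governs $A_k(t,\xi)/A_k(t,\eta)$: outside the resonant interval, or in the overlap $I_{k,\xi}\cap I_{k,\eta}$, the clean Gevrey ratio \eqref{vfc26} closes the argument against $A_R(t,\xi-\eta)\geq e^{\lambda(t)\langle\xi-\eta\rangle^{1/2}}$; in the mismatched piece $t\in I_{k,\xi}\setminus I_{k,\eta}$ the extra polynomial factor $|\xi|/(k^2(1+|t-\xi/k|))$ from \eqref{vfc27} is absorbed by a small fraction of the exponential gain, using the geometry of the resonant intervals from \eqref{reb2}. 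When instead $\langle k,\eta\rangle\leq\langle\xi-\eta\rangle$, I would use the refined inequality $\langle k,\xi\rangle^{1/2}\leq\langle\xi-\eta\rangle^{1/2}+\langle k,\eta\rangle^{1/2}/2$, which follows from $\langle k,\xi\rangle\leq\langle k,\eta\rangle+|\xi-\eta|$ and the concavity of $\sqrt{\cdot}$ in this regime, together with the explicit definitions \eqref{dor3}--\eqref{dor4} and the associated lower bounds, to split $A_k(t,\xi)$ as a product $A_k(t,\eta)A_R(t,\xi-\eta)$ with leftover gain $e^{-(\delta_0/2)\langle k,\eta\rangle^{1/2}}$. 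Finally \eqref{vfc30.7} is handled analogously: apply \eqref{eq:CDW} with $\ell=k$ to move the $\xi$-argument to $\eta$, and use \eqref{vfc30.5} at $\xi-\eta$ (valid whenever $|k|\leq\langle\xi-\eta\rangle+10$, the complementary regime being covered by keeping the $|\dot{A}_k/A_k|(t,\eta)$ term on the right) to convert $|\dot{A}_k/A_k|$ into $|\dot{A}_R/A_R|$.

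The main obstacle I expect is the resonant subcase of \eqref{TLX7} where $t\in I_{k,\xi}\setminus I_{k,\eta}$, since one must show that the polynomial factor $|\xi|/(k^2(1+|t-\xi/k|))$ produced by \eqref{vfc27} is subexponential in $\langle\xi-\eta\rangle^{1/2}$. This hinges on the geometry of the resonant intervals: the widths $|I_{k,\xi}|\sim|\xi|/k^2$ with centers $\xi/k$ and $\eta/k$ force $t\in I_{k,\xi}\setminus I_{k,\eta}$ to imply that $|\xi-\eta|/|k|$ is at least comparable to the overlap deficit, so the polynomial is bounded by $1+|\xi-\eta|$ and is thus absorbed trivially by the exponential gain. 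Once this combinatorial point is settled, all remaining steps are routine applications of the single-weight comparison lemmas already at our disposal.
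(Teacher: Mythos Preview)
The paper does not give a proof of this lemma at all: it simply cites \cite[Lemmas 8.2 and 8.3]{IOJI}. So there is no proof here to compare against line by line; what you are doing is reconstructing the argument from the single-weight comparison lemmas that the present paper quotes from \cite{IOJI}. That reconstruction is essentially sound, and the general mechanism you describe (use \eqref{vfc25}--\eqref{vfc27} or \eqref{eq:CDW} to control the ratio, then cash in the lower bound \eqref{dor4.1} on the remaining factor to produce the exponential gain) is exactly how these estimates are established.

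Two places where your sketch is looser than it should be. First, in the regime $\langle k,\eta\rangle\le\langle\xi-\eta\rangle$ of \eqref{TLX7}, the crude upper bound $A_k(t,\xi)\le 2e^{(\lambda(t)+2\sqrt\delta)\langle k,\xi\rangle^{1/2}}$ from \eqref{dor4.1} alone does not close against the crude lower bound $A_R(t,\xi-\eta)\ge e^{\lambda(t)\langle\xi-\eta\rangle^{1/2}}$: the stray factor $e^{2\sqrt\delta\langle\xi-\eta\rangle^{1/2}}$ cannot be absorbed by a gain of size $e^{c\delta_0\langle k,\eta\rangle^{1/2}}$ when $\langle\xi-\eta\rangle\gg\langle k,\eta\rangle$. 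You need to open up the definitions \eqref{dor3}--\eqref{dor4} and track the $e^{\sqrt\delta\langle\cdot\rangle^{1/2}}/b_Y$ pieces separately, using \eqref{reb13} (namely $b_R\le b_k$) together with \eqref{eq:comparisonweights1} to compare $b_k(t,\xi)$ with $b_R(t,\xi-\eta)$ at a cost of only $e^{\sqrt\delta\langle\eta\rangle^{1/2}}\le e^{\sqrt\delta\langle k,\eta\rangle^{1/2}}$. Once this is done, the refined inequality you quote (i.e.\ \eqref{TLD3}) yields the full gain $e^{-(\delta_0/3)\langle k,\eta\rangle^{1/2}}$. Second, your description of \eqref{vfc30.7} is slightly garbled: in the regime $\langle k,\eta\rangle\le\langle\xi-\eta\rangle$ one must first apply \eqref{eq:CDW} with $\ell=k$ to move the argument from $\xi$ to $\xi-\eta$ (picking up $e^{4\sqrt\delta|\eta|^{1/2}}$), and only then invoke \eqref{vfc30.5} at $\xi-\eta$ to convert $\dot A_k/A_k$ into $\dot A_R/A_R$. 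Your remark about the resonant subcase of \eqref{TLX7} is correct in spirit, and the geometric observation that either $|t-\xi/k|\gtrsim|\xi|/k^2$ or else $|\xi-\eta|\gtrsim|\xi|/k$ is what makes the polynomial factor from \eqref{vfc27} harmless.
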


To state our new estimates we let $\delta'_0:=\delta_0/200$ and define the sets
\begin{equation}\label{nar18.1}
\begin{split}
R_0:=\Big\{&((k,\xi),(\ell,\eta))\in (\Z\times \R)^2:\\
&\min(\langle k,\xi\rangle,\,\langle\ell,\eta\rangle,\,\langle k-\ell,\xi-\eta\rangle)\geq \frac{\langle k,\xi\rangle+\langle\ell,\eta\rangle+\langle k-\ell,\xi-\eta\rangle}{20}\Big\},\\
\end{split}
\end{equation}
\begin{equation}\label{nar18.2}
R_1:=\Big\{((k,\xi),(\ell,\eta))\in (\Z\times \R)^2:\,\langle k-\ell,\xi-\eta\rangle\leq \frac{\langle k,\xi\rangle+\langle\ell,\eta\rangle+\langle k-\ell,\xi-\eta\rangle}{10}\Big\},
\end{equation}
\begin{equation}\label{nar18.3}
R_2:=\Big\{((k,\xi),(\ell,\eta))\in (\Z\times \R)^2:\,\langle\ell,\eta\rangle\leq \frac{\langle k,\xi\rangle+\langle\ell,\eta\rangle+\langle k-\ell,\xi-\eta\rangle}{10}\Big\},
\end{equation}
\begin{equation}\label{nar18.4}
R_3:=\Big\{((k,\xi),(\ell,\eta))\in (\Z\times \R)^2:\,\langle k,\xi\rangle\leq \frac{\langle k,\xi\rangle+\langle\ell,\eta\rangle+\langle k-\ell,\xi-\eta\rangle}{10}\Big\}.
\end{equation}

\begin{lemma}\label{TLXH1}
Assume that $t\ge1$, $k,\ell\in\mathbb{Z}$, $\xi,\eta\in\mathbb{R}$, let $(m,\rho):=(k-\ell,\xi-\eta)$, and assume that $m\neq 0$.

(i) If $((k,\xi),(\ell,\eta))\in R_0\cup R_1$, then
\begin{equation}\label{TLXH1.1}
\begin{split}
\frac{(|\rho/m|+\langle t \rangle)\langle \rho \rangle}{\langle t\rangle m^2\langle t-\rho/m \rangle^2}&\big|\ell A_k^2(t,\xi)-kA_{\ell}^2(t,\eta)\big|\\
&\lesssim_{\delta}\sqrt{|(A_k\dot{A}_k)(t,\xi)|}\,\sqrt{|(A_{\ell}\dot{A}_{\ell})(t,\eta)|}\,A_{m}(t,\rho) \,e^{-\delta'_0\langle m,\rho \rangle^{1/2}}
\end{split}
\end{equation}
and
\begin{equation}\label{TLD1}
\begin{split}
\frac{(|\rho/m|+\langle t \rangle)\langle \rho \rangle}{\langle t\rangle m^2\langle t-\rho/m \rangle^2}&\frac{\big|\ell A_k^2(t,\xi)\big|+\big|kA_{\ell}^2(t,\eta)\big|}{(1+\langle k,\xi\rangle/\langle t\rangle)^{1/2}}\\
&\lesssim_{\delta}\sqrt{|(A_k\dot{A}_k)(t,\xi)|}\,\sqrt{|(A_{\ell}\dot{A}_{\ell})(t,\eta)|}\,A_{m}(t,\rho) \,e^{-\delta'_0\langle m,\rho \rangle^{1/2}}.
\end{split}
\end{equation}

(ii) If $((k,\xi),(\ell,\eta))\in R_2$, then
\begin{equation}\label{TLXH1.2}
\begin{split}
\frac{(|\rho/m|+\langle t \rangle)\langle \rho \rangle}{\langle t\rangle m^2\langle t-\rho/m \rangle^2}&\big\{\big|\ell A_k^2(t,\xi)\big|+\big|kA_{\ell}^2(t,\eta)\big|\big\}\\
&\lesssim_{\delta}\sqrt{|(A_k\dot{A}_k)(t,\xi)|}\,\sqrt{|(A_{m}\dot{A}_{m})(t,\rho)|}\,A_{\ell}(t,\eta) \,e^{-\delta'_0\langle \ell,\eta \rangle^{1/2}}.
\end{split}
\end{equation}
\end{lemma}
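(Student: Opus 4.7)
The plan is to prove the three estimates by a case analysis on the region $R_i$ and on whether the time $t$ lies in the resonant interval $I_{m,\rho}$. For part (i), the key algebraic identity
\begin{equation*}
\ell A_k^2(t,\xi) - k A_\ell^2(t,\eta) = \ell\bigl(A_k(t,\xi) - A_\ell(t,\eta)\bigr)\bigl(A_k(t,\xi) + A_\ell(t,\eta)\bigr) - m A_\ell^2(t,\eta)
\end{equation*}
exposes a cancellation factor in the difference. In regions $R_0 \cup R_1$ the indices $\langle k,\xi\rangle$ and $\langle \ell,\eta\rangle$ are comparable and $\langle m,\rho\rangle$ is controlled by their sum, so the smoothness estimate \eqref{A-A2} (combined with the ratio bound \eqref{vfc26} to handle the index shift $k \to \ell$) yields
$|A_k(t,\xi) - A_\ell(t,\eta)| \lesssim_\delta A_m(t,\rho)\, A_\ell(t,\eta)\, e^{-(\lambda(t)/40)\langle m,\rho\rangle^{1/2}}\bigl[C_0(\delta)\langle k,\xi\rangle^{-1/8} + \sqrt{\delta}\bigr]$. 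The residual $-m A_\ell^2(t,\eta)$ piece is absorbed using $|m| \lesssim \langle m,\rho\rangle^{1/2}$ together with the bilinear bound \eqref{TLX7}.

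The remaining task for part (i) is to dominate the singular factor $\frac{(|\rho/m|+\langle t\rangle)\langle\rho\rangle}{\langle t\rangle m^2\langle t-\rho/m\rangle^2}$ by the product $\sqrt{|(\dot A_k/A_k)(t,\xi)|}\sqrt{|(\dot A_\ell/A_\ell)(t,\eta)|}$. When $t \notin I_{m,\rho}$, the length estimate $|I_{m,\rho}| \approx \langle\rho\rangle/m^2$ gives $\langle t-\rho/m\rangle \gtrsim \langle\rho\rangle/m^2$, so the singular factor reduces to an expression controlled by the polynomial decay $|(\dot A_\cdot/A_\cdot)| \gtrsim \langle k,\cdot\rangle^{1/2}/\langle t\rangle^{1+\sigma_0}$ from \eqref{eq:A_kxi}. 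When $t \in I_{m,\rho}$, the identities
\begin{equation*}
\frac{\xi}{k} - \frac{\rho}{m} = \frac{m\eta - \ell\rho}{km},\qquad \frac{\eta}{\ell} - \frac{\rho}{m} = \frac{m\eta - \ell\rho}{\ell m},
\end{equation*}
which are small in $R_0 \cup R_1$, show that $t$ simultaneously lies in the resonant intervals $I_{k,\xi}$ and $I_{\ell,\eta}$; hence the resonant contribution $\mu^\ast \approx \delta^2/(1+\delta^2|t-\rho/m|)$ in \eqref{eq:A_kxi} produces $\sqrt{|(\dot A_k/A_k)(t,\xi)||(\dot A_\ell/A_\ell)(t,\eta)|} \gtrsim \delta^2/(1+\delta^2|t-\rho/m|)$, which beats the singular factor after pairing with the smoothness gain above. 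For the sum estimate \eqref{TLD1} the argument is parallel but individual-term based: the factor $(1+\langle k,\xi\rangle/\langle t\rangle)^{-1/2}$ allows the $\langle k,\xi\rangle^{1/2}$ contribution in \eqref{eq:A_kxi} to be traded against $\langle t\rangle^{1/2}$, removing the need for cancellation.

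For part (ii), in $R_2$ the pair $(\ell,\eta)$ is small compared to $(k,\xi) \approx (m,\rho)$, so Lemma \ref{OldBilin}(ii) yields $A_k(t,\xi) \lesssim_\delta A_m(t,\rho)\, A_\ell(t,\eta)\, e^{-(\delta_0/20)\langle\ell,\eta\rangle^{1/2}}$ (and similarly $A_\ell(t,\eta)$ is dominated by a product with exponential gain in $\langle\ell,\eta\rangle$); combined with the analogous bound \eqref{vfc30.7} for $|(\dot A_k/A_k)|$ this produces the right-hand side of \eqref{TLXH1.2}, with the singular factor absorbed by $\sqrt{|(\dot A_m/A_m)(t,\rho)|}$ via \eqref{newAdot}. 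The main obstacle is the resonant subcase $t \in I_{m,\rho}$ in $R_0 \cup R_1$, where the singular factor reaches its maximum $\sim m^2/\langle\rho\rangle$ and the trivial size bounds for $\ell A_k^2 - k A_\ell^2$ are insufficient. There one must simultaneously exploit the cancellation in \eqref{A-A2}, the smallness of $\sqrt{\delta} + C_0(\delta)\langle k,\xi\rangle^{-1/8}$, and the precise form of the resonant lower bound on $\mu^\ast$, following the book-keeping developed for the bilinear estimates in \cite[Section 8]{IOJI}.
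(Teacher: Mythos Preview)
Your outline correctly recognizes that \eqref{TLXH1.1} and \eqref{TLXH1.2} are inherited from \cite[Lemma~8.4]{IOJI}; the paper simply cites that reference and does not re-prove them. The only new content is \eqref{TLD1}, and here your approach differs from the paper's and contains a genuine gap.

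The paper's route to \eqref{TLD1} avoids any case analysis on $I_{m,\rho}$ altogether. The key observation is the elementary bound
\[
\langle t\rangle^{2}\,\frac{(|\rho/m|+\langle t\rangle)\langle\rho\rangle}{\langle t\rangle m^{2}\langle t-\rho/m\rangle^{2}} \;\lesssim_{\delta}\; e^{\delta\langle m,\rho\rangle^{1/2}},
\]
which holds because the left side is polynomially bounded in $\langle m,\rho\rangle$. After also noting that $(1+\langle\ell,\eta\rangle/\langle t\rangle)/(1+\langle k,\xi\rangle/\langle t\rangle)\lesssim_{\delta}e^{\delta\langle m,\rho\rangle^{1/2}}$ in $R_0\cup R_1$, the problem reduces by symmetry and \eqref{eq:CDW} to the pointwise inequality
\[
\langle k,\xi\rangle\,\frac{A_{k}(t,\xi)}{A_{\ell}(t,\eta)} \;\lesssim_{\delta}\; (1+\langle k,\xi\rangle/\langle t\rangle)^{1/2}\,\langle t\rangle^{2}\,\bigl|(\dot A_{k}/A_{k})(t,\xi)\bigr|\,A_{m}(t,\rho)\,e^{-2\delta'_{0}\langle m,\rho\rangle^{1/2}},
\]
labelled \eqref{TLD2}. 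This is then dispatched by a short case split on $\langle k,\xi\rangle\lessgtr 100\langle m,\rho\rangle$ and, in the latter case, on whether $t\in I_{k,\xi}$ (not $I_{m,\rho}$), invoking \eqref{vfc26}--\eqref{vfc27} and \eqref{reb8}.

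Your resonant argument has a concrete gap: the assertion that in $R_{0}\cup R_{1}$ the inclusion $t\in I_{m,\rho}$ forces $t\in I_{k,\xi}\cap I_{\ell,\eta}$ is false. The quantities $\xi/k-\rho/m=(m\eta-\ell\rho)/(km)$ and $\eta/\ell-\rho/m=(m\eta-\ell\rho)/(\ell m)$ need not be small relative to $|I_{k,\xi}|$ or $|I_{\ell,\eta}|$. For instance, take $(k,\xi)=(3,2N)$, $(\ell,\eta)=(1,N)$, $(m,\rho)=(2,N)$ with $N\gg\delta^{-10}$; this lies in $R_{0}$, and one computes $I_{m,\rho}=[5N/12,3N/4]$, $I_{k,\xi}=[7N/12,5N/6]$, $I_{\ell,\eta}=[3N/4,2N]$, so $t=N/2\in I_{m,\rho}$ lies in neither $I_{k,\xi}$ nor $I_{\ell,\eta}$. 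Consequently the lower bound $\sqrt{|(\dot A_{k}/A_{k})(t,\xi)|\,|(\dot A_{\ell}/A_{\ell})(t,\eta)|}\gtrsim \delta^{2}/(1+\delta^{2}|t-\rho/m|)$ you claim does not follow from \eqref{eq:A_kxi}. (Incidentally, the maximum of the singular factor at $t=\rho/m$ is of order $\langle\rho\rangle/m^{2}$, not $m^{2}/\langle\rho\rangle$.) The paper's absorption-first argument sidesteps this difficulty entirely.
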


\begin{proof} The bounds \eqref{TLXH1.1} and \eqref{TLXH1.2} are proved in \cite[Lemma 8.4]{IOJI}. The statement of \eqref{TLXH1.2} is slightly weaker in \cite[Lemma 8.4]{IOJI}, in the sense that the quantity $\big|\ell A_k^2(t,\xi)\big|+\big|kA_{\ell}^2(t,\eta)\big|$ in the left-hand side is replaced by the smaller quantity $\big|\ell A_k^2(t,\xi)-kA_{\ell}^2(t,\eta)\big|$, but the proof itself does not use the symmetrization and applies to the larger quantity as well.

We prove now the new bounds \eqref{TLD1}. Notice that
\begin{equation*}
\langle t\rangle^2\frac{(|\rho/m|+\langle t \rangle)\langle \rho \rangle}{\langle t\rangle m^2\langle t-\rho/m \rangle^2}+\frac{1+\langle \ell,\eta\rangle/\langle t\rangle}{1+\langle k,\xi\rangle/\langle t\rangle}+\frac{1+\langle k,\xi\rangle/\langle t\rangle}{1+\langle \ell,\eta\rangle/\langle t\rangle}\lesssim_\delta e^{\delta\langle m,\rho \rangle^{1/2}}.
\end{equation*}
By symmetry, for \eqref{TLD1} it suffices to prove that
\begin{equation*}
\begin{split}
\big|\ell A_k^2(t,\xi)\big|\lesssim_{\delta}\big(1+\langle k,\xi\rangle/\langle t\rangle\big)^{1/2}\langle t\rangle^2\sqrt{|(A_k\dot{A}_k)(t,\xi)|}\,\sqrt{|(A_{\ell}\dot{A}_{\ell})(t,\eta)|}\,A_{m}(t,\rho) \,e^{-(3\delta'_0/2)\langle m,\rho \rangle^{1/2}}.
\end{split}
\end{equation*}
This is equivalent to proving that 
\begin{equation}\label{TLD1.5}
\begin{split}
\big|\ell A_k(t,\xi)\big|\lesssim_{\delta}\big(1+\langle k,\xi\rangle/\langle t\rangle\big)^{1/2}\langle t\rangle^2\sqrt{|(\dot{A}_k/A_k)(t,\xi)|}\,\sqrt{|(\dot{A}_{\ell}/A_\ell)(t,\eta)|}\\
\times A_{\ell}(t,\eta)\,A_{m}(t,\rho) \,e^{-(3\delta'_0/2)\langle m,\rho \rangle^{1/2}}.
\end{split}
\end{equation}
In view of \eqref{eq:CDW} we may replace $|(\dot{A}_{\ell}/A_\ell)(t,\eta)|$ with $|(\dot{A}_k/A_k)(t,\xi)|$ at the expense of an acceptable factor. For later use (in the proof of \eqref{TLD10} below), we prove the stronger bounds
\begin{equation}\label{TLD2}
\begin{split}
\langle k,\xi\rangle \frac{A_k(t,\xi)}{A_\ell(t,\eta)}\lesssim_{\delta}(1+\langle k,\xi\rangle/\langle t\rangle)^{1/2}\langle t\rangle^2|(\dot{A}_k/A_k)(t,\xi)| A_{m}(t,\rho) \,e^{-2\delta'_0\langle m,\rho \rangle^{1/2}},
\end{split}
\end{equation}
provided that $((k,\xi),(\ell,\eta))\in R_0\cup R_1$ and $t\geq 1$.

For this we use first the following elementary observation: if $a,b\in\mathbb{R}^d$ and $\beta\in[0,1]$ then
\begin{equation}\label{TLD3}
\text{ if }\quad\langle b\rangle\geq\beta \langle a\rangle\quad\text{ then }\quad \langle a+b\rangle^{1/2}\leq \langle b\rangle^{1/2}+(1-\sqrt\beta/2)\langle a\rangle^{1/2}.
\end{equation}
Notice that $20\langle l,\eta\rangle\geq\langle m,\rho\rangle$ (since $((k,\xi),(\ell,\eta))\in R_0\cup R_1$), and $|(\dot{A}_k/A_k)(t,\xi)|\gtrsim\langle t\rangle^{-1-\sigma_0}$ (see \eqref{eq:A_kxi}). Using also \eqref{dor4.1} and \eqref{TLD3}, the bounds \eqref{TLD2} follow if $\langle k,\xi\rangle \leq 100\langle m,\rho\rangle$. 

On the other hand, if $\langle k,\xi\rangle \geq 100\langle m,\rho\rangle$ then we consider two cases. If $t\notin I_{k,\xi}$ then we simply use \eqref{eq:A_kxi} to bound $|(\dot{A}_k/A_k)(t,\xi)|\gtrsim_\delta \langle k,\xi\rangle^{1/2}\langle t\rangle^{-1-\sigma_0}$. The desired bounds \eqref{TLD2} follow using also \eqref{vfc26}. If $t\in I_{k,\xi}$ (in particular $1\leq |k|\leq \delta|\xi|$ and $t\approx \xi/k$), then 
\begin{equation*}
\begin{split}
\langle k,\xi\rangle \frac{A_k(t,\xi)}{A_\ell(t,\eta)}\lesssim_{\delta}\frac{|\xi|^2/k^2}{\langle t-\xi/k\rangle} e^{0.9\lambda(t)\langle m,\rho\rangle^{1/2}}\lesssim_{\delta}\frac{\langle t\rangle^2}{\langle t-\xi/k\rangle} A_{m}(t,\rho)e^{-(\delta_0/20)\langle m,\rho\rangle^{1/2}}
\end{split}
\end{equation*}
using \eqref{vfc27}. The bounds \eqref{TLD2} follow since
\begin{equation*}
\begin{split}
\frac{1}{\langle t-\xi/k\rangle}\lesssim_\delta\frac{\partial_tw_k(t,\xi)}{w_k(t,\xi)}\lesssim_\delta\frac{|\dot{A}_k(t,\xi)|}{A_k(t,\xi)},
\end{split}
\end{equation*}
as a consequence of \eqref{reb8} and \eqref{eq:A_kxi}.
\end{proof}

\begin{lemma}\label{TLXH3}
Assume that $t\ge1$, $k,\ell\in\mathbb{Z}$, $\xi,\eta\in\mathbb{R}$, let $(m,\rho):=(k-\ell,\xi-\eta)$, and assume that $m\neq 0$.

(i) If $((k,\xi),(\ell,\eta))\in R_0\cup R_1$, then
\begin{equation}\label{TLXH3.1}
\begin{split}
\frac{|\rho/m|^2+\langle t \rangle^2}{|m|\langle t\rangle^2\langle t-\rho/m \rangle^2}&\big|\eta A_k^2(t,\xi)-\xi A_{\ell}^2(t,\eta)\big|\\
&\lesssim_{\delta}\sqrt{|(A_k\dot{A}_k)(t,\xi)|}\,\sqrt{|(A_{\ell}\dot{A}_{\ell})(t,\eta)|}\,A_{m}(t,\rho) \,e^{-\delta'_0\langle m,\rho \rangle^{1/2}}
\end{split}
\end{equation}
and
\begin{equation}\label{TLD10}
\begin{split}
\frac{|\rho/m|^2+\langle t \rangle^2}{|m|\langle t\rangle^2\langle t-\rho/m \rangle^2}&\frac{\big|\eta A_k^2(t,\xi)\big|+\big|\xi A_{\ell}^2(t,\eta)\big|}{\big(1+\langle k,\xi\rangle/\langle t\rangle\big)^{1/2}}\\
&\lesssim_{\delta}\sqrt{|(A_k\dot{A}_k)(t,\xi)|}\,\sqrt{|(A_{\ell}\dot{A}_{\ell})(t,\eta)|}\,A_{m}(t,\rho) \,e^{-\delta'_0\langle m,\rho \rangle^{1/2}}
\end{split}
\end{equation}

(ii) If $((k,\xi),(\ell,\eta))\in R_2$, then
\begin{equation}\label{TLXH3.2}
\begin{split}
\frac{|\rho/m|^2+\langle t \rangle^2}{|m|\langle t\rangle^2\langle t-\rho/m \rangle^2}&\big\{\big|\eta A_k^2(t,\xi)\big|+\big|\xi A_{\ell}^2(t,\eta)\big|\big\}\\
&\lesssim_{\delta}\sqrt{|(A_k\dot{A}_k)(t,\xi)|}\,\sqrt{|(A_{m}\dot{A}_{m})(t,\rho)|}\,A_{\ell}(t,\eta) \,e^{-\delta'_0\langle \ell,\eta \rangle^{1/2}}.
\end{split}
\end{equation}
\end{lemma}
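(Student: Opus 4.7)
My proposal is that Lemma \ref{TLXH3} can be established by following the exact same architecture as the proof of Lemma \ref{TLXH1} outlined in the excerpt, with the key reduction again passing through the non-symmetric inequality \eqref{TLD2}. The symmetrized bounds \eqref{TLXH3.1} (for the difference $|\eta A_k^2 - \xi A_\ell^2|$) and \eqref{TLXH3.2} (for the individual terms in $R_2$) should be read off from \cite[Lemma 8.4]{IOJI}, exactly as the authors do for \eqref{TLXH1.1} and \eqref{TLXH1.2}. The only genuinely new content is therefore \eqref{TLD10}, and for this I would adapt the short argument given after \eqref{TLD1.5}.

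First I would exploit the near-symmetry in the region $R_0\cup R_1$: since $\langle m,\rho\rangle \le C_\delta(\langle k,\xi\rangle + \langle\ell,\eta\rangle)$ in this region, we have $\langle\ell,\eta\rangle \approx \langle k,\xi\rangle$ up to a factor that is absorbed by $e^{\delta\langle m,\rho\rangle^{1/2}}$, and the weights $(\dot A_k/A_k)(t,\xi)$ and $(\dot A_\ell/A_\ell)(t,\eta)$ are interchangeable up to $e^{4\sqrt\delta\langle m,\rho\rangle^{1/2}}$ by \eqref{eq:CDW}. The factor $(1+\langle k,\xi\rangle/\langle t\rangle)^{1/2}$ is likewise symmetric in $(k,\xi)\leftrightarrow(\ell,\eta)$ up to such a factor. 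Thus it suffices to bound the single term with $|\eta A_k^2(t,\xi)|$, which I estimate using $|\eta| \le \langle \ell,\eta\rangle \le C\langle k,\xi\rangle$.

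At this point I would apply the already-proved bound \eqref{TLD2} with an extra factor $A_k(t,\xi)$ pulled out, yielding
\begin{equation*}
\frac{|\eta|\,A_k^2(t,\xi)}{(1+\langle k,\xi\rangle/\langle t\rangle)^{1/2}} \lesssim_\delta A_k(t,\xi)\,A_\ell(t,\eta)\,\langle t\rangle^2\,|(\dot A_k/A_k)(t,\xi)|\,A_m(t,\rho)\,e^{-2\delta'_0\langle m,\rho\rangle^{1/2}}.
\end{equation*}
Multiplying by the prefactor from \eqref{TLD10} produces the quantity $\dfrac{|\rho/m|^2+\langle t\rangle^2}{|m|\langle t-\rho/m\rangle^2}$, which using $|\rho/m|\le |t-\rho/m|+|t|$ is bounded by $C\langle t\rangle^2$, and combined with the explicit lower bound $|(\dot A_k/A_k)(t,\xi)|\gtrsim_\delta \langle t\rangle^{-1-\sigma_0}$ from \eqref{eq:A_kxi} the remaining ratio is controlled after symmetrizing $|(\dot A_k/A_k)(t,\xi)|$ into $\sqrt{|(\dot A_k/A_k)(t,\xi)|\,|(\dot A_\ell/A_\ell)(t,\eta)|}$ via \eqref{eq:CDW}. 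Any surviving polynomial loss in $\langle m,\rho\rangle$ is absorbed into the gap $e^{-2\delta'_0\langle m,\rho\rangle^{1/2}}\to e^{-\delta'_0\langle m,\rho\rangle^{1/2}}$.

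The main obstacle I anticipate is verifying, case by case, that the prefactor $\dfrac{|\rho/m|^2+\langle t\rangle^2}{|m|\langle t\rangle^2\langle t-\rho/m\rangle^2}$ is compatible with the structure of $|(\dot A_k/A_k)(t,\xi)|$ on resonant intervals $t\in I_{k,\xi}$, where the second (resonant) term of \eqref{eq:A_kxi} dominates and behaves like $(\langle t-\xi/k\rangle)^{-1}$. In particular, when $t\in I_{k,\xi}$ and $\langle k,\xi\rangle\gg \langle m,\rho\rangle$ one must argue as in the final paragraph of the proof of \eqref{TLD2}, using \eqref{vfc27} together with $1/\langle t-\xi/k\rangle \lesssim_\delta |\dot A_k(t,\xi)|/A_k(t,\xi)$ coming from \eqref{reb8}; this is precisely the step where the delicate design of the weights on the resonant intervals is used. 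Once this case is handled, the remaining non-resonant case follows directly from the size bounds \eqref{vfc26} together with the polynomial lower bound $|(\dot A_k/A_k)(t,\xi)|\gtrsim_\delta \langle k,\xi\rangle^{1/2}\langle t\rangle^{-1-\sigma_0}$, completing the reduction.
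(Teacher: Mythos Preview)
Your overall architecture is right and matches the paper's: \eqref{TLXH3.1} and \eqref{TLXH3.2} are quoted from \cite{IOJI} (it is Lemma 8.5 there, not 8.4; Lemma 8.4 handles the $\ell,k$ multipliers of Lemma \ref{TLXH1}), and the only new work is \eqref{TLD10}, which is reduced to the already-proved inequality \eqref{TLD2} after using \eqref{eq:CDW} to symmetrize $|(\dot A_k/A_k)(t,\xi)|$. Your last paragraph about the resonant intervals $I_{k,\xi}$ is unnecessary here: that case analysis was already carried out in the proof of \eqref{TLD2}, and once \eqref{TLD2} is in hand you may use it as a black box.

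There is, however, a real gap in your treatment of the prefactor. After multiplying your displayed bound by the prefactor you are left with $\frac{|\rho/m|^2+\langle t\rangle^2}{|m|\langle t-\rho/m\rangle^2}$, and you bound this by $C\langle t\rangle^2$. That bound is correct but far too crude: it leaves an unrecoverable factor of $\langle t\rangle^2$ on the right-hand side which cannot be absorbed by $e^{\delta'_0\langle m,\rho\rangle^{1/2}}$ when $\langle m,\rho\rangle$ is bounded and $t\to\infty$ (take $m=1$, $\rho=0$). The lower bound $|(\dot A_k/A_k)(t,\xi)|\gtrsim_\delta\langle t\rangle^{-1-\sigma_0}$ does not help either, since this factor sits on the right-hand side and you need an \emph{upper} bound, not a lower one. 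The correct observation, which the paper uses, is the sharper estimate
\[
\frac{|\rho/m|^2+\langle t\rangle^2}{|m|\langle t-\rho/m\rangle^2}\lesssim_\delta e^{\delta\langle m,\rho\rangle^{1/2}}:
\]
indeed, if $|t-\rho/m|\gtrsim\max(|\rho/m|,t)$ the left side is $\lesssim 1$, while if $|t-\rho/m|\ll t$ then $t\approx\rho/m$, hence $\langle m,\rho\rangle\gtrsim|m|t\geq t$ and the left side $\lesssim t^2\lesssim_\delta e^{\delta\langle m,\rho\rangle^{1/2}}$. With this in place your reduction to \eqref{TLD2} goes through exactly as in the paper.
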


\begin{proof} The bounds \eqref{TLXH3.1} and \eqref{TLXH3.2} are proved in \cite[Lemma 8.5]{IOJI}, with the same remark as before that the inequality \eqref{TLXH3.2} is slightly weaker in \cite[Lemma 8.5]{IOJI}, but its proof does not use the symmetrization. To prove \eqref{TLD10} we notice again that
\begin{equation*}
\langle t\rangle^2\frac{|\rho/m|^2+\langle t \rangle^2}{|m|\langle t\rangle^2\langle t-\rho/m \rangle^2}+\frac{1+\langle \ell,\eta\rangle/\langle t\rangle}{1+\langle k,\xi\rangle/\langle t\rangle}+\frac{1+\langle k,\xi\rangle/\langle t\rangle}{1+\langle \ell,\eta\rangle/\langle t\rangle}\lesssim_\delta e^{\delta\langle m,\rho \rangle^{1/2}}.
\end{equation*}
Therefore for \eqref{TLD10} it suffices to prove that if $((k,\xi),(\ell,\eta))\in R_0\cup R_1$ then
\begin{equation*}
\begin{split}
|\eta| A_k^2(t,\xi)\lesssim_\delta \big(1+\langle k,\xi\rangle/\langle t\rangle\big)^{1/2}\langle t\rangle^2\sqrt{|(A_k\dot{A}_k)(t,\xi)|}\,\sqrt{|(A_{\ell}\dot{A}_{\ell})(t,\eta)|}\,A_{m}(t,\rho) \,e^{-(3\delta'_0/2)\langle m,\rho \rangle^{1/2}}.
\end{split}
\end{equation*}
As in the proof of Lemma \ref{TLXH1}, this follows from \eqref{TLD2}.
\end{proof}

\begin{lemma}\label{TLXH2}
Define the sets $R_n^\ast:=\{((k,\xi),(\ell,\eta))\in R_n:\,k=l\}$. Assume that $t\ge1$, $k\in\mathbb{Z}$, $\xi,\eta\in\mathbb{R}$, and let $\rho:=\xi-\eta$.

(i) If $((k,\xi),(k,\eta))\in R_0^\ast\cup R_1^\ast$, then
\begin{equation}\label{TLXH2.1}
\begin{split}
\frac{\big|\eta A_k^2(t,\xi)-\xi A_{k}^2(t,\eta)\big|}{\langle\rho\rangle\langle t\rangle+\langle \rho\rangle^{1/4}\langle t\rangle^{7/4}}\lesssim_{\delta}\sqrt{|(A_k\dot{A}_k)(t,\xi)|}\,\sqrt{|(A_{k}\dot{A}_{k})(t,\eta)|}\,A_{NR}(t,\rho) \,e^{-\delta'_0\langle \rho \rangle^{1/2}}.
\end{split}
\end{equation}
and
\begin{equation}\label{TLD20}
\begin{split}
\frac{\big|\eta A_k^2(t,\xi)\big|+\big|\xi A_{k}^2(t,\eta)\big|}{\big(\langle\rho\rangle\langle t\rangle+\langle \rho\rangle^{1/4}\langle t\rangle^{7/4}\big)\big(1+\langle k,\xi\rangle/\langle t\rangle\big)^{1/2}}&\\
\lesssim_{\delta}\sqrt{|(A_k\dot{A}_k)(t,\xi)|}&\,\sqrt{|(A_{k}\dot{A}_{k})(t,\eta)|}\,A_{NR}(t,\rho) \,e^{-\delta'_0\langle \rho \rangle^{1/2}}.
\end{split}
\end{equation}

(ii) If $((k,\xi),(k,\eta))\in R_2^\ast$, then
\begin{equation}\label{TLXH2.2}
\begin{split}
\frac{\big|\eta A_k^2(t,\xi)\big|+\big|\xi A_{k}^2(t,\eta)\big|}{\langle\rho\rangle\langle t\rangle+\langle \rho\rangle^{1/4}\langle t\rangle^{7/4}}\lesssim_{\delta}\sqrt{|(A_k\dot{A}_k)(t,\xi)|}\,\sqrt{|(A_{NR}\dot{A}_{NR})(t,\rho)|}\,A_{k}(t,\eta) \,e^{-\delta'_0\langle k,\eta \rangle^{1/2}}.
\end{split}
\end{equation}
\end{lemma}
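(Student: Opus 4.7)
The plan is to mirror the strategy of Lemmas \ref{TLXH1} and \ref{TLXH3}, but with $m=k-\ell=0$ the resonance factor $\frac{|\rho/m|+\langle t\rangle}{m^2\langle t-\rho/m\rangle^2}$ is replaced by the new prefactor $(\langle\rho\rangle\langle t\rangle+\langle\rho\rangle^{1/4}\langle t\rangle^{7/4})^{-1}$, and the weight $A_m(t,\rho)$ is replaced by $A_{NR}(t,\rho)$ (since in all regimes below the comparison $A_k(t,\xi)/A_k(t,\eta)\lesssim_\delta A_{NR}(t,\rho)\,e^{-(\delta_0/20)\langle\rho\rangle^{1/2}}$ is available through \eqref{TLX7} or \eqref{vfc25}). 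First, I would symmetrize:
\[
\eta A_k^2(t,\xi)-\xi A_k^2(t,\eta)=\frac{\xi+\eta}{2}\bigl[A_k^2(t,\xi)-A_k^2(t,\eta)\bigr]-\rho\cdot\frac{A_k^2(t,\xi)+A_k^2(t,\eta)}{2},
\]
which exposes a ``smoothness'' piece and a ``size'' piece.

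For \eqref{TLXH2.1} in $R_0^\ast\cup R_1^\ast$, the hypothesis $\langle\rho\rangle\le(\langle k,\xi\rangle+\langle k,\eta\rangle)/8$ triggers Lemma \ref{A-A}, so
\[
|A_k^2(t,\xi)-A_k^2(t,\eta)|\lesssim A_R(t,\rho)\,A_k(t,\xi)A_k(t,\eta)\,e^{-(\lambda(t)/40)\langle\rho\rangle^{1/2}}\Bigl[\frac{C_0(\delta)}{\langle k,\xi\rangle^{1/8}}+\sqrt\delta\,\Bigr].
\]
Multiplying by $|\xi+\eta|\lesssim\langle k,\xi\rangle+\langle k,\eta\rangle$ and using $|\dot A_k/A_k|(t,\xi)\gtrsim_\delta\langle k,\xi\rangle^{1/2}\langle t\rangle^{-1-\sigma_0}$ from \eqref{eq:A_kxi}, the extra factor $|\xi+\eta|\cdot[\langle k,\xi\rangle^{-1/8}+\sqrt\delta]$ is absorbed by $(\langle\rho\rangle^{1/4}\langle t\rangle^{7/4})\sqrt{|\dot A_k/A_k|(t,\xi)|\dot A_k/A_k|(t,\eta)}$ provided $\sigma_0<1/4$; the surviving ratio $A_R(t,\rho)/A_{NR}(t,\rho)\lesssim_\delta 1$ and the Gevrey gap between $e^{-\lambda(t)\langle\rho\rangle^{1/2}/40}$ and $e^{-\delta'_0\langle\rho\rangle^{1/2}}$ are harmless. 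For the $\rho\cdot(A_k^2(\xi)+A_k^2(\eta))/2$ piece, one bounds $A_k(t,\xi)/A_k(t,\eta)\lesssim_\delta e^{0.9\lambda(t)\langle\rho\rangle^{1/2}}$ by \eqref{vfc26}, then uses that $|\rho|\lesssim\langle\rho\rangle\langle t\rangle\cdot\langle t\rangle^{-1}$ and that $\langle t\rangle^{-1}\lesssim_\delta\sqrt{|\dot A_k/A_k|(t,\xi)|\dot A_k/A_k|(t,\eta)}\cdot\langle t\rangle^{\sigma_0}$.

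For \eqref{TLD20} and \eqref{TLXH2.2} the symmetrization is not needed: bound $|\eta|A_k^2(t,\xi)$ and $|\xi|A_k^2(t,\eta)$ separately, exactly as in the derivation of \eqref{TLD10} from \eqref{TLD2}, where the auxiliary inequality
\[
\langle k,\xi\rangle\frac{A_k(t,\xi)}{A_k(t,\eta)}\lesssim_\delta(1+\langle k,\xi\rangle/\langle t\rangle)^{1/2}\langle t\rangle^{2}\,|\dot A_k/A_k|(t,\xi)\,A_{NR}(t,\rho)\,e^{-2\delta'_0\langle\rho\rangle^{1/2}}
\]
(valid on $R_0^\ast\cup R_1^\ast$ and yielding the required shape once an extra $(1+\langle k,\xi\rangle/\langle t\rangle)^{1/2}$ is allowed) can be imported directly by setting $m=0$ in \eqref{TLD2} — the proof there splits into $\langle k,\xi\rangle\le 100\langle\rho\rangle$ versus the resonant case $t\in I_{k,\xi}$, both of which go through unchanged. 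In the case $R_2^\ast$, $\langle\ell,\eta\rangle=\langle k,\eta\rangle$ is small compared to $\langle k,\xi\rangle+\langle\rho\rangle$, so the tiny factor $A_k(t,\eta)\,e^{-\delta'_0\langle k,\eta\rangle^{1/2}}$ on the right is plainly of order one, and one only needs to dominate $|\eta|A_k^2(t,\xi)+|\xi|A_k^2(t,\eta)$ by the $A_{NR}$-piece times $\sqrt{|\dot A_k/A_k|(t,\xi)|\dot A_{NR}/A_{NR}|(t,\rho)}$; this uses \eqref{vfc30.5} plus the same case analysis (resonant vs.\ non-resonant $t$ relative to $I_{k,\xi}$).

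The main obstacle is the resonant case $t\in I_{k,\xi}$ in bound \eqref{TLD20}: there $A_k^2(t,\xi)$ is measured by the resonant weight $1/b_R^2$, while $|\dot A_k/A_k|(t,\xi)$ is comparable to $\delta^2/(1+\delta^2|t-\xi/k|)$ by \eqref{reb8}, which is rather weak away from the center of the resonance. Trading the excess $\langle k,\xi\rangle/k^2\langle t-\xi/k\rangle$ against the allowed factor $\langle t\rangle^{7/4}$ and the multiplier $(1+\langle k,\xi\rangle/\langle t\rangle)^{1/2}$ is the delicate algebraic step; this is precisely what the ``$\langle\rho\rangle^{1/4}\langle t\rangle^{7/4}$'' structural improvement over the naive $\langle\rho\rangle\langle t\rangle$ is designed to accommodate, and it closes because on $I_{k,\xi}$ one has $|\xi|\approx|k|t$, so $(1+\langle k,\xi\rangle/\langle t\rangle)^{1/2}\lesssim|k|^{1/2}$ absorbs the required power.
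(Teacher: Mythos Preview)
Your plan for \eqref{TLXH2.1} and \eqref{TLXH2.2} follows the lines of \cite[Lemma 8.6]{IOJI}, which is what the paper cites. The problem is with your argument for the new bound \eqref{TLD20}.

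You propose to import \eqref{TLD2} with $m=0$, obtaining an auxiliary inequality carrying the factor $\langle t\rangle^{2}$ on the right. But the denominator in \eqref{TLD20} is $\langle\rho\rangle\langle t\rangle+\langle\rho\rangle^{1/4}\langle t\rangle^{7/4}$, which for bounded $\rho$ is only of order $\langle t\rangle^{7/4}$; after dividing through you are left with an uncontrolled factor $\langle t\rangle^{1/4}$. Your attempted repair in the resonant regime --- using $(1+\langle k,\xi\rangle/\langle t\rangle)^{1/2}\approx|k|^{1/2}$ on $I_{k,\xi}$ to absorb this power --- amounts to the requirement $\langle t\rangle^{1/4}\lesssim|k|^{1/2}$, and this fails whenever $|k|$ is small relative to $\langle t\rangle^{1/2}$ (for instance $k=1$, $\xi\approx t$ large, a perfectly legitimate resonant configuration).

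The paper's route is both simpler and avoids any resonant case analysis. It proves directly the sharper auxiliary bound \eqref{TLD22}
\[
\langle k,\xi\rangle\,\frac{A_k(t,\xi)}{A_k(t,\eta)}\lesssim_\delta(1+\langle k,\xi\rangle/\langle t\rangle)^{1/2}\langle t\rangle^{7/4}\,\bigl|(\dot A_k/A_k)(t,\xi)\bigr|\,A_{NR}(t,\rho)\,e^{-2\delta'_0\langle\rho\rangle^{1/2}},
\]
using only the universal lower bound $|(\dot A_k/A_k)(t,\xi)|\gtrsim_\delta\langle k,\xi\rangle^{1/2}\langle t\rangle^{-1-\sigma_0}$ from the first term in \eqref{eq:A_kxi}, together with the product estimate $A_k(t,\xi)\lesssim_\delta A_k(t,\eta)A_{NR}(t,\rho)e^{-2\delta'_0\langle\rho\rangle^{1/2}}$ (coming from \eqref{vfc25} on $R_1^\ast$, or from \eqref{dor4.1} and \eqref{TLD3} on $R_0^\ast$). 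After these two inputs the claim reduces to the elementary inequality $\langle k,\xi\rangle^{1/2}\lesssim(1+\langle k,\xi\rangle/\langle t\rangle)^{1/2}\langle t\rangle^{3/4-\sigma_0}$, checked by splitting into $\langle k,\xi\rangle\le\langle t\rangle$ and $\langle k,\xi\rangle\ge\langle t\rangle$. No appeal to \eqref{vfc27} or to the structure of $I_{k,\xi}$ is needed.
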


\begin{proof} The bounds \eqref{TLXH2.1} and \eqref{TLXH2.2} are proved in \cite[Lemma 8.6]{IOJI}, with the same remark as before that the inequality \eqref{TLXH2.2} is slightly weaker in \cite[Lemma 8.6]{IOJI}, but its proof does not use the symmetrization. For \eqref{TLD20} it suffices to prove that if $((k,\xi),(k,\eta))\in R_0^\ast\cup R_1^\ast$ then
\begin{equation*}
\begin{split}
|\eta| A_k^2(t,\xi)\lesssim_\delta &\big(1+\langle k,\xi\rangle/\langle t\rangle\big)^{1/2}\langle t\rangle^{7/4}\\
&\times\sqrt{|(A_k\dot{A}_k)(t,\xi)|}\,\sqrt{|(A_k\dot{A}_k)(t,\eta)|}\,A_{NR}(t,\rho) \,e^{-(3\delta'_0/2)\langle \rho \rangle^{1/2}}.
\end{split}
\end{equation*}
As in the proof of Lemma \ref{TLXH1}, using \eqref{eq:CDW} it suffices to prove that
\begin{equation}\label{TLD22}
\begin{split}
\langle k,\xi\rangle \frac{A_k(t,\xi)}{A_k(t,\eta)}\lesssim_{\delta}(1+\langle k,\xi\rangle/\langle t\rangle)^{1/2}\langle t\rangle^{7/4}|(\dot{A}_k/A_k)(t,\xi)| A_{NR}(t,\rho) \,e^{-2\delta'_0\langle\rho \rangle^{1/2}}.
\end{split}
\end{equation} 
Since $|(\dot{A}_k/A_k)(t,\xi)|\gtrsim_\delta \langle k,\xi\rangle^{1/2}\langle t\rangle^{-1-\sigma_0}$ (see \eqref{eq:A_kxi}), for \eqref{TLD22} it suffices to prove that
\begin{equation}\label{TLD23}
A_k(t,\xi)\lesssim_{\delta}A_k(t,\eta)A_{NR}(t,\rho) \,e^{-2\delta'_0\langle\rho \rangle^{1/2}}.
\end{equation} 
This follows from \eqref{vfc25} if $((k,\xi),(k,\eta))\in R_1^\ast$, or from the bounds \eqref{TLD3},  $A_k(t,\eta)\geq e^{\lambda(t)\langle k,\eta\rangle^{1/2}}$, $A_{NR}(t,\rho)\geq e^{\lambda(t)\langle\rho\rangle^{1/2}}$, $A_k(t,\xi)\leq 2e^{\lambda(t)\langle k,\xi\rangle^{1/2}}e^{2\sqrt\delta\langle k,\xi\rangle^{1/2}}$ (see \eqref{dor4.1}) if $((k,\xi),(k,\eta))\in R_0^\ast$.
\end{proof}

\begin{lemma}\label{TLD30}
Assume that $t\ge1$, $k\in\mathbb{Z}$, $\xi,\eta\in\mathbb{R}$, and let $\rho:=\xi-\eta$.

(i) If $((k,\xi),(k,\eta))\in R_0^\ast\cup R_1^\ast$ and $k\neq 0$ then
\begin{equation}\label{TLD31}
\begin{split}
A_k^2(t,\xi)\lesssim_{\delta}\sqrt{|(A_k\dot{A}_k)(t,\xi)|}\,\sqrt{|(A_{k}\dot{A}_{k})(t,\eta)|}\frac{|k|\langle t\rangle\langle t-\eta/k\rangle^2}{\langle t\rangle +|\eta/k|}\,A_{R}(t,\rho) \,e^{-\delta'_0\langle \rho \rangle^{1/2}}.
\end{split}
\end{equation}

(ii) If $((k,\xi),(k,\eta))\in R_2^\ast$ and $k\neq 0$ then
\begin{equation}\label{TLD32}
\begin{split}
A_k^2(t,\xi)\lesssim_{\delta}\sqrt{|(A_k\dot{A}_k)(t,\xi)|}\,\sqrt{|(A_{R}\dot{A}_{R})(t,\rho)|}\frac{|k|\langle t\rangle\langle t-\eta/k\rangle^2}{\langle t\rangle +|\eta/k|}A_{k}(t,\eta) \,e^{-\delta'_0\langle k,\eta \rangle^{1/2}}.
\end{split}
\end{equation}

(iii) If $((k,\xi),(k,\eta))\in R_3^\ast$ and $k\neq 0$ then
\begin{equation}\label{TLD34}
\begin{split}
A_k^2(t,\xi)\lesssim_{\delta}\sqrt{|(A_k\dot{A}_k)(t,\eta)|}\,\sqrt{|(A_{R}\dot{A}_{R})(t,\rho)|}\frac{|k|\langle t\rangle\langle t-\eta/k\rangle^2}{\langle t\rangle +|\eta/k|}A_{k}(t,\xi) \,e^{-\delta'_0\langle k,\xi \rangle^{1/2}}.
\end{split}
\end{equation}
\end{lemma}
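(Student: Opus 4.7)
The proof follows the same template as the strengthened bounds \eqref{TLD1}, \eqref{TLD10}, \eqref{TLD20} already established in Lemmas \ref{TLXH1}, \ref{TLXH3}, \ref{TLXH2}. The first move is to use \eqref{mudA1} to write $\sqrt{|(A_Y\dot{A}_Y)(t,\zeta)|}=A_Y(t,\zeta)\sqrt{\mu_Y(t,\zeta)}$, which turns each of the three desired bounds into a statement about a ratio of $A$-weights multiplied by powers of the $\mu$-weights and the key factor $M:=\frac{|k|\langle t\rangle\langle t-\eta/k\rangle^2}{\langle t\rangle+|\eta/k|}$. The purpose of $M$ is precisely to absorb the singular ratio $\frac{|\xi|}{k^2(1+|t-\xi/k|)}$ that appears from \eqref{vfc27} when $t\in I_{k,\xi}$; in that range $t\approx \xi/k$, so $|\xi|/k^2\lesssim\langle t\rangle/|k|$, and the $\mu^\ast$ contribution in \eqref{muD} gives $\mu_k(t,\xi)\gtrsim_\delta(1+|t-\xi/k|)^{-1}$.

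For Part (i), after dividing by $A_k^2(t,\xi)$ the claim reduces to
\[
\frac{A_k(t,\xi)}{A_k(t,\eta)}\lesssim_\delta \sqrt{\mu_k(t,\xi)\mu_k(t,\eta)}\,M\,A_R(t,\rho)\,e^{-\delta'_0\langle\rho\rangle^{1/2}}.
\]
I split into two subcases. If $t\notin I_{k,\xi}$, then \eqref{vfc26} gives $A_k(t,\xi)/A_k(t,\eta)\lesssim_\delta e^{0.9\lambda(t)\langle\rho\rangle^{1/2}}$, which is absorbed by $A_R(t,\rho)e^{-\delta'_0\langle\rho\rangle^{1/2}}\geq e^{(\lambda(t)-\delta'_0)\langle\rho\rangle^{1/2}}$ with exponential room since $\lambda(t)\geq\delta_0\gg\delta'_0$. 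The residual inequality $\sqrt{\mu_k(t,\xi)\mu_k(t,\eta)}M\gtrsim 1$ then follows from \eqref{newAdot} and a short case split according to whether $|\eta/k|\leq\langle t\rangle$ (where $M\gtrsim|k|\langle t-\eta/k\rangle^2$) or $|\eta/k|>\langle t\rangle$ (where $M\gtrsim\langle t\rangle|\eta|$). If instead $t\in I_{k,\xi}$, then \eqref{vfc27} contributes the additional singular factor $\frac{|\xi|}{k^2(1+|t-\xi/k|)}\lesssim \langle t\rangle/(|k|(1+|t-\xi/k|))$, which is absorbed by $\sqrt{\mu_k(t,\xi)}\gtrsim_\delta(1+|t-\xi/k|)^{-1/2}$ together with the explicit lower bound for $M$.

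Parts (ii) and (iii) reduce to similar ratio inequalities. In $R_2^\ast$, where $\langle k,\eta\rangle$ is small compared to $\langle k,\xi\rangle\approx\langle\rho\rangle$, inequality \eqref{TLX7} gives $A_k(t,\xi)\lesssim_\delta A_R(t,\rho)A_k(t,\eta)e^{-(\delta_0/20)\langle k,\eta\rangle^{1/2}}$, which more than absorbs the factor $e^{-\delta'_0\langle k,\eta\rangle^{1/2}}$ on the right of \eqref{TLD32}; the residue is $1\lesssim_\delta\sqrt{\mu_k(t,\xi)\mu_R(t,\rho)}M$, handled as in Part (i), now using $\mu_R(t,\rho)\gtrsim_\delta\langle\rho\rangle^{1/2}/\langle t\rangle^{1+\sigma_0}$. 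Part (iii) is symmetric: in $R_3^\ast$, one applies \eqref{TLX7} in the reverse direction to get $A_k(t,\eta)\lesssim_\delta A_R(t,-\rho)A_k(t,\xi)e^{-(\delta_0/20)\langle k,\xi\rangle^{1/2}}$ (using the evenness of $w_R$) and re-runs the same argument.

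The main technical obstacle will be the case $t\in I_{k,\xi}$ in Part (i) when simultaneously $|\eta/k|$ is very large compared to $\langle t\rangle$: there one must combine $M\approx\langle t\rangle|\eta|$ with the first-term lower bound $\mu_k(t,\eta)\gtrsim\langle k,\eta\rangle^{1/2}\langle t\rangle^{-1-\sigma_0}$ (since $\mu^\ast(t,\eta)$ may vanish outside the resonant interval) and the $\mu^\ast$ piece of $\mu_k(t,\xi)$ to obtain a product $\gtrsim 1$, and this uses crucially that $\sigma_0=0.01$ is much less than $1$. Everything else is a routine combination of the pointwise weight estimates collected in Subsection \ref{weights}.
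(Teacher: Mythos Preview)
Your decomposition in Part~(i) contains a genuine gap. After invoking \eqref{vfc26} in the case $t\notin I_{k,\xi}$, you assert that the residual inequality $\sqrt{\mu_k(t,\xi)\mu_k(t,\eta)}\,M\gtrsim_\delta 1$ holds by \eqref{newAdot}. This is false. Take $k=1$, $\eta=t$, $\xi=t/4$ with $t$ large: then $((1,\xi),(1,\eta))\in R_0^\ast$ and $t\notin I_{1,\xi}$ (since $t>2\xi$), while $M=\frac{\langle t\rangle\langle 0\rangle^2}{\langle t\rangle+t}\approx 1/2$. One has $\mu_1(t,\eta)\approx_\delta 1$ (resonant), but since $t>2|\xi|$ forces $\mu^\#(t,\xi)=0$, only the first term in \eqref{mu1} survives and $\mu_1(t,\xi)\approx_\delta\langle\xi\rangle^{1/2}/\langle t\rangle^{1+\sigma_0}\approx t^{-1/2-\sigma_0}$. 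Hence $\sqrt{\mu_1(t,\xi)\mu_1(t,\eta)}\,M\approx_\delta t^{-1/4-\sigma_0/2}\to 0$. The same phenomenon occurs in $R_1^\ast$ for larger $|k|$ (e.g.\ $k=10$, $\eta=10t$, $\xi=10.6t$), so one cannot simply appeal to $\langle\rho\rangle$ being large. Your claimed residuals in Parts~(ii)--(iii) need the same scrutiny.

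The paper's argument is both simpler and avoids this issue. For Part~(i) one first applies \eqref{eq:CDW} to replace $\sqrt{|(\dot{A}_k/A_k)(t,\xi)|}$ by $\sqrt{|(\dot{A}_k/A_k)(t,\eta)|}$ at the cost of a harmless $e^{2\sqrt\delta\langle\rho\rangle^{1/2}}$; the claim then reduces to $A_k(t,\xi)\lesssim_\delta A_k(t,\eta)\,|(\dot{A}_k/A_k)(t,\eta)|\,M\,A_R(t,\rho)\,e^{-(3\delta'_0/2)\langle\rho\rangle^{1/2}}$. After \eqref{TLX7} one is left with $|(\dot{A}_k/A_k)(t,\eta)|\,M\gtrsim_\delta 1$, which \emph{does} follow from \eqref{newAdot} and \eqref{eq:A_kxi} because both factors now depend only on $\eta$. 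No case split on $I_{k,\xi}$ is needed, and \eqref{vfc26}--\eqref{vfc27} are not used. Your argument can be repaired by inserting this transfer step, or equivalently by explicitly invoking \eqref{Genmu} to bound $\sqrt{\mu_k(t,\eta)/\mu_k(t,\xi)}\lesssim_\delta e^{3\sqrt\delta\langle\rho\rangle^{1/2}}$ and then absorbing that into the exponential room you already identified; but as written the ``residual inequality'' step is not justified.

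A smaller point: the scenario you flag as the ``main technical obstacle'' --- $t\in I_{k,\xi}$ with $|\eta/k|$ very large compared to $\langle t\rangle$ --- is vacuous. If $t\in I_{k,\xi}$ then $|\xi/k|\approx t$, and in $R_0^\ast\cup R_1^\ast$ one has $|\rho|\lesssim\langle k,\xi\rangle$, forcing $|\eta/k|\lesssim t$ as well.
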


\begin{proof} (i) Using \eqref{eq:CDW}, it suffices to prove that
\begin{equation*}
\begin{split}
A_k(t,\xi)\lesssim_{\delta}A_k(t,\eta)|(\dot{A}_{k}/A_{k})(t,\eta)|\frac{|k|\langle t\rangle\langle t-\eta/k\rangle^2}{\langle t\rangle +|\eta/k|}\,A_{R}(t,\rho) \,e^{-(3\delta'_0/2)\langle \rho \rangle^{1/2}}.
\end{split}
\end{equation*}
This follows from \eqref{TLX7} and \eqref{eq:A_kxi}--\eqref{newAdot}.

(ii) Since $4\langle k,\eta\rangle\leq\min(\langle k,\xi\rangle,\langle\rho\rangle)$, we can apply \eqref{eq:CDW}--\eqref{vfc30.5}. Notice also that
\begin{equation*}
\frac{|k|\langle t\rangle\langle t-\eta/k\rangle^2}{\langle t\rangle +|\eta/k|}\gtrsim_\delta\langle t\rangle^2e^{-\delta\langle k,\eta\rangle^{1/2}}.
\end{equation*}
For \eqref{TLD32} it suffices to prove that 
\begin{equation*}\label{TLD33}
\begin{split}
A_k(t,\xi)\lesssim_{\delta}|(\dot{A}_k/A_k)(t,\xi)|\langle t\rangle^2A_{R}(t,\rho)A_{k}(t,\eta) \,e^{-(3\delta'_0/2)\langle k,\eta \rangle^{1/2}}.
\end{split}
\end{equation*}
Since $|(\dot{A}_k/A_k)(t,\xi)|\gtrsim_\delta \langle t\rangle^{-1-\sigma_0}$, this follows from  \eqref{TLX7}.

(iii) Since $4\langle k,\xi\rangle\leq\min(\langle k,\eta\rangle,\langle\rho\rangle)$, the desired bounds \eqref{TLD34} follow easily from \eqref{dor4.1} and \eqref{TLX3.5}--\eqref{eq:A_kxi}.
\end{proof}

\section{Nonlinear bounds and the main elliptic estimate} We prove now estimates on some of the functions defined in Proposition \ref{ChangedEquations}. In most cases we apply the definitions, the bootstrap assumptions \eqref{boot2}, and the following general  lemma (see \cite[Lemma 8.1]{IOJI} for the proof).

\begin{lemma}\label{Multi0}
(i) Assume that $m,m_1,m_2:\R\to\C$ are symbols satisfying
\begin{equation}\label{TLX5}
|m(\xi)|\leq |m_1(\xi-\eta)|\,|m_2(\eta)|\{\langle\xi-\eta\rangle^{-2}+\langle\eta\rangle^{-2}\}
\end{equation}
for any $\xi,\eta\in\R$. If $M, M_1, M_2$ are the operators defined by these symbols then
\begin{equation}\label{TLX6}
\|M(gh)\|_{L^2(\R)}\lesssim \|M_1g\|_{L^2(\R)}\|M_2h\|_{L^2(\R)}.
\end{equation}

(ii) Similarly, if $m,m_2:\Z\times\R\to\C$ and $m_1:\R\to\C$ are symbols satisfying
\begin{equation}\label{TLX5.1}
|m(k,\xi)|\leq |m_1(\xi-\eta)|\,|m_2(k,\eta)|\{\langle\xi-\eta\rangle^{-2}+\langle k,\eta\rangle^{-2}\}
\end{equation}
for any $\xi,\eta\in\R$, $k\in\Z$, and $M, M_1, M_2$ are the operators defined by these symbols, then
\begin{equation}\label{TLX6.1}
\|M(gh)\|_{L^2(\mathbb{T}\times\R)}\lesssim \|M_1g\|_{L^2(\R)}\|M_2h\|_{L^2(\mathbb{T}\times\R)}.
\end{equation}

(iii) Finally, assume that $m,m_1,m_2:\Z\times\R\to\C$ are symbols satisfying
\begin{equation}\label{TLX5.2}
|m(k,\xi)|\leq |m_1(k-\ell,\xi-\eta)|\,|m_2(\ell,\eta)|\{\langle k-\ell,\xi-\eta\rangle^{-2}+\langle \ell,\eta\rangle^{-2}\}
\end{equation}
for any $\xi,\eta\in\R$, $k,\ell\in\Z$. If $M, M_1, M_2$ are the operators defined by these symbols, then
\begin{equation}\label{TLX6.2}
\|M(gh)\|_{L^2(\mathbb{T}\times\R)}\lesssim \|M_1g\|_{L^2(\mathbb{T}\times\R)}\|M_2h\|_{L^2(\mathbb{T}\times\R)}.
\end{equation}
\end{lemma}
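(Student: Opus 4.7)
The plan is to reduce each assertion to a pointwise bilinear convolution estimate in Fourier space and then close it by Young's convolution inequality combined with Cauchy--Schwarz. For part~(i), Plancherel turns the goal into bounding $\|m(\xi)\widetilde{gh}(\xi)\|_{L^2_\xi}$, and since $\widetilde{gh}=\widetilde{g}\ast\widetilde{h}$, the hypothesis \eqref{TLX5} delivers the pointwise upper bound
\begin{equation*}
\big|m(\xi)\widetilde{gh}(\xi)\big|\leq\int_{\R}\{\langle\xi-\eta\rangle^{-2}+\langle\eta\rangle^{-2}\}\,F(\xi-\eta)G(\eta)\,d\eta,
\end{equation*}
where $F(\rho):=|m_1(\rho)\widetilde{g}(\rho)|$ and $G(\rho):=|m_2(\rho)\widetilde{h}(\rho)|$ satisfy $\|F\|_{L^2}=\|M_1g\|_{L^2}$ and $\|G\|_{L^2}=\|M_2h\|_{L^2}$ by Plancherel.

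The first term on the right is the convolution $(\langle\cdot\rangle^{-2}F)\ast G$; applying Young's inequality $\|f\ast g\|_{L^2}\leq\|f\|_{L^1}\|g\|_{L^2}$ together with Cauchy--Schwarz $\|\langle\cdot\rangle^{-2}F\|_{L^1}\leq\|\langle\cdot\rangle^{-2}\|_{L^2}\|F\|_{L^2}$ bounds its $L^2_\xi$-norm by $\|M_1g\|_{L^2}\|M_2h\|_{L^2}$ up to the finite constant $\|\langle\cdot\rangle^{-2}\|_{L^2(\R)}$. The second term is handled by the symmetric decomposition as $F\ast(\langle\cdot\rangle^{-2}G)$. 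Summing the two contributions gives \eqref{TLX6}.

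Parts~(ii) and (iii) follow the same scheme, the only change being the underlying frequency space on which the convolution lives. In (ii) the product $gh$ depends on $(z,v)$ but $g$ depends on $v$ alone, so the Fourier convolution is only in the continuous variable, $\widetilde{gh}(k,\xi)=\int\widetilde{g}(\xi-\eta)\widetilde{h}(k,\eta)\,d\eta$. One takes $L^2_\xi$ for each fixed $k$, applies the one-dimensional Young/Cauchy--Schwarz bound as above, and then sums in $k$; for the $\langle k,\eta\rangle^{-2}$ term one uses the uniform estimate $\sup_k\|\langle k,\cdot\rangle^{-2}\|_{L^2(\R)}\lesssim 1$ to pull the $k$-sum onto $|m_2(k,\cdot)\widetilde{h}(k,\cdot)|$ and recover $\|M_2h\|_{L^2(\T\times\R)}$. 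In (iii) the convolution takes place on the locally compact abelian group $\Z\times\R$, and one needs $\|\langle k,\xi\rangle^{-2}\|_{L^2(\Z\times\R)}<\infty$, which is immediate from $\int\langle k,\xi\rangle^{-4}d\xi\lesssim(1+k^2)^{-3/2}$ and $\sum_k(1+k^2)^{-3/2}<\infty$.

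There is no substantive obstacle in any of the three cases; the lemma is essentially a bookkeeping statement that packages Young's convolution inequality and Cauchy--Schwarz into a clean multiplier form. The only analytic input is the $L^2$-integrability of the reference weight $\langle\cdot\rangle^{-2}$ on the relevant frequency space, and all three parts collapse to verifying that elementary fact. The mild care required is only to make sure, in the two-dimensional settings, that the $\ell^2_k$-summation is performed on the factor containing $\widetilde{h}$ rather than the one containing $\widetilde{g}$ (in (ii)), which is exactly what the asymmetric form of the hypothesis \eqref{TLX5.1} arranges.
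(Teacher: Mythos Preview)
Your proof is correct and is exactly the standard argument one would give here; the paper itself does not prove this lemma but cites \cite[Lemma 8.1]{IOJI}, where the proof follows precisely the Young/Cauchy--Schwarz route you outline. There is nothing to add.
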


For $f\in C([0,T]:H^{4}(\mathbb{R}))$, $g\in C([0,T]:H^4(\mathbb{T}\times\mathbb{R}))$, $Y\in\{R,NR\}$, and $t_1,t_2\in[0,T]$ we define
\begin{equation}\label{rew0}
\|f\|^2_{Y[t_1,t_2]}:=\sup_{t\in [t_1,t_2]}\int_{\R}A_Y^2(t,\xi)\big|\widetilde{f}(t,\xi)\big|^2\,d\xi+\int_{t_1}^{t_2}\int_{\R}|\dot{A}_Y(s,\xi)|A_Y(s,\xi)\big|\widetilde{f}(s,\xi)\big|^2\,d\xi ds,
\end{equation}
\begin{equation}\label{rew0.2}
\begin{split}
\|g\|^2_{W[t_1,t_2]}:=\sup_{t\in [t_1,t_2]}\Big\{&\sum_{k\in\mathbb{Z}}\int_{\R}A_k^2(t,\xi)\big|\widetilde{g}(t,k,\xi)\big|^2\,d\xi\Big\}\\
&+ \int_{t_1}^{t_2}\sum_{k\in\mathbb{Z}}\int_{\R}\big|(A_k\dot{A}_k)(s,\xi)\big|\widetilde{g}(s,k,\xi)\big|^2\,d\xi ds,
\end{split}
\end{equation}
\begin{equation}\label{rew0.4}
\begin{split}
\|g\|^2_{\widetilde{W}[t_1,t_2]}:=\sup_{t\in [t_1,t_2]}\Big\{&\sum_{k\in\mathbb{Z}^\ast}\int_{\R}A_k^2(t,\xi)\frac{k^2\langle t\rangle^2}{|\xi|^2+k^2\langle t\rangle^2}\big|\widetilde{g}(t,k,\xi)\big|^2\,d\xi\Big\}\\
&+ \int_{t_1}^{t_2}\sum_{k\in\mathbb{Z}^\ast}\int_{\R}\big|(A_k\dot{A}_k)(s,\xi)\big|\frac{k^2\langle s\rangle^2}{|\xi|^2+k^2\langle s\rangle^2}\big|\widetilde{g}(s,k,\xi)\big|^2\,d\xi ds.
\end{split}
\end{equation}
For simplicity of notation, let $Y:=Y[1,T]$, $W:=W[1,T]$, $\widetilde{W}:=\widetilde{W}[1,T]$.

The resonant weights $A_R$ are our strongest weights. We show first that $R$ is an algebra, and, in fact, multiplication by functions in $R$ preserves the norms $W$ and $\widetilde{W}$. More precisely:

\begin{lemma}\label{nar8}
(i) If $f,g\in C([1,T]:H^{4}(\mathbb{R}))$ and $H\in C([1,T]:H^{4}(\mathbb{T}\times\mathbb{R}))$, then
\begin{equation}\label{rew1}
\|fg\|_R\lesssim_\delta \|f\|_R\|g\|_R.
\end{equation}
and
\begin{equation}\label{rew1.2}
\|fH\|_W\lesssim_\delta \|f\|_R\|H\|_W,\qquad \|fH\|_{\widetilde{W}}\lesssim_\delta \|f\|_R\|H\|_{\widetilde{W}}.
\end{equation}

(ii) As a consequence, if $\Psi_1$ is a Gevrey cutoff function supported in $\big[b(\vartheta_0/20), b(1-\vartheta_0/20)\big]$ and satisfying $\big\|e^{\langle\xi\rangle^{3/4}}\widetilde{\Psi_1}(\xi)\big\|_{L^\infty}\lesssim 1$,  and 
\begin{equation}\label{rew3}
h\in\big\{\Psi_1(V')^a,\,\Psi_1(B')^a,\,\langle\partial_v\rangle^{-1}V'',\,B'':\,a\in[-2,2]\cap\mathbb{Z}\big\},
\end{equation}
then
\begin{equation}\label{nar4}
\|h\|_R\lesssim_\delta 1.
\end{equation} 
Moreover, the functions $B'_0$ and $B''_0$ do not depend on $t$ and satisfy the stronger bounds
\begin{equation}\label{rew6.44}
\|\Psi_1B'_0\|_{\mathcal{G}^{4\delta_0,1/2}}+\|\Psi_1(1/B'_0)\|_{\mathcal{G}^{4\delta_0,1/2}}+\|B''_0\|_{\mathcal{G}^{4\delta_0,1/2}}\lesssim 1.
\end{equation}

(iii) With $\mathcal{K}$ as in \eqref{rec3} and $h$ satisfying $\|h\|_R+\|\partial_vh\|_R\leq 1$, for any $t\in[1,T]$ we have
\begin{equation}\label{nar7}
\begin{split}
&\int_{\R}A_{NR}^2(t,\xi)\big(\langle\xi\rangle^2\langle t\rangle^2+\mathcal{K}^2\langle\xi\rangle^{1/2}\langle t\rangle^{7/2}\big)\big|\widetilde{(h\dot{V})}(t,\xi)\big|^2\,d\xi\lesssim_\delta\eps_1^2,\\
&\int_1^t\int_{\R}|\dot{A}_{NR}(s,\xi)|A_{NR}(s,\xi)\big(\langle\xi\rangle^2\langle s\rangle^2+\mathcal{K}^2\langle\xi\rangle^{1/2}\langle s\rangle^{7/2}\big)\big|\widetilde{(h\dot{V})}(s,\xi)\big|^2\,d\xi ds\lesssim_\delta\eps_1^2.
\end{split}
\end{equation}
The implicit constants in \eqref{nar7} may depend on $\delta$, and $\mathcal{K}$ is assumed large enough compared to these constants.
\end{lemma}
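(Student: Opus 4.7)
The plan is to prove the three parts using the bilinear weight estimates of Lemma \ref{OldBilin} and Lemma \ref{Multi0}, combined with the bootstrap hypothesis \eqref{boot2} and the identity $\partial_v\dot V=\mathcal H/(tV')$.

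\emph{Part (i)} is a direct application of the bilinear estimates. For $\|fg\|_R$, Fourier convolution and \eqref{TLX4} give a pointwise split $A_R(t,\xi)\lesssim_\delta A_R(t,\xi-\eta)A_R(t,\eta)$ times a summable error, so that the hypothesis \eqref{TLX5} of Lemma \ref{Multi0}(i) is satisfied and the $L^2$ bound follows. The time-derivative part of the norm is symmetrized by \eqref{DtVMulti} into a piece supported in $\eta$ plus a piece supported in $\xi-\eta$, and Lemma \ref{Multi0}(i) concludes again. The module bounds \eqref{rew1.2} are identical, using \eqref{TLX7}--\eqref{vfc30.7} in place of \eqref{TLX4}--\eqref{DtVMulti}; the only novelty is the weight $k^2\langle t\rangle^2/(|\xi|^2+k^2\langle t\rangle^2)$ in $\widetilde W$, which is absorbed by splitting on whether $|\eta|\geq|\xi|/2$ (so $k^2\langle t\rangle^2/(|\xi|^2+k^2\langle t\rangle^2)\leq 4k^2\langle t\rangle^2/(|\eta|^2+k^2\langle t\rangle^2)$) or $|\xi-\eta|\geq|\xi|/2$ (so the exponential factor $e^{-(\delta_0/20)\langle\xi-\eta\rangle^{1/2}}$ in \eqref{TLX7} dominates any loss).

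\emph{Part (ii)} reduces to Gevrey composition estimates and series expansions. The bounds \eqref{rew6.44} on $B'_0(v)=b'(b^{-1}(v))$ and $B''_0(v)=b''(b^{-1}(v))$ follow from \eqref{IntB}--\eqref{IntB1} by applying Lemma \ref{GPF}(iii) to $b$ (using $b'\geq\vartheta_0$) and then Lemma \ref{GPF}(ii) to the compositions, with $\Psi_1$ localizing to the relevant support. These transfer to $R$-norm bounds via the elementary inequality $A_R(t,\xi)\lesssim_\delta e^{(2\delta_0+2\sqrt\delta)\langle\xi\rangle^{1/2}}$. For $\Psi_1(V')^a$ and $\Psi_1(B')^a$ with $a\in\{1,2\}$ I decompose $V'=V'_\ast+B'_0$, $B'=B'_\ast+B'_0$ and apply the $R$-algebra from part (i) together with the bootstrap bounds $\mathcal E_{V'_\ast},\mathcal E_{B'_\ast}\leq\eps_1^2$. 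For $a\in\{-1,-2\}$ I expand $\Psi_1/V'=(\Psi_1/B'_0)\sum_{n\geq 0}(-V'_\ast/B'_0)^n$, which converges in $R$ since $\|V'_\ast\|_R\lesssim\eps_1\ll 1$. The bound on $\langle\partial_v\rangle^{-1}V''$ uses the identity $V''=\tfrac{1}{2}\partial_v((V')^2)$ from \eqref{rea27}: the Fourier multiplier $\langle\partial_v\rangle^{-1}\partial_v$ has symbol bounded by $1$ and is therefore a contraction on the $A_R$-weighted $L^2$ space, while $(V')^2\in R$ by the algebra. Finally $B''=B''_\ast+B''_0$ is immediate.

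\emph{Part (iii)} exploits the identity $\partial_v\dot V = \mathcal H/(tV')$ together with the compact support of $\dot V$ in $[b(\vartheta_0),b(1-\vartheta_0)]$, which follows because $\mathcal H$ has this support by Proposition \ref{ChangedEquations} and because $\dot V(t,b(0))=\dot V(t,b(1))=0$. This gives the global Fourier identity $\xi\widetilde{h\dot V}(\xi)=-i\widetilde{(\partial_v h)\dot V}(\xi)-i\widetilde{h\mathcal H/(tV')}(\xi)$, so for $|\xi|\gtrsim 1$ the target weighted norm with weight $\langle\xi\rangle^2\langle t\rangle^2$ is bounded by $\int A_{NR}^2\bigl(\langle t\rangle^2|\widetilde{(\partial_v h)\dot V}|^2+|\widetilde{h\mathcal H/V'}|^2\bigr) d\xi$ (plus a harmless low-frequency term absorbed by $\|\dot V\|_{L^1}\lesssim\|\mathcal H\|_{L^1}/t$). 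The module form of part (i), adapted to $A_{NR}$-weights, pulls out $\|\partial_v h\|_R$, $\|h\|_R$, $\|1/V'\|_R$ (all bounded by part (ii) and the hypotheses), reducing the problem to two scalar bounds: first, $\int A_{NR}^2|\widetilde{\mathcal H}|^2 d\xi\lesssim\eps_1^2$, which follows from $\mathcal H=B'_\ast-V'_\ast-\langle F\rangle$ (using \eqref{rea23} and \eqref{VB5}) and from the bootstrap bounds $\mathcal E_{B'_\ast},\mathcal E_{V'_\ast},\mathcal E_F\leq\eps_1^2$ together with $A_0\approx A_{NR}$ for the zero mode $\langle F\rangle$; and second, $\int A_{NR}^2\langle t\rangle^2|\widetilde{\dot V}|^2 d\xi\lesssim\eps_1^2$, which is immediate from $|\xi\widetilde{\dot V}(\xi)|=|\widetilde{\mathcal H/(tV')}(\xi)|$ and the first bound. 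The second weighted piece $\mathcal K^2\langle\xi\rangle^{1/2}\langle t\rangle^{7/2}$ reduces via the same primitive identity to $\mathcal K^2\int A_{NR}^2(\langle t\rangle/\langle\xi\rangle)^{3/2}|\widetilde{h\mathcal H/V'}|^2 d\xi\lesssim\mathcal E_{\mathcal H}\leq\eps_1^2$, with $\mathcal K$ chosen large enough to absorb the $\delta$-dependent implicit constants. The time-integrated inequality in \eqref{nar7} follows identically with $|\dot A_{NR}|A_{NR}$ replacing $A_{NR}^2$ and the $\mathcal B$-quantities replacing the corresponding $\mathcal E$-quantities. The main technical obstacle is verifying the module property over $A_{NR}$ with these non-standard weights; for each weight the ratio in $\xi$ versus $\eta$ is either bounded when $|\eta|\geq|\xi|/2$ or dominated by the exponential decay $e^{-(\delta_0/20)\langle\xi-\eta\rangle^{1/2}}$ of \eqref{TLX7} in the complementary regime.
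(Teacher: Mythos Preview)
Your proof is correct and follows essentially the same route as the paper. Part (i) uses Lemma \ref{Multi0} together with the bilinear weight estimates \eqref{TLX4}--\eqref{DtVMulti} and \eqref{TLX7}--\eqref{vfc30.7}, exactly as the paper does; your case-split for the $\widetilde W$ weight is the content of the paper's inequality \eqref{rew4.2}. Part (ii) is again the paper's argument (decompose via \eqref{VB5}, invoke the Gevrey lemmas \ref{lm:Gevrey}--\ref{GPF}, use the algebra and the bootstrap on $V'_\ast,B'_\ast,B''_\ast$). Part (iii) matches the paper's use of $\partial_v\dot V=\mathcal H/(tV')$ and $\mathcal H=B'_\ast-V'_\ast-\langle F\rangle$ together with the $NR$-module property from \eqref{TLX4} (with $\alpha=3/4$ for the $(\langle t\rangle/\langle\xi\rangle)^{3/2}$ weight); the paper defers the details to \cite[Lemma 4.5]{IOJI2}, while you spell them out. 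One cosmetic point: where you write $\|1/V'\|_R$ you should insert the cutoff and write $\|\Psi_1(V')^{-1}\|_R$, since $1/V'$ is not square-integrable on $\mathbb R$; this is harmless because every occurrence multiplies $\mathcal H$ or $\dot V$, which are supported in $[b(\vartheta_0),b(1-\vartheta_0)]$.
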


\begin{proof} (i) 
The bounds \eqref{rew1} follow using Lemma \ref{Multi0} (i) and the bilinear estimates  \eqref{TLX4}--\eqref{DtVMulti} with $Y=R$ and $\alpha=0$. (see \cite[Lemma 4.2]{IOJI2} for complete details). To prove the bounds \eqref{rew1.2} we use the bilinear estimates \eqref{TLX7}--\eqref{vfc30.7}. Moreover, if $k\neq 0$ it is easy to see that
\begin{equation}\label{rew4.2}
\frac{|k|\langle t\rangle}{|\xi|+|k|\langle t\rangle}\lesssim_\delta \frac{|k|\langle t\rangle}{|\eta|+|k|\langle t\rangle}e^{\delta\min(\langle\xi-\eta\rangle,\langle k,\eta\rangle)^{1/2}},
\end{equation}
and the desired bounds \eqref{rew1.2} follow using also Lemma \ref{Multi0} (ii).

(ii) To prove the bounds \eqref{nar4} we write
\begin{equation}\label{rew6}
\begin{split}
&B'(t,v)=B'_{\ast}(t,v)+B'_0(v),\qquad V'(t,v)=V'_{\ast}(t,v)+B'_0(v),\\
&B''(t,v)=B''_{\ast}(t,v)+B''_0(v),\\
\end{split}
\end{equation}
and recall also that $V''=\partial_v(V')^2/2$, see \eqref{rea27}. The functions $B'_0$, $1/B'_0$, and $B''_0$ do not depend on $t$ and satisfy the bounds \eqref{rew6.44}, as a consequence of Lemmas \ref{lm:Gevrey} and \ref{GPF} and the assumptions \eqref{IntB1}--\eqref{IntB2}. The desired bounds \eqref{nar4} follow using the algebra property \eqref{rew1}, the bootstrap assumptions \eqref{boot2} on $V'_{\ast}, B'_{\ast},B''_{\ast}$, and the identities \eqref{rew6}, as long as $\eps_1$ is sufficiently small depending on $\delta$ (see \cite[Lemma 4.2]{IOJI2} for complete details).

(iii) To prove \eqref{nar7} we use the formula $\partial_v\dot{V}=\mathcal{H}/(tV')$ (see \eqref{rea27}) and the bootstrap assumptions \eqref{boot2}. Since $\dot{V}$ and $\mathcal{H}$  are supported in $[b(\vartheta_0),b(1-\vartheta_0)]$ we have
\begin{equation*}
tV'\partial_v\dot{V}=\Psi \mathcal{H}=\Psi(B'_\ast-V'_\ast-\langle F\rangle),
\end{equation*}
see \eqref{rea23}, where $\Psi$ is as in \eqref{rec0}. The bootstrap assumptions \eqref{boot2} show that
\begin{equation}\label{rew6.45}
\|V'_\ast\|_R+\|B'_\ast\|_R+\|\langle F\rangle\|_{NR}\lesssim \eps_1.
\end{equation}
The desired bounds \eqref{nar7} follow using \eqref{boot2}, the bilinear estimates \eqref{TLX4}--\eqref{DtVMulti} with $Y=NR$, and the bounds $\|\Psi (V')^{-1}\|_R\lesssim 1$ (see also \cite[Lemma 4.5]{IOJI2}).
\end{proof}

We record now bounds on some of the functions that apear in the right-hand sides of the equations \eqref{rea23.1} and \eqref{rea25}.

\begin{lemma}\label{nar13}
(i) For any $t\in[1,T]$ and $h_1\in\{(V')^a\partial_z(\Psi\phi):\,a\in[-2,2]\}$ we have
\begin{equation}\label{nar34}
\begin{split}
&\sum_{k\in \mathbb{Z}\setminus\{0\}}\int_{\R}A_k^2(t,\xi)\frac{k^2\langle t\rangle^4\langle t-\xi/k\rangle^4}{(|\xi/k|^2+\langle t\rangle^2)^2}\big|\widetilde{h_1}(t,k,\xi)\big|^2\,d\xi\lesssim_\delta\eps_1^2\\
&\int_1^t\sum_{k\in \mathbb{Z}\setminus\{0\}}\int_{\R}|\dot{A}_k(s,\xi)|A_k(s,\xi)\frac{k^2\langle s\rangle^4\langle s-\xi/k\rangle^4}{(|\xi/k|^2+\langle s\rangle^2)^2}\big|\widetilde{h_1}(s,k,\xi)\big|^2\,d\xi ds\lesssim_\delta\eps_1^2.
\end{split}
\end{equation}

(ii) For any $t\in[1,T]$ and $h_2\in\{(V')^a\partial_v\mathbb{P}_{\neq 0}(\Psi\phi):\,a\in[-2,2]\}$ we have
\begin{equation}\label{nar14} 
\begin{split}
&\sum_{k\in \mathbb{Z}\setminus\{0\}}\int_{\R}A_k^2(t,\xi)\frac{k^4\langle t\rangle^2\langle t-\xi/k\rangle^4}{(|\xi/k|^2+\langle t\rangle^2)\langle\xi\rangle^2}\big|\widetilde{h_2}(t,k,\xi)\big|^2\,d\xi\lesssim_\delta\eps_1^2\\
&\int_1^t\sum_{k\in \mathbb{Z}\setminus\{0\}}\int_{\R}|\dot{A}_k(s,\xi)|A_k(s,\xi)\frac{k^4\langle s\rangle^2\langle s-\xi/k\rangle^4}{(|\xi/k|^2+\langle s\rangle^2)\langle\xi\rangle^2}\big|\widetilde{h_2}(s,k,\xi)\big|^2\,d\xi ds\lesssim_\delta\eps_1^2.
\end{split}
\end{equation}

(iii) If $g\in \{(V')^a\langle \partial_z\phi\partial_vF\rangle,\,(V')^a\langle\partial_v\mathbb{P}_{\neq0}\phi\partial_zF\rangle,\,a\in[-2,2]\cap\mathbb{Z}\}$ then, for any $t\in[1,T]$,
\begin{equation}\label{yar24}
\begin{split}
&\int_{\R}|\dot{A}_{NR}(t,\xi)|^{-2}A^4_{NR}(t,\xi)\big(\langle t\rangle^{3/2}\langle\xi\rangle^{-3/2}\big)|\widetilde{g}(t,\xi)|^2\,d\xi\lesssim_\delta \eps_1^4,\\
&\int_1^t\int_{\R}|\dot{A}_{NR}(s,\xi)|^{-1}A^3_{NR}(s,\xi)\big(\langle s\rangle^{3/2}\langle\xi\rangle^{-3/2}\big)|\widetilde{g}(s,\xi)|^2\,d\xi ds\lesssim_\delta \eps_1^4.
\end{split}
\end{equation}
\end{lemma}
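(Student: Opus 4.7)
The proof splits naturally into three pieces, and parts (i) and (ii) both reduce to the elliptic identity
\[
\widetilde{\Theta}(t,k,\xi) = -\big(k^2+(\xi-tk)^2\big)\widetilde{\Psi\phi}(t,k,\xi),
\]
which follows immediately from the definition of $\Theta$ in \eqref{defgellip}. For part (i), this gives, for $k\neq 0$, $\widetilde{\partial_z(\Psi\phi)}(t,k,\xi) = -ik\widetilde{\Theta}(t,k,\xi)/(k^2+(\xi-tk)^2)$. Using $k^2+(\xi-tk)^2 = k^2\langle t-\xi/k\rangle^2$, a direct calculation yields
\[
\frac{k^2\langle t\rangle^4\langle t-\xi/k\rangle^4}{(|\xi/k|^2+\langle t\rangle^2)^2}\big|\widetilde{\partial_z(\Psi\phi)}\big|^2 = \frac{k^4\langle t\rangle^4}{(|\xi|^2+k^2\langle t\rangle^2)^2}\big|\widetilde{\Theta}\big|^2 \leq \frac{k^2\langle t\rangle^2}{|\xi|^2+k^2\langle t\rangle^2}\big|\widetilde{\Theta}\big|^2.
\]
The right-hand side is exactly the density defining $\mathcal{E}_\Theta$ and $\mathcal{B}_\Theta$, so the bootstrap bounds \eqref{boot2} give \eqref{nar34} for $h_1 = \partial_z(\Psi\phi)$. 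To handle $(V')^a$ for general $a\in[-2,2]$, I use the support inclusion $\operatorname{supp}\partial_z(\Psi\phi)\subseteq\operatorname{supp}\Psi$ to insert a Gevrey cutoff $\Psi_1$, obtaining $(V')^a\partial_z(\Psi\phi) = \Psi_1(V')^a\partial_z(\Psi\phi)$; Lemma~\ref{nar8}(ii) bounds $\Psi_1(V')^a$ in the $R$-norm, and a variant of \eqref{rew1.2} adapted to the modified weight closes the bound. This variant reduces, by the standard proof of \eqref{rew1.2}, to showing
\[
\frac{k^2\langle t\rangle^4\langle t-\xi/k\rangle^4}{(|\xi/k|^2+\langle t\rangle^2)^2}\lesssim_\delta \frac{k^2\langle t\rangle^4\langle t-\eta/k\rangle^4}{(|\eta/k|^2+\langle t\rangle^2)^2}e^{\delta\min(\langle\xi-\eta\rangle,\langle k,\eta\rangle)^{1/2}},
\]
which follows by elementary case analysis distinguishing $|\xi-\eta|\lesssim |k|\langle t\rangle$ versus the opposite regime.

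Part (ii) is almost identical, using instead $\widetilde{\partial_v(\Psi\phi)}(t,k,\xi) = -i\xi\widetilde{\Theta}(t,k,\xi)/(k^2+(\xi-tk)^2)$ and the reduction
\[
\frac{k^4\langle t\rangle^2\langle t-\xi/k\rangle^4}{(|\xi/k|^2+\langle t\rangle^2)\langle\xi\rangle^2}\big|\widetilde{\partial_v(\Psi\phi)}\big|^2 = \frac{\langle t\rangle^2|\xi|^2}{(|\xi/k|^2+\langle t\rangle^2)\langle\xi\rangle^2}\big|\widetilde{\Theta}\big|^2 \leq \frac{k^2\langle t\rangle^2}{|\xi|^2+k^2\langle t\rangle^2}\big|\widetilde{\Theta}\big|^2,
\]
which again is bounded by the $\mathcal{E}_\Theta,\mathcal{B}_\Theta$ density. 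The $(V')^a$ step is handled as in part (i).

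Part (iii) is where the new work lies. Since $\partial_z\phi$ kills the zero mode, $\partial_z\phi=\partial_z\mathbb{P}_{\neq 0}\phi$, and by the support of $F$ I may replace $\phi$ by $\Psi\phi$ in the products. Thus for $g=(V')^a\langle\partial_z(\Psi\phi)\,\partial_v F\rangle$ (and similarly for the other ordering),
\[
\widetilde{g}(t,\xi) = c\int\widetilde{(V')^a}(\xi-\rho)\sum_{\ell\in\mathbb{Z}^\ast}\int\widetilde{\partial_z(\Psi\phi)}(t,\ell,\rho-\eta)\,\widetilde{\partial_v F}(t,-\ell,\eta)\,d\eta\,d\rho.
\]
The plan is to apply Lemma~\ref{Multi0}(ii)--(iii) together with the bilinear bounds \eqref{TLX7}--\eqref{vfc30.7} and \eqref{DtVMulti} to distribute $A_{NR}(t,\xi)$ across the two factors: into $A_\ell(t,\rho-\eta)$ (consumed by part (i), since $\partial_z(\Psi\phi)$ already carries the strong elliptic weight from part~(i)) and $A_\ell(t,\eta)$ (consumed by $\mathcal{E}_F$ and $\mathcal{B}_F$ from the bootstrap). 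The time-derivative weight is redistributed via \eqref{eq:CDW}. The key algebraic step is that the awkward weight
\[
|\dot{A}_{NR}|^{-2}A_{NR}^4\langle t\rangle^{3/2}\langle\xi\rangle^{-3/2} \lesssim_\delta A_{NR}^2\,\langle t\rangle^{7/2+2\sigma_0}\langle\xi\rangle^{-5/2}
\]
(using $|\dot{A}_{NR}/A_{NR}|\gtrsim\langle\xi\rangle^{1/2}\langle t\rangle^{-1-\sigma_0}$) is compensated by pairing the $\langle t\rangle^{7/2}$ loss with the decay $\langle t\rangle^{4}/(|\xi/\ell|^2+\langle t\rangle^2)^2$ that part~(i) attaches to $\widetilde{\partial_z(\Psi\phi)}$. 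The $(V')^a$ multiplier is absorbed exactly as in parts~(i)--(ii). The $L^2_t$ bound in \eqref{yar24} is then obtained by the same splitting, one of the two factors being measured in its $\mathcal{B}$-norm and the other in its $\mathcal{E}$-norm.

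The main obstacle is precisely the bookkeeping in part~(iii): matching the inverse-dissipation weight $|\dot{A}_{NR}|^{-2}A_{NR}^4\langle t\rangle^{3/2}\langle\xi\rangle^{-3/2}$ on the product to the available weights on $\partial_z(\Psi\phi)$, $\partial_v\mathbb{P}_{\neq 0}(\Psi\phi)$, and $F$ in an $\eps_1^4$-integrable way. This requires a careful case split according to the three regions $R_0\cup R_1$, $R_2$, $R_3$ from \eqref{nar18.1}--\eqref{nar18.4}, and in the resonant regime ($t\in I_{\ell,\eta}$) one must rely on the improved form \eqref{vfc27} of the weight comparison together with the refined smoothness bound \eqref{eq:CDW} so that the time-derivative weights on the two factors can be freely exchanged. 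Once this case analysis is in place, both inequalities of \eqref{yar24} follow from Lemma~\ref{Multi0}, \eqref{boot2}, and parts~(i)--(ii).
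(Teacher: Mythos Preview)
Your treatment of parts (i) and (ii) is correct and essentially complete: the elliptic identity $\widetilde{\Theta}=-(k^2+(\xi-tk)^2)\widetilde{\Psi\phi}$ reduces both to the bootstrap control on $\mathcal{E}_\Theta,\mathcal{B}_\Theta$, and the $(V')^a$ prefactor is handled exactly as you say via Lemma~\ref{nar8} and a weight-stability estimate of the type \eqref{rew4.2}. This matches the argument in \cite[Lemma~4.3]{IOJI2} to which the paper defers.

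For part (iii) your overall architecture is right --- distribute the $A_{NR}$ weight onto the two factors using \eqref{TLX4}--\eqref{vfc30.7}, feed $\partial_z(\Psi\phi)$ and $\partial_v\mathbb{P}_{\neq 0}(\Psi\phi)$ into parts (i)--(ii), feed $F$ into the bootstrap, and split into the regions $R_0\cup R_1,R_2,R_3$ --- and this is indeed how \cite[Lemma~4.6]{IOJI2} proceeds. However, your ``key algebraic step'' is misleading as written. The factor $\langle t\rangle^4/(|\xi/\ell|^2+\langle t\rangle^2)^2$ appearing in the weight of part~(i) does \emph{not} provide uniform $\langle t\rangle^{-4}$ decay: in the regime $|\xi|\lesssim |\ell|\langle t\rangle$ it is $\approx 1$, and in the regime $|\xi|\gg |\ell|\langle t\rangle$ it is cancelled by $\langle t-\xi/\ell\rangle^4$. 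The genuine compensation comes from the factor $\langle t-\xi/\ell\rangle^2$ (which gives $\langle t\rangle^2$ away from resonance) together with, near resonance, the largeness of $|\dot A_k/A_k|$ and the improved comparison \eqref{vfc27} --- precisely the mechanism you invoke two sentences later. So your sketch contains all the right pieces; just be aware that the single displayed inequality you call the ``key step'' is not itself the pointwise bound that closes the argument, and the weight matching has to be done region by region rather than globally. Since the paper itself only cites \cite{IOJI2} here, your level of detail is comparable, and the approach is the same.
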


\begin{proof} See \cite[Lemma 4.3]{IOJI2} for (i) and (ii), and \cite[Lemma 4.6]{IOJI2} for the proof of (iii).
\end{proof}

\subsection{Green's functions and elliptic estimates} Assume that $\varphi',f':\mathbb{T}\times[0,1]\to\mathbb{C}$ are $C^2$ functions satisfying
\begin{equation}\label{elli9}
(\partial_x^2+\partial_y^2)\varphi'=f',\qquad \varphi'(x,0)=\varphi'(x,1)=0.
\end{equation}
Then $\varphi'$ can be determined explicitly through an integral operator. Indeed, we can write
\begin{equation}\label{elli10}
\varphi'_k(y)=-\int_0^1f'_k(y')G_k(y,y')\,dy',
\end{equation}
where
\begin{equation}\label{elli11}
G_k(y,z):=\frac{1}{k\sinh k}
\begin{cases}
\sinh(k(1-z))\sinh(ky)&\qquad\text{ if }y\leq z,\\
\sinh(kz)\sinh(k(1-y))&\qquad\text{ if }y\geq z,
\end{cases}
\qquad k\in\mathbb{Z}\setminus\{0\},
\end{equation}
\begin{equation}\label{elli11.2}
G_0(y,z):=
\begin{cases}
(1-z)y&\qquad\text{ if }y\leq z,\\
z(1-y)&\qquad\text{ if }y\geq z,
\end{cases}
\end{equation}
is the Green's function associated to the equation \eqref{elli9}, and
\begin{equation}\label{elli11.5}
\varphi'_k(y):=\frac{1}{2\pi}\int_{\mathbb{T}}\varphi'(x,y)e^{-ikx}\,dx,\qquad f'_k(y):=\frac{1}{2\pi}\int_{\mathbb{T}}f'(x,y)e^{-ikx}\,dx,
\end{equation}
denote the $k$-th Fourier coefficient of the functions $\varphi'$ and $f'$.

We prove now an important lemma concerning elliptic estimates adapted to our situation. 

\begin{lemma}\label{lm:elli3}
Assume that $f\in C([0,T]:H^4(\mathbb{T}\times[b(0),b(1)]))$ is supported in $\T\times[b(\vartheta_0),b(1-\vartheta_0)]$. Then there is a unique solution $\varphi\in C([0,T]:H^4(\mathbb{T}\times[b(0),b(1)]))$ of the problem
\begin{equation}\label{elli4}
\partial_z^2\varphi+(B'_0)^2(\partial_v-t\partial_z)^2\varphi+B''_0(\partial_v-t\partial_z)\varphi=f(t,z,v),
\end{equation}
with Dirichlet boundary conditions $\varphi(t,z,b(0))=\varphi(t,z,b(1))=0$. Moreover, if $t_1,t_2\in[0,T]$ then, recalling the definitions \eqref{rew0}--\eqref{rew0.4},
\begin{equation}\label{elli6}
\begin{split}
\big\|P_{\neq 0}[\partial_z^2+(\partial_v-t\partial_z)^2](\Psi\varphi)\big\|_{W[t_1,t_2]}\lesssim_\delta \|f\|_{W[t_1,t_2]},\\
\big\|P_{\neq 0}[\partial_z^2+(\partial_v-t\partial_z)^2](\Psi\varphi)\big\|_{\widetilde{W}[t_1,t_2]}\lesssim_\delta\|f\|_{\widetilde{W}[t_1,t_2]}.
\end{split}
\end{equation}
\end{lemma}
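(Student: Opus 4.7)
The plan is first to establish existence and uniqueness via the change of coordinates $\psi(t,x,y):=\varphi(t,x-tb(y),b(y))$ and $\tilde f(t,x,y):=f(t,x-tb(y),b(y))$. The chain rule gives $\partial_x=\partial_z$ and $\partial_y=B'_0(\partial_v-t\partial_z)$, so the left-hand side of \eqref{elli4} transforms into the standard Laplacian acting on $\psi$, and the equation becomes $\Delta\psi=\tilde f$ on $\mathbb{T}\times[0,1]$ with Dirichlet boundary conditions $\psi(t,x,0)=\psi(t,x,1)=0$. Standard elliptic theory yields a unique $H^4$ solution, given explicitly via the Green's function formula \eqref{elli10}--\eqref{elli11}, and pulling back gives $\varphi$.

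For the quantitative bounds \eqref{elli6}, the plan is to derive an integral representation for $\Phi:=P_{\neq 0}L_0(\Psi\varphi)$ with $L_0:=\partial_z^2+(\partial_v-t\partial_z)^2$. Taking Fourier in $z$ in the Green's function formula and undoing the change of variables yields, for $k\neq 0$,
\begin{equation*}
\varphi_k(t,v)=-\int e^{ikt(v-v')}f_k(t,v')\,\frac{G_k(b^{-1}(v),b^{-1}(v'))}{B'_0(v')}\,dv'.
\end{equation*}
Multiplying by $\Psi(v)$, taking Fourier in $v$, and using that the symbol of $L_0$ is $-(k^2+(\xi-kt)^2)$, one expresses $\widetilde{\Phi}(t,k,\xi)$ as an integral operator $\int M_k(t,\xi,\eta)\widetilde{f}(t,k,\eta)\,d\eta$, whose kernel $M_k$ incorporates the cutoff $\Psi$, the change of variables $b$ and $b^{-1}$, the factor $1/B'_0$, and the Green's function $G_k$. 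The crucial point is that the factor $k^2+(\xi-kt)^2$ supplied by $L_0$ is essentially cancelled by the decay built into $G_k$ (morally the inverse of $-\partial_y^2+k^2$), so $M_k$ is bounded; moreover, since $\Psi$, $b$, $b^{-1}$, and $1/B'_0$ are all Gevrey-$1/2$ regular on the supports involved by \eqref{rec0}, assumption $(A)$, Lemma \ref{GPF}, and \eqref{rew6.44}, the kernel decays in $|\xi-\eta|$ at a Gevrey-$1/2$ rate.

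With such a kernel estimate, the bound \eqref{elli6} for the $W$-norm follows by applying Lemma \ref{Multi0}(iii), taking $m_2=A_k$ and letting $m_1$ absorb the Gevrey decay of $M_k$; the resulting multiplier norm on the ``first factor'' is finite by the Gevrey decay, while the ``second factor'' norm is precisely the $W[t_1,t_2]$-norm of $f$. For the $\widetilde{W}$-bound, the additional localizing weight $k^2\langle t\rangle^2/(|\xi|^2+k^2\langle t\rangle^2)$ must be transferred across the convolution; this follows from the elementary inequality \eqref{rew4.2}, which shows stability of this factor under Gevrey-decaying perturbations of $\xi$.

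The main obstacle I anticipate is rigorously establishing the Gevrey decay of $M_k$. The Green's function $G_k$ on $[0,1]$ has the explicit $\sinh$ formulas \eqref{elli11}, whose pull-back under $v=b(y)$ is not an elementary object, and extracting the decay requires combining the exponential bound on $G_k$ in $|y-y'|$ (valid for $|k|\geq 1$ and sharp in $k$) with Lemma \ref{GPF}(ii) on compositions, while tracking carefully how the cancellation between $k^2+(\xi-kt)^2$ and the ``Laplacian inverse'' structure of $G_k$ interacts on the Fourier side with the oscillating phase $e^{ikt(v-v')}$. The boundary corrections $\sinh(k(1-y))\sinh(ky)/(k\sinh k)$ are harmless for $|k|\geq 1$ since they decay exponentially, but handling them uniformly in $k$ and $t$ requires some bookkeeping.
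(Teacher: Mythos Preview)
Your approach is essentially the same as the paper's: reverse the change of variables to reduce to $\Delta\psi=\tilde f$ on $\mathbb{T}\times[0,1]$, invoke the explicit Green's function \eqref{elli10}--\eqref{elli11}, pull back to $(z,v)$-coordinates to get the representation \eqref{elli12}, multiply by $\Psi$ and take the Fourier transform to obtain an integral operator with kernel $K(\xi-kt,kt-\eta)$, and then combine the kernel decay with \eqref{TLX7} and \eqref{eq:CDW}. The one point worth noting is that the obstacle you anticipate---the Gevrey decay of the kernel together with the cancellation of $k^2+(\xi-kt)^2$---is exactly the bound \eqref{elli15}, which the paper does not prove from scratch but imports from \cite[Lemma~A3]{JiaG}; so your instinct that this is the nontrivial technical ingredient is correct, and the resolution is to quote that lemma rather than redo the $\sinh$ analysis by hand.
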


\begin{proof} We reverse the change of variables \eqref{changeofvariables1}, so we define
\begin{equation}\label{elli8}
\varphi'(t,x,y):=\varphi(t,x-tb(y),b(y)),\qquad f'(t,x,y):=f(t,x-tb(y),b(y)).
\end{equation}
The functions $\varphi',f'$ satisfy the equation \eqref{elli9} for any $t\in[0,T]$, therefore, using \eqref{elli8},
\begin{equation*}
\varphi_k(t,b(y))=-\int_0^1f_k(t,b(y'))G_k(y,y')e^{ikt((b(y)-b(y'))}\,dy'.
\end{equation*}
Thus, letting $\mathcal{G}_k(b(y),b(y')):=G_k(y,y')$, we have
\begin{equation}\label{elli12}
\varphi_k(t,v)=-\int_{b(0)}^{b(1)}f_k(t,w)\mathcal{G}_k(v,w)e^{ikt(v-w)}(1/B'_0(w))\,dw,
\end{equation}
Recall that $f(t)$ is supported in $\T\times[b(\vartheta_0),b(1-\vartheta_0)]$. We  multiply \eqref{elli12} by $\Psi(v)\Psi(w)$, and take the Fourier transform in $v$ and $w$. Thus
\begin{equation}\label{elli13}
\begin{split}
\widetilde{\Psi\varphi}(t,k,\xi)&=C\int_{\mathbb{R}}\widetilde{f}(t,k,\eta)K(\xi-kt,kt-\eta)\,d\eta,\\
K(\mu,\nu):&=\int_{\mathbb{R}^2}\Psi(v)\Psi(w)\mathcal{G}_k(v,w)(1/B'_0(w))e^{-iv\mu}e^{-iw\nu}\,dvdw.
\end{split}
\end{equation}

The kernel $K$ satisfies the bounds, for $k\neq 0$,
\begin{equation}\label{elli15}
|K(\mu,\nu)|\lesssim \frac{e^{-4\delta_0\langle\mu+\nu\rangle^{1/2}}}{k^2+|\mu|^2}.
\end{equation}
This is proved in \cite[Lemma A3]{JiaG}, using the explicit formula \eqref{elli11}. Therefore, using \eqref{elli13}, 
\begin{equation}\label{elli16}
(k^2+|\xi-kt|^2)|\widetilde{\Psi\varphi}(t,k,\xi)|\lesssim \int_{\mathbb{R}}|\widetilde{f}(t,k,\eta)|e^{-4\delta_0\langle\xi-\eta\rangle^{1/2}}\,d\eta,
\end{equation}
for $k\neq 0$. It follows from \eqref{TLX7} that, for any $\xi,\eta\in\mathbb{R}$, $k\in\mathbb{Z}\setminus \{0\}$, and $t\geq 0$,
\begin{equation}\label{elli18}
A_k(t,\xi)\lesssim_{\delta}A_k(t,\eta)e^{2\delta_0\langle \xi-\eta\rangle^{1/2}}.
\end{equation}
The inequalities in \eqref{elli6} follow from \eqref{elli16}--\eqref{elli18} and the definitions \eqref{rew0.2}--\eqref{rew0.4}, using also \eqref{eq:CDW} (for the space-time bound).
\end{proof}

\section{Improved control on the coordinate functions $V'_{\ast}, B'_{\ast}, B''_{\ast}, \mathcal{H}$}\label{SecCoor}

In this subsection we prove the following bounds:

\begin{proposition}\label{Coor0.1}
With the definitions and assumptions in Proposition \ref{MainBootstrap}, we have
\begin{equation}\label{Coor0.2}
\sum_{g\in\{V'_{\ast},B'_{\ast},B''_{\ast},\mathcal{H}\}}\left[\E_{g}(t)+\mathcal{B}_g(t)\right]\leq \epsilon_1^2/2\qquad {\rm for\,\,any\,\,}t\in[1,T].
\end{equation}
\end{proposition}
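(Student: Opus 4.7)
The plan is to perform energy estimates for each of the four quantities, showing that since $\E_g(1)+\mathcal{B}_g(1)\le \eps_1^3$ by \eqref{boot1}, the increment on $[1,T]$ is again of order $\eps_1^3$, so the total is $C\eps_1^3\le \eps_1^2/2$ because $\eps_1\ll\delta$. The first step is to rewrite the equations \eqref{rea23.8}--\eqref{rea25} in terms of the perturbative variables, using $\partial_tB'_0=\partial_tB''_0=0$:
\begin{align*}
&\partial_tB'_\ast+\dot{V}\partial_vB'_\ast=-\dot{V}\,\partial_vB'_0,\qquad \partial_tB''_\ast+\dot{V}\partial_vB''_\ast=-\dot{V}\,\partial_vB''_0,\\
&\partial_tV'_\ast+\dot{V}\partial_vV'_\ast=\mathcal{H}/t-\dot{V}\,\partial_vB'_0,\\
&\partial_t\mathcal{H}+\dot{V}\partial_v\mathcal{H}=-\mathcal{H}/t-V'\big\langle\partial_vP_{\neq0}\phi\cdot\partial_zF\big\rangle+V'\big\langle\partial_z\phi\cdot\partial_vF\big\rangle.
\end{align*}
For each $g\in\{V'_\ast,B'_\ast,B''_\ast\}$ I would then differentiate $\int A_R^2(t,\xi)|\widetilde g(t,\xi)|^2\,d\xi$ in time; the factor $2A_R\dot{A}_R\le 0$ produces the $\mathcal{B}_g$ space-time term with a definite sign, and the remainder must be estimated from the transport and source contributions. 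The $\mathcal{H}$ energy is handled analogously using the $NR$ weight with the extra factor $(\langle t\rangle/\langle\xi\rangle)^{3/2}$.

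The transport terms $\dot V\partial_vg$ are dealt with by symmetrization in Fourier, reducing them to commutator expressions involving $A_Y^2(t,\xi)-A_Y^2(t,\eta)$ for $Y\in\{R,NR\}$. These are controlled using Lemma \ref{OldBilin} together with the weight-smoothness bound of Lemma \ref{A-A}, which converts the weight loss into additional factors multiplying $\dot V$, and the strong $A_{NR}$-weighted estimates \eqref{nar7} on $\dot{V}$. The $\langle s\rangle^{7/2}$-improvement in \eqref{nar7} makes the resulting quadratic expression integrable in time with integral $O(\eps_1^3/\mathcal{K})$. The source $-\dot V\partial_vB'_0$ (and the analogous $B''_0$ version) is estimated via Lemma \ref{Multi0}, using the $\mathcal{G}^{4\delta_0,1/2}$-bound \eqref{rew6.44} on $B'_0$ paired with \eqref{nar7}. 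For $V'_\ast$ the source $\mathcal{H}/t$ is handled by Cauchy--Schwarz: since $\mathcal{E}_{\mathcal{H}}$ carries the factor $\mathcal{K}^2(\langle t\rangle/\langle\xi\rangle)^{3/2}$, the $A_R$-weighted $L^2$ norm of $\mathcal{H}/t$ is of size $\eps_1/(\mathcal{K}t\langle t\rangle^{3/4})$, which is square-integrable in $t$.

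The $\mathcal{H}$ energy is the most delicate. Differentiating the weight $(\langle t\rangle/\langle\xi\rangle)^{3/2}$ in $t$ produces a positive contribution of size $\tfrac{3}{2}\langle t\rangle^{-1}(\langle t\rangle/\langle\xi\rangle)^{3/2}|\widetilde{\mathcal{H}}|^2$, but the $-\mathcal{H}/t$ term on the right-hand side of the $\mathcal{H}$ equation produces $-2(\langle t\rangle/\langle\xi\rangle)^{3/2}|\widetilde{\mathcal{H}}|^2/t$, and the combination $\tfrac{3}{2}-2<0$ shows that the weight growth is overcompensated by the $-\mathcal{H}/t$ damping. The transport term is treated as in the previous paragraph. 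The nonlinear source terms $V'\langle\partial_vP_{\neq0}\phi\cdot\partial_zF\rangle$ and $V'\langle\partial_z\phi\cdot\partial_vF\rangle$ are controlled directly using \eqref{yar24} from Lemma \ref{nar13}(iii), which is tailored exactly to this $NR$-weighted, $(\langle t\rangle/\langle\xi\rangle)^{3/2}$-weighted norm and gives a quadratic-in-$\eps_1$ contribution of size $\eps_1^4$ that is integrable in time.

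The main obstacle is the tight coupling between the $\mathcal{H}$ estimate and the transport commutator estimates on the other three quantities: the large constant $\mathcal{K}$ in \eqref{rec3} must simultaneously be (i) large enough so that the bounds \eqref{nar7} on $\dot V=\int_{b(0)}^v(\mathcal{H}/(sV'))$ come with a factor $1/\mathcal{K}$ absorbing the $\delta$-dependent constants appearing in the commutator estimates, yet (ii) chosen so that the initial-time bound $\mathcal{E}_{\mathcal{H}}(1)\le \eps_1^3$ from \eqref{boot1} still holds. Since by Lemma \ref{nar8}(iii) it suffices to take $\mathcal{K}$ depending only on $\delta$, and \eqref{boot1} gives $\eps_1^3$ with the same $\mathcal{K}$ built in, this is consistent. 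Once $\mathcal{K}$ is fixed and $\eps_1$ is taken sufficiently small depending on $\delta$, all increments are bounded by $C(\delta)\eps_1^3\le\eps_1^2/2$, which closes \eqref{Coor0.2}.
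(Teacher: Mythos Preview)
Your overall strategy matches the paper's approach closely: rewrite the equations in perturbative form, differentiate the energies, exploit the sign of $\dot A_Y$, symmetrize the transport terms, observe that the $-2/t$ damping in the $\mathcal H$ equation beats the $+3/(2t)$ growth from differentiating $(\langle t\rangle/\langle\xi\rangle)^{3/2}$, and control the quadratic sources in the $\mathcal H$ equation via \eqref{yar24}. Two points deserve correction.

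First, a minor issue: for the transport commutators $\big|\eta A_Y^2(s,\xi)-\xi A_Y^2(s,\eta)\big|$ with $Y\in\{R,NR\}$ you appeal to Lemma~\ref{A-A}, but that lemma is stated only for the weights $A_k$. The paper instead uses dedicated bilinear estimates \eqref{tol7}--\eqref{TLY2.3} (imported from \cite[Lemma~8.9]{IOJI}), which are the correct tools here.

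Second, and more substantively, your handling of the source $\mathcal H/t$ in the $V'_\ast$ equation has a gap. You assert that the $A_R$-weighted $L^2$ norm of $\mathcal H/t$ is of size $\eps_1/(\mathcal K\,t\,\langle t\rangle^{3/4})$, but the bootstrap controls $\mathcal H$ only in the \emph{weaker} $A_{NR}$ norm (recall $A_R\ge A_{NR}$, with ratio potentially as large as $|\xi|/k^2$ at resonant times), and moreover the factor $(\langle t\rangle/\langle\xi\rangle)^{3/4}$ lives inside the frequency weight, so for $|\xi|\gtrsim\langle t\rangle$ it yields no time decay at all. After the correct Cauchy--Schwarz split the quantity to bound is
\[
\int_1^t\!\int_\R \frac{A_R^3(s,\xi)}{s^2\,|\dot A_R(s,\xi)|}\,|\widetilde{\mathcal H}(s,\xi)|^2\,d\xi\,ds,
\]
and this is not $O(\eps_1^3)$ directly. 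The paper proves instead (citing \cite[Lemma~6.2]{IOJI}) the pointwise weight inequality
\[
\frac{A_R^3(s,\xi)}{s^2|\dot A_R(s,\xi)|}\le A_{NR}(s,\xi)\,|\dot A_{NR}(s,\xi)|\Big(C_\delta^{-1}+\mathcal K^2\langle s\rangle^{3/2}\langle\xi\rangle^{-3/2}\Big),
\]
which converts the troublesome integral into something controlled by $\mathcal B_{\mathcal H}$; that term is then absorbed by the available negative $-\sum_g\mathcal B_g$ in \eqref{yar6}. This inequality is nontrivial---it must compensate the large ratio $A_R/A_{NR}$ using the factor $|\dot A_{NR}|$---and it is exactly here, not in the transport commutators, that the constant $\mathcal K$ is fixed. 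Your final paragraph therefore has the role of $\mathcal K$ slightly misplaced.
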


The rest of the subsection is concerned with the proof of this proposition. The arguments are similar to the arguments in \cite[Section 6]{IOJI}, and we will be somewhat brief.

Using the definitions \eqref{rec2}--\eqref{rec3} we calculate
\begin{equation*}
\begin{split}
\frac{d}{dt}\sum_{g\in\{\mathcal{H},V'_{\ast},B'_{\ast},B''_{\ast}\}}\mathcal{E}_{g}(t)&=2\mathcal{K}^{2}\int_{\R}\dot{A}_{NR}(t,\xi)A_{NR}(t,\xi)\big(\langle t\rangle^{3/2}\langle\xi\rangle^{-3/2}\big)\big|\widetilde{\mathcal{H}}(t,\xi)\big|^2\,d\xi\\
&+\mathcal{K}^{2}2\Re\int_{\R}A^2_{NR}(t,\xi)\big(\langle t\rangle^{3/2}\langle\xi\rangle^{-3/2}\big)\partial_t\widetilde{\mathcal{H}}(t,\xi)\overline{\widetilde{\mathcal{H}}(t,\xi)}\,d\xi\\
&+\mathcal{K}^{2}\int_{\R}A^2_{NR}(t,\xi)\frac{3}{2}\big(t\langle t\rangle^{-1/2}\langle\xi\rangle^{-3/2}\big)\big|\widetilde{\mathcal{H}}(t,\xi)\big|^2\,d\xi\\
&+2\sum_{U\in\{V'_\ast,B'_{\ast},B''_{\ast}\}}\int_\R \dot{A}_R(t,\xi)A_R(t,\xi)\big|\widetilde{U}(t,\xi)\big|^2\,d\xi\\
&+2\sum_{U\in\{V'_\ast,B'_{\ast},B''_{\ast}\}}\Re\int_\R A^2_R(t,\xi)\partial_t\widetilde{U}(t,\xi)\overline{\widetilde{U}(t,\xi)}\,d\xi.
\end{split}
\end{equation*}
Therefore, since $\partial_tA_R\leq 0$ and $\partial_tA_{NR}\leq 0$, for any $t\in[1,T]$ we have
\begin{equation}\label{yar2}
\begin{split}
&\sum_{g\in\{\mathcal{H},V'_{\ast},B'_{\ast},B''_{\ast}\}}\left[\mathcal{E}_{g}(t) +\mathcal{B}_{g}(t)\right]\\
&=\sum_{g\in\{\mathcal{H},V'_{\ast},B'_{\ast},B''_{\ast}\}}\mathcal{E}_{g}(1)-\bigg[\sum_{g\in\{\mathcal{H},V'_{\ast},B'_{\ast},B''_{\ast}\}}\mathcal{B}_g(t)\bigg]+\mathcal{L}_1(t)+\mathcal{L}_2(t),
\end{split}
\end{equation}
where
\begin{equation}\label{yar3}
\begin{split}
\mathcal{L}_1(t):=2\Re\sum_{U\in\{V'_\ast,B'_{\ast},B''_{\ast}\}}\int_1^t\int_\R A^2_R(s,\xi)\partial_s\widetilde{U}(s,\xi)\overline{\widetilde{U}(s,\xi)}\,d\xi ds,
\end{split}
\end{equation}
\begin{equation}\label{yar4}
\begin{split}
\mathcal{L}_2(t):&=\mathcal{K}^{2}2\Re\int_1^t\int_{\R}A^2_{NR}(s,\xi)\big(\langle s\rangle^{3/2}\langle\xi\rangle^{-3/2}\big)\partial_s\widetilde{\mathcal{H}}(s,\xi)\overline{\widetilde{\mathcal{H}}(s,\xi)}\,d\xi ds\\
&+\mathcal{K}^{2}\int_1^t\int_{\R}A^2_{NR}(s,\xi)\frac{3}{2}\big(s\langle s\rangle^{-1/2}\langle\xi\rangle^{-3/2}\big)\big|\widetilde{\mathcal{H}}(s,\xi)\big|^2\,d\xi ds.
\end{split}
\end{equation}
Since $\sum_{g\in\{\mathcal{H},V'_{\ast},B'_{\ast},B''_{\ast}\}}\mathcal{E}_{g}(1)\lesssim\eps_1^3$, for \eqref{Coor0.2} it suffices to prove that, for any $t\in[1,T]$,
\begin{equation}\label{yar6}
-\bigg[\sum_{g\in\{\mathcal{H},V'_{\ast},B'_{\ast},B''_{\ast}\}}\mathcal{B}_g(t)\bigg]+\mathcal{L}_1(t)+\mathcal{L}_2(t)\leq \eps_1^2/4.
\end{equation}

To prove \eqref{yar6} we rewrite the equations \eqref{rea23.8}--\eqref{rea24} in the form
\begin{equation}\label{yar6.50}
\begin{split}
\partial_tB'_\ast&=-\dot{V}\partial_vB'_\ast-\dot{V}\partial_vB'_0,\\
\partial_tB''_\ast&=-\dot{V}\partial_vB''_\ast-\dot{V}\partial_vB''_0,\\
\partial_tV'_\ast&=-\dot{V}\partial_vV'_\ast-\dot{V}\partial_vB'_0+\mathcal{H}/t,\\
\end{split}
\end{equation}

We extract the quadratic components of $\mathcal{L}_1$ and $\mathcal{L}_2$ (corresponding to the linear terms in the right-hand sides of \eqref{yar6.50} and \eqref{rea25}, so we define 
\begin{equation}\label{yar7}
\mathcal{L}_{1,2}(t):=2\Re\int_1^t\int_\R A^2_R(s,\xi)\Big\{\Big[\frac{\widetilde{\mathcal{H}}(s,\xi)}{s}-\widetilde{\dot{V}'_1}(s,\xi)\Big]\overline{\widetilde{V'_{\ast}}(s,\xi)}-\sum_{a\in\{1,2\}}\widetilde{\dot{V}'_a}(s,\xi)\overline{\widetilde{U_a}(s,\xi)}\Big\}\,d\xi ds,
\end{equation}
where $\dot{V}'_1:=\dot{V}\partial_vB'_0, \dot{V}'_2:=\dot{V}\partial_vB''_0$, $U_1:=B'_{\ast}, U_2:=B''_{\ast}$, and
\begin{equation}\label{yar8}
\begin{split}
\mathcal{L}_{2,2}(t)&:=\mathcal{K}^{2}\int_1^t\int_{\R}A^2_{NR}(s,\xi)\Big\{-\frac{2\langle s\rangle^{3/2}}{s\langle\xi\rangle^{3/2}}|\widetilde{\mathcal{H}}(s,\xi)|^2+\frac{3s/2}{\langle s\rangle^{1/2}\langle\xi\rangle^{3/2}}\big|\widetilde{\mathcal{H}}(s,\xi)\big|^2\Big\}\,d\xi ds\\
&=-\mathcal{K}^{2}\int_1^t\int_{\R}A^2_{NR}(s,\xi)\frac{2+s^2/2}{s\langle\xi\rangle^{3/2}\langle s\rangle^{1/2}}|\widetilde{\mathcal{H}}(s,\xi)|^2\,d\xi ds.
\end{split}
\end{equation}
We examine the identities \eqref{yar6.50} and \eqref{rea25} and let
\begin{equation}\label{yar18.5}
\begin{split}
& f_1:=-\dot{V}\partial_vB'_{\ast},\qquad f_2:=-\dot{V}\partial_vB''_{\ast},\qquad f_3:=-\dot{V}\partial_vV'_{\ast},\\
&  g_1:=-\dot{V}\partial_v\mathcal{H},\qquad g_2:=V'[\langle\partial_z\phi\,\partial_vF\rangle-\langle\partial_vP_{\neq0}\phi\,\partial_zF\rangle].
\end{split}
\end{equation}
Notice that
\begin{equation}\label{yar19}
\begin{split}
&\mathcal{L}_1(t)=\mathcal{L}_{1,2}(t)+2\Re\int_1^t\int_\R A^2_R(s,\xi)\bigg\{\sum_{a\in\{1,2\}}\widetilde{f_a}(s,\xi)\overline{\widetilde{U_a}(s,\xi)}+\widetilde{f_3}(s,\xi)\overline{\widetilde{V'_{\ast}}(s,\xi)}\bigg\}\,d\xi ds,\\
&\mathcal{L}_2(t)=\mathcal{L}_{2,2}(t)+\sum_{a\in\{1,2\}}\mathcal{K}^{2}2\Re\int_1^t\int_{\R}A^2_{NR}(s,\xi)\big(\langle s\rangle^{3/2}\langle\xi\rangle^{-3/2}\big)\widetilde{g_a}(s,\xi)\overline{\widetilde{\mathcal{H}}(s,\xi)}\,d\xi ds.
\end{split}
\end{equation}
The desired bounds \eqref{yar6} follow from Lemmas \ref{yar10} and \ref{yar20} below.

\begin{lemma}\label{yar10}
For any $t\in [1,T]$ we have
\begin{equation}\label{yar11}
-\bigg[\sum_{g\in\{\mathcal{H},V'_{\ast},B'_{\ast},B''_{\ast}\}}\mathcal{B}_g(t)\bigg]+\mathcal{L}_{1,2}(t)+\mathcal{L}_{2,2}(t)\leq \eps_1^2/8.
\end{equation}
\end{lemma}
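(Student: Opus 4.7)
The strategy rests on three ingredients. First, the integrand of $\mathcal{L}_{2,2}(t)$ is pointwise nonpositive, so $\mathcal{L}_{2,2}(t)\leq 0$; and for $s\geq 1$ it supplies the useful damping
\begin{equation*}
-\mathcal{L}_{2,2}(t) \gtrsim \mathcal{K}^2\int_1^t\!\!\int_\R A_{NR}^2(s,\xi)\,\frac{s}{\langle s\rangle^{1/2}\langle\xi\rangle^{3/2}}\,|\widetilde{\mathcal{H}}(s,\xi)|^2\,d\xi\,ds,
\end{equation*}
which will be combined with $\mathcal{B}_\mathcal{H}$ when absorbing cross terms. In view of \eqref{yar2}--\eqref{yar8} it therefore suffices to bound each of the four summands in $\mathcal{L}_{1,2}(t)$ by $\tfrac{1}{10}\sum_g\mathcal{B}_g(t)+\tfrac{1}{10}(-\mathcal{L}_{2,2}(t))+O(\eps_1^3)$. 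Moreover, since $B'_0,B''_0\in\mathcal{G}^{4\delta_0,1/2}$ by \eqref{rew6.44}, Lemma \ref{nar8}(ii) gives $\|\partial_v B'_0\|_R+\|\partial_v B''_0\|_R+\|\partial_v^2 B'_0\|_R+\|\partial_v^2 B''_0\|_R\lesssim 1$, so Lemma \ref{nar8}(iii) applied with $h\in\{\partial_v B'_0,\partial_v B''_0\}$ yields the weighted bounds \eqref{nar7} for $h\dot V$.

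Three of the four summands have the shape $\int_1^t\!\!\int_\R A_R^2\,\widetilde{\dot V'_a}\,\overline{\widetilde W}\,d\xi\,ds$ with $\widetilde W\in\{\widetilde{V'_\ast},\widetilde{B'_\ast},\widetilde{B''_\ast}\}$ and $\dot V'_a=h_a\dot V$ for $h_a\in\{\partial_v B'_0,\partial_v B''_0\}$. I apply a weighted Cauchy--Schwarz
\begin{equation*}
2\big|A_R^2\widetilde{\dot V'_a}\,\overline{\widetilde W}\big|\leq \eta\,|\dot A_R|A_R|\widetilde W|^2+\eta^{-1}\frac{A_R^3}{|\dot A_R|}|\widetilde{\dot V'_a}|^2,
\end{equation*}
absorbing the first summand into $\eta\mathcal{B}_W$ (with $W\in\{V'_\ast,B'_\ast,B''_\ast\}$). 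The pointwise domination
\begin{equation*}
\frac{A_R^3(s,\xi)}{|\dot A_R(s,\xi)|}\lesssim_\delta |\dot A_{NR}(s,\xi)|A_{NR}(s,\xi)\,\langle\xi\rangle^2\langle s\rangle^2,
\end{equation*}
derived from $A_R/A_{NR}=b_{NR}/b_R\lesssim_\delta\langle\xi\rangle$ (a consequence of the construction \eqref{reb1}--\eqref{reb9}), the equivalence \eqref{vfc30.5}, and the lower bound $|\dot A_{NR}/A_{NR}|\gtrsim\langle\xi\rangle^{1/2}\langle s\rangle^{-1-\sigma_0}$ of \eqref{TLX3.5}, converts the second summand into the integrand on the left of the first inequality in \eqref{nar7}, so it is $\lesssim_\delta \eta^{-1}\eps_1^2$, which is $O(\eps_1^3)$ when $\eps_1$ is small.

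The main obstacle is the remaining cross term $2\Re\int_1^t\!\!\int_\R A_R^2\,s^{-1}\widetilde{\mathcal{H}}\,\overline{\widetilde{V'_\ast}}\,d\xi\,ds$, where the topology on $\mathcal{H}$ (the $\mathcal{K} A_{NR}\langle s\rangle^{3/4}\langle\xi\rangle^{-3/4}$--weight) and the topology on $V'_\ast$ (the $A_R$--weight) do not match. Cauchy--Schwarz
\begin{equation*}
\big|2A_R^2 s^{-1}\widetilde{\mathcal{H}}\,\overline{\widetilde{V'_\ast}}\big|\leq \eta\,|\dot A_R|A_R|\widetilde{V'_\ast}|^2+\eta^{-1}\,\frac{A_R^2}{s^2|\dot A_R/A_R|}\,|\widetilde{\mathcal{H}}|^2
\end{equation*}
absorbs the first summand into $\eta\mathcal{B}_{V'_\ast}$. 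The crucial pointwise estimate for the second summand is
\begin{equation*}
\frac{A_R^2(s,\xi)}{s^2|\dot A_R/A_R|(s,\xi)}\lesssim_\delta \mathcal{K}^{-2}\bigg[\mathcal{K}^2\,|\dot A_{NR}|A_{NR}\frac{\langle s\rangle^{3/2}}{\langle\xi\rangle^{3/2}}+\mathcal{K}^2 A_{NR}^2\frac{s}{\langle s\rangle^{1/2}\langle\xi\rangle^{3/2}}\bigg],
\end{equation*}
i.e., the left-hand side is dominated by $\mathcal{K}^{-2}$ times the sum of integrands of $\mathcal{B}_\mathcal{H}$ and $-\mathcal{L}_{2,2}$. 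This inequality is verified by splitting into the regime where the first bracketed term is the larger of the two (where one uses \eqref{TLX3.5}, \eqref{vfc30.5}, and $A_R/A_{NR}\lesssim_\delta\langle\xi\rangle$ absorbed into the $\langle\xi\rangle^{-3/2}$ weight) and the complementary regime, which exploits precisely the extra damping from $\mathcal{L}_{2,2}$. Choosing $\mathcal{K}$ sufficiently large depending only on $\delta$, then $\eta$ small fixed, closes \eqref{yar11}.
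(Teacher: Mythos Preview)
Your overall plan---Cauchy--Schwarz on each summand of $\mathcal{L}_{1,2}$, pointwise comparison of weights, then use of the $\mathcal{K}$-largeness---is exactly the paper's. But two of the pointwise weight inequalities you assert are false, and one conclusion is a logical slip; these are genuine gaps.

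\textbf{The $\dot V'_a$ terms.} The claim ``$\lesssim_\delta\eta^{-1}\eps_1^2$, which is $O(\eps_1^3)$ when $\eps_1$ is small'' is simply wrong: a bound $C_\delta\eps_1^2$ can never be pushed below $\eps_1^2/8$, let alone made $O(\eps_1^3)$. Moreover, your pointwise domination $\frac{A_R^3}{|\dot A_R|}\lesssim_\delta|\dot A_{NR}|A_{NR}\,\langle\xi\rangle^2\langle s\rangle^2$ is false: take $|\xi|\lesssim 1$ and $s$ large; then $A_R=A_{NR}$, $|\dot A_{NR}/A_{NR}|\approx\langle s\rangle^{-1-\sigma_0}$, so the left side is $\approx A_{NR}^2\langle s\rangle^{1+\sigma_0}$ while the right side is $\approx A_{NR}^2\langle s\rangle^{1-\sigma_0}$. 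The paper instead exploits the $\mathcal{K}^2\langle\xi\rangle^{1/2}\langle s\rangle^{7/2}$ piece of \eqref{nar7} (not the $\langle\xi\rangle^2\langle s\rangle^2$ piece), which yields $\int\frac{A_R^3}{|\dot A_R|}|\widetilde{\dot V'_a}|^2\lesssim_\delta\mathcal{K}^{-2}\eps_1^2$, small once $\mathcal{K}$ is chosen large.

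\textbf{The $\mathcal{H}/s$ cross term.} Your ``crucial pointwise estimate'' also fails. Take $|\xi|$ large and $s\approx 1$; then $A_R=A_{NR}$ (since $s<t_{k_0(\xi),\xi}$) and $|\dot A_{NR}/A_{NR}|\approx\langle\xi\rangle^{1/2}$, so the left side is $\approx A_{NR}^2\langle\xi\rangle^{-1/2}$, whereas both terms on your right side are $\lesssim A_{NR}^2\langle\xi\rangle^{-1}$. The $\langle\xi\rangle^{-3/2}$ weight shared by the integrands of $\mathcal{B}_\mathcal{H}$ and $-\mathcal{L}_{2,2}$ is too strong at high frequency; the extra damping from $\mathcal{L}_{2,2}$ buys you nothing there. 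The paper's fix is the inequality cited from \cite[Lemma 6.2]{IOJI},
\[
\frac{A_R^3}{s^2|\dot A_R|}\leq |\dot A_{NR}|A_{NR}\big(C_\delta^{-1}+\mathcal{K}^2\langle s\rangle^{3/2}\langle\xi\rangle^{-3/2}\big),
\]
whose first term carries \emph{no} $\langle\xi\rangle^{-3/2}$ weight and therefore handles the high-$\xi$ regime. After integrating against $|\widetilde{\mathcal{H}}|^2$, that first term is $\leq C_\delta^{-1}\int|\dot A_{NR}|A_{NR}|\widetilde{\mathcal{H}}|^2$, which is $\lesssim_\delta C_\delta^{-1}\eps_1^2$ via $\mathcal{H}=B'_\ast-V'_\ast-\langle F\rangle$ and the bootstrap bounds \eqref{boot2}; choosing $C_\delta$ large (and then $\mathcal{K}$ accordingly) makes it small. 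This unweighted piece is the missing idea in your argument.
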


\begin{proof} Since $\mathcal{L}_{2,2}(t)\leq 0$, it suffices to prove that, for any $t\in[1,T]$,
\begin{equation}\label{yar11.4}
\mathcal{L}_{1,2}(t)\leq \bigg[\sum_{g\in\{\mathcal{H},V'_{\ast},B'_{\ast},B''_{\ast}\}}\mathcal{B}_g(t)\bigg]+\eps_1^2/8.
\end{equation}
Using Cauchy-Schwarz and the definitions, we have
\begin{equation*}
\begin{split}
\mathcal{L}_{1,2}(t)&\leq \frac{1}{2}\mathcal{B}_{V'_{\ast}}(t)+32\int_1^t\int_{\R}\frac{A^3_R(s,\xi)}{|\dot{A}_R(s,\xi)|}\frac{|\widetilde{\mathcal{H}}(s,\xi)|^2}{s^2}\,d\xi ds+32\int_1^t\int_{\R}\frac{A^3_R(s,\xi)}{|\dot{A}_R(s,\xi)|}|\dot{V}'_1(s,\xi)|^2\,d\xi ds\\
&+\frac{1}{2}\mathcal{B}_{B'_{\ast}}(t)+8\int_1^t\int_{\R}\frac{A^3_R(s,\xi)}{|\dot{A}_R(s,\xi)|}|\dot{V}'_1(s,\xi)|^2\,d\xi ds\\
&+\frac{1}{2}\mathcal{B}_{B''_{\ast}}(t)+8\int_1^t\int_{\R}\frac{A^3_R(s,\xi)}{|\dot{A}_R(s,\xi)|}|\dot{V}'_2(s,\xi)|^2\,d\xi ds
\end{split}
\end{equation*}

The functions $\dot{V}'_a, a\in\{1,2\}$ satisfy the bounds \eqref{nar7}. Notice also that for any $C_\delta\geq 1$ there is $\mathcal{K}(\delta)$ large enough such that
\begin{equation*}
\frac{A^3_R(s,\xi)}{s^2|\dot{A}_R(s,\xi)|}\leq A_{NR}(s,\xi)|\dot{A}_{NR}(s,\xi)|(C_\delta^{-1}+\mathcal{K}(\delta)^{2}\langle s\rangle^{3/2}\langle \xi\rangle^{-3/2}).
\end{equation*}
This inequality is proved in \cite[Lemma 6.2]{IOJI}. The desired bounds \eqref{yar11.4} follow by letting $\mathcal{K}$ large enough, using also the estimates \eqref{nar7}.
\end{proof}

We prove now estimates on the cubic terms. 

\begin{lemma}\label{yar20} 
For any $t\in[1,T]$ and $a\in\{1,2\}$ we have
\begin{equation}\label{yar21.7}
\Big|2\Re\int_1^t\int_\R A^2_R(s,\xi)\widetilde{f_a}(s,\xi)\overline{\widetilde{U_a}(s,\xi)}\,d\xi ds\Big|\lesssim_\delta\eps_1^3,
\end{equation}
\begin{equation}\label{yar21}
\Big|2\Re\int_1^t\int_\R A^2_R(s,\xi)\widetilde{f_3}(s,\xi)\overline{\widetilde{V'_{\ast}}(s,\xi)}\,d\xi ds\Big|\lesssim_\delta\eps_1^3,
\end{equation}
and
\begin{equation}\label{yar22}
\Big|2\Re\int_1^t\int_{\R}A^2_{NR}(s,\xi)\big(\langle s\rangle^{3/2}\langle\xi\rangle^{-3/2}\big)\widetilde{g_a}(s,\xi)\overline{\widetilde{\mathcal{H}}(s,\xi)}\,d\xi ds\Big|\lesssim_\delta\eps_1^3.
\end{equation}
\end{lemma}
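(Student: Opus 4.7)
My plan for Lemma \ref{yar20} is to treat the five integrals in three groups, exploiting the transport structure of $f_1, f_2, f_3, g_1$ and the bilinear structure of $g_2$. For the transport integrals \eqref{yar21.7} and \eqref{yar21}, I would first symmetrize using the reality of $\dot V$ and the swap $\xi\leftrightarrow\eta$: the real part of $2\int A_R^2(\xi)\widetilde{(-\dot V\partial_v U)}(\xi)\overline{\widetilde U(\xi)}\,d\xi$ reduces to an imaginary-part-integrand involving the commutator weight $[\eta A_R^2(\xi)-\xi A_R^2(\eta)]$. The crucial observation is that this commutator is controlled by
\[
\bigl|\eta A_R^2(s,\xi)-\xi A_R^2(s,\eta)\bigr|\lesssim_\delta A_R(s,\xi-\eta)A_R^2(s,\eta)\bigl[\langle\xi-\eta\rangle+\sqrt\delta\langle\eta\rangle\bigr]e^{-(\lambda(s)/40)\langle\xi-\eta\rangle^{1/2}},
\]
which follows from the $A_R$-analog of Lemma \ref{A-A} (combined with the identity $A_R^2(\xi)-A_R^2(\eta)=(A_R(\xi)-A_R(\eta))(A_R(\xi)+A_R(\eta))$) and the triangle inequality $|\xi|\leq|\xi-\eta|+|\eta|$. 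The $\langle\xi-\eta\rangle$-piece effectively transfers one derivative onto $\dot V$ and is absorbed by the strong space-time bound \eqref{nar7}: since $|\dot A_R/A_R|^{-1}\lesssim_\delta\langle s\rangle^{1+\sigma_0}$ by \eqref{TLX3.5}, the $\mathcal K^2\langle s\rangle^{7/2}$ weight in \eqref{nar7} leaves ample room after Cauchy--Schwarz in $s$ with the splitting $1=\sqrt{|\dot A_R/A_R|}\cdot|\dot A_R/A_R|^{-1/2}$. The $\sqrt\delta\langle\eta\rangle$-piece is absorbed into the dissipation $\mathcal B_U(t)\leq\eps_1^2$ via the bound $\langle\eta\rangle\lesssim\langle s\rangle^{2+2\sigma_0}|\dot A_R/A_R|(\eta)^2$, the small parameter $\sqrt\delta$ being what overcomes the residual derivative loss. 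Both pieces yield $\lesssim_\delta\eps_1^3$ after applying Lemma \ref{Multi0}(i) and the bootstrap \eqref{boot2}.

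The $g_1$-contribution to \eqref{yar22} is treated by the same scheme, with $\mathcal H$ in place of $U$ and the weight $A_{NR}^2\langle s\rangle^{3/2}\langle\xi\rangle^{-3/2}$ in place of $A_R^2$. The only new ingredient is the Lipschitz-type estimate
\[
\Bigl|\frac{\eta}{\langle\xi\rangle^{3/2}}-\frac{\xi}{\langle\eta\rangle^{3/2}}\Bigr|\lesssim\frac{\langle\xi-\eta\rangle}{\langle\eta\rangle^{3/2}},
\]
which combines cleanly with the $A_{NR}^2$-smoothness from Lemmas \ref{comparisonweights}--\ref{A_kA_ell} to yield the analogous commutator bound; the resulting weighted $\dot V$ norm is exactly the one controlled by \eqref{nar7}. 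The $g_2$-contribution is the simplest and requires no symmetrization: applying Cauchy--Schwarz with the splitting $1=\sqrt{|\dot A_{NR}/A_{NR}|}\cdot|\dot A_{NR}/A_{NR}|^{-1/2}$ bounds the integral by
\[
\Bigl(\int_1^t\!\!\int |\dot A_{NR}|^{-1}A_{NR}^3\langle s\rangle^{3/2}\langle\xi\rangle^{-3/2}|\widetilde{g_2}|^2\,d\xi\,ds\Bigr)^{1/2}\cdot\bigl(\mathcal B_{\mathcal H}(t)/\mathcal K^2\bigr)^{1/2}\lesssim_\delta \eps_1^2\cdot(\eps_1/\mathcal K),
\]
where the first factor uses \eqref{yar24} of Lemma \ref{nar13}(iii) and the second uses \eqref{boot2}; this yields $\lesssim_\delta\eps_1^3$, the extra factor of $1/\mathcal K$ showing the estimate has significant room.

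The main technical obstacle is the $\sqrt\delta\langle\eta\rangle$-piece of the commutator: unlike the $\langle\xi-\eta\rangle$-piece which is fully absorbed by the overcontrol on $\dot V$ from \eqref{nar7}, this piece genuinely represents a derivative loss on $U$ that cannot be eliminated by symmetrization and must be beaten by the small parameter $\sqrt\delta$ arising from the weight smoothness in Lemma \ref{A-A}. Once this smallness is tracked carefully through the multilinear estimates, the remaining argument is bookkeeping that parallels closely the analogous cubic estimates in \cite[Section 6]{IOJI}.
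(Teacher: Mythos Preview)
Your treatment of the $g_2$-contribution is correct and matches the paper exactly: Cauchy--Schwarz with the $|\dot A_{NR}/A_{NR}|^{\pm1/2}$ splitting together with \eqref{yar24} and the bootstrap bound on $\mathcal{B}_{\mathcal{H}}$ gives \eqref{yar22} for $a=2$.

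The transport integrals, however, have a real gap. Your commutator bound via an $A_R$-analog of Lemma \ref{A-A} produces the piece $\sqrt{\delta}\,\langle\eta\rangle\,A_R(t,\xi-\eta)A_R^2(t,\eta)e^{-c\langle\xi-\eta\rangle^{1/2}}$, and the constant $\sqrt\delta$ cannot cure a full derivative on $U$. Concretely: the only lower bound available is $|\dot A_R/A_R|(s,\eta)\gtrsim_\delta\langle\eta\rangle^{1/2}\langle s\rangle^{-1-\sigma_0}$, so converting $\langle\eta\rangle$ into $\sqrt{|A_R\dot A_R|(s,\xi)}\sqrt{|A_R\dot A_R|(s,\eta)}$ (what $\mathcal{B}_{U}$ controls) costs $\langle s\rangle^{1+\sigma_0}$ and still leaves a residual factor $\langle\eta\rangle^{1/2}$ (equivalently an extra $\sqrt{|\dot A_R/A_R|}$). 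That factor is unbounded in $\eta$ and no choice of $\delta$, however small, removes it; the phrase ``$\sqrt\delta$ overcomes the residual derivative loss'' conflates a small \emph{constant} with a small \emph{derivative}. The Lemma \ref{A-A} mechanism is designed for absorption arguments (where one closes $\|X\|\lesssim C+\sqrt\delta\,\|X\|$), not for trading derivatives. A second, related issue: the Lemma \ref{A-A} bound only holds when $\langle\xi-\eta\rangle\le(\langle\xi\rangle+\langle\eta\rangle)/8$, so the ``high--high'' and ``low--high'' interactions (the sets $S_0,S_2,S_3$ in the paper's decomposition \eqref{tol4}) are not covered by your commutator estimate at all.

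The paper's route is different in kind: it does not rely on the Lipschitz-type smoothness of Lemma \ref{A-A} but instead invokes the sharper bilinear weight inequalities \eqref{tol7}--\eqref{TLY2.3} (proved in \cite[Lemma 8.9]{IOJI}), which give directly
\[
\big|\eta A_R^2(s,\xi)-\xi A_R^2(s,\eta)\big|\lesssim_\delta s^{1.6}\sqrt{|A_R\dot A_R|(s,\xi)}\sqrt{|A_R\dot A_R|(s,\eta)}\,A_{NR}(s,\rho)\,e^{-\delta_0'\langle\rho\rangle^{1/2}}
\]
on $S_0\cup S_1$, with analogous bounds on $S_2,S_3$. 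The crucial point is that the loss is purely a power of $s$ (here $s^{1.6}$), with no residual $\langle\eta\rangle$-factor; this $s^{1.6}$ is then beaten by the $\langle s\rangle^{-7/4}$ decay of $\dot V$ from \eqref{nar7}. These bilinear estimates exploit the precise resonant/nonresonant structure of $w_R,w_{NR}$ beyond what Lemma \ref{A-A} captures. The $g_1$-term is handled identically with $Y=NR$ using \eqref{tol7} and \eqref{TLY2.3}.
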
 

\begin{proof} {\bf{Step 1.}} We start with \eqref{yar21.7}-\eqref{yar21}. The two bounds are similar, so we only provide all the details for the estimate \eqref{yar21}. See also \cite[Lemma 6.5]{IOJI} for a similar argument.

We write the left-hand side of \eqref{yar21} in the form
\begin{equation*}
\begin{split}
&C\Big|2\Re\int_1^t\int_\R\int_\R A^2_R(s,\xi)\widetilde{\dot{V}}(s,\xi-\eta)(i\eta)\widetilde{V'_{\ast}}(s,\eta)\overline{\widetilde{V'_{\ast}}(s,\xi)}\,d\xi d\eta ds\Big|\\
&=C\Big|\int_1^t\int_\R\int_\R [\eta A^2_R(s,\xi)-\xi A_R^2(s,\eta)]\widetilde{\dot{V}}(s,\xi-\eta)\widetilde{V'_{\ast}}(s,\eta)\overline{\widetilde{V'_{\ast}}(s,\xi)}\,d\xi d\eta ds\Big|,
\end{split}
\end{equation*}
using symmetrization and the fact that $\dot{V}$ is real-valued. We define the sets
\begin{equation}\label{tol4}
\begin{split}
&S_0:=\Big\{(\xi,\eta)\in\R^2:\,\min(\langle\xi\rangle,\,\langle\eta\rangle,\,\langle\xi-\eta\rangle)\geq \frac{\langle\xi\rangle+\langle\eta\rangle+\langle\xi-\eta\rangle}{20}\Big\},\\
&S_1:=\Big\{(\xi,\eta)\in\R^2:\,\langle\xi-\eta\rangle\leq \frac{\langle\xi\rangle+\langle\eta\rangle+\langle\xi-\eta\rangle}{10}\Big\},\\
&S_2:=\Big\{(\xi,\eta)\in\R^2:\,\langle\eta\rangle\leq \frac{\langle\xi\rangle+\langle\eta\rangle+\langle\xi-\eta\rangle}{10}\Big\},\\
&S_3:=\Big\{(\xi,\eta)\in\R^2:\,\langle\xi\rangle\leq \frac{\langle\xi\rangle+\langle\eta\rangle+\langle\xi-\eta\rangle}{10}\Big\}.
\end{split}
\end{equation}
and the corresponding integrals
\begin{equation}\label{tol5}
\begin{split}
\mathcal{I}_n:=\int_1^t\int_\R\int_\R \mathbf{1}_{S_n}(\xi,\eta)&|\eta A^2_R(s,\xi)-\xi A_R^2(s,\eta)|\,|\widetilde{\dot{V}}(s,\xi-\eta)||\widetilde{V'_{\ast}}(s,\eta)|\,|\widetilde{V'_{\ast}}(s,\xi)|\,d\xi d\eta ds.
\end{split}
\end{equation}
For \eqref{yar21} it suffices to prove that
\begin{equation}\label{tol6}
\mathcal{I}_n\lesssim_\delta \eps_1^3\qquad\text{ for }n\in\{0,1,2,3\}.
\end{equation}

We use the following bilinear estimates for the weights, proved in \cite[Lemma 8.9]{IOJI}. Letting $\delta'_0=\delta_0/200$, we have:

$\bullet\,\,$ If $(\xi,\eta)\in S_0\cup S_1$, $\rho=\xi-\eta$, $s\geq 1$, $\alpha\in[0,4]$, and $Y\in\{NR,R\}$ then
\begin{equation}\label{tol7}
\begin{split}
|\eta A^2_Y(s,\xi)&\langle\xi\rangle^{-\alpha}-\xi A_Y^2(s,\eta)\langle\eta\rangle^{-\alpha}|\\
&\lesssim_\delta s^{1.6}\frac{\sqrt{|(A_Y\dot{A}_Y)(s,\xi)|}}{\langle\xi\rangle^{\alpha/2}}\frac{\sqrt{|(A_Y\dot{A}_Y)(s,\eta)|}}{\langle\eta\rangle^{\alpha/2}}\cdot A_{NR}(s,\rho)e^{-\delta'_0\langle\rho\rangle^{1/2}}.
\end{split}
\end{equation}

$\bullet\,\,$ If $(\xi,\eta)\in S_2$, $\rho=\xi-\eta$, and $s\geq 1$ then
\begin{equation}\label{TLY2.2}
\langle\eta\rangle A^2_{R}(s,\xi)\lesssim_\delta s^{1.1}\langle\xi\rangle^{0.6}\sqrt{|(A_R\dot{A}_R)(s,\xi)|}\sqrt{|(A_{NR}\dot{A}_{NR})(s,\rho)|}\cdot A_{R}(s,\eta)e^{-\delta'_0\langle\eta\rangle^{1/2}}
\end{equation}
and
\begin{equation}\label{TLY2.3}
\langle\eta\rangle A^2_{NR}(s,\xi)\lesssim_\delta s^{1.1}\langle\xi\rangle^{-0.4}\sqrt{|(A_{NR}\dot{A}_{NR})(s,\xi)|}\sqrt{|(A_{NR}\dot{A}_{NR})(s,\rho)|}\cdot A_{NR}(s,\eta)e^{-\delta'_0\langle\eta\rangle^{1/2}}.
\end{equation}

For $n\in\{0,1\}$ we can now estimate, using \eqref{tol7},
\begin{equation*}
\begin{split}
\mathcal{I}_n\lesssim_\delta \Big\|\sqrt{|(A_R\dot{A}_R)(s,\xi)|}&\widetilde{V'_{\ast}}(s,\xi)\Big\|_{L^2_sL^2_\xi}\Big\|\sqrt{|(A_R\dot{A}_R)(s,\eta)|}\widetilde{V'_{\ast}}(s,\eta)\Big\|_{L^2_sL^2_\eta}\\
&\times\Big\|s^{1.6}A_{NR}(s,\rho)\langle\rho\rangle^2 e^{-\delta'_0\langle\rho\rangle^{1/2}}\widetilde{\dot{V}}(s,\rho)\Big\|_{L^\infty_sL^2_\rho},
\end{split}
\end{equation*}
and the bounds \eqref{tol6} follow for $n\in\{0,1\}$ from \eqref{boot2} and \eqref{nar7}. Similarly, for $n=2$ we use \eqref{TLY2.2} and \eqref{eq:comparisonweights1} to estimate
\begin{equation*}
\begin{split}
\mathcal{I}_2\lesssim_\delta \Big\|\sqrt{|(A_R\dot{A}_R)(s,\xi)|}&\widetilde{V'_{\ast}}(s,\xi)\Big\|_{L^2_sL^2_\xi}\Big\|s^{1.1}\langle\rho\rangle^{0.6}\sqrt{|(A_{NR}\dot{A}_{NR})(s,\rho)|} \widetilde{\dot{V}}(s,\rho)\Big\|_{L^2_sL^2_\rho}\\
&\times\Big\|A_R(s,\eta)\langle\eta\rangle e^{-\delta'_0\langle\eta\rangle^{1/2}}\widetilde{V'_{\ast}}(s,\eta)\Big\|_{L^\infty_sL^2_\eta},
\end{split}
\end{equation*}
and the desired bounds follow from \eqref{boot2} and \eqref{nar7}. The case $n=3$ is similar, by changes of variables, which completes the proof of \eqref{yar21}.

{\bf{Step 2.}} The bounds \eqref{yar22} for $a=1$ are similar, using symmetrization, the bounds \eqref{tol7} with $Y=NR$, and the bounds \eqref{TLY2.3}. See also \cite[Lemma 6.6]{IOJI} for a similar argument. Finally, the bounds \eqref{yar22} for $a=2$ follow from \eqref{yar24}, \eqref{boot2}, and the Cauchy inequality (see also \cite[Lemma 6.4]{IOJI} for a similar proof).
\end{proof}

\section{Improved control on the auxiliary variables $\Theta^\ast$ and $F^{\ast}$}

In this section we prove the main bootstrap bounds \eqref{boot3} for the functions $\Theta^{\ast}$ and $F^\ast$.

\begin{proposition}\label{ImprovBoots1}
With the definitions and assumptions in Proposition \ref{MainBootstrap}, we have
\begin{equation}\label{elli1}
\E_{\Theta^{\ast}}(t)+\mathcal{B}_{\Theta^{\ast}}(t)\lesssim_{\delta}\epsilon_1^4\qquad\text{ for any }\,\,t\in[1,T].
\end{equation}
\end{proposition}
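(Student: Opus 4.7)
The plan is to view $\phi - \phi'$ as the solution of an elliptic equation with the \emph{frozen} coefficients $(B'_0)^2, B''_0$ and a genuinely quadratic-small source, then apply the elliptic estimate in Lemma \ref{lm:elli3} directly. Subtracting the defining equation \eqref{Ph1} for $\phi'$ from \eqref{rea26}, and regrouping so that the frozen-coefficient operator appears on the left, one obtains
\begin{equation*}
\partial_z^2(\phi - \phi') + (B'_0)^2(\partial_v - t\partial_z)^2(\phi - \phi') + B''_0(\partial_v - t\partial_z)(\phi - \phi') = \mathcal{S},
\end{equation*}
with homogeneous Dirichlet conditions at $v = b(0), b(1)$, where
\begin{equation*}
\mathcal{S} := -\big[(V')^2 - (B'_0)^2\big](\partial_v - t\partial_z)^2\phi - V''_\ast(\partial_v - t\partial_z)\phi.
\end{equation*}
The source is quadratic-small in the bootstrap variables, because $(V')^2 - (B'_0)^2 = V'_\ast(V' + B'_0)$ has the factor $V'_\ast$, and $V'' - B''_0 = V''_\ast$ directly, both $R$-controlled by $\eps_1$ via Proposition \ref{Coor0.1}. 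Moreover $\mathcal{S}$ is supported in $\T \times [b(\vartheta_0), b(1-\vartheta_0)]$, inherited from the support of the small factor.

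Since the hypotheses of Lemma \ref{lm:elli3} hold, applying it with $\varphi = \phi - \phi'$ and $f = \mathcal{S}$ produces
\begin{equation*}
\|\Theta^\ast\|_{\widetilde{W}[1,T]} \lesssim_\delta \|\mathcal{S}\|_{\widetilde{W}[1,T]},
\end{equation*}
where $\widetilde{W}$ is the norm defined in \eqref{rew0.4}. Since $\mathcal{E}_{\Theta^\ast}(t) + \mathcal{B}_{\Theta^\ast}(t) \leq \|\Theta^\ast\|_{\widetilde{W}[1,t]}^2$ for $t \in [1,T]$, the proposition reduces to proving $\|\mathcal{S}\|_{\widetilde{W}[1,T]} \lesssim_\delta \eps_1^2$. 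We use the algebra/multiplication bound \eqref{rew1.2} in Lemma \ref{nar8} to peel off the $R$-norm of the coefficients of $\mathcal{S}$ (which are $\lesssim \eps_1$), reducing the task to controlling $\|(\partial_v - t\partial_z)^a \phi\|_{\widetilde{W}[1,T]}$ for $a \in \{1,2\}$ by $\eps_1$. Because $\Psi \equiv 1$ in a neighborhood of the support of $V'_\ast, V''_\ast$ (by \eqref{rec0}), we may exactly replace $\phi$ by $\Psi\phi$ inside the products that constitute $\mathcal{S}$, with no commutator error. The Fourier identities
\begin{equation*}
\widetilde{(\partial_v - t\partial_z)^a \Psi\phi}(k,\xi) = (i(\xi-kt))^a \widetilde{\Psi\phi}(k,\xi), \qquad \widetilde{\Theta}(k,\xi) = -(k^2 + (\xi-kt)^2) \widetilde{\Psi\phi}(k,\xi)
\end{equation*}
then yield, for $k \neq 0$ and $a \in \{1,2\}$,
\begin{equation*}
\big|\widetilde{(\partial_v - t\partial_z)^a \Psi\phi}(k,\xi)\big| \leq \big|\widetilde{\Theta}(k,\xi)\big|,
\end{equation*}
so by the bootstrap assumption \eqref{boot2} we get $\|(\partial_v - t\partial_z)^a \Psi\phi\|_{\widetilde{W}[1,T]} \leq \|\Theta\|_{\widetilde{W}[1,T]} \lesssim \eps_1$. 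Multiplying finally gives $\|\mathcal{S}\|_{\widetilde{W}[1,T]} \lesssim_\delta \eps_1^2$, as required.

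The main technical point is ensuring that the special weight $k^2\langle t\rangle^2/(|\xi|^2 + k^2\langle t\rangle^2)$ in the $\widetilde{W}$-norm --- which reflects the expected inviscid-damping decay of the stream function and precisely matches the degeneracy of $\partial_z^2 + (\partial_v - t\partial_z)^2$ near the resonant times $t \approx \xi/k$ --- is preserved through the bilinear step. The zero mode, on which the elliptic operator truly degenerates, is excluded both by the $P_{\neq 0}$ projection in the conclusion of Lemma \ref{lm:elli3} and by the summation over $\mathbb{Z}^\ast$ in the definition of $\widetilde{W}$, and therefore causes no difficulty.
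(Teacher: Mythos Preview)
Your overall strategy matches the paper's: subtract \eqref{rea26} and \eqref{Ph1}, apply Lemma~\ref{lm:elli3}, and reduce to bounding the source in $\widetilde W$. Your treatment of the first source term $[(B'_0)^2-(V')^2](\partial_v-t\partial_z)^2\phi$ is also correct and is exactly what the paper does.

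There is, however, a genuine gap in your handling of the second source term. You write ``$V''-B''_0=V''_\ast$ directly, \ldots $R$-controlled by $\eps_1$ via Proposition~\ref{Coor0.1}''. But $V''_\ast$ is not a bootstrap variable in this paper (see \eqref{VB5}: only $V'_\ast,B'_\ast,B''_\ast$ are defined), and $V''-B''_0$ is \emph{not} directly controlled in $R$ by $\eps_1$. Since $V''=V'\partial_vV'$ and $B''_0=B'_0\partial_vB'_0$, one has
\[
V''-B''_0=\tfrac12\,\partial_v\bigl[V'_\ast\,(V'+B'_0)\bigr],
\]
which contains $\partial_vV'_\ast$; the bootstrap only gives $\|V'_\ast\|_R\lesssim\eps_1$, not $\|\partial_vV'_\ast\|_R$ (compare also \eqref{nar4}, where only $\|\langle\partial_v\rangle^{-1}V''\|_R$ is bounded). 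So the straightforward use of \eqref{rew1.2} fails here by one derivative.

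The paper's fix is precisely to exploit the slack you left unused on the $\phi$ side for $a=1$. Writing $\mathcal{G}_2=-(1/2)\,\partial_v[V'_\ast\cdot\Psi(B'_0+V')]\cdot(\partial_v-t\partial_z)(\Psi\phi)$, it proves the refined multiplier inequality \eqref{elli2.6},
\[
\frac{|k|\langle t\rangle}{|\xi|+|k|\langle t\rangle}\lesssim_\delta \frac{\langle\eta-tk\rangle}{\langle \xi-\eta\rangle}\,\frac{|k|\langle t\rangle}{|\eta|+|k|\langle t\rangle}\,e^{\delta\min(\langle\xi-\eta\rangle,\langle k,\eta\rangle)^{1/2}},
\]
and combines it with \eqref{TLX7}--\eqref{vfc30.7} and Lemma~\ref{Multi0}. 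The factor $1/\langle\xi-\eta\rangle$ absorbs the extra $\partial_v$ on the coefficient, while the compensating $\langle\eta-tk\rangle$ is harmless because $\|\langle\partial_v-t\partial_z\rangle(\partial_v-t\partial_z)(\Psi\phi)\|_{\widetilde W}\lesssim\|\Theta\|_{\widetilde W}$. This derivative-trading step is the missing ingredient in your argument.
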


\begin{proof} We use the equations \eqref{rea26} and \eqref{Ph1}, thus 
\begin{equation}\label{elli2}
\begin{split}
&\partial_z^2(\phi-\phi')+(B'_0)^2(\partial_v-t\partial_z)^2(\phi-\phi')+B''_0(\partial_v-t\partial_z)(\phi-\phi')=\mathcal{G}_1+\mathcal{G}_2,\\
&\mathcal{G}_1:=\left[(B'_0)^2-(V')^2\right](\partial_v-t\partial_z)^2\phi,\\
&\mathcal{G}_2:=(B''_0-V'')(\partial_v-t\partial_z)\phi.
\end{split}
\end{equation}
In view of Lemma \ref{lm:elli3}, it suffices to prove that
\begin{equation}\label{elli2.5}
\|\mathcal{G}_1\|_{\widetilde{W}}+\|\mathcal{G}_2\|_{\widetilde{W}}\lesssim_\delta \eps_1^2.
\end{equation}

Since $V'_\ast$ is supported in $[b(\va_0),b(1-\va_0)]$, we can write
\begin{equation*}
\mathcal{G}_1=-V'_\ast\cdot \Psi(B'_0+V')\cdot (\partial_v-t\partial_z)^2(\Psi\phi),
\end{equation*}
where $\Psi$ is the Gevrey cut-off function in \eqref{rec0}. Using Lemma \ref{nar8} (i), (ii), and the bootstrap assumptions \eqref{boot2} for $V'_\ast$ and $\Theta$, we can estimate
\begin{equation*}
\|\mathcal{G}_1\|_{\widetilde{W}}\lesssim_\delta \|V'_\ast\|_R\|\Psi(B'_0+V')\|_R\|(\partial_v-t\partial_z)^2(\Psi\phi)\|_{\widetilde{W}}\lesssim_\delta \eps_1^2,
\end{equation*}
as claimed in \eqref{elli2.5}. 

Similarly, since $V''=V'\partial_vV'$ and $B''_0=B'_0\partial_vB'_0$, we can write
\begin{equation}\label{elli2.4}
\mathcal{G}_2=-(1/2)\partial_v[V'_\ast\cdot \Psi(B'_0+V')]\cdot (\partial_v-t\partial_z)(\Psi\phi).
\end{equation}
Moreover
\begin{equation}\label{elli2.6}
\frac{|k|\langle t\rangle}{|\xi|+|k|\langle t\rangle}\lesssim_\delta \frac{\langle\eta-tk\rangle}{\langle \xi-\eta\rangle}\frac{|k|\langle t\rangle}{|\eta|+|k|\langle t\rangle}e^{\delta\min(\langle\xi-\eta\rangle,\langle k,\eta\rangle)^{1/2}},
\end{equation}
if $k\in\mathbb{Z}^\ast$, $t\geq 1$, and $\xi,\eta\in\mathbb{R}$, as one can check easily by considering the cases $|\xi-\eta|\leq 10 |k,\eta|$ and $|\xi-\eta|\geq 10 |k,\eta|$. Therefore, using also \eqref{TLX7}--\eqref{vfc30.7},
\begin{equation*}
\frac{A_k(t,\xi)|k|\langle t\rangle}{|\xi|+|k|\langle t\rangle}\lesssim_\delta \frac{A_R(t,\xi-\eta)}{\langle\xi-\eta\rangle}\frac{A_k(t,\eta)|k|\langle t\rangle}{|\eta|+|k|\langle t\rangle}\langle\eta-tk\rangle e^{-(\delta_0/30)\min(\langle\xi-\eta\rangle,\langle k,\eta\rangle)^{1/2}}
\end{equation*}
and
\begin{equation*}
\begin{split}
\frac{|(A_k\dot{A}_k)(t,\xi)|^{1/2}|k|\langle t\rangle}{|\xi|+|k|\langle t\rangle}&\lesssim_\delta e^{-(\delta_0/30)\min(\langle\xi-\eta\rangle,\langle k,\eta\rangle)^{1/2}}\Big\{\frac{|(A_R\dot{A}_R)(t,\xi-\eta)|^{1/2}}{\langle\xi-\eta\rangle}\\
\times\frac{A_k(t,\eta)|k|\langle t\rangle}{|\eta|+|k|\langle t\rangle}&\langle\eta-tk\rangle+\frac{A_R(t,\xi-\eta)}{\langle\xi-\eta\rangle}\frac{|(A_k\dot{A}_k)(t,\eta)|^{1/2}|k|\langle t\rangle}{|\eta|+|k|\langle t\rangle}\langle\eta-tk\rangle\Big\}.
\end{split}
\end{equation*}

We examine the formula \eqref{elli2.4} and notice that $\|V'_\ast\cdot \Psi(B'_0+V')\|_R\lesssim_\delta\eps_1$ (due to Lemma \ref{nar8} (i), (ii)) and $\|\langle\partial_v-t\partial_z\rangle(\partial_v-t\partial_z)(\Psi\phi)\|_{\widetilde{W}}\lesssim_\delta\eps_1$ (due to the bootstrap assumption \eqref{boot2}). The desired conclusion $\|\mathcal{G}_2\|_{\widetilde{W}}\lesssim_\delta \eps_1^2$ in \eqref{elli2.5} follows using Lemma \ref{Multi0} (ii) and the two weighted estimates above.
\end{proof}

We prove now bootstrap bounds on the function $F^\ast$. 

\begin{proposition}\label{ImprovBoots2}
With the definitions and assumptions in Proposition \ref{MainBootstrap}, we have
\begin{equation}\label{lkj1}
\E_{F^{\ast}}(t)+\mathcal{B}_{F^{\ast}}(t)\lesssim_{\delta}\epsilon_1^3\qquad\text{ for any }\,\,t\in[1,T].
\end{equation}
\end{proposition}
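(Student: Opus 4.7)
The plan is to run an energy estimate on $F^\ast$ with respect to the weights $A_k$, handling two ``new'' linear terms via the elliptic machinery and the already-established control of $\Theta^\ast$, and handling the two transport terms by decomposing $F=F^\ast+(F-F^\ast)$. Differentiating \eqref{reb14} in $t$ and using \eqref{rea23.1}, one obtains
\begin{equation*}
\partial_tF^{\ast}=B''_0\,\partial_z(\phi-\phi')+B''_\ast\,\partial_z\phi+V'\partial_vP_{\neq 0}\phi\,\partial_zF-(\dot V+V'\partial_z\phi)\,\partial_vF.
\end{equation*}
The standard energy identity
\begin{equation*}
\tfrac{d}{dt}\mathcal{E}_{F^\ast}(t)=-2\sum_k\int|\dot A_k|A_k|\widetilde{F^\ast}|^2\,d\xi+2\Re\sum_k\int A_k^2\,\widetilde{\partial_tF^\ast}\,\overline{\widetilde{F^\ast}}\,d\xi,
\end{equation*}
integrated in time and combined with $\mathcal{E}_{F^\ast}(1)\lesssim\eps_1^3$ from \eqref{boot1}, reduces the proof to bounding the time integral of each of the four contributions above, paired in $L^2$ against $A_k^2\widetilde{F^\ast}$, by $\mathcal{O}_\delta(\eps_1^3)$ (the dissipation produced by $\dot A_k$ is moved to the left to recover $\mathcal{B}_{F^\ast}$).

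For the two linear terms, I would absorb $B''_0$ and $B''_\ast$ into $R$-type weights using the algebra/multiplier bounds of Lemma \ref{nar8}, and then invoke Lemma \ref{lm:elli3} to convert $\partial_z(\phi-\phi')$ and $\partial_z\phi$ (multiplied by $\Psi$, which costs only a lower-order commutator by the support properties of $B''_0$ and $B''_\ast$) into $\Theta^\ast$ and $\Theta$, measured in the norm $\widetilde W$. The improved bound \eqref{elli1} on $\Theta^\ast$ from Proposition \ref{ImprovBoots1} and the bootstrap \eqref{boot2} on $\Theta$ then yield cubic-in-$\eps_1$ contributions, using the gain $|k|\langle t\rangle/(|\xi|+|k|\langle t\rangle)$ built into the $\widetilde W$ norm (see \eqref{rew0.4}) to pair against $F^\ast$ in $\mathcal{E}_{F^\ast}$.

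The main work lies in the two transport terms
\begin{equation*}
\mathcal T_1:=V'\partial_vP_{\neq 0}\phi\,\partial_zF,\qquad \mathcal T_2:=(\dot V+V'\partial_z\phi)\,\partial_vF.
\end{equation*}
In each I would split $F=F^\ast+(F-F^\ast)$ in the differentiated factor. The \emph{diagonal} piece (with $F$ replaced by $F^\ast$) is treated by Fourier symmetrization: partitioning the frequency triples into the regions $R_0\cup R_1$, $R_2$, $R_3$ from \eqref{nar18.1}--\eqref{nar18.4}, exchanging the two $F^\ast$-factors produces the differences $\ell A_k^2(t,\xi)-kA_\ell^2(t,\eta)$ and $\eta A_k^2(t,\xi)-\xi A_\ell^2(t,\eta)$, which are controlled by the symmetric bilinear estimates \eqref{TLXH1.1} and \eqref{TLXH3.1} (and by \eqref{TLXH2.1} for the $x$-independent $\dot V$-part of $\mathcal T_2$, where $k=\ell$). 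These distribute the dissipation between the two $F^\ast$-factors, while the coefficient factor ($V'\partial_vP_{\neq 0}\phi$, $V'\partial_z\phi$, or $\dot V$) is absorbed using the weighted space-time bounds of Lemma \ref{nar13} (and \eqref{nar7} for $\dot V$). This is essentially what is done in the Couette case \cite{IOJI}, so I would quote those estimates rather than redo them.

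The genuinely new obstacle, and the reason for the stronger weight in \eqref{rec1''}, is the \emph{off-diagonal} piece where the second factor is $F-F^\ast$: symmetrization cannot generate the cancelling difference, and bounding $|\ell A_k^2(t,\xi)|+|kA_\ell^2(t,\eta)|$ (respectively its $\eta A_k^2$-analogue) directly would cost a derivative. This is precisely what the unbalanced estimates \eqref{TLD1}, \eqref{TLD10}, and \eqref{TLD20} are designed for: they bound these sums at the price of the compensating factor $(1+\langle k,\xi\rangle/\langle t\rangle)^{-1/2}$, which matches exactly the extra weight $(1+\langle k,\xi\rangle/\langle t\rangle)^{1/2}$ in the definition of $\mathcal{E}_{F-F^\ast}$ and $\mathcal{B}_{F-F^\ast}$. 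Placing the $(F-F^\ast)$-factor in its stronger norm and the other two factors in their standard norms, the bootstrap hypothesis \eqref{boot2} on $\mathcal{E}_{F-F^\ast}+\mathcal{B}_{F-F^\ast}$ closes the estimate at order $\eps_1^3$, which combined with the linear and diagonal contributions yields $\mathcal{E}_{F^\ast}(t)+\mathcal{B}_{F^\ast}(t)\lesssim_\delta\eps_1^3$ for all $t\in[1,T]$.
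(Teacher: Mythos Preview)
Your proposal is correct and follows essentially the same approach as the paper: the paper derives the same evolution equation for $F^\ast$, splits the nonlinearity into exactly the pieces you describe (labelled $\mathcal N_1,\ldots,\mathcal N_7$ there, with $\mathcal N_7=B''_\ast\partial_z(\Psi\phi)+B''_0\partial_z(\Psi(\phi-\phi'))$), treats the diagonal transport pieces $\mathcal N_1,\mathcal N_2,\mathcal N_3$ by symmetrization via \eqref{TLXH1.1}, \eqref{TLXH3.1}, \eqref{TLXH2.1}, and treats the off-diagonal pieces $\mathcal N_4,\mathcal N_5,\mathcal N_6$ via the unbalanced estimates \eqref{TLD1}, \eqref{TLD10}, \eqref{TLD20} exactly as you outline. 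The one place where the paper is more explicit than your sketch is the linear piece $\mathcal N_7$: rather than appealing only to Lemma~\ref{nar8} and the $\widetilde W$-gain, the paper uses the dedicated bilinear weight estimates of Lemma~\ref{TLD30} (bounds \eqref{TLD31}--\eqref{TLD34}) to distribute $A_k^2$ into two $\sqrt{|A_k\dot A_k|}$-factors, which is what makes the space-time pairing against $F^\ast$ close; your description points in the right direction but would need this ingredient to be carried out.
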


\begin{proof} The function $F^\ast$ satisfies the evolution equation
\begin{equation}\label{lkj2}
\partial_tF^\ast=V'\partial_vP_{\neq 0}\phi\,\partial_zF-(\dot{V}+V'\partial_z\phi)\,\partial_vF+(B''\partial_z\phi-B''_0\partial_z\phi'),
\end{equation}
which follows from \eqref{rea23.1} and \eqref{reb14}. Recalling the definition \eqref{rec1}, we calculate
\begin{equation}\label{lkj3}
\begin{split}
\frac{d}{dt}\E_{F^\ast}(t)=&\sum_{k\in \mathbb{Z}}\int_\R 2\dot{A}_k(t,\xi)A_k(t,\xi)\big|\widetilde{F^\ast}(t,k,\xi)\big|^2\,d\xi\\
&+2\Re\sum_{k\in \mathbb{Z}}\int_{\R}A_k^2(t,\xi)\partial_t\widetilde{F^\ast}(t,k,\xi)\overline{\widetilde{F^\ast}(t,k,\xi)}\,d\xi.
\end{split}
\end{equation}
Therefore, since $\partial_tA_k\leq 0$, for any $t\in[1,T]$ we have
\begin{equation*}
\begin{split}
&\E_f(t)+\int_1^t\sum_{k\in \mathbb{Z}}\int_\R 2|\dot{A}_k(s,\xi)|A_k(s,\xi)\big|\widetilde{F^\ast}(s,k,\xi)\big|^2\,d\xi ds\\
&=\E_f(1)+\int_1^t\Big\{2\Re\sum_{k\in \mathbb{Z}}\int_{\R}A_k^2(s,\xi)\partial_s\widetilde{F^\ast}(s,k,\xi)\overline{\widetilde{F^\ast}(s,k,\xi)}\,d\xi\Big\}ds.
\end{split}
\end{equation*}

We examine the equation \eqref{lkj2} and decompose the nonlinearity in the right-hand side. Let
\begin{equation}\label{lkj5}
\begin{split}
&\mathcal{N}_1:=V'\partial_vP_{\neq 0}\phi\,\partial_zF^\ast,\qquad \mathcal{N}_2:=-V'\partial_z\phi\partial_vF^\ast,\qquad\mathcal{N}_3:=-\dot{V}\partial_vF^\ast,\\
&\mathcal{N}_4:=V'\partial_vP_{\neq 0}\phi\,\partial_z(F-F^\ast),\qquad \mathcal{N}_5:=-V'\partial_z\phi\partial_v(F-F^\ast),\\
&\mathcal{N}_6:=-\dot{V}\partial_v(F-F^\ast),\qquad \mathcal{N}_7:=B''\partial_z\phi-B''_0\partial_z\phi'.
\end{split}
\end{equation}

Since $\E_f(1)\lesssim\eps_1^3$ (see \eqref{boot1}), for \eqref{lkj1} it suffices to prove that, for any $t\in[1,T]$,
\begin{equation}\label{lkj6}
\Big|2\Re\int_1^t\sum_{k\in \mathbb{Z}}\int_{\R}A_k^2(s,\xi)\widetilde{\mathcal{N}_a}(s,k,\xi)\overline{\widetilde{F^\ast}(s,k,\xi)}\,d\xi ds\Big|\lesssim_\delta \eps_1^3,
\end{equation}
for $a\in\{1,\ldots,7\}$. We prove these bounds in Lemmas \ref{LemN1}--\ref{LemN3} below. 
\end{proof}

\begin{lemma}\label{LemN1}
The bounds \eqref{lkj6} hold for $a\in\{1,2,3\}$.
\end{lemma}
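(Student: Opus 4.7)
Each nonlinearity has the form $\mathcal{N}_a=H_a\cdot D_a F^\ast$ where $D_a$ is a first-order derivative and $H_a$ is a real-valued coefficient: $(H_1,D_1)=(V'\partial_v P_{\neq 0}\phi,\partial_z)$, $(H_2,D_2)=(-V'\partial_z\phi,\partial_v)$, $(H_3,D_3)=(-\dot V,\partial_v)$. For each $a$ I will write the integrand of \eqref{lkj6} in Fourier variables $(k,\xi)$ for the outer $\widetilde{F^\ast}$, $(\ell,\eta)$ for the inner $\widetilde{F^\ast}$, and $(m,\rho)=(k-\ell,\xi-\eta)$ for $\widetilde{H_a}$. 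Symmetrizing the two copies of $\widetilde{F^\ast}$ and using the reality of $H_a$ converts the unfavorable prefactor $i\ell A_k^2(\xi)$ coming from $\partial_z F^\ast$ (resp.~$i\eta A_k^2(\xi)$ from $\partial_v F^\ast$) into the antisymmetric differences $\ell A_k^2(\xi)-kA_\ell^2(\eta)$ for $a=1$ and $\eta A_k^2(\xi)-\xi A_\ell^2(\eta)$ for $a=2$. For $a=3$ the symbol $\widetilde{H_3}(m,\rho)$ is supported on $m=0$, so the symmetrization takes place in $\xi\leftrightarrow\eta$ at fixed $k$ and produces $\eta A_k^2(\xi)-\xi A_k^2(\eta)$. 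These three expressions are precisely the objects controlled by Lemmas~\ref{TLXH1}, \ref{TLXH3}, and \ref{TLXH2}, respectively.

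Then I split the integration over $(k,\xi,\ell,\eta)$ into the symmetric region $R_0\cup R_1$ (where $\langle m,\rho\rangle$ is small) and the transport regions $R_2, R_3$ (where one of $\langle\ell,\eta\rangle$, $\langle k,\xi\rangle$ is small). In $R_0\cup R_1$ I use the ``difference'' bounds \eqref{TLXH1.1}/\eqref{TLXH3.1}/\eqref{TLXH2.1}, which factor the left-hand side as $\sqrt{|A_k\dot A_k|(\xi)}\cdot\sqrt{|A_\ell\dot A_\ell|(\eta)}\cdot A_m(\rho)\cdot P_a(m,\rho,s)$ for an explicit polynomial weight $P_a$; Cauchy--Schwarz places the two square-root factors on the $F^\ast$ pieces (contributing $\mathcal B_{F^\ast}\lesssim\eps_1^2$ after the time integral), and the $A_m P_a$-weighted norm of $\widetilde{H_a}$ is bounded by $\eps_1$ via Lemma~\ref{nar13}(ii) for $a=1$, Lemma~\ref{nar13}(i) for $a=2$, and \eqref{nar7} for $a=3$. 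The matching works because $P_a^2$ coincides exactly with the weight appearing in the corresponding coefficient bound; e.g.\ $\bigl(\tfrac{|m|\langle t\rangle^2\langle t-\rho/m\rangle^2}{|\rho/m|^2+\langle t\rangle^2}\bigr)^{2}$ from \eqref{TLXH3.1} equals the weight $\tfrac{m^2\langle t\rangle^4\langle t-\rho/m\rangle^4}{(|\rho/m|^2+\langle t\rangle^2)^2}$ in \eqref{nar34}, and the $\mathcal{K}\langle\rho\rangle^{1/4}\langle s\rangle^{7/4}$ factor in \eqref{TLXH2.1} dualizes the $\dot V$ weight in \eqref{nar7}. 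In $R_2$ I use instead the ``sum'' bounds \eqref{TLXH1.2}/\eqref{TLXH3.2}/\eqref{TLXH2.2}, which transfer one $\sqrt{|A\dot A|}$ factor onto $H_a$ itself; the decaying factor $e^{-\delta'_0\langle\ell,\eta\rangle^{1/2}}$ then absorbs the residual $A_\ell(\eta)\widetilde{F^\ast}(\ell,\eta)$ via $\sqrt{\mathcal E_{F^\ast}}\lesssim\eps_1$, and $R_3$ is handled symmetrically. The $\Psi$-cutoff implicit in Lemma~\ref{nar13} is available for free since $F^\ast$ is supported where $\Psi\equiv 1$, and Lemma~\ref{Multi0} turns each trilinear frequency integral into a product of three $L^2$-norms.

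The main obstacle I expect is the bookkeeping in the non-symmetric transport regions $R_2,R_3$: the derivative lost when pulling $\partial_z F^\ast$ or $\partial_v F^\ast$ out of $\mathcal{N}_a$ must be recovered exactly by the transport-adapted factors $\langle t-\rho/m\rangle^{-2}$ and $(|\rho/m|^2+\langle t\rangle^2)^{-1}$ supplied by Lemma~\ref{nar13}, and any mismatch with the polynomial factors in Lemmas~\ref{TLXH1}/\ref{TLXH3} would break the closure. Verifying this exponent alignment (together with the elementary equivalence $|\rho/m|+\langle t\rangle\approx(|\rho/m|^2+\langle t\rangle^2)^{1/2}$ used to interchange the two forms in which the weights naturally appear) is the technical heart of the argument. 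Once it is in place, the bound \eqref{lkj6} for $a\in\{1,2,3\}$ follows directly from \eqref{boot2} together with \eqref{nar7} and \eqref{nar34}--\eqref{nar14}.
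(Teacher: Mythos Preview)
Your proposal is correct and follows essentially the same route as the paper: symmetrize in $(k,\xi)\leftrightarrow(\ell,\eta)$ using the reality of $H_a$ to obtain the antisymmetric weight differences, split into the regions $R_0\cup R_1$, $R_2$, $R_3$, apply the bilinear weight bounds of Lemmas~\ref{TLXH1}/\ref{TLXH3}/\ref{TLXH2} in the respective regions, and close with the coefficient estimates \eqref{nar14}, \eqref{nar34}, \eqref{nar7} exactly as you describe. The exponent matching you flag as the main obstacle is indeed the point, and your verification of it (e.g.\ that the square of the inverse polynomial factor in \eqref{TLXH3.1} is the weight in \eqref{nar34}) is correct; the paper carries this out in full only for $\mathcal N_1$ and then refers to \cite[Lemmas~4.6 and~4.8]{IOJI} for $\mathcal N_2,\mathcal N_3$.
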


\begin{proof} This is similar to the proofs of \cite[Lemmas 4.4, 4.6, and 4.8]{IOJI}, and we will be somewhat brief. The common point is that one can symmetrize the integrals to avoid loss of derivatives.

{\bf{Step 1.}} We consider first the nonlinearity $\mathcal{N}_1$. Letting $H_1:=V'\partial_vP_{\neq 0}(\Psi\phi)$, we write
\begin{equation*}
\begin{split}
&\Big|2\Re\int_1^t\sum_{k\in \mathbb{Z}}\int_{\R}A_k^2(s,\xi)\widetilde{\mathcal{N}_1}(s,k,\xi)\overline{\widetilde{F^\ast}(s,k,\xi)}\,d\xi ds\Big|\\
&=C\Big|2\Re\Big\{\sum_{k,\ell\in \mathbb{Z}}\int_1^t\int_{\R^2}A_k^2(s,\xi)\widetilde{H_1}(s,k-\ell,\xi-\eta)i\ell\widetilde{F^\ast}(s,\ell,\eta)\overline{\widetilde{F^\ast}(s,k,\xi)}\,d\xi d\eta ds\Big\}\Big|\\
&=C\Big|\int_1^t\sum_{k,\ell\in \mathbb{Z}}\int_{\R^2}\big[\ell A_k^2(s,\xi)-k A_\ell^2(s,\eta)\big]\widetilde{H_1}(s,k-\ell,\xi-\eta)\widetilde{F^\ast}(s,\ell,\eta)\overline{\widetilde{F^\ast}(s,k,\xi)}\,d\xi d\eta ds\Big|,
\end{split}
\end{equation*}
where the second identity uses symmetrization based on the fact that $H_1$ is real-valued. The cutoff function $\Psi$ can be inserted in the definition of $H_1$ because $F$ and $F^\ast$ are supported in $[0,T]\times\mathbb{T}\times[b(\va_0),b(1-\va_0)]$. With $R_n$ as in \eqref{nar18.1}--\eqref{nar18.4}, we define the integrals
\begin{equation}\label{nar19}
\begin{split}
\mathcal{U}^n_1:=\int_1^t\sum_{k,\ell\in \mathbb{Z}}\int_{\R^2}&\mathbf{1}_{R_n}((k,\xi),(\ell,\eta))\big|\ell A_k^2(s,\xi)-k A_\ell^2(s,\eta)\big|\,|\widetilde{H_1}(s,k-\ell,\xi-\eta)|\\
&\times|\widetilde{F^\ast}(s,\ell,\eta)|\,|\widetilde{F^\ast}(s,k,\xi)|\,d\xi d\eta ds.
\end{split}
\end{equation}
We use Lemma \ref{TLXH1} and remark that $\widetilde{H_1}(t,0,\rho)=0$ for $\rho\in\mathbb{R}$. Denote $(m,\rho)=(k-\ell,\xi-\eta)$. Using \eqref{TLXH1.1}, \eqref{nar14}, and \eqref{boot2}, for $n\in\{0,1\}$ we can bound
\begin{equation*}
\begin{split}
\mathcal{U}_1^n&\lesssim_{\delta}\int_1^t\sum_{k,\ell\in \mathbb{Z}}\int_{\R^2}\sqrt{|(A_k\dot{A}_k)(s,\xi)|}\,\big|\widetilde{F^\ast}(s,k,\xi)\big|\sqrt{|(A_{\ell}\dot{A}_{\ell})(s,\eta)|}\,\big|\widetilde{F^\ast}(s,\ell,\eta)\big|\\
&\qquad\times\, \mathbf{1}_{\mathbb{Z}^\ast}(m)\frac{\langle s\rangle\langle s-\rho/m\rangle^2m^2}{(|\rho/m|+\langle s\rangle)\langle \rho\rangle}A_{m}(s,\rho)\big|\widetilde{H_1}(s,m,\rho)\big|e^{-\delta'_0\langle m,\rho\rangle^{1/2}}\,d\xi d\eta ds\\
&\lesssim_{\delta} \Big\|\sqrt{|(A_k\dot{A}_k)(s,\xi)|}\,\widetilde{F^\ast}(s,k,\xi)\Big\|_{L^2_{s}L^2_{k,\xi}}\Big\|\sqrt{|(A_{\ell}\dot{A}_{\ell})(s,\eta)|}\,\widetilde{F^\ast}(s,\ell,\eta)\Big\|_{L^2_sL^2_{\ell,\eta}}\\
&\qquad\times \Big\|\mathbf{1}_{\mathbb{Z}^\ast}(m)A_{m}(s,\rho)\frac{\langle s\rangle\langle s-\rho/m\rangle^2m^2}{(|\rho/m|+\langle s\rangle)\langle \rho\rangle}e^{-(\delta'_0/2)\langle m,\rho\rangle^{1/2}}\widetilde{H_1}(s,m,\rho)\Big\|_{L^{\infty}_sL^2_{m,\rho}}\\
&\lesssim_{\delta}\epsilon_1^3.
\end{split}
\end{equation*}
Similarly, for $n=2$ we use \eqref{TLXH1.2}, \eqref{nar14}, and \eqref{boot2} to bound
\begin{equation}\label{lkj7.1}
\begin{split}
\mathcal{U}_1^2&\lesssim_{\delta}\int_1^t\sum_{k,\ell\in \mathbb{Z}}\int_{\R^2}\mathbf{1}_{\mathbb{Z}^\ast}(m)\sqrt{|(A_{m}\dot{A}_{m})(s,\rho)|}\,\frac{\langle s\rangle\langle s-\rho/m\rangle^2m^2}{(|\rho/m|+\langle s\rangle)\langle \rho\rangle}\big|\widetilde{H_1}(s,m,\rho)\big|\\
&\qquad\times\sqrt{|(A_k\dot{A}_k)(s,\xi)|}\,\big|\widetilde{F^\ast}(s,k,\xi)\big|A_{\ell}(s,\eta) e^{-\delta'_0\langle \ell,\eta\rangle^{1/2}}|\widetilde{F^\ast}(s,\ell,\eta)|\,d\xi d\eta ds\\
&\lesssim_{\delta} \Big\|\sqrt{|(A_k\dot{A}_k)(s,\xi)|}\,\widetilde{F^\ast}(s,k,\xi)\Big\|_{L^2_{s}L^2_{k,\xi}}\Big\|A_{\ell}(s,\eta)\,e^{-(\delta'_0/2)\langle \ell,\eta\rangle^{1/2}}\widetilde{F^\ast}(s,\ell,\eta)\Big\|_{L^{\infty}_sL^2_{\ell,\eta}}\\
&\qquad\times \Big\|\mathbf{1}_{\mathbb{Z}^\ast}(m)\sqrt{|(A_{m}\dot{A}_{m})(s,\rho)|}\,\frac{\langle s\rangle\langle s-\rho/m\rangle^2m^2}{(|\rho/m|+\langle s\rangle)\langle \rho\rangle}\big|\widetilde{H_1}(s,m,\rho)\Big\|_{L^{2}_sL^2_{m,\rho}}\\
&\lesssim_{\delta}\epsilon_1^3.
\end{split}
\end{equation}
The case $n=3$ is identical to the case $n=2$, by symmetry, so $\mathcal{U}_1^n\lesssim_\delta\epsilon_1^3$ for all $n\in\{0,1,2,3\}$. The desired bounds \eqref{lkj6} follow for $a=1$.

{\bf{Step 2.}} The bounds for the nonlinearity $\mathcal{N}_2$ follow in the same way, using the estimates \eqref{TLXH3.1}--\eqref{TLXH3.2} and \eqref{nar34} (see \cite[Lemma 4.6]{IOJI} for complete details). The bounds for the nonlinearity $\mathcal{N}_3$ also follow in the same way, using the estimates \eqref{TLXH2.1}--\eqref{TLXH2.2} and \eqref{nar7} (see \cite[Lemma 4.8]{IOJI} for complete details). 
\end{proof}

\begin{lemma}\label{LemN2}
The bounds \eqref{lkj6} hold for $a\in\{4,5,6\}$.
\end{lemma}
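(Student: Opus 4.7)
I would follow the scheme of Lemma \ref{LemN1}, treating $\mathcal{N}_4, \mathcal{N}_5, \mathcal{N}_6$ in parallel with $\mathcal{N}_1, \mathcal{N}_2, \mathcal{N}_3$ respectively. For each $a\in\{4,5,6\}$, after rewriting the left-hand side of \eqref{lkj6} as a trilinear Fourier integral in $(k,\xi)$ (the output frequency, paired with $F^\ast$), $(\ell,\eta)$ (the frequency of $F-F^\ast$), and $(m,\rho)=(k-\ell,\xi-\eta)$ (the frequency of the coefficient), the analysis splits according to the regions $R_0,R_1,R_2,R_3$ defined in \eqref{nar18.1}--\eqref{nar18.4}. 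The relevant coefficients are $V'\partial_v P_{\neq 0}(\Psi\phi)$ for $a=4$, $V'\partial_z(\Psi\phi)$ for $a=5$ (both controlled by Lemma \ref{nar13}), and $\dot V$ for $a=6$, which is a zero-mode function and therefore forces $m=0$ and $k=\ell$.

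The essential new obstacle, and the reason the argument of Lemma \ref{LemN1} does not apply verbatim, is the loss of symmetrization. In Lemma \ref{LemN1} the two profiles coincided, allowing the antisymmetric combination $\ell A_k^2(s,\xi)-kA_\ell^2(s,\eta)$ to be estimated by \eqref{TLXH1.1}; this saved a half-derivative. Here $F^\ast$ and $F-F^\ast$ are distinct profiles, so the symmetrization fails and a naive bound loses a half-derivative. This is precisely the loss compensated for by the strengthened bootstrap \eqref{rec1''}, whose extra weight $1+\langle\ell,\eta\rangle/\langle s\rangle$ supplies the missing factor of $(1+\langle\ell,\eta\rangle/\langle s\rangle)^{1/2}$ in $L^2$. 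The unsymmetrized bilinear estimates \eqref{TLD1}, \eqref{TLD10}, \eqref{TLD20} proved in Section \ref{SecLem} are designed exactly for this pairing.

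Concretely, in $R_0\cup R_1$ (where $\langle k,\xi\rangle$ and $\langle \ell,\eta\rangle$ are comparable) I would bound $|\ell A_k^2(s,\xi)|$ via \eqref{TLD1} for $\mathcal{N}_4$, $|\eta A_k^2(s,\xi)|$ via \eqref{TLD10} for $\mathcal{N}_5$, and $|\eta A_k^2(s,\xi)|$ via the diagonal estimate \eqref{TLD20} for $\mathcal{N}_6$. The resulting $(1+\langle k,\xi\rangle/\langle s\rangle)^{1/2}$ factor is comparable, up to an exponentially small correction in $\langle m,\rho\rangle^{1/2}$, to $(1+\langle \ell,\eta\rangle/\langle s\rangle)^{1/2}$, and after Cauchy--Schwarz it is absorbed into the $\mathcal{B}_{F-F^\ast}$ bootstrap norm of $(F-F^\ast)(\ell,\eta)$; the coefficient factor is then controlled by \eqref{nar14}, \eqref{nar34}, \eqref{nar7} respectively, exactly as in the proof of Lemma \ref{LemN1}. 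In $R_2$ (small $\langle\ell,\eta\rangle$) the already-symmetric estimates \eqref{TLXH1.2}, \eqref{TLXH3.2}, \eqref{TLXH2.2} apply with no derivative loss and pair directly against the plain $\mathcal{B}_{F-F^\ast}$-bootstrap. The region $R_3$ (small $\langle k,\xi\rangle$) is symmetric to $R_2$ with the roles of $(k,\xi)$ and $(\ell,\eta)$ interchanged, using that $A_k(s,\xi)\lesssim 1$ there so the factor $|\ell A_k^2|$ is effectively controlled by the $F-F^\ast$ variable.

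Summing the contributions of the four regions gives $|\mathrm{LHS\ of\ }\eqref{lkj6}|\lesssim_\delta \eps_1^3$ for each $a\in\{4,5,6\}$, as required. The main technical hurdle is already encapsulated in the new bilinear estimates \eqref{TLD1}, \eqref{TLD10}, \eqref{TLD20}; once those are in hand, the case analysis above is a direct adaptation of the proof of Lemma \ref{LemN1}.
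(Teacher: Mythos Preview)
Your proposal is correct and follows essentially the same approach as the paper's proof: decompose each trilinear integral over the regions $R_0\cup R_1$, $R_2$, $R_3$, use the unsymmetrized bilinear bounds \eqref{TLD1}, \eqref{TLD10}, \eqref{TLD20} in $R_0\cup R_1$ to absorb the missing symmetrization into the strengthened $F-F^\ast$ bootstrap norm \eqref{rec1''}, and handle $R_2$ (and, by swapping $(k,\xi)\leftrightarrow(\ell,\eta)$, $R_3$) via \eqref{TLXH1.2}, \eqref{TLXH3.2}, \eqref{TLXH2.2} together with the coefficient bounds \eqref{nar14}, \eqref{nar34}, \eqref{nar7}. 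This is exactly what the paper does.
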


\begin{proof} {\bf{Step 1.}} We consider first the nonlinearity $\mathcal{N}_4$, and estimate
\begin{equation*}
\begin{split}
&\Big|2\Re\int_1^t\sum_{k\in \mathbb{Z}}\int_{\R}A_k^2(s,\xi)\widetilde{\mathcal{N}_4}(s,k,\xi)\overline{\widetilde{F^\ast}(s,k,\xi)}\,d\xi ds\Big|\\
&\lesssim\Big|\sum_{k,\ell\in \mathbb{Z}}\int_1^t\int_{\R^2}A_k^2(s,\xi)\widetilde{H_1}(s,k-\ell,\xi-\eta)\ell\widetilde{(F-F^\ast)}(s,\ell,\eta)\overline{\widetilde{F^\ast}(s,k,\xi)}\,d\xi d\eta ds\Big|\lesssim \sum_{n\in\{0,1,2,3\}}\mathcal{U}_4^n,
\end{split}
\end{equation*}
where $H_1=V'\partial_vP_{\neq 0}(\Psi\phi)$ as in the proof of Lemma \ref{LemN1}, and
\begin{equation*}
\begin{split}
\mathcal{U}^n_4:=\int_1^t\sum_{k,\ell\in \mathbb{Z}}\int_{\R^2}&\mathbf{1}_{R_n}((k,\xi),(\ell,\eta))\big|\ell A_k^2(s,\xi)\big|\,|\widetilde{H_1}(s,k-\ell,\xi-\eta)|\\
&\times|\widetilde{(F-F^\ast)}(s,\ell,\eta)|\,|\widetilde{F^\ast}(s,k,\xi)|\,d\xi d\eta ds.
\end{split}
\end{equation*}
Recall that $\widetilde{H_1}(t,0,\rho)=0$ for $\rho\in\mathbb{R}$. Letting $(m,\rho)=(k-\ell,\xi-\eta)$ and using \eqref{TLD1}, \eqref{nar14}, and \eqref{boot2}, for $n\in\{0,1\}$ we can bound
\begin{equation*}
\begin{split}
&\mathcal{U}_4^n\lesssim_{\delta}\int_1^t\sum_{k,\ell\in \mathbb{Z}}\int_{\R^2}\sqrt{|(A_{\ell}\dot{A}_{\ell})(s,\eta)|}(1+\langle\ell,\eta\rangle/\langle s\rangle)^{1/2}\big|\widetilde{(F-F^\ast)}(s,\ell,\eta)\big|\cdot \sqrt{|(A_k\dot{A}_k)(s,\xi)|}\\
&\qquad\big|\widetilde{F^\ast}(s,k,\xi)\big|\cdot\mathbf{1}_{\mathbb{Z}^\ast}(m)\frac{\langle s\rangle\langle s-\rho/m\rangle^2m^2}{(|\rho/m|+\langle s\rangle)\langle \rho\rangle}A_{m}(s,\rho)\big|\widetilde{H_1}(s,m,\rho)\big|e^{-\delta'_0\langle m,\rho\rangle^{1/2}}\,d\xi d\eta ds\\
&\lesssim_{\delta} \Big\|\sqrt{|(A_k\dot{A}_k)(s,\xi)|}\,\widetilde{F^\ast}(s,k,\xi)\Big\|_{L^2_{s}L^2_{k,\xi}}\Big\|\sqrt{|(A_{\ell}\dot{A}_{\ell})(s,\eta)|(1+\langle\ell,\eta\rangle/\langle s\rangle)}\widetilde{(F-F^\ast)}(s,\ell,\eta)\Big\|_{L^2_sL^2_{\ell,\eta}}\\
&\qquad\times \Big\|\mathbf{1}_{\mathbb{Z}^\ast}(m)A_{m}(s,\rho)\frac{\langle s\rangle\langle s-\rho/m\rangle^2m^2}{(|\rho/m|+\langle s\rangle)\langle \rho\rangle}e^{-(\delta'_0/2)\langle m,\rho\rangle^{1/2}}\widetilde{H_1}(s,m,\rho)\Big\|_{L^{\infty}_sL^2_{m,\rho}}\\
&\lesssim_{\delta}\epsilon_1^3.
\end{split}
\end{equation*}
Moreover, we can also estimate $\mathcal{U}_4^2\lesssim_{\delta}\epsilon_1^3$, using \eqref{TLXH1.2}, \eqref{nar14}, and \eqref{boot2} as in \eqref{lkj7.1}. Then we can estimate $\mathcal{U}_4^3\lesssim_{\delta}\epsilon_1^3$ by symmetry. The desired bounds \eqref{lkj6} follow for $a=4$.

{\bf{Step 2.}} We consider now the nonlinearity $\mathcal{N}_5$, and estimate
\begin{equation*}
\begin{split}
&\Big|2\Re\int_1^t\sum_{k\in \mathbb{Z}}\int_{\R}A_k^2(s,\xi)\widetilde{\mathcal{N}_5}(s,k,\xi)\overline{\widetilde{F^\ast}(s,k,\xi)}\,d\xi ds\Big|\lesssim \sum_{n\in\{0,1,2,3\}}\mathcal{U}_5^n,
\end{split}
\end{equation*}
where $H_2:=V'\partial_z(\Psi\phi)$, and
\begin{equation*}
\begin{split}
\mathcal{U}^n_5:=\int_1^t\sum_{k,\ell\in \mathbb{Z}}\int_{\R^2}&\mathbf{1}_{R_n}((k,\xi),(\ell,\eta))\big|\eta A_k^2(s,\xi)\big|\,|\widetilde{H_2}(s,k-\ell,\xi-\eta)|\\
&\times|\widetilde{(F-F^\ast)}(s,\ell,\eta)|\,|\widetilde{F^\ast}(s,k,\xi)|\,d\xi d\eta ds.
\end{split}
\end{equation*}
Notice that $\widetilde{H_2}(t,0,\rho)=0$ for $\rho\in\mathbb{R}$. Letting $(m,\rho)=(k-\ell,\xi-\eta)$ and using \eqref{TLD10}, \eqref{nar34}, and \eqref{boot2}, for $n\in\{0,1\}$ we can bound, as before,
\begin{equation*}
\begin{split}
&\mathcal{U}_5^n\lesssim_{\delta}\int_1^t\sum_{k,\ell\in \mathbb{Z}}\int_{\R^2}\sqrt{|(A_{\ell}\dot{A}_{\ell})(s,\eta)|}(1+\langle\ell,\eta\rangle/\langle s\rangle)^{1/2}\big|\widetilde{(F-F^\ast)}(s,\ell,\eta)\big|\cdot \sqrt{|(A_k\dot{A}_k)(s,\xi)|}\\
&\qquad\big|\widetilde{F^\ast}(s,k,\xi)\big|\cdot\mathbf{1}_{\mathbb{Z}^\ast}(m)\frac{|m|\langle s\rangle^2\langle s-\rho/m\rangle^2}{|\rho/m|^2+\langle s\rangle^2}A_{m}(s,\rho)\big|\widetilde{H_2}(s,m,\rho)\big|e^{-\delta'_0\langle m,\rho\rangle^{1/2}}\,d\xi d\eta ds\\
&\lesssim_{\delta} \Big\|\sqrt{|(A_k\dot{A}_k)(s,\xi)|}\,\widetilde{F^\ast}(s,k,\xi)\Big\|_{L^2_{s}L^2_{k,\xi}}\Big\|\sqrt{|(A_{\ell}\dot{A}_{\ell})(s,\eta)|(1+\langle\ell,\eta\rangle/\langle s\rangle)}\widetilde{(F-F^\ast)}(s,\ell,\eta)\Big\|_{L^2_sL^2_{\ell,\eta}}\\
&\qquad\times \Big\|\mathbf{1}_{\mathbb{Z}^\ast}(m)\frac{|m|\langle s\rangle^2\langle s-\rho/m\rangle^2}{|\rho/m|^2+\langle s\rangle^2}A_{m}(s,\rho)e^{-(\delta'_0/2)\langle m,\rho\rangle^{1/2}}\widetilde{H_2}(s,m,\rho)\Big\|_{L^{\infty}_sL^2_{m,\rho}}\\
&\lesssim_{\delta}\epsilon_1^3.
\end{split}
\end{equation*}
The term $\mathcal{U}_5^2$ can be bounded in the same way, using \eqref{TLXH3.2}, \eqref{nar34}, and \eqref{boot2}, while the term $\mathcal{U}_5^3$ can be bounded by symmetry. The desired bounds \eqref{lkj6} follow for $a=5$. 

{\bf{Step 3.}} Similarly, for $a=6$ we estimate
\begin{equation*}
\begin{split}
&\Big|2\Re\int_1^t\sum_{k\in \mathbb{Z}}\int_{\R}A_k^2(s,\xi)\widetilde{\mathcal{N}_6}(s,k,\xi)\overline{\widetilde{F^\ast}(s,k,\xi)}\,d\xi ds\Big|\lesssim \sum_{n\in\{0,1,2,3\}}\mathcal{U}_6^n,
\end{split}
\end{equation*}
where $R_n^\ast:=\{((k,\xi),(\ell,\eta))\in R_n:\,k=l\}$ and
\begin{equation*}
\begin{split}
\mathcal{U}^n_6:=\int_1^t\sum_{k\in \mathbb{Z}}\int_{\R^2}&\mathbf{1}_{R^\ast_n}((k,\xi),(k,\eta))\big|\eta A_k^2(s,\xi)\big|\,|\widetilde{\dot{V}}(s,\xi-\eta)|\\
&\times|\widetilde{(F-F^\ast)}(s,k,\eta)|\,|\widetilde{F^\ast}(s,k,\xi)|\,d\xi d\eta ds.
\end{split}
\end{equation*}
Letting $\rho=\xi-\eta$ and using \eqref{TLD20}, \eqref{nar7}, and \eqref{boot2}, for $n\in\{0,1\}$ we can bound
\begin{equation*}
\begin{split}
&\mathcal{U}_6^n\lesssim_{\delta}\int_1^t\sum_{k\in \mathbb{Z}}\int_{\R^2}\sqrt{|(A_{k}\dot{A}_{k})(s,\eta)|}(1+\langle k,\eta\rangle/\langle s\rangle)^{1/2}\big|\widetilde{(F-F^\ast)}(s,\ell,\eta)\big|\cdot \sqrt{|(A_k\dot{A}_k)(s,\xi)|}\\
&\qquad\big|\widetilde{F^\ast}(s,k,\xi)\big|\cdot \big(\langle\rho\rangle\langle s\rangle+\langle \rho\rangle^{1/4}\langle s\rangle^{7/4}\big)A_{NR}(s,\rho)\big|\widetilde{\dot{V}}(s,\rho)\big|e^{-\delta'_0\langle\rho\rangle^{1/2}}\,d\xi d\eta ds\\
&\lesssim_{\delta} \Big\|\sqrt{|(A_k\dot{A}_k)(s,\xi)|}\,\widetilde{F^\ast}(s,k,\xi)\Big\|_{L^2_{s}L^2_{k,\xi}}\Big\|\sqrt{|(A_k\dot{A}_{k})(s,\eta)|(1+\langle k,\eta\rangle/\langle s\rangle)}\widetilde{(F-F^\ast)}(s,k,\eta)\Big\|_{L^2_sL^2_{k,\eta}}\\
&\qquad\times \Big\|\big(\langle\rho\rangle\langle s\rangle+\langle \rho\rangle^{1/4}\langle s\rangle^{7/4}\big)A_{NR}(s,\rho)e^{-(\delta'_0/2)\langle \rho\rangle^{1/2}}\widetilde{\dot{V}}(s,\rho)\Big\|_{L^{\infty}_sL^2_{\rho}}\\
&\lesssim_{\delta}\epsilon_1^3.
\end{split}
\end{equation*}
The term $\mathcal{U}_6^2$ can be bounded in the same way, using \eqref{TLXH2.2}, \eqref{nar7}, and \eqref{boot2}, while the term $\mathcal{U}_6^3$ can be bounded by symmetry. The desired bounds \eqref{lkj6} follow for $a=6$. 
\end{proof}

\begin{lemma}\label{LemN3}
The bounds \eqref{lkj6} hold for $a=7$.
\end{lemma}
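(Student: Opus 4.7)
Since $F^\ast$ is supported in $\{v:\Psi(v)=1\}$, the cutoff $\Psi$ acts trivially inside the pairing, so I would first write $\mathcal{N}_7=\mathcal{N}_{7,1}+\mathcal{N}_{7,2}$, with
\[
\mathcal{N}_{7,1}:=B''_\ast\cdot\Psi\partial_z\phi,\qquad \mathcal{N}_{7,2}:=B''_0\cdot\Psi\partial_z(\phi-\phi'),
\]
using $B''=B''_\ast+B''_0$ and the defining equations \eqref{rea26}, \eqref{Ph1} for $\phi$ and $\phi'$. Since $\mathcal{N}_7$ carries no derivative of $F^\ast$, the symmetrization used in Lemmas~\ref{LemN1}--\ref{LemN2} is not available here; instead I would pair with $F^\ast$ directly by Cauchy--Schwarz so that one factor is absorbed by $\sqrt{\mathcal{B}_{F^\ast}}\leq\eps_1$.

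The key reduction is the elliptic identity $\widetilde{\Psi\partial_z\phi}(s,k,\eta)=-ik\widetilde{\Theta}(s,k,\eta)/[k^2+(\eta-sk)^2]$, valid for $k\ne 0$ by definition of $\Theta$, together with the elementary bound $(k^2+(\eta-sk)^2)\langle s\rangle^2\gtrsim|\eta|^2+k^2\langle s\rangle^2$ (a consequence of $(\eta-sk)^2+(sk)^2\geq\eta^2/2$). This yields
\[
|\widetilde{\Psi\partial_z\phi}|^2\leq \frac{1}{k^2}\cdot\frac{k^2\langle s\rangle^2}{|\eta|^2+k^2\langle s\rangle^2}|\widetilde{\Theta}|^2,
\]
converting the $\widetilde W$-type bootstrap bounds $\mathcal{E}_\Theta+\mathcal{B}_\Theta\leq\eps_1^2$ into ordinary $W$-type bounds on $\Psi\partial_z\phi$, with a harmless extra $k^{-2}$. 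The analogous identity for $\Psi\partial_z(\phi-\phi')$ uses $\Theta^\ast$ and the much stronger estimate $\mathcal{E}_{\Theta^\ast}+\mathcal{B}_{\Theta^\ast}\lesssim_\delta\eps_1^4$ from Proposition~\ref{ImprovBoots1}.

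Combining Lemma~\ref{Multi0}(ii) with the bilinear bounds \eqref{TLX7}--\eqref{vfc30.7}, a Leibniz-type decomposition of the $|\dot A_k|$-weight between the coefficient factor and the stream-function factor gives
\[
\int_1^T\sum_k\int |\dot A_k|A_k\bigl|\widetilde{\mathcal{N}_{7,j}}\bigr|^2\,d\xi\,ds\;\lesssim_\delta\;\mathcal{B}_{\text{coef}}\cdot\sup_s\mathcal{E}_{\text{stream}}+\mathcal{E}_{\text{coef}}\cdot\mathcal{B}_{\text{stream}}\;\lesssim\;\eps_1^4,
\]
where for $j=1$ the coefficient bounds come from the bootstrap $\mathcal{E}_{B''_\ast}+\mathcal{B}_{B''_\ast}\leq\eps_1^2$, and for $j=2$ from the Gevrey bound $\|B''_0\|_{\mathcal{G}^{4\delta_0,1/2}}\lesssim 1$ (with the time-independence of $B''_0$ supplying the integrable factor $\|\sqrt{|\dot A_R|A_R}\widetilde{B''_0}\|^2_{L^2}\lesssim\langle s\rangle^{-1-\sigma_0}$). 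The hardest step is the final Cauchy--Schwarz pairing with $F^\ast$: the natural weight split $A_k^2=\sqrt{|\dot A_k|A_k}\cdot A_k^{3/2}/|\dot A_k|^{1/2}$ produces on the $\mathcal{N}_{7,j}$ side the dual weight $A_k^3/|\dot A_k|$ rather than $|\dot A_k|A_k$, and the ratio $A_k/|\dot A_k|$ can be as large as $\langle s\rangle^{1+\sigma_0}/\langle k,\xi\rangle^{1/2}$. To close the estimate one must use the bilinear estimates \eqref{TLX7}--\eqref{vfc30.7} again to distribute the $1/|\dot A_k|^{1/2}$ carefully between the two factors, so that in each case either a pointwise $\sqrt{\mathcal{E}}$-bound or a space-time $\sqrt{\mathcal{B}}$-bound closes the estimate, yielding the required $\eps_1^3$ bound on the pairing $\int A_k^2\widetilde{\mathcal{N}_{7,j}}\overline{\widetilde{F^\ast}}$.
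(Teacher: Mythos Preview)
Your decomposition $\mathcal{N}_7=B''_\ast\,\partial_z(\Psi\phi)+B''_0\,\partial_z(\Psi(\phi-\phi'))$ and the identification of the relevant inputs (the $R$--bounds on $B''_\ast,B''_0$ and the $\widetilde W$--bounds on $\Theta,\Theta^\ast$) are exactly what the paper does in \eqref{lkj20}. The gap is in the ``hardest step'' you flag at the end, and it is a genuine one.

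First, the elementary conversion $|\widetilde{\Psi\partial_z\phi}|^2\le k^{-2}\cdot\frac{k^2\langle s\rangle^2}{|\eta|^2+k^2\langle s\rangle^2}|\widetilde\Theta|^2$ that you propose is obtained by bounding $(k^2+(\eta-sk)^2)^{-1}\le k^{-2}$, which discards the factor $\langle s-\eta/k\rangle^{-2}$. That factor is not a luxury here: it is precisely what absorbs the time growth coming from $|\dot A_k|^{-1}$, so once it is thrown away the argument cannot close.

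Second, your proposed remedy --- ``distribute $1/|\dot A_k|^{1/2}$ using \eqref{TLX7}--\eqref{vfc30.7}'' --- does not work as stated: \eqref{vfc30.7} is an \emph{upper} bound on $|\dot A_k/A_k|$, hence a lower bound on $A_k/|\dot A_k|$, which goes the wrong way. What is actually needed is a pointwise \emph{lower} bound on $|\dot A_k/A_k|$ at the stream--function frequency $\eta$. The paper's Lemma~\ref{TLD30} provides exactly this: its proof combines \eqref{TLX7} with the sharp bound \eqref{newAdot}, namely $|\dot A_k/A_k|(s,\eta)\gtrsim_\delta\langle s-\eta/k\rangle^{-1-\sigma_0}$, to produce
\[
A_k^2(s,\xi)\lesssim_\delta\sqrt{|A_k\dot A_k|(s,\xi)}\,\sqrt{|A_k\dot A_k|(s,\eta)}\,\frac{|k|\langle s\rangle\langle s-\eta/k\rangle^2}{\langle s\rangle+|\eta/k|}\,A_R(s,\rho)\,e^{-\delta_0'\langle\rho\rangle^{1/2}}
\]
(and analogues on $R_2^\ast,R_3^\ast$). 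The weight $\frac{|k|^2\langle s\rangle\langle s-\eta/k\rangle^2}{\langle s\rangle+|\eta/k|}$ that lands on the stream function, when combined with $|\widetilde\varphi|=(k^2\langle s-\eta/k\rangle^2)^{-1}|\widetilde\Theta|$, reproduces the $\widetilde W$--multiplier $\frac{|k|\langle s\rangle}{|\eta|+|k|\langle s\rangle}$ with no leftover time growth; this is how the trilinear pairing closes with $\sqrt{\mathcal B_{F^\ast}}\cdot\sqrt{\mathcal B_\Theta}\cdot\|h\|_R\lesssim\eps_1^2$ (and then $\eps_1^2\cdot\eps_1$ or $\eps_1^2\cdot 1\cdot\eps_1^2$ for the two pieces). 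Your generic bound $A_k/|\dot A_k|\lesssim\langle s\rangle^{1+\sigma_0}/\langle k,\xi\rangle^{1/2}$ misses the point that the dangerous growth of $|\dot A_k|^{-1}$ is concentrated away from $\eta\approx ks$, which is exactly where $|\widetilde\varphi|$ already decays like $\langle s-\eta/k\rangle^{-2}$; the cancellation between these two effects is the content of Lemma~\ref{TLD30} and of \eqref{newAdot}, neither of which you invoke.
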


\begin{proof} Since $B''$ and $B''_0$ are supported in $[0,T]\times\mathbb{T}\times[b(\va_0),b(1-\va_0)]$ we can write
\begin{equation*}
\mathcal{N}_7=B''_\ast\partial_z(\Psi\phi)+B''_0\partial_z(\Psi(\phi-\phi')).
\end{equation*}
In view of \eqref{nar4}, \eqref{boot2}, and \eqref{elli1}, and recalling the definition \eqref{rew0}--\eqref{rew0.4} we have
\begin{equation}\label{lkj20}
\begin{split}
&\|B''_\ast\|_R\lesssim_\delta \eps_1,\qquad \|(\partial_z^2+(\partial_v-t\partial_z)^2)(\Psi\phi)\|_{\widetilde{W}}\lesssim_\delta\eps_1,\\
&\|B''_0\|_{R}\lesssim_\delta 1,\qquad\,\,\big\|(\partial_z^2+(\partial_v-t\partial_z)^2)(\Psi(\phi-\phi'))\big\|_{\widetilde{W}}\lesssim_\delta\eps_1^2.
\end{split}
\end{equation}

Therefore, to prove \eqref{lkj6} for $a=7$ it suffices to show that
\begin{equation}\label{lkj21}
\Big|\sum_{k\in \mathbb{Z}}\int_1^t\int_{\R^2}kA_k^2(s,\xi)\widetilde{h}(s,\xi-\eta)\widetilde{\varphi}(s,k,\eta)\overline{\widetilde{F^\ast}(s,k,\xi)}\,d\xi d\eta ds\Big|\lesssim_\delta\eps_1,
\end{equation}
for any functions $h$ and $\varphi$ satisfying $\|h\|_R\leq 1$ and $\|(\partial_z^2+(\partial_v-t\partial_z)^2)\varphi\|_{\widetilde{W}}\leq 1$. With $R_n^\ast$ defined as before, for $n\in\{0,1,2,3\}$ we let
\begin{equation*}
\begin{split}
\mathcal{U}^n_7:=\int_1^t\sum_{k\in \mathbb{Z}}\int_{\R^2}&\mathbf{1}_{R^\ast_n}((k,\xi),(k,\eta))|k| A_k^2(s,\xi)\,|\widetilde{h}(s,\xi-\eta)||\widetilde{\varphi}(s,k,\eta)||\widetilde{F^\ast}(s,k,\xi)|\,d\xi d\eta ds.
\end{split}
\end{equation*}

Letting $\rho=\xi-\eta$ and using \eqref{TLD31}, for $n\in\{0,1\}$ we can estimate
\begin{equation*}
\begin{split}
\mathcal{U}_7^n&\lesssim_{\delta}\int_1^t\sum_{k\in \mathbb{Z}^\ast}\int_{\R^2}\sqrt{|(A_{k}\dot{A}_{k})(s,\eta)|}\frac{|k|^2\langle s\rangle\langle s-\eta/k\rangle^2}{\langle s\rangle +|\eta/k|}\big|\widetilde{\varphi}(s,k,\eta)\big|\\
&\qquad\times \sqrt{|(A_k\dot{A}_k)(s,\xi)|}\big|\widetilde{F^\ast}(s,k,\xi)\big|\cdot A_R(s,\rho)\big|\widetilde{h}(s,\rho)\big|e^{-\delta'_0\langle \rho\rangle^{1/2}}\,d\xi d\eta ds\\
&\lesssim_{\delta} \Big\|\sqrt{|(A_k\dot{A}_k)(s,\xi)|}\,\widetilde{F^\ast}(s,k,\xi)\Big\|_{L^2_{s}L^2_{k,\xi}}\Big\|A_R(s,\rho)e^{-(\delta'_0/2)\langle \rho\rangle^{1/2}}\widetilde{h}(s,\rho)\Big\|_{L^{\infty}_sL^2_{\rho}}\\
&\qquad\times \Big\|\mathbf{1}_{\mathbb{Z}^\ast}(k)\sqrt{|(A_{k}\dot{A}_{k})(s,\eta)|}\frac{|k|^2\langle s\rangle\langle s-\eta/k\rangle^2}{\langle s\rangle +|\eta/k|}\widetilde{\varphi}(s,k,\eta)\Big\|_{L^2_sL^2_{k,\eta}}\\
&\lesssim_{\delta}\epsilon_1.
\end{split}
\end{equation*}
Similarly, we can use \eqref{TLD32} to estimate $\mathcal{U}_7^2\lesssim_{\delta}\epsilon_1$ and then use \eqref{TLD34} to estimate $\mathcal{U}_7^3\lesssim_{\delta}\epsilon_1$. This completes the proof of the lemma.
\end{proof}

\section{Improved control on $F-F^\ast$ and the main variables $F$ and $\Theta$}\label{SecDif}

In this section we improve the remaining bootstrap bounds:

\begin{proposition}\label{Pr1}
With the definitions and assumptions in Proposition \ref{MainBootstrap}, we have
\begin{equation}\label{hard1}
\sum_{g\in\{F-F^\ast,F,\Theta\}}\left[\E_{g}(t)+\mathcal{B}_g(t)\right]\lesssim_\delta \epsilon_1^3\qquad {\rm for\,\,any\,\,}t\in[1,T].
\end{equation}
\end{proposition}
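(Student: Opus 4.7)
I would approach Proposition~\ref{Pr1} in three stages: first establish the improved bounds on $F-F^\ast$ using the linear theory, then deduce bounds on $F=F^\ast+(F-F^\ast)$ by triangle inequality with the already-proved bounds on $F^\ast$, and finally recover bounds on $\Theta$ via the elliptic estimate of Lemma~\ref{lm:elli3}.

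The heart of the argument is the control of $g := F-F^\ast$. Since $\partial_t(F-F^\ast) = B''_0\,\partial_z\phi'$ by \eqref{reb14}, introduce the auxiliary stream function $\phi^\ast$ defined so that $\varphi := \phi'-\phi^\ast$ satisfies
\begin{equation*}
(B'_0)^2(\partial_v-t\partial_z)^2\varphi+B''_0(\partial_v-t\partial_z)\varphi+\partial_z^2\varphi = g,\qquad \varphi|_{\partial}=0.
\end{equation*}
Then $g$ and $\varphi$ satisfy the inhomogeneous linear system described in \eqref{Intg}, with source $H := B''_0\,\partial_z\phi^\ast$ and trivial initial data. The key point is that $\phi^\ast$ is determined by $F^\ast$ through an elliptic equation of the same type as in Lemma~\ref{lm:elli3}, so Proposition~\ref{ImprovBoots2} together with Lemma~\ref{nar8}(ii) (to handle the coefficient $B''_0$) gives $\|H\|_W \lesssim_\delta \epsilon_1^{3/2}$, and similarly with the stronger weight $(1+\langle k,\xi\rangle/\langle t\rangle)^{1/2}$ incorporated. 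Thus we may regard $g$ as the solution of the linearized problem \eqref{Intg} forced by a quadratically small source.

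Next, decompose in modes, conjugate by $e^{-ikvt}$, and apply Duhamel to reduce to the homogeneous flow \eqref{B1intro}. The linear theory carried out in Section~8 (to be proved there) provides the representation formula \eqref{B7intro} with the generalized eigenfunction kernel $\Pi'_k$, and critically the weighted bound \eqref{B5intro} with the extra factor $(|k|+|\xi|)$. Applied with the family of weights $W_k = A_k$ (which satisfy the smoothness property \eqref{S1intro}, as recorded in Lemma~\ref{A-A}), the $(|k|+|\xi|)$ factor is exactly what absorbs the extra weight $(1+\langle k,\xi\rangle/\langle t\rangle)^{1/2}$ appearing in the norm $\mathcal{E}_{F-F^\ast}$ (cf.\ \eqref{rec1''}). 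Carrying out the $\tau$-integration in \eqref{B7intro} against the quadratic bounds on $H$ then yields
\begin{equation*}
\mathcal{E}_{F-F^\ast}(t)+\mathcal{B}_{F-F^\ast}(t)\lesssim_\delta \epsilon_1^3.
\end{equation*}
The hard part of this step is verifying that $A_k$ is admissible as a weight in the linear theory, and that the conjugation parameter $a$ (which encodes the drift $kv$) can be accommodated uniformly; this is where the spectral assumption $(B)$ is essential, since the absence of discrete eigenvalues is needed for the spectral representation used in \cite{JiaG}.

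With the bound on $F-F^\ast$ in hand, $F = F^\ast + (F-F^\ast)$ and the triangle inequality in the norms $\mathcal{E}_F, \mathcal{B}_F$ combined with Proposition~\ref{ImprovBoots2} give $\mathcal{E}_F(t)+\mathcal{B}_F(t)\lesssim_\delta \epsilon_1^3$. Finally, for $\Theta$ we note that by \eqref{defgellip} and \eqref{rea26}, the function $\Psi\phi$ satisfies an elliptic equation of exactly the form treated in Lemma~\ref{lm:elli3}, but with variable coefficients $(V')^2, V''$ instead of $(B'_0)^2, B''_0$. Writing $\Psi\phi = \Psi\phi' + \Psi(\phi-\phi')$, the first piece is handled directly by Lemma~\ref{lm:elli3} applied to the equation \eqref{Ph1} (with source $F$), and the second piece is already controlled by $\mathcal{E}_{\Theta^\ast}+\mathcal{B}_{\Theta^\ast}\lesssim_\delta\epsilon_1^4$ from Proposition~\ref{ImprovBoots1}. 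Combining yields $\mathcal{E}_\Theta(t)+\mathcal{B}_\Theta(t)\lesssim_\delta\epsilon_1^3$. The main obstacle throughout is the clean integration of the spectral/linear analysis of Section~8 with the imbalanced Gevrey weights $A_k$; once \eqref{B5intro} is available for these weights, the proposition follows cleanly.
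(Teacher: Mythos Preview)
Your three-stage strategy is exactly the paper's, and the reduction to the inhomogeneous linear system \eqref{Intg} with source $H=B_0''\partial_z\phi^\ast$ is correct. However, two technical points in your sketch of the $F-F^\ast$ step need adjustment. First, the weights fed into the linear bound \eqref{B25} are \emph{not} simply $W_k=A_k$; the paper uses $W_k(\eta)=A_k(t,\eta)\bigl(|k|^2+\delta^2|\eta-kt|^2\bigr)$ for the energy and $W_k(\eta)=\sqrt{\mu_k(t,\eta)}\,A_k(t,\eta)\bigl(|k|^2+\delta^2|\eta-kt|^2\bigr)$ for the space-time integral (see \eqref{B35.3}--\eqref{B35.7}). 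The elliptic factor $(|k|^2+|\eta-kt|^2)$ encodes the decay of the stream function $\phi^\ast$ and is indispensable for the time integration in Lemma~\ref{Blemma}; one must then check these enlarged weights still satisfy the commutator condition \eqref{B22.1}, which uses Lemma~\ref{A-A} and Lemma~\ref{Lmu1} in addition to the trivial smoothness of the elliptic factor.

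Second, your identification of the mechanism for the extra $(1+\langle k,\xi\rangle/\langle t\rangle)^{1/2}$ is off: the $(|k|+|\xi|)$ gain in \eqref{B25} acts on the \emph{first} variable of $\Pi_k$ and provides integrability in the $\tau$-integration (it becomes the $(1+|\eta/k|)^{-1}$ factor in \eqref{B38.4}). The extra half-derivative weight is instead absorbed during the $s$-integration by combining the elliptic decay $\langle\xi/k-s\rangle^{-2}$ with the lower bound $|(\dot{A}_k/A_k)(s,\xi)|\gtrsim_\delta\langle s-\xi/k\rangle^{-1-\sigma_0}$ from \eqref{newAdot}; see the key inequality \eqref{B39}. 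Your treatment of $F$ and $\Theta$ is fine (for $\Theta$ the paper equivalently rewrites \eqref{rea26} with the frozen coefficients $B_0',B_0''$ and invokes Lemma~\ref{lm:elli3} with source $F+\mathcal{G}_1+\mathcal{G}_2$, which amounts to your splitting $\Theta=(\Theta-\Theta^\ast)+\Theta^\ast$).
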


The key issue is to prove the bounds \eqref{hard1} for the variable $F-F^\ast$, from which the other bounds follow easily. Our main tool is the following precise estimates on the linearized flow.

\begin{proposition}\label{B20}
Given $k\in\mathbb{Z}^\ast$, assume that $f_k$ is a smooth solution to the equation
 \begin{equation}\label{B21}
\partial_tf_k-ikB_0''\psi_k=X_k(t,v),
\end{equation}
\begin{equation}\label{B22}
(B'_0)^2(\partial_v-itk)^2\psi_k+B_0''(\partial_v-itk)\psi_k-k^2\psi_k=f_k, \qquad \psi_k(b(0))=\psi_k(b(1))=0,
\end{equation}
for $t\in[0,T],\,v\in[b(0),b(1)]$, with vanishing initial data $f_k(0,v)\equiv0$. Assume that $X_k$ is supported in $[0,T]\times[b(\va_0),b(1-\va_0)]$. Then 
\begin{equation}\label{B23}
\begin{split}
\widetilde{f_k}(t,\xi)=\int_0^t\widetilde{X_k}(s,\xi)\,ds+ik\int_0^t\int_s^{t}\int_{\R}\widetilde{B_0''}(\zeta)\,\widetilde{\,\Pi_k}(s,\xi-\zeta-k\tau,\xi-\zeta)\,d\zeta d\tau ds,
\end{split}
\end{equation}
for some functions $\Pi_k:[0,T]\times\mathbb{R}^2\to\mathbb{C}$. Moreover, the functions $\Pi_k$ satisfy the bounds 
\begin{equation}\label{B25}
\left\|(|k|+|\xi|)W_k(\eta)\widetilde{\,\Pi_k}(t,\xi,\eta)\right\|_{L^2_{\xi,\eta}}\lesssim_{\delta} \left\|W_k(\xi)\widetilde{X_k}(t,\xi)\right\|_{L^2_\xi},
\end{equation}
for any $t\in[0,T]$ and $\delta$ sufficiently small. Here $W_k\geq 1$ is a family of weights which depend on a small parameter $\delta\in(0,1]$, and satisfy, for any $k\in\mathbb{Z}^\ast$ and $\xi,\eta\in\R$,
\begin{equation}\label{B22.1}
\left|W_k(\xi)-W_k(\eta)\right|\lesssim e^{2\delta_0\langle\xi-\eta\rangle^{1/2}}W_k(\eta)\Big[\frac{C(\delta)}{\langle k,\eta\rangle^{1/8}}+\sqrt{\delta}\Big],
\end{equation}
where $C(\delta)\gg1$ is a large constant, and the implied constant in \eqref{B22.1} does not depend on $k, \delta$. 
\end{proposition}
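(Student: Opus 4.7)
The plan is to reduce \eqref{B21}--\eqref{B22} to the homogeneous linearized flow analyzed in \cite{JiaG} by a Lagrangian gauge transformation, and then apply Duhamel's formula together with a commutator identity. Setting $g_k:=e^{-iktv}f_k$, $\varphi_k:=e^{-iktv}\psi_k$, and $Y_k:=e^{-iktv}X_k$ conjugates $(\partial_v-ikt)$ to $\partial_v$ and transforms the system into
\begin{equation*}
\partial_t g_k+ikv\,g_k-ikB_0''\varphi_k=Y_k,\qquad (B'_0)^2\partial_v^2\varphi_k+B_0''\partial_v\varphi_k-k^2\varphi_k=g_k,
\end{equation*}
with Dirichlet boundary conditions for $\varphi_k$ and $g_k(0,\cdot)\equiv 0$. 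Under the change of variable $v=b(y)$ the elliptic operator pulls back to $\partial_y^2-k^2$, so its solution operator $K_k^{\mathrm{ell}}:g_k\mapsto\varphi_k$ is built from the Green's function $G_k$ of \eqref{elli11}, and the composite $L_kg_k:=vg_k-B_0''K_k^{\mathrm{ell}}g_k$ coincides (after conjugation by $b$) with the operator from assumption $(B)$.

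Next, Duhamel's formula gives $g_k(t)=\int_0^t e^{-ik(t-s)L_k}Y_k(s)\,ds$. Since $L_k-v=-B_0''K_k^{\mathrm{ell}}$, differentiating $e^{-ik(t-\tau)v}e^{-ik(\tau-s)L_k}$ in $\tau$ yields the key identity
\begin{equation*}
e^{-ik(t-s)L_k}-e^{-ik(t-s)v}=ik\int_s^t e^{-ik(t-\tau)v}\,B_0''\,K_k^{\mathrm{ell}}\,e^{-ik(\tau-s)L_k}\,d\tau,
\end{equation*}
which splits the propagator into a free transport piece plus a correction carrying one explicit factor of $B_0''$. Inserting this into Duhamel's formula, taking the Fourier transform in $v$, and reverting the gauge via $\widetilde{f_k}(t,\xi)=\widetilde{g_k}(t,\xi-kt)$, the free piece yields exactly $\int_0^t\widetilde{X_k}(s,\xi)\,ds$, and the correction yields the double integral in \eqref{B23} after writing multiplication by $B_0''$ as a convolution with $\widetilde{B_0''}$. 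The two-frequency function $\Pi_k(s,\cdot,\cdot)$ emerges as the Fourier-space kernel of $K_k^{\mathrm{ell}}\,e^{-ik(\tau-s)L_k}$ acting on $X_k(s,\cdot)$, parametrized so that the evolved frequency (first argument) differs from the frozen frequency (second argument) by $-k\tau$.

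The main obstacle is the quantitative bound \eqref{B25}. The factor $|k|+|\xi|$ precisely cancels the two-derivative gain of $K_k^{\mathrm{ell}}$ (compatible with the $H^1_k$-scale of Lemma \ref{T5}), reducing the task to an $L^2$-type estimate on $e^{-ik(\tau-s)L_k}$. By $(B)$, the operator $L_k$ has purely continuous spectrum $[b(0),b(1)]$, and the uniform invertibility of $I+T_{k,y_0,\epsilon}$ from \eqref{T6} supplies a limiting absorption principle producing generalized eigenfunctions $\Phi_{k,\lambda}$ with precise Fourier-side control. Propagating the weight $W_k$ through this spectral expansion is the delicate step: decomposing $W_k(\eta)=W_k(\xi)+\bigl(W_k(\eta)-W_k(\xi)\bigr)$ across the convolutions inherent to the spectral representation and combining the gain $C(\delta)\langle k,\eta\rangle^{-1/8}+\sqrt\delta$ from \eqref{B22.1} with the $|k|^{-1/3}$-smallness of $T_{k,y_0,\epsilon}$ from \eqref{T6}, a Neumann-type iteration closes for $\delta$ small, giving \eqref{B25}. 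Carrying out this weighted Fourier-side analysis, an adaptation of \cite{JiaG} to arbitrary weights $W_k$ satisfying only \eqref{B22.1}, is the heart of the proof and is the content of Section 8.
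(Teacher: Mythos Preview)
Your proposal is correct and follows essentially the same route as the paper: the gauge change $g_k=e^{-iktv}f_k$ and Duhamel reduce to the homogeneous problem (the paper's Lemma~\ref{Le1}), your commutator identity for $e^{-ik(t-s)L_k}-e^{-ik(t-s)v}$ is exactly the integrated form of \eqref{D6}--\eqref{D8}, and the weighted bound \eqref{B25} is obtained via the spectral representation of Proposition~\ref{F3.3} combined with a commutator argument exploiting \eqref{B22.1} (the paper's Lemma~\ref{S1}). One small correction: the closure for \eqref{B25} is not really a Neumann iteration using the $|k|^{-1/3}$ gain---rather, one invokes directly the \emph{second} bound in \eqref{F16.6} (invertibility of $I+S'_{k,w,\epsilon}$, which for small $|k|$ genuinely requires the spectral assumption $(B)$) and absorbs the weight-commutator using only the $\sqrt{\delta}$ smallness from \eqref{B22.1}.
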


The weights $W_k$ we use for our application are connected to the main weights $A_k$, see \eqref{B35.3} and \eqref{B35.7}. They are allowed to depend on $t$ as well, as long as the bounds \eqref{B22.1} hold uniformly. 

The proof of Proposition \ref{B20} is based on the ideas introduced in \cite{JiaG}. For our purposes here, we need to consider the linearized flow with an inhomogeneous term and to obtain more precise estimates. We provide the detailed proof of this proposition in the next section.

\subsection{Proof of Proposition \ref{Pr1}} In the rest of this section, we assume Proposition \ref{B20} and prove Proposition \ref{Pr1}. For $k\in\mathbb{Z}^\ast$ we define the function $\phi^{\ast}_k(t,v)$ as the solution to
\begin{equation}\label{B27}
(B'_0)^2(\partial_v-itk)^2\phi^{\ast}_k+B_0''(\partial_v-itk)\phi^{\ast}_k-k^2\phi^{\ast}_k=F^{\ast}_k,\qquad {\rm for}\,\,v\in(b(0),b(1)),
\end{equation}
with boundary value $\phi^{\ast}_k(t,b(0))=\phi^{\ast}_k(t,b(1))=0$. Notice that
\begin{equation}\label{B32}
\begin{split}
\sup_{t\in[0,T]}\Big\{\sum_{k\in\mathbb{Z}^\ast}\int_{\mathbb{\R}}A^2_k(t,\xi)(|k|^2+|\xi-kt|^2)^2\,\big|\widetilde{h_k}(t,\xi)\big|^2\,d\xi\Big\}\lesssim_{\delta} \epsilon_1^3,\\
\sum_{k\in\mathbb{Z}^\ast}\int_{0}^T\int_{\mathbb{\R}}\big|\dot{A}_kA_k(s,\xi)\big|(|k|^2+|\xi-ks|^2)^2\,\big|\widetilde{h_k}(s,\xi)\big|^2\,d\xi ds\lesssim_{\delta} \epsilon_1^3,
\end{split}
\end{equation}
where $h_k=\Psi\phi^{\ast}_k$ or $h_k=B''_0\phi^{\ast}_k$. Indeed, since $\|F^\ast\|_{W[0,T]}\lesssim_\delta\eps_1^{3/2}$ (see \eqref{lkj1}), the bounds \eqref{B32} follow from the elliptic bounds in Lemma \ref{lm:elli3} if $h_k=\Psi\phi_k^\ast$. The bounds for $B''_0\phi^\ast_k=B''_0\Psi\phi^{\ast}_k$ then follow using Lemma \ref{OldBilin} (ii) and the bounds \eqref{rew6.44} on the function $B''_0$.

We now write
\begin{equation}\label{B28}
F_k(t,v)-F^{\ast}_k(t,v)-ik\int_0^tB_0''(v)(\phi'_k-\phi^{\ast}_k)(\tau,v)\,d\tau=ik\int_0^tB_0''(v)\phi^{\ast}_k(\tau,v)\,d\tau.
\end{equation}
Setting 
\begin{equation}\label{B29}
g_k(t,v):=F_k(t,v)-F^{\ast}_k(t,v),\qquad \psi_k(t,v):=\phi_k'(t,v)-\phi^{\ast}_k(t,v),
\end{equation}
then $g_k$ satisfies the equation
\begin{equation}\label{B30}
\partial_tg_k-ikB_0''(v)\psi_k=ik(B_0''\phi^{\ast}_k)(t,v),
\end{equation}
with initial data $g_k(0,v)\equiv0$, while $\psi_k$ solves the elliptic equation
\begin{equation}\label{B31}
\begin{split}
(B'_0)^2(\partial_v-itk)^2\psi_k+&B_0''(\partial_v-itk)\psi_k-k^2\psi_k=g_k,\qquad \psi_k(t,b(0))=\psi_k(t,b(1))=0,
\end{split}
\end{equation}
in $[0,T]\times[b(0),b(1)]$. Using Proposition \ref{B20}, we obtain that
\begin{equation}\label{B33}
\begin{split}
\widetilde{g_k}(t,\xi)=ik\int_0^t\widetilde{(B''_0\phi^{\ast}_k)}(s,\xi)\,ds+ik\int_0^t\int_s^{t}\int_{\R}\widetilde{B_0''}(\zeta)\,\widetilde{\Pi_k}(s,\xi-\zeta-k\tau,\xi-\zeta)\, d\zeta\,d\tau\, ds.
\end{split}
\end{equation}
This is the main formula we need to estimate the functions $g_k=F_k-F^\ast_k$. To use it effectively, we need bounds on the functions $\Pi_k$, which we prove below:

\begin{lemma}\label{B33.3}
The functions $\widetilde{\,\Pi_k}$ satisfy the bounds
\begin{equation}\label{B35}
\begin{split}
\sup_{t\in[0,T]}\Big\{\sum_{k\in\mathbb{Z}^\ast}\int_{\mathbb{R}^2}(1+|\xi/k|)^2(|k|^2+|\eta-kt|^2)^2A^2_k(t,\eta)\big|\widetilde{\,\Pi_k}(t,\xi,\eta)\big|^2\,d\xi d\eta\Big\} \lesssim_{\delta} \epsilon_1^3,\\
\int_0^T\sum_{k\in\mathbb{Z}^\ast}\int_{\mathbb{R}^2}(1+|\xi/k|)^2(|k|^2+|\eta-ks|^2)^2\big|\dot{A}_kA_k(s,\eta)\big|\,\big|\widetilde{\,\Pi_k}(s,\xi,\eta)\big|^2\,d\xi d\eta ds
\lesssim_{\delta} \epsilon_1^{3}.
\end{split}
\end{equation}
\end{lemma}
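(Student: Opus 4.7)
The plan is to deduce Lemma~\ref{B33.3} directly from Proposition~\ref{B20} applied, for each fixed $t \in [0,T]$, to the system \eqref{B30}--\eqref{B31} (with source $X_k = ik(B_0''\phi_k^\ast)$), choosing the admissible weights $W_k$ so that the left-hand side of \eqref{B25} matches the target integrand in \eqref{B35} and the right-hand side matches the elliptic output \eqref{B32}.

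For the supremum bound I would apply Proposition~\ref{B20} with the $t$-dependent weight
\[
W_k(\eta) := \Bigl(|k|+\frac{(\eta-kt)^2}{|k|}\Bigr)\,A_k(t,\eta),\qquad k\in \mathbb{Z}^\ast.
\]
Then $(|k|+|\xi|)W_k(\eta) = (1+|\xi/k|)(|k|^2+(\eta-kt)^2)A_k(t,\eta)$, while on the right,
\[
|W_k(\xi)\widetilde{X_k}(t,\xi)| = (|k|^2+(\xi-kt)^2)A_k(t,\xi)\,|\widetilde{B_0''\phi_k^\ast}(t,\xi)|,
\]
so \eqref{B25} squared becomes exactly the integrand on the left of the first line of \eqref{B35}, bounded by the integrand on the right of \eqref{B32} with $h_k=B_0''\phi_k^\ast$. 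Summing over $k\in\mathbb{Z}^\ast$ and invoking \eqref{B32} yields the $L^\infty_t$ estimate. For the space-time bound I would run the identical argument with the $t$-dependent weight
\[
W_k(\eta):=\Bigl(|k|+\frac{(\eta-kt)^2}{|k|}\Bigr)\,|(\dot A_k A_k)(t,\eta)|^{1/2},
\]
square \eqref{B25}, and then integrate in $t\in[0,T]$; the right-hand side is then precisely controlled by the second line of \eqref{B32}.

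The verification that the chosen $W_k$ obey \eqref{B22.1} is the main technical point. Writing $W_k(\eta) = P(\eta)\,\Omega_k(t,\eta)$ with $P(\eta)=|k|+(\eta-kt)^2/|k|$ and $\Omega_k\in \{A_k(t,\cdot),\,|\dot A_k A_k|^{1/2}(t,\cdot)\}$, one applies the telescoping identity
\[
W_k(\xi)-W_k(\eta) = \bigl(P(\xi)-P(\eta)\bigr)\Omega_k(t,\xi) + P(\eta)\bigl(\Omega_k(t,\xi)-\Omega_k(t,\eta)\bigr).
\]
The second term is handled by Lemma~\ref{A-A} (for $\Omega_k=A_k$) and by Lemma~\ref{Lmu1}, equation \eqref{Lmu3.2}, combined with $|\dot A_k A_k|^{1/2}=(\mu_k)^{1/2}A_k$ and a square-root version of \eqref{Lmu3.2} (for $\Omega_k=|\dot A_k A_k|^{1/2}$); both give the required $[C(\delta)\langle k,\eta\rangle^{-1/8}+\sqrt\delta]$ smallness times $e^{2\delta_0\langle\xi-\eta\rangle^{1/2}}$. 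For the polynomial term one uses
\[
|P(\xi)-P(\eta)| \le \bigl(|\xi-kt|+|\eta-kt|\bigr)|\xi-\eta|/|k| \lesssim (1+|\xi-\eta|)^2\,P(\eta),
\]
which is absorbed into $e^{2\delta_0\langle\xi-\eta\rangle^{1/2}}$, and to produce the small factor $[C(\delta)\langle k,\eta\rangle^{-1/8}+\sqrt\delta]$ here one uses the polynomial increment in conjunction with the $\Omega_k(t,\xi)/\Omega_k(t,\eta)$ ratio from Lemma~\ref{A-A}, splitting cases according to whether $\langle\xi-\eta\rangle$ is small or large relative to $\langle k,\eta\rangle^{1/4}$ (large-gap cases use the exponential decay built into $A_k$, small-gap cases use the polynomial smallness coming from $|\xi-\eta|^2/\langle k,\eta\rangle$).

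The hard part of the proof is precisely this verification of \eqref{B22.1} for the polynomial-times-exponential weight $W_k$: polynomial weights by themselves do not enjoy a $\sqrt\delta$-small increment estimate at unit scale, so one must genuinely exploit that the polynomial factor is multiplied against $A_k$ (or $|\dot A_k A_k|^{1/2}$), whose smoothness lemmas do provide the small factor, and that any residual $\langle k,\eta\rangle^{1/8}$ loss from the polynomial factor is exactly compatible with the right-hand side of \eqref{B22.1}. Once this is established, both displays in \eqref{B35} follow immediately by the two weight choices above, and the constants $\lesssim_\delta$ absorb the implicit constants of Proposition~\ref{B20} and \eqref{B32}.
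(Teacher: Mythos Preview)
Your overall strategy is exactly the paper's: apply Proposition~\ref{B20} with a weight of the form ``polynomial in $\eta-kt$ times $A_k$'' (respectively ``times $\sqrt{\mu_k}\,A_k$''), so that \eqref{B25} reproduces the integrand of \eqref{B35} and the right-hand side is controlled by \eqref{B32}. The gap is in the verification of \eqref{B22.1} for your specific polynomial factor $P(\eta)=|k|+(\eta-kt)^2/|k|$.

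Concretely, take $k=1$, $\eta=t$ (so $|\eta-kt|=0$ and $P(\eta)=1$), $\xi=t+1$, and $t$ large. Then $P(\xi)=2$, so $|P(\xi)-P(\eta)|=1=P(\eta)$, while $A_k(t,\xi)$ and $A_k(t,\eta)$ are comparable. Your telescoped term $(P(\xi)-P(\eta))\Omega_k(\xi)$ is therefore $\gtrsim W_k(\eta)$, but the right-hand side of \eqref{B22.1} is $\lesssim W_k(\eta)\bigl[C(\delta)\langle 1,t\rangle^{-1/8}+\sqrt\delta\bigr]$, which tends to $0$ as $t\to\infty$ and $\delta\to 0$. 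So \eqref{B22.1} fails for this weight. Your proposed case split does not help: the relative polynomial increment near the minimum of $P$ is $|\xi-\eta|^2/|k|^2$, not $|\xi-\eta|^2/\langle k,\eta\rangle$, and for $|k|=1$ this gives no smallness. The point is that \eqref{B22.1} must hold with a $\delta$-independent implicit constant (this is what allows the commutator absorption in the proof of Proposition~\ref{B20}), and a bare polynomial factor cannot supply the $\sqrt\delta$.

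The paper's fix is to insert $\delta^2$ into the polynomial: use $W_k(\eta)=A_k(t,\eta)\bigl(|k|^2+\delta^2|\eta-kt|^2\bigr)$. Then $|P_\delta(\xi)-P_\delta(\eta)|=\delta^2|\xi-\eta|\,|\xi+\eta-2kt|$, and the extra $\delta^2$ combines with $|\xi-\eta|\lesssim_{\delta_0} e^{\delta_0\langle\xi-\eta\rangle^{1/2}}$ to produce the required $\sqrt\delta$ factor; the remaining $A_k$-increment is handled by Lemma~\ref{A-A}. Since $(|k|^2+\delta^2|\eta-kt|^2)^2\approx_\delta (|k|^2+|\eta-kt|^2)^2$, the conclusion \eqref{B35} is unaffected (the $\delta$-loss goes into the $\lesssim_\delta$). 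For the space-time bound the paper uses $W_k(\eta)=\sqrt{\mu_k(t,\eta)}\,A_k(t,\eta)(|k|^2+\delta^2|\eta-kt|^2)$ rather than your $|\dot A_kA_k|^{1/2}$; the smoothed weight $\mu_k$ is needed here because $\mu_k\approx_\delta|\dot A_k/A_k|$ only up to $\delta$-dependent constants, and one must verify \eqref{B22.1} with $\delta$-independent constants via \eqref{Lmu3.2}.
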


\begin{proof} We would like to use the bounds \eqref{B25} and \eqref{B32}, but we need to be careful because our weights have to satisfy the condition \eqref{B22.1}. We use first the weights 
\begin{equation}\label{B35.3}
W_k(\eta):=A_k(t,\eta)(|k|^2+\delta^2|\eta-kt|^2).
\end{equation}
We verify now the estimates \eqref{B22.1}. If $\langle k,\xi\rangle+\langle k,\eta\rangle\leq 8\langle\xi-\eta\rangle$ then, in view of \eqref{dor4.1},
\begin{equation}\label{B35.6}
A_k(t,\xi)\leq 2A_k(t,\eta)e^{\lambda(t)\langle\xi-\eta\rangle^{1/2}}e^{2\sqrt{\delta}\langle k,\xi\rangle^{1/2}},
\end{equation}
which gives \eqref{B22.1} in the stronger form $W_k(\xi)+W_k(\eta)\lesssim_\delta e^{2\delta_0\langle\xi-\eta\rangle^{1/2}}W_k(\eta)\langle k,\eta\rangle^{-1/8}$. On the other hand, if $\langle\xi-\eta\rangle\leq (\langle k,\xi\rangle+\langle k,\eta\rangle)/8$ then we write $|W_k(\xi)-W_k(\eta)|\leq I+II$, where 
\begin{equation*}\label{B35.4}
\begin{split}
I&:=|A_k(t,\xi)-A_k(t,\eta)|(|k|^2+\delta^2|\eta-kt|^2),\\
II&:=\delta^2A_k(t,\xi)\big||\xi-kt|^2-|\eta-kt|^2\big|.
\end{split}
\end{equation*}
The desired estimates \eqref{B22.1} follow easily using \eqref{A-A2}.

We can therefore use \eqref{B25} to estimate, for any $t\in[0,T]$,
\begin{equation*}
\begin{split}
\int_{\mathbb{R}^2}(|k|+|\xi|)^2(|k|^2+&\delta^2|\eta-kt|^2)^2A^2_k(t,\eta)\big|\widetilde{\,\Pi_k}(t,\xi,\eta)\big|^2\,d\xi d\eta\\
&\lesssim_\delta \int_{\mathbb{\R}}A^2_k(t,\xi)(|k|^2+\delta^2|\xi-kt|^2)^2\,k^2\big|\widetilde{(B''_0\phi_k^\ast)}(t,\xi)\big|^2\,d\xi,
\end{split}
\end{equation*}
and the desired bounds in the first line of \eqref{B35} follow from \eqref{B32}, after dividing by $k^2$ and summing over $k\in\mathbb{Z}^\ast$.

The bounds in the second line of \eqref{B35} are similar, using \eqref{B25} with the different weights 
\begin{equation}\label{B35.7}
W_k(\eta):=\sqrt{\mu_k(t,\eta)}A_k(t,\eta)(|k|^2+\delta^2|\eta-kt|^2),
\end{equation}
where the functions $\mu_k$ are defined in \eqref{mu1}. These weights satisfy the bounds \eqref{B22.1} as well, using \eqref{B35.6} and \eqref{Genmu} if $\langle k,\xi\rangle+\langle k,\eta\rangle\leq 8\langle\xi-\eta\rangle$ or \eqref{A-A2} and \eqref{Lmu3.2} if $\langle\xi-\eta\rangle\leq (\langle k,\xi\rangle+\langle k,\eta\rangle)/8$. We can therefore use \eqref{B25} to estimate, for any $t\in[0,T]$,
\begin{equation*}
\begin{split}
\int_{\mathbb{R}^2}(|k|+|\xi|)^2(|k|^2+&\delta^2|\eta-kt|^2)^2\mu_k(t,\eta)A^2_k(t,\eta)\big|\widetilde{\,\Pi_k}(t,\xi,\eta)\big|^2\,d\xi d\eta\\
&\lesssim_\delta \int_{\mathbb{\R}}\mu_k(t,\xi)A^2_k(t,\xi)(|k|^2+\delta^2|\xi-kt|^2)^2\,k^2\big|\widetilde{(B''_0\phi_k^\ast)}(t,\xi)\big|^2\,d\xi,
\end{split}
\end{equation*}
and the desired bounds in the second line of \eqref{B35} follow from \eqref{B32} and \eqref{mudA1}, after dividing by $k^2$, summing over $k\in\mathbb{Z}^\ast$ and integrating in $t\in[0,T]$.
\end{proof}

We are now ready to bound the functions $g_k$. 

\begin{lemma}\label{Blemma}
For any $t\in[1,T]$ we have
\begin{equation}\label{B37}
\sum_{k\in \mathbb{Z}^\ast}\int_{\R} (1+\langle k,\xi\rangle/\langle t\rangle)A_k^2(t,\xi)\big|\widetilde{g_k}(t,\xi)\big|^2\,d\xi\lesssim_{\delta} \epsilon^3_1,
\end{equation}
and
\begin{equation}\label{B38}
\sum_{k\in \mathbb{Z}^\ast}\int_1^t\int_{\R}(1+\langle k,\xi\rangle/\langle s\rangle)\big|\dot{A}_kA_k(s,\xi)\big|\,\big|\widetilde{g_k}(s,\xi)\big|^2\,d\xi\,ds\lesssim_{\delta} \epsilon_1^3.
\end{equation}
\end{lemma}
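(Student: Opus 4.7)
The plan is to start from the representation formula \eqref{B33}, which writes $\widetilde{g_k}(t,\xi) = I_k(t,\xi) + II_k(t,\xi)$ with
$$I_k(t,\xi) := ik\int_0^t \widetilde{(B_0''\phi^{\ast}_k)}(s,\xi)\,ds$$
and, after the substitution $\eta = \xi - \zeta$ and swapping the order of $s$ and $\tau$ integration,
$$II_k(t,\xi) := ik\int_0^t\int_0^\tau\int_\R \widetilde{B_0''}(\xi-\eta)\,\widetilde{\Pi_k}(s,\eta-k\tau,\eta)\,d\eta\,ds\,d\tau.$$
The term $I_k$ will be controlled via the elliptic bounds \eqref{B32} on $B_0''\phi^{\ast}_k$, while $II_k$ will be handled through Lemma \ref{B33.3}, whose weights $(|k|^2+|\eta-ks|^2)^2$ and $(1+|(\eta-k\tau)/k|)^2$ are precisely tailored to absorb the two time integrations.

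For $I_k$, Cauchy--Schwarz in $s$ against the integrable weight $(|k|^2+|\xi-ks|^2)^{-2}$, whose $s$-integral is $O(k^{-4})$, together with the monotonicity $A_k(t,\xi)\leq A_k(s,\xi)$, gives
$$A_k(t,\xi)^2|I_k(t,\xi)|^2 \lesssim k^{-2}\int_0^t A_k(s,\xi)^2(|k|^2+|\xi-ks|^2)^2\,\big|\widetilde{(B_0''\phi^{\ast}_k)}(s,\xi)\big|^2\,ds,$$
which together with \eqref{B32} controls the contribution to \eqref{B37} without the $(1+\langle k,\xi\rangle/\langle t\rangle)$-factor. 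To absorb that factor, observe that for $0\le s\le t$ one has $(1+\langle k,\xi\rangle/\langle t\rangle)^{1/2}\lesssim \langle k\rangle^{1/2}+(|k|^2+|\xi-ks|^2)^{1/4}$, and the excess powers are covered by the generous reserve in $(|k|^2+|\xi-ks|^2)^2$.

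For $II_k$, the Gevrey decay $|\widetilde{B_0''}(\xi-\eta)|\lesssim e^{-4\delta_0\langle\xi-\eta\rangle^{1/2}}$ from \eqref{rew6.44}, combined with the smoothness \eqref{A-A2} of $A_k$ and the elementary inequality $(1+\langle k,\xi\rangle/\langle t\rangle)^{1/2}\le (1+\langle k,\eta\rangle/\langle t\rangle)^{1/2}+\langle\xi-\eta\rangle^{1/2}$, yields the pointwise transfer
$$A_k(t,\xi)(1+\langle k,\xi\rangle/\langle t\rangle)^{1/2}|\widetilde{B_0''}(\xi-\eta)| \lesssim_\delta A_k(t,\eta)(1+\langle k,\eta\rangle/\langle t\rangle)^{1/2}\,e^{-3\delta_0\langle\xi-\eta\rangle^{1/2}}.$$
By Minkowski in $\eta$ the problem reduces to a pointwise-in-$\eta$ bound on $J_k(\eta) := |k|\int_0^t\int_0^\tau|\widetilde{\Pi_k}(s,\eta-k\tau,\eta)|\,ds\,d\tau$. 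Applying Cauchy--Schwarz successively in $s$ (weight $(|k|^2+|\eta-ks|^2)^{-2}$, $L^1_s$-norm $O(k^{-4})$) and in $\tau$ (weight $(1+|(\eta-k\tau)/k|)^{-2}$, $L^1_\tau$-norm $O(1)$), using $A_k(t,\eta)\leq A_k(s,\eta)$ and $(1+\langle k,\eta\rangle/\langle t\rangle)\leq 1+\langle k,\eta\rangle/\langle s\rangle$, with the latter folded into a fractional power of $(|k|^2+|\eta-ks|^2)$, yields
$$A_k(t,\eta)^2(1+\langle k,\eta\rangle/\langle t\rangle)J_k(\eta)^2 \lesssim_\delta k^{-2}\int_0^t\int_0^\tau(1+|(\eta-k\tau)/k|)^2(|k|^2+|\eta-ks|^2)^2 A_k(s,\eta)^2|\widetilde{\Pi_k}|^2\,ds\,d\tau.$$
Summing over $k\in\mathbb{Z}^\ast$, integrating in $\eta$, and invoking the first line of \eqref{B35} delivers \eqref{B37}.

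The space-time bound \eqref{B38} proceeds along the same lines, with the extra weight $\sqrt{|(\dot A_k A_k)(s,\cdot)|}$ transferred through the $\widetilde{B_0''}$ convolution using the $\mu_k$-smoothness \eqref{Lmu3.2}, and with the second line of \eqref{B35} invoked in place of the first. The bootstrap upgrades for $F$ and $\Theta$ in \eqref{hard1} then follow: for $F$ by writing $F=F^\ast+(F-F^\ast)$ and combining \eqref{B37}--\eqref{B38} with Proposition \ref{ImprovBoots2}; for $\Theta$ by repeating the elliptic argument of Proposition \ref{ImprovBoots1} on the full equation \eqref{rea26} for $\Psi\phi$, whose source $F$ is now cubically controlled. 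The main technical obstacle is propagating the $(1+\langle k,\xi\rangle/\langle t\rangle)^{1/2}$-weight simultaneously through the $\widetilde{B_0''}$ convolution and the double Duhamel integral in $II_k$: the dispersive reserves in Lemma \ref{B33.3} just balance the required time growth, and near the resonant intervals $I_{k,\xi}$ the $C_0(\delta)/\langle k,\xi\rangle^{1/8}+\sqrt\delta$ improvement in \eqref{A-A2} is the critical ingredient that prevents the commutator between $A_k$ and multiplication by $\widetilde{B_0''}$ from destroying the Gevrey gain.
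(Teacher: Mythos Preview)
Your decomposition $\widetilde{g_k}=I_k+II_k$ and the idea of exploiting the elliptic reserves $(|k|^2+|\xi-ks|^2)^2$ and $(1+|\xi/k|)^2$ from \eqref{B32} and \eqref{B35} are exactly right, but the Cauchy--Schwarz step as you wrote it produces a bound that grows linearly in $t$. Concretely, after your CS in $s$ you obtain
\[
\sum_{k}\int_\xi A_k^2(t,\xi)|I_k(t,\xi)|^2\,d\xi\lesssim\sum_{k}k^{-2}\int_0^t\int_\xi A_k^2(s,\xi)(|k|^2+|\xi-ks|^2)^2\big|\widetilde{(B_0''\phi_k^\ast)}(s,\xi)\big|^2\,d\xi\,ds,
\]
and neither line of \eqref{B32} controls the right side uniformly in $t$: the first line is a $\sup_s$ bound and picks up a factor of $t$ upon integration, while the second line carries the weight $|\dot A_k A_k|$ rather than $A_k^2$. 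The same issue recurs for $II_k$ when you appeal to the first line of \eqref{B35}; after your two successive CS steps and the change of variable $\mu=\eta-k\tau$, the remaining $s$-integral has no decay and again contributes a factor of $t$.

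The missing ingredient is \eqref{newAdot}, namely $|(\dot A_k/A_k)(s,\xi)|\gtrsim_\delta\langle s-\xi/k\rangle^{-1-\sigma_0}$. This is what lets you trade $A_k(s,\xi)$ for $\sqrt{|(\dot A_kA_k)(s,\xi)|}$ at the cost of a factor $\langle s-\xi/k\rangle^{(1+\sigma_0)/2}$, which is then absorbed by the dispersive reserve $(1+|\xi/k-s|^2)^{-1}$ (with room to spare, since $3-\sigma_0>1$). The paper carries this out by duality: writing the $L^2_\xi$ norm as a pairing with a test function $P$, splitting off $\sqrt{|(\dot A_kA_k)(s,\xi)|}\,\alpha_k(s,\xi)$ (which lands in $L^2_{s,\xi}$ by the second line of \eqref{B32}), and checking via \eqref{newAdot} that the remaining factor has bounded $L^2_{s,\xi}$ norm for each $P$; this is exactly \eqref{B39}. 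Equivalently, you could repair your direct CS argument by choosing the $s$-weight to be $(|k|^2+|\xi-ks|^2)^{-2}|(\dot A_k/A_k)(s,\xi)|^{-1}$ instead of $(|k|^2+|\xi-ks|^2)^{-2}$; the $L^1_s$ norm of this weight is still $O(|k|^{-4})$ thanks to \eqref{newAdot}, and the complementary factor then produces $|\dot A_kA_k|$, matching the space-time bounds in \eqref{B32} and \eqref{B35}. For \eqref{B38} there is a further trick you did not mention: the paper first integrates out the outermost $s$-variable using $\int_\tau^t|\dot A_kA_k(s,\xi)|\,ds\le\tfrac12 A_k^2(\tau,\xi)$ (monotonicity of $A_k$), which reduces \eqref{B38} back to the shape of \eqref{B37}.
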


\begin{proof} Using the identity \eqref{B33}, we have $|\widetilde{g_k}(t,\xi)|\leq |k|\gamma_{k,1}(t,\xi)+|k|\gamma_{k,2}(t,\xi)$, where
\begin{equation}\label{B36}
\begin{split}
\gamma_{k,1}(t,\xi)&:=\int_0^t|\widetilde{(B''_0\phi_k^{\ast})}(s,\xi)|\,ds,\\
\gamma_{k,2}(t,\xi)&:=\int_0^t\int_s^{t}\int_{\R}|\widetilde{B_0''}(\zeta)|\,|\widetilde{\,\Pi_k}(s,\xi-\zeta-k\tau,\xi-\zeta)|\, d\zeta\,d\tau\, ds.
\end{split}
\end{equation}
To simplify notations we define, for any $k\in\mathbb{Z}^\ast$, $t\geq 0$, and $\xi,\eta\in\mathbb{R}$,
\begin{equation}\label{B35''}
\begin{split}
\alpha_k(t,\xi)&:=(k^2+|\xi-kt|^2)|\widetilde{(B''_0\phi_k^{\ast})}(t,\xi)|,\\
\beta_k(t,\xi,\eta)&:=(1+|\xi/k|)(|k|^2+|\eta-kt|^2)\int_{\mathbb{R}}|\widetilde{B_0''}(\zeta)||\widetilde{\,\Pi_k}(t,\xi-\zeta,\eta-\zeta)\big|\,d\zeta.
\end{split}
\end{equation}
Using \eqref{B32} we have
\begin{equation}\label{ML3}
\sup_{t\in[0,T]}\Big\{\sum_{k\in\mathbb{Z}^\ast}\int_{\mathbb{\R}}A^2_k(t,\xi)\alpha_k^2(t,\xi)\,d\xi\Big\}+\sum_{k\in\mathbb{Z}^\ast}\int_{0}^T\int_{\mathbb{\R}}\big|\dot{A}_kA_k(s,\xi)\big|\alpha_k^2(s,\xi)\,d\xi ds\lesssim_{\delta} \epsilon_1^3.
\end{equation}
Also, using \eqref{B35}, the strong smoothness bounds \eqref{rew6.44} on $B''_0$, and the bilinear estimates \eqref{TLX7}--\eqref{vfc30.7}, we have
\begin{equation}\label{ML3.5}
\sup_{t\in[0,T]}\Big\{\sum_{k\in\mathbb{Z}^\ast}\int_{\mathbb{R}^2}A^2_k(t,\eta)\beta_k^2(t,\xi,\eta)\,d\xi d\eta\Big\}+\int_0^T\sum_{k\in\mathbb{Z}^\ast}\int_{\mathbb{R}^2}\big|\dot{A}_kA_k(s,\eta)\big|\beta_k^2(s,\xi,\eta)\,d\xi d\eta ds
\lesssim_{\delta} \epsilon_1^{3}.
\end{equation}

{\bf Step 1.} We first prove the bounds \eqref{B37}. Using the definitions \eqref{B36}--\eqref{B35''} we estimate for any $k\in\mathbb{Z}^\ast$ and $t\in[1,T]$, 
\begin{equation}\label{B38.3}
\begin{split}
&\left\|(1+\langle k,\xi\rangle/\langle t\rangle)^{1/2} A_k(t,\xi)|k|\gamma_{k,1}(t,\xi)\right\|_{L^2_\xi}\\
&=\sup_{\|P\|_{L^2_\xi}\leq 1}\int_\mathbb{R}\int_0^t |P(\xi)|(1+\langle k,\xi\rangle/\langle t\rangle)^{1/2} A_k(t,\xi)|k|\frac{\alpha_k(s,\xi)}{k^2+|\xi-ks|^2}\,ds d\xi\\
&\lesssim\frac{1}{|k|}\big\|\alpha_k(s,\xi)|(A_k\dot{A}_k)(s,\xi)|^{1/2}\big\|_{L^2_{s,\xi}}\sup_{\|P\|_{L^2_\xi}\leq 1}\Big\|\frac{(1+\langle k,\xi\rangle/\langle s\rangle)^{1/2}}{{1+|\xi/k-s|^2}}\frac{P(\xi)A_k(s,\xi)}{|(A_k\dot{A}_k)(s,\xi)|^{1/2}}\Big\|_{L^2_{s,\xi}},
\end{split}
\end{equation}
using also the fact that $A_k(t,\xi)\leq A_k(s,\xi)$ if $s\in[0,t]$. Using \eqref{newAdot} we have
\begin{equation}\label{B39}
\frac{1}{|k|^{1/2}}\Big\|P(\xi)\frac{(1+\langle k,\xi\rangle/\langle s\rangle)^{1/2}}{{1+|\xi/k-s|^2}}\frac{A_k(s,\xi)}{|(A_k\dot{A}_k)(s,\xi)|^{1/2}}\Big\|_{L^2_{s,\xi}}\lesssim_\delta\|P\|_{L^2}.
\end{equation}
Therefore
\begin{equation}\label{B40}
\left\|(1+\langle k,\xi\rangle/\langle t\rangle)^{1/2} A_k(t,\xi)|k|\gamma_{k,1}(t,\xi)\right\|_{L^2_\xi}^2\lesssim_\delta\big\|\alpha_k(s,\xi)|(A_k\dot{A}_k)(s,\xi)|^{1/2}\big\|^2_{L^2_{s,\xi}}.
\end{equation}

Similarly, to bound the contribution of $\gamma_{k,2}$ we write
\begin{equation}\label{B38.4}
\begin{split}
&\left\|(1+\langle k,\xi\rangle/\langle t\rangle)^{1/2} A_k(t,\xi)|k|\gamma_{k,2}(t,\xi)\right\|_{L^2_\xi}\\
&=\sup_{\|P\|_{L^2_\xi}\leq 1}\int_\mathbb{R}\int_0^t\int_s^t |P(\xi)|(1+\langle k,\xi\rangle/\langle t\rangle)^{1/2} \frac{A_k(t,\xi)}{|k|\langle\xi/k-s\rangle^2}\frac{\beta_k(s,\xi-k\tau,\xi)}{1+|\xi-k\tau|/|k|}\,d\tau ds d\xi\\
&\lesssim\sup_{\|P\|_{L^2_\xi}\leq 1}\frac{1}{k^2}\int_{\mathbb{R}^2}\int_0^t |P(\xi)|(1+\langle k,\xi\rangle/\langle s\rangle)^{1/2} \frac{A_k(s,\xi)}{\langle\xi/k-s\rangle^2}\frac{\beta_k(s,\eta,\xi)}{1+|\eta|/|k|}\,ds d\eta d\xi\\
&\lesssim\frac{1}{k^2}\big\|\beta_k(s,\eta,\xi)|(A_k\dot{A}_k)(s,\xi)|^{1/2}\big\|_{L^2_{s,\xi,\eta}}\sup_{\|P\|_{L^2_\xi}\leq 1}\Big\|\frac{(1+\langle k,\xi\rangle/\langle s\rangle)^{1/2}}{{\langle\xi/k-s\rangle^2}\langle\eta/k\rangle}\frac{P(\xi)A_k(s,\xi)}{|(A_k\dot{A}_k)(s,\xi)|^{1/2}}\Big\|_{L^2_{s,\xi,\eta}}.
\end{split}
\end{equation}
We can use again \eqref{B39}, and note that bounding the $L^2$ norm in $\eta$ requires an additional factor of $|k|^{1/2}$. It follows that
\begin{equation}\label{B41}
\left\|(1+\langle k,\xi\rangle/\langle t\rangle)^{1/2} A_k(t,\xi)|k|\gamma_{k,2}(t,\xi)\right\|_{L^2_\xi}^2\lesssim_\delta\big\|\beta_k(s,\eta,\xi)|(A_k\dot{A}_k)(s,\xi)|^{1/2}\big\|^2_{L^2_{s,\xi,\eta}}.
\end{equation}
The bounds \eqref{B37} follow from \eqref{B40}--\eqref{B41} and \eqref{ML3}--\eqref{ML3.5}, by summation over $k\in\mathbb{Z}^\ast$.

{\bf Step 2.} We now prove the bounds \eqref{B38}. Using the definitions \eqref{B36}--\eqref{B35''} we estimate for any $k\in\mathbb{Z}^\ast$ and $t\in[1,T]$
\begin{equation*}
\begin{split}
&\left\|(1+\langle k,\xi\rangle/\langle s\rangle)^{1/2} |(A_k\dot{A}_k)(s,\xi)|^{1/2}|k|\gamma_{k,1}(s,\xi)\right\|_{L^2_{s,\xi}}\\
&=\sup_{\|P\|_{L^2_{s,\xi}}\leq 1}\int_\mathbb{R}\int_1^t\int_0^s |P(s,\xi)|(1+\langle k,\xi\rangle/\langle s\rangle)^{1/2} |(A_k\dot{A}_k)(s,\xi)|^{1/2}\frac{|k|\alpha_k(\tau,\xi)}{k^2+|\xi-k\tau|^2}\, d\tau ds d\xi\\
&\leq\sup_{\|P\|_{L^2_{s,\xi}}\leq 1}\int_\mathbb{R}\int_0^t\int_\tau^t \Big\{|P(s,\xi)||(A_k\dot{A}_k)(s,\xi)|^{1/2}\Big\}(1+\langle k,\xi\rangle/\langle \tau\rangle)^{1/2} \frac{\alpha_k(\tau,\xi)}{|k|\langle\xi/k-\tau\rangle}\, ds d\tau d\xi.
\end{split}
\end{equation*}
The integral in $s\in[\tau,t]$ can be estimated using the Cauchy inequality and the observation
\begin{equation}\label{B43}
2\int_{\tau}^t|\dot{A}_kA_k(s,\xi)|\,ds=A_k^2(\tau,\xi)-A_k^2(t,\xi)\leq A_k^2(\tau,\xi),
\end{equation}
since the functions $A_k$ are decreasing in $s$. Therefore, the right hand side of the expression above is bounded by
\begin{equation*}
\begin{split}
&\sup_{\|P'\|_{L^2_{\xi}}\leq 1}\int_\mathbb{R}\int_0^t |P'(\xi)|A_k(\tau,\xi)(1+\langle k,\xi\rangle/\langle \tau\rangle)^{1/2} \frac{\alpha_k(\tau,\xi)}{|k|\langle\xi/k-\tau\rangle}\,d\tau d\xi.
\end{split}
\end{equation*}
This is similar to the expression in the second line of \eqref{B38.3}, so it can be estimated in the same way to give
\begin{equation}\label{B43.5}
\left\|(1+\langle k,\xi\rangle/\langle s\rangle)^{1/2} |(A_k\dot{A}_k)(s,\xi)|^{1/2}|k|\gamma_{k,1}(s,\xi)\right\|^2_{L^2_{s,\xi}}\lesssim_\delta\big\|\alpha_k(s,\xi)|(A_k\dot{A}_k)(s,\xi)|^{1/2}\big\|^2_{L^2_{s,\xi}}.
\end{equation}

Similarly, to bound the contribution of $\gamma_{k,2}$ we write
\begin{equation*}
\begin{split}
&\left\|(1+\langle k,\xi\rangle/\langle s\rangle)^{1/2} |(A_k\dot{A}_k)(s,\xi)|^{1/2}|k|\gamma_{k,2}(s,\xi)\right\|_{L^2_{s,\xi}}\\
&=\sup_{\|P\|_{L^2_{s,\xi}}\leq 1}\int_\mathbb{R}\int_1^t\int_0^s\int_u^s |P(s,\xi)|(1+\langle k,\xi\rangle/\langle s\rangle)^{1/2} \frac{|(A_k\dot{A}_k)(s,\xi)|^{1/2}}{|k|\langle\xi/k-u\rangle^2}\frac{\beta_k(u,\xi-k\tau,\xi)}{1+|\xi-k\tau|/|k|}\,d\tau du ds d\xi\\
&\leq\sup_{\|P\|_{L^2_{s,\xi}}\leq 1}\frac{1}{k^2}\int_{\mathbb{R}^2}\int_0^t\int_u^t |P(s,\xi)|(1+\langle k,\xi\rangle/\langle s\rangle)^{1/2} \frac{|(A_k\dot{A}_k)(s,\xi)|^{1/2}}{\langle\xi/k-u\rangle^2}\frac{\beta_k(u,\eta,\xi)}{1+|\eta|/|k|}\,ds du d\xi d\eta.
\end{split}
\end{equation*}
We use \eqref{B43} and the Cauchy inequality to estimate first the integral in $s\in[u,t]$, so the right-hand side of the expression above is bounded by 
\begin{equation*}
\sup_{\|P'\|_{L^2_{\xi}}\leq 1}\frac{1}{k^2}\int_{\mathbb{R}^2}\int_0^t P'(\xi)(1+\langle k,\xi\rangle/\langle u\rangle)^{1/2} \frac{A_k(u,\xi)}{\langle\xi/k-u\rangle^2}\frac{\beta_k(u,\eta,\xi)}{1+|\eta|/|k|}\,du d\xi d\eta.
\end{equation*}
This is similar to the expression in the third line of \eqref{B38.4}, therefore
\begin{equation}\label{B43.7}
\left\|(1+\langle k,\xi\rangle/\langle s\rangle)^{1/2} |(A_k\dot{A}_k)(s,\xi)|^{1/2}|k|\gamma_{k,2}(s,\xi)\right\|^2_{L^2_{s,\xi}}\lesssim_\delta\big\|\beta_k(s,\eta,\xi)|(A_k\dot{A}_k)(s,\xi)|^{1/2}\big\|^2_{L^2_{s,\xi,\eta}}.
\end{equation}
The bounds \eqref{B38} follow from \eqref{B43.5}--\eqref{B43.7} and \eqref{ML3}--\eqref{ML3.5}, by summation over $k\in\mathbb{Z}^\ast$.
\end{proof}

We can now complete the proof of Proposition \ref{Pr1}. The bounds for the function $F-F^\ast$ follow from Lemma \ref{Blemma}, once we recall that $g_k=F_k-F^\ast_k$ (compare with the definitions in \eqref{rec1''}). The bounds for the main variable $F$ then follow using also Proposition \ref{ImprovBoots2}.  Finally, to prove the bounds for $\Theta$ we start from the main elliptic equation \eqref{rea26}, and rewrite it in the form
\begin{equation*}
\partial_z^2\phi+(B'_0)^2(\partial_v-t\partial_z)^2\phi+B''_0(\partial_v-t\partial_z)\phi=F+\mathcal{G}_1+\mathcal{G}_2,
\end{equation*}
where $\mathcal{G}_1=[(B'_0)^2-(V')^2](\partial_v-t\partial_z)^2\phi$ and $\mathcal{G}_2=[B''_0-V''](\partial_v-t\partial_z)\phi$ are as in \eqref{elli2}. In view of \eqref{elli2.5} we have $\|\mathcal{G}_1\|_{\widetilde{W}[1,T]}+\|\mathcal{G}_2\|_{\widetilde{W}[1,T]}\lesssim_\delta\eps_1^2$, while the bounds $\mathcal{E}_F+\mathcal{B}_F\lesssim_\delta\eps_1^3$ we have just proved show that  $\|F\|_{W[1,T]}\lesssim_\delta\eps_1^{3/2}$. The desired bounds $\|\Theta\|_{\widetilde{W}[1,T]}\lesssim_\delta\eps_1^{3/2}$ follow from Lemma \ref{lm:elli3}. This completes the proof of Proposition \ref{Pr1}.

\section{Analysis of the linearized operator: proof of Proposition \ref{B20}}\label{PropD}

In this section we provide the proof of the key Proposition \ref{B20}, which is the only place where the spectral assumption on the linearized operator $L_k$ is used. As we have seen before, the linear estimates we prove here are essential to link the nonlinear profile $F^\ast$, which evolves perturbatively, with the full profile $F$. The proof of Proposition \ref{B20} relies on the following homogeneous bounds on the linearized flow:

\begin{lemma}\label{Le1}
Assume $k\in\mathbb{Z}^\ast$ and $a\in\R$, and consider the initial value problem 
\begin{equation}\label{B1}
\partial_tg_k+ikvg_k-ikB''_0\varphi_k=0,\qquad g_k(0,v)=X_k(v)e^{-ikav},
\end{equation}
for $(v,t)\in[\lv,\uv]\times[0,\infty)$, where $\varphi_k$ is determined through the elliptic equation
\begin{equation}\label{B2}
(B'_0)^2\partial_v^2\varphi_k+B_0''(v)\partial_v\varphi_k-k^2\varphi_k=g_k,\qquad \varphi_k(b(0))=\varphi_k(b(1))=0.
\end{equation}
Assume that $X_k\in L^2[b(0),b(1)]$ and ${\rm supp}\,X_k\subseteq[b(\vartheta_0),b(1-\vartheta_0)]$. Then there is a unique global solution $g_k\in C^1([0,\infty):L^2[b(0),b(1)])$ of the initial-value problem \eqref{B1} with ${\rm supp}\,g_k(t)\subseteq[b(\vartheta_0),b(1-\vartheta_0)]$ for any $t\geq 0$. Moreover there is a function $\Pi'_k=\Pi_k'(\xi,\eta,a)$ such that 
\begin{equation}\label{B7}
\widetilde{g_k}(t,\xi)=\widetilde{X_k}(\xi+kt+ka)+ik\int_0^t\int_{\R}\widetilde{B_0''}(\zeta)\widetilde{\,\,\Pi_k'}(\xi+kt-\zeta-k\tau,\xi+kt-\zeta,a)\,d\zeta\,d\tau.
\end{equation}
Finally, if the weights $W_k$ satisfy the bounds \eqref{B22.1} then
\begin{equation}\label{B5}
\left\|(|k|+|\xi|)W_k(\eta+ka)\widetilde{\Pi'_k}(\xi,\eta,a)\right\|_{L^2_{\xi,\eta}}\lesssim_{\delta} \big\|W_k(\eta)\widetilde{X_k}(\eta)\big\|_{L^2_\eta}.
\end{equation}
\end{lemma}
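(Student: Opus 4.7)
The plan is to adapt the spectral approach of \cite{JiaG} to accommodate the phase shift $e^{-ikav}$ in the initial data and to produce the gain factor $(|k|+|\xi|)$ in \eqref{B5}. First I would establish existence, uniqueness, and the support property by writing \eqref{B1}--\eqref{B2} in Duhamel form
\[
g_k(t,v)=e^{-ikvt}X_k(v)e^{-ikav}+ik\int_0^t e^{-ikv(t-\tau)}B_0''(v)\varphi_k(\tau,v)\,d\tau,
\]
with $\varphi_k(\tau,\cdot)$ recovered from $g_k(\tau,\cdot)$ via the Green's function of the elliptic operator $(B_0')^2\partial_v^2+B_0''\partial_v-k^2$ with Dirichlet data. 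A standard Picard iteration on finite time intervals yields a unique global $L^2$ solution whose support stays in $[b(\vartheta_0),b(1-\vartheta_0)]$, since $B_0''$ is supported in $[b(2\vartheta_0),b(1-2\vartheta_0)]$ and transport preserves support.

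Next I would derive \eqref{B7} by taking the $v$-Fourier transform of the Duhamel identity: the transport term contributes $\widetilde{X_k}(\xi+ka+kt)$, while the inhomogeneity gives, after expanding $B_0''\varphi_k$ as a convolution in $\zeta$, precisely the double integral appearing in \eqref{B7}, provided one defines $\widetilde{\Pi_k'}(\mu,\nu,a):=\widetilde{\varphi_k((\nu-\mu)/k)}(\mu)$. Thus $\Pi_k'$ encodes the stream function of the linearized flow launched from $X_ke^{-ika\cdot}$, and all of the $a$-dependence enters through this initial condition.

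The heart of the proof is the weighted bound \eqref{B5}, which I would extract from the spectral theorem for the self-adjoint operator $L_k$ of \eqref{IntB2}. Writing $g_k(t)=e^{-iktL_k}(X_ke^{-ika\cdot})$ and invoking Stone's formula represents the evolution as a $\lambda$-integral over the continuous spectrum $[b(0),b(1)]$ of the jump of the resolvent across the real axis. The limiting resolvents $(L_k-\lambda\mp i0)^{-1}$ factor through $(I+T_{k,y_0(\lambda),\pm 0})^{-1}$ from Lemma \ref{T5}, whose invertibility on $H^1_k$ with constant $\kappa^{-1}$ is the quantitative form of assumption $(B)$. This yields a Fourier-side kernel representation of $\widetilde{\varphi_k(\tau)}(\mu)$ against $\widetilde{X_k}(\eta+ka)$, in which the two-derivative gain of \eqref{B2} matches exactly the $(|k|+|\xi|)$ factor on the left of \eqref{B5}.

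The hard part will be pushing the weighted bound through uniformly in $a\in\R$. The spectral kernel convolves the input with both $\widetilde{B_0''}$ (Gevrey-decaying by \eqref{IntB1}) and the exponentially localized Fourier transform of the elliptic Green's function, and the output weight $W_k(\eta+ka)$ must be transferred to $W_k(\eta)$ on the input side. The hypothesis \eqref{B22.1} is tailored for exactly this: its slow-variation form lets the weight change be absorbed across the kernels at the cost of factors bounded by $C(\delta)\langle k,\eta\rangle^{-1/8}+\sqrt{\delta}$, which are dominated by the exponential decay of the kernels once $\delta$ is small enough. Making this rigorous amounts to a careful shifted and weighted version of the singular-integral estimates from \cite{JiaG}, and is the main task of the remaining subsections.
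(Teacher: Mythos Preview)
Your outline tracks the paper's strategy closely---Duhamel for existence and the representation formula, then limiting absorption plus the weight-transfer condition \eqref{B22.1} for the bound \eqref{B5}---but there is one genuine error and one missing structural idea.

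The error: $L_k$ is \emph{not} self-adjoint. From \eqref{F3.1} its kernel is $b''(y)G_k(y,z)$, which is asymmetric whenever $b''\not\equiv 0$, so Stone's formula and the spectral theorem for self-adjoint operators do not apply. What replaces them is the representation formula \eqref{F3.4} (Proposition~\ref{F3.3}), obtained in \cite{JiaG} by contour deformation of the resolvent; it expresses the stream function through the jump $\psi^-_{k,\epsilon}-\psi^+_{k,\epsilon}$ of generalized eigenfunctions solving \eqref{F7}. Your plan to factor the limiting resolvents through $(I+T_{k,y_0,\pm 0})^{-1}$ is exactly right, but it rests on this non-self-adjoint representation, not on Stone.

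The missing idea: the bound \eqref{B5} is not obtained by direct kernel estimates in Fourier space but by a \emph{commutator argument} that hinges on a specific change of variables. The paper sets $\Pi^\iota_{k,\epsilon}(v,w):=\Psi(v+w)\phi^\iota_{k,\epsilon}(v+w,w)\Psi(w)$, so that in the integral equation \eqref{F17.2} the singular denominator becomes $1/(v'+i\iota\epsilon)$, \emph{independent of the spectral parameter} $w$. One then applies the weight operator $W_k^a$ (Fourier multiplier $W_k(\eta+ka)$ in the $w$-variable) to this equation; because the singularity no longer involves $w$, the commutator $\mathcal{C}^\iota_{k,\epsilon}$ in \eqref{S26} is controlled solely by how $W_k^a$ interacts with the smooth kernels $\mathcal{G}'_k$ and $\partial_vB'_0$, and \eqref{B22.1} makes it small (of order $\sqrt{\delta}$ plus a low-frequency remainder). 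The limiting absorption bound \eqref{F16.6} then inverts $I+S'_{k,w,\epsilon}$ in $H^1_k$, which is precisely the source of the single factor $(|k|+|\xi|)$. Without the shift $v\mapsto v+w$ the singularity sits at $v'=w$ and the weight operator does not decouple from it; your ``careful shifted and weighted version'' needs to make this shift explicit, since it is the mechanism that turns \eqref{B22.1} into a usable smallness condition.
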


The equation \eqref{B1} is of the form
\begin{equation}\label{B5.5}
\partial_tG-iT_k(G)=0,\qquad T_k(G):=-kvG+kB''_0\Phi,\qquad G(0)=G_0,
\end{equation}
where $\Phi$ is the solution of the elliptic equation $(B'_0)^2\partial_v^2\Phi+B_0''(v)\partial_v\Phi-k^2\Phi=G$ with Dirichlet boundary conditions $\Phi(b(0))=\Phi(b(1))=0$. This elliptic equation can be solved explicitly using the change of variables $v=b(y)$ (see \eqref{LE7} below), thus $T_k$ is a bounded operator on $L^2[b(0),b(1)]$. Therefore, the equation \eqref{B5.5} can be solved explicitly
\begin{equation}\label{B5.6}
G(t)=S_k(G_0)=e^{itT_k}G_0=\sum_{n\geq 0}\frac{(itT_k)^nG_0}{n!},
\end{equation}
and the solution $G(t)$ is unique by energy estimates. The main point of the lemma is to derive the representation formula \eqref{B7} and the strong bounds \eqref{B5}. 

We first show that Lemma \ref{Le1} implies Proposition \ref{B20}.

\begin{proof}[Proof of Proposition \ref{B20}]
With $f_k$ and $\psi_k$ as in Proposition \ref{B20} let 
\begin{equation}\label{AB10}
g_k(t,v):=f_k(t,v)e^{-ikvt},\qquad \varphi_k(t,v):=\psi_k(t,v)e^{-ikvt}.
\end{equation}
The functions $g_k,\varphi_k$ satisfy for $(t,v)\in[0,T]\times[b(0),b(1)]$,
\begin{equation}\label{AB11}
\partial_tg_k+ikvg_k-ikB_0''(v)\varphi_k=X_k(t,v)e^{-iktv},
\end{equation}
\begin{equation}\label{AB12}
(B'_0)^2\partial_v^2\varphi_k+B_0''\partial_v\varphi_k-k^2\varphi_k=g_k,
\end{equation}
with initial data $g_k(0,v)=0$. By Duhamel's formula, we obtain the representation formula
\begin{equation}\label{AB13}
g_k(t,v):=\int_0^t\left\{S_k(t-a)\big[X_k(a,\cdot)e^{-ika\cdot}\big]\right\}(v)\,da,
\end{equation}
where $S_k$ is the evolution operator defined in \eqref{B5.6}. Thus, in view of the formula \eqref{B7}, we have
\begin{equation}\label{AB14}
\begin{split}
\widetilde{g_k}(t,\xi)&=\int_0^t\widetilde{X_k}(a,\xi+kt)\,da\\
                               &\,\,+ik\int_0^t\int_0^{t-a}\int_{\R}\widetilde{B_0''}(\zeta)\widetilde{\,\,\Pi_k'}(\xi+k(t-a)-\zeta-k\tau,\xi+k(t-a)-\zeta,a)\,d\zeta\,d\tau \,da,
\end{split}
\end{equation}
where the functions $\Pi_k'$ satisfy the bounds
\begin{equation}\label{AB16}
\left\|(|k|+|\xi|)W_k(\eta+ka)\widetilde{\,\Pi_k'}(\xi,\eta,a)\right\|_{L^2_{\xi,\eta}}\lesssim_{\delta} \left\|W_k(\xi)\widetilde{X_k}(a,\xi)\right\|_{L^2_\xi}.
\end{equation}
Define for $\xi,\eta\in\R, a\in[0,T]$,
\begin{equation}\label{AB17}
\widetilde{\,\Pi_k}(a,\xi,\eta):=\widetilde{\,\Pi_k'}(\xi,\eta-ka,a),
\end{equation}
The desired bounds \eqref{B25} follow from \eqref{AB16}. Using \eqref{AB10}, \eqref{AB14} and \eqref{AB17}, we also have
\begin{equation}\label{AB19}
\begin{split}
\widetilde{f_k}(t,\xi)&=
                               \int_0^t\widetilde{X_k}(a,\xi)\,da+ik\int_0^t\int_0^{t-a}\int_{\R}\widetilde{B_0''}(\zeta)\,\widetilde{\,\Pi_k}(a,\xi-ka-\zeta-k\tau,\xi-\zeta)\,d\zeta\,d\tau\,da \\
                               &=\int_0^t\widetilde{X_k}(a,\xi)\,da+ik\int_0^t\int_a^{t}\int_{\R}\widetilde{B_0''}(\zeta)\,\widetilde{\,\Pi_k}(a, \xi-\zeta-k\tau,\xi-\zeta)\,d\zeta\,d\tau \,da.
\end{split}
\end{equation}
The proposition is now proved.
\end{proof}

In the rest of this section we provide the proof of Lemma \ref{Le1}. The main idea is the same as in \cite{JiaG}. However we need to consider more general initial data with the additional modulation factor $e^{-ikav}$, in order to analyze the inhomogeneous linear evolution, and we need to prove stronger estimates. We divide the proof into several steps, organized in subsections.

\subsection{The representation formula and limiting absorption principle}\label{sec:spectral}
In this subsection we recall some important properties of the linear evolution operator from \cite{JiaG}. Throughout this section, we use the change of variables
\begin{equation}\label{F8}
v=b(y)  \qquad {\rm for}\,\,y\in[0,1].
\end{equation}
The change of variable \eqref{F8} is just the nonlinear change of variable \eqref{changeofvariables1} at $t=0$.
Define
\begin{equation}\label{LE5}
g^{\ast}_k(t,y):=g_k(t,v),\qquad \varphi^{\ast}_k(t,y):=\varphi_k(t,v), \qquad X^{\ast}_k(y):=X_k(v),
\end{equation}
where $v=b(y)$ for $y\in[0,1]$. Then $g^{\ast}_k, \varphi^{\ast}_k$ satisfy
\begin{equation}\label{LE6}
\partial_tg^{\ast}_k(t,y)+ikb(y)g^{\ast}_k(t,y)-ikb''(y)\varphi^{\ast}_k(t,y)=0,
\end{equation}
\begin{equation}\label{LE7}
-k^2\varphi^{\ast}_k(t,y)+\partial_y^2\varphi^{\ast}_k(t,y)=g^{\ast}_k(t,y),\qquad\varphi^{\ast}_k(t,0)=\varphi^\ast(t,1)=0,
\end{equation}
for $(y,t)\in[0,1]\times[0,\infty)$, with initial data $g^{\ast}_k(0,y)=X^{\ast}_k(y)e^{-ikab(y)}$.

 For each $k\in\mathbb{Z}\backslash\{0\}$, we set for any $f\in L^2[0,1]$,
 \begin{equation}\label{F3.1}
 L_kf(y):=b(y)f(y)+b''(y)\int_0^1G_k(y,z)f(z)dz,
 \end{equation}
 where $G_k$ is the Green's function for the operator $k^2-\partial_y^2$ on $[0,1]$ with zero Dirichlet boundary condition defined in \eqref{elli11}. Then the system \eqref{LE6}-\eqref{LE7} can be reformulated as
 \begin{equation}\label{F3.2}
 \partial_tg^{\ast}_k(t,y)+ikL_kg^{\ast}_k(t,y)=0.
 \end{equation}

We first record an important representation formula, see \cite[Proposition 2.1]{JiaG}.

\begin{proposition}\label{F3.3}
For $k\in\mathbb{Z}^\ast$ we have the following representation formula for $\varphi^{\ast}_k$
 \begin{equation}\label{F3.4}
 \varphi^{\ast}_k(t,y)=-\frac{1}{2\pi i}\lim_{\epsilon\to0+}\int_{0}^1e^{-ikb(y_0) t}b'(y_0)\left[\psi_{k,\epsilon}^{-}(y,y_0)-\psi_{k,\epsilon}^{+}(y,y_0)\right]dy_0,
 \end{equation}
 where $\psi_{k,\epsilon}^{\iota}:[0,1]^2\to\mathbb{C}$ are defined for $\iota\in\{+,-\}$ and $\epsilon\in[-1/4,1/4]\backslash\{0\}$ by
\begin{equation}\label{F3.6}
\psi_{k,\eps}^\pm(y,y_0):=\int_0^1G_k(y,z)\big[(-b(y_0)+L_k\pm i\eps)^{-1}(X_k(\cdot)e^{-ikab(\cdot)}\big](z)\,dz,
\end{equation}
where $G_k$ are the Green functions defined in \eqref{elli11}. Moreover, the generalized eigenfunctions $\psi_{k,\eps}^\pm$ are solutions of the equation
 \begin{equation}\label{F7}
 \begin{split}
  -k^2\psi_{k,\epsilon}^{\iota}(y,y_0)+\frac{d^2}{dy^2}\psi_{k,\epsilon}^{\iota}(y,y_0)-\frac{b''(y)}{b(y)-b(y_0)+i\iota\epsilon}\psi_{k,\epsilon}^{\iota}(y,y_0)=\frac{-X^{\ast}_k(y)e^{-ikab(y)}}{b(y)-b(y_0)+i\iota\epsilon}.
 \end{split}
 \end{equation}
 \end{proposition}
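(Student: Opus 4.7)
The plan is to first reformulate the coupled system \eqref{LE6}--\eqref{LE7} as a single abstract evolution equation on $L^2[0,1]$, then apply a limiting-absorption / Dunford-type spectral representation to the resulting semigroup, and finally verify the ODE satisfied by the generalized eigenfunctions by direct computation. Concretely, using the operator $L_k$ from \eqref{F3.1} and $\varphi^\ast_k(t,y) = \int_0^1 G_k(y,z)g^\ast_k(t,z)\,dz$ (which is just the Green's function solution of \eqref{LE7}), the system reduces to $\partial_t g^\ast_k + ikL_k g^\ast_k = 0$ with initial data $g^\ast_k(0,y) = X^\ast_k(y)e^{-ikab(y)}$, so that $g^\ast_k(t) = e^{-itkL_k}(X^\ast_k e^{-ikab})$.

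For the representation of the semigroup, I would use the fact that, under assumption (B), the spectrum of $L_k$ consists only of the continuous spectrum $[b(0),b(1)]$, so the resolvent $(\lambda - L_k)^{-1}$ is holomorphic on $\mathbb{C}\setminus[b(0),b(1)]$. Starting from the standard contour representation of $e^{-itkL_k}$ around $\sigma(L_k)$, I deform the contour onto $[b(0),b(1)]$ and take the jump across the cut to obtain
\begin{equation*}
g^\ast_k(t) = \frac{1}{2\pi i}\lim_{\eps\to 0^+}\int_{b(0)}^{b(1)} e^{-itk\lambda}\bigl[(\lambda + i\eps - L_k)^{-1}-(\lambda - i\eps - L_k)^{-1}\bigr](X^\ast_k e^{-ikab})\,d\lambda.
\end{equation*}
Changing variables $\lambda = b(y_0)$, using $(b(y_0)\pm i\eps - L_k)^{-1} = -(-b(y_0)+L_k\mp i\eps)^{-1}$ to convert the jump into the form appearing in \eqref{F3.6}, and then applying $\int_0^1 G_k(y,z)(\cdot)(z)\,dz$ produces formula \eqref{F3.4} up to the sign convention $\psi^- - \psi^+$ recorded in the statement.

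To obtain the ODE \eqref{F7}, I set $\phi^\iota_{k,\eps}(y) := \bigl[(-b(y_0)+L_k + i\iota\eps)^{-1}(X^\ast_k e^{-ikab})\bigr](y)$ so that $\psi^\iota_{k,\eps}(\cdot,y_0) = \int_0^1 G_k(\cdot,z)\phi^\iota_{k,\eps}(z)\,dz$. By definition of $G_k$ one has $(k^2 - \partial_y^2)\psi^\iota_{k,\eps}(\cdot,y_0) = \phi^\iota_{k,\eps}$ with zero Dirichlet data in $y$. Unpacking the formula for $L_k$ in \eqref{F3.1} gives
\begin{equation*}
\bigl(b(y) - b(y_0) + i\iota\eps\bigr)\phi^\iota_{k,\eps}(y) + b''(y)\psi^\iota_{k,\eps}(y,y_0) = X^\ast_k(y)e^{-ikab(y)};
\end{equation*}
substituting $\phi^\iota_{k,\eps} = (k^2 - \partial_y^2)\psi^\iota_{k,\eps}$ and dividing by $b(y) - b(y_0) + i\iota\eps$ yields \eqref{F7} directly.

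The main obstacle is rigorously justifying the contour deformation and the $\eps\to 0^+$ limit, since $L_k$ is not self-adjoint and $b''$ changes sign, so the classical self-adjoint Stone's formula is not immediately available. This is precisely where Lemma \ref{T5} must be invoked: it provides a uniform coercivity bound $\|f + T_{k,y_0,\eps}f\|_{H^1_k}\gtrsim\kappa\|f\|_{H^1_k}$ that, together with the decomposition $L_k = b(y) + (\text{compact})$, converts the qualitative spectral assumption (B) into effective resolvent bounds up to the real axis, uniformly in $y_0\in[0,1]$ and small $\eps$. With these bounds in hand, the limiting-absorption principle applies and the boundary values $(-b(y_0)+L_k\pm i0)^{-1}$ make sense as operators on a suitable weighted (e.g.\ Gevrey-cutoff) space, which is enough to pass to the limit in the integral representation and to establish the formula rigorously.
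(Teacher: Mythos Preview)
Your proposal is correct and follows the standard route: reduce to the abstract evolution $\partial_t g^\ast_k + ikL_k g^\ast_k = 0$ with $L_k$ bounded on $L^2[0,1]$, represent $e^{-itkL_k}$ via a Dunford contour integral around $\sigma(L_k)=[b(0),b(1)]$, collapse the contour onto the real axis using the uniform resolvent bounds supplied by Lemma~\ref{T5}, change variables $\lambda=b(y_0)$, and apply $-\int_0^1 G_k(y,z)\cdot\,dz$ to pass from $g^\ast_k$ to $\varphi^\ast_k$. The derivation of \eqref{F7} by unpacking the resolvent identity $(-b(y_0)+L_k+i\iota\eps)\phi^\iota_{k,\eps}=X^\ast_k e^{-ikab}$ and substituting $\phi^\iota_{k,\eps}=(k^2-\partial_y^2)\psi^\iota_{k,\eps}$ is exactly right.

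The paper itself does not give a proof of this proposition; it simply records the statement and refers to \cite[Proposition~2.1]{JiaG} for the argument. Your sketch is essentially what one finds there. One small remark: in your displayed Stone-type formula the order of the two resolvents is reversed relative to what the counterclockwise Dunford contour actually gives (collapsing the contour yields $(\lambda-i\eps-L_k)^{-1}-(\lambda+i\eps-L_k)^{-1}$, which after the identity $(\lambda\pm i\eps-L_k)^{-1}=-(-\lambda+L_k\mp i\eps)^{-1}$ produces precisely $\psi^{-}-\psi^{+}$ with the overall minus sign in \eqref{F3.4}). You flagged this as ``up to the sign convention,'' so you are aware of it, but it is worth tracking carefully since the sign is what distinguishes the two boundary values of the resolvent.
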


\begin{remark}\label{F6.1}
The existence of the functions $\psi^{\iota}_{k,\epsilon}$ for $\epsilon\in[-1/4,1/4]\setminus\{0\}$ follows from our spectral assumptions. These functions depend on the parameter $a$ as well, but we suppress this dependence for simplicity of notation.
\end{remark}

We transfer now the results of Lemma \ref{T5} to the new variables.

\begin{lemma}\label{F16.1}
For any $f\in H^1_k(\R)$ and $\epsilon\in[-1/4,1/4]\backslash\{0\}, k\in\mathbb{Z}^\ast, w\in[\lv,\uv]$, let 
\begin{equation}\label{F16.2}
S_{k,w,\epsilon}f(v):=\int_{\R}\Psi(v)\mathcal{G}_k(v,v')(\partial_{v'}B'_0)(v')\frac{f(v')}{v'-w+i\epsilon}dv',
\end{equation}
where $\mathcal{G}_k(v,v')=G_k(b^{-1}(v),b^{-1}(v'))$ are the renormalized Green functions defined in the proof of Lemma \ref{lm:elli3}. Then for all $k\in\mathbb{Z}^\ast,\,w\in[\lv,\uv]$, $f\in H^1_k(\mathbb{R})$, and sufficiently small $\epsilon\neq 0$, 
\begin{equation}\label{F16.3}
\|S_{k,w,\epsilon}f\|_{H^1_k(\R)} \lesssim|k|^{-1/3} \|f\|_{H^1_k(\R)}\qquad {\rm and}\qquad \|f\|_{H^1_k(\R)}\lesssim\|f+S_{k,w,\epsilon}f\|_{H^1_k(\R)}.
\end{equation}

Define also
\begin{equation}\label{F16.5}
S'_{k,w,\epsilon}f(v):=\int_{\R}\Psi(v+w)\mathcal{G}_k(v+w,v'+w)(\partial_{v'}B'_0)(v'+w)\frac{f(v')}{v'+i\epsilon}dv'.
\end{equation}
Then for all $k\in\mathbb{Z}^\ast,\,w\in[\lv,\uv]$, $f\in H^1_k(\mathbb{R})$, and sufficiently small $\epsilon\neq 0$, 
\begin{equation}\label{F16.6}
\|S'_{k,w,\epsilon}f\|_{H^1_k(\R)} \lesssim|k|^{-1/3} \|f\|_{H^1_k(\R)}\qquad {\rm and}\qquad\|f\|_{H^1_k(\R)}\lesssim\|f+S'_{k,w,\epsilon}f\|_{H^1_k(\R)}.
\end{equation}
\end{lemma}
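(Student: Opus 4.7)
The plan is to reduce both operator bounds directly to Lemma \ref{T5} via a change of variable for $S_{k,w,\epsilon}$ and an additional translation conjugation for $S'_{k,w,\epsilon}$. The key enabling fact is that, under the assumptions \eqref{IntB}, the map $v=b(y)$ extends to a global diffeomorphism of $\R$ onto $\R$ with $b'(y)\in[\vartheta_0,1/\vartheta_0]$ everywhere (since $b$ is affine outside the compact interval $[2\vartheta_0,1-2\vartheta_0]$). From this and the chain rule $(f\circ b)'(y)=b'(y)\,f'(b(y))$, a change of variables in each of the $L^2$ integrals in the definition of $\|\cdot\|_{H^1_k(\R)}$ shows
\[
\|f\circ b\|_{H^1_k(\R)}\approx \|f\|_{H^1_k(\R)},
\]
with constants depending only on $\vartheta_0$ and, crucially, independent of $k$ (the $|k|^{-1}$ weight on the derivative factor passes through multiplicatively).

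Given $w\in[b(0),b(1)]$ I set $y_0:=b^{-1}(w)\in[0,1]$ and substitute $v=b(y)$, $v'=b(y')$ in the definition \eqref{F16.2}. Using $\mathcal{G}_k(b(y),b(y'))=G_k(y,y')$, $(\partial_{v'}B'_0)(b(y'))=b''(y')/b'(y')$, $\Psi(b(y))=\varphi(y)$, and $dv'=b'(y')\,dy'$, the Jacobian $b'(y')$ cancels the $1/b'(y')$ in $\partial_{v'}B'_0$, giving exactly
\[
S_{k,w,\epsilon}f(b(y)) = T_{k,y_0,\epsilon}(f\circ b)(y).
\]
Combining this identification with the norm equivalence above and the two conclusions of Lemma \ref{T5} yields \eqref{F16.3} immediately. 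Note that the support hypotheses $\mathrm{supp}\,\Psi\subseteq[b(\vartheta_0/4),b(1-\vartheta_0/4)]$ and $\mathrm{supp}\,\varphi\subseteq[\vartheta_0/4,1-\vartheta_0/4]$ match up under $v=b(y)$ since $\varphi(y)=\Psi(b(y))$ was the cutoff choice fixed after Lemma \ref{T5}.

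For the second operator I denote translation by $T_a g(v):=g(v-a)$ and substitute $u=v'+w$ in \eqref{F16.5}; setting $\tilde v=v+w$ then factors $S'$ through $S$ via
\[
S'_{k,w,\epsilon}f \;=\; T_{-w}\,S_{k,w,\epsilon}\,T_w f,
\]
so that $f+S'_{k,w,\epsilon}f=T_{-w}\big[(I+S_{k,w,\epsilon})T_w f\big]$. Because translations are isometries on $H^1_k(\R)$, both estimates in \eqref{F16.6} follow from the corresponding estimates in \eqref{F16.3} applied to $T_w f$. There is no substantive analytic obstacle here: the entire spectral content is already absorbed into Lemma \ref{T5}, and the only work is the bookkeeping of the change of variable and the translation, together with the uniform-in-$k$ norm equivalence under $v=b(y)$, which is the one point meriting explicit verification.
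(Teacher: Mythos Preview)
Your proof is correct and follows essentially the same approach as the paper: the paper's own proof simply says that \eqref{F16.3} follows from Lemma~\ref{T5} via the change of variables $v=b(y)$, and that \eqref{F16.6} follows from \eqref{F16.3} by the shift $v\to v-w$. You have spelled out exactly these two reductions, including the explicit identification $S_{k,w,\epsilon}f(b(y))=T_{k,y_0,\epsilon}(f\circ b)(y)$ and the conjugation $S'_{k,w,\epsilon}=T_{-w}S_{k,w,\epsilon}T_w$, together with the uniform-in-$k$ norm equivalence under composition with $b$, which is the one point the paper leaves implicit.
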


\begin{proof}
The bounds \eqref{F16.3} follow from Lemma \ref{T5}, using the change of the variables formula \eqref{F8}.
The bounds \eqref{F16.6} follow by a shift of variables $v\to v-w$. 
\end{proof}

\subsection{Gevrey bounds for generalized eigenfunctions}\label{gev}
In this section we study the regularity of the generalized eigenfunctions $\psi^{\iota}_{k,\epsilon}(y,y_0), y,y_0\in[0,1], \iota\in\{\pm\}$. The starting point is the equation \eqref{F7}, which can be reformulated as
\begin{equation}\label{H0}
\psi^{\iota}_{k,\epsilon}(y,y_0)+\int_0^1G_k(y,z)\frac{b''(z)\psi^{\iota}_{k,\epsilon}(z,y_0)}{b(z)-b(y_0)+i\iota\epsilon}\,dz=\int_0^1G_k(y,z)\frac{X^{\ast}_k(z)e^{-ikab(z)}}{b(z)-b(y_0)+i\iota\epsilon}dz.
\end{equation}
Denote for $y,y_0\in[0,1]$,
\begin{equation}\label{LE9}
h(y,y_0):=\int_0^1G_k(y,z)\frac{X^{\ast}_k(z)e^{-ikab(z)}}{b(z)-b(y_0)+i\iota\epsilon}dz.
\end{equation}

We can now prove bounds on the low frequencies of the generalized eigenfunctions.

\begin{lemma}\label{H1}
(i) We have
\begin{equation}\label{LE10}
\left\|h(y,y_0)\right\|_{L^2_{y,y_0}}+|k|^{-1}\left\|\partial_yh(y,y_0)\right\|_{L^2_{y,y_0}}\lesssim |k|^{-1}\left\|X_k^\ast\right\|_{L^2_v}.
\end{equation}

(ii) For $\iota\in\{\pm\}$, $k\in\mathbb{Z}^\ast$, and $\epsilon\in[-1/4,1/4]\setminus\{0\}$ sufficiently small, we have
\begin{equation}\label{H0.1}
|k|\left\|\Psi(b(y))\psi^{\iota}_{k,\epsilon}(y,y_0)\right\|_{L^2_{y,y_0}}+\left\|\partial_y(\Psi(b(y))\psi^{\iota}_{k,\epsilon})(y,y_0)\right\|_{L^2_{y,y_0}}\lesssim \left\|X_k\right\|_{L^2}.
\end{equation}
Moreover
\begin{equation}\label{F10.4}
\lim_{\epsilon\to0+}\Big[\psi^{-}_{k,\epsilon}(y,y_0)-\psi^{+}_{k,\epsilon}(y,y_0)\Big]\equiv 0, \qquad{\rm for}\,\,y_0\in[0,\vartheta_0/2]\cup[1-\vartheta_0/2,1].
\end{equation}
\end{lemma}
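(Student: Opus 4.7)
The plan is to first prove (i) by a Cauchy-integral argument in $y_0$ and then bootstrap (ii) from (i) via the operator bound in Lemma \ref{T5}. For part (i), I observe that for fixed $y$, $h(y,\cdot)$ is essentially a Cauchy-type integral in $y_0$. After the change of variables $w' = b(z)$ and $w = b(y_0)$, absorbing Jacobians into a single factor $\widetilde{F}_y(w') := G_k(y, b^{-1}(w')) X^\ast_k(b^{-1}(w')) e^{-ikaw'}/b'(b^{-1}(w'))$, the integral becomes
\begin{equation*}
h(y, b^{-1}(w)) = \int_{\mathbb{R}} \frac{\widetilde{F}_y(w')}{w' - w + i\iota\epsilon}\, dw'.
\end{equation*}
Convolution against $1/(\cdot + i\iota\epsilon)$ has Fourier multiplier of modulus at most $2\pi$ uniformly in $\epsilon$, so it defines an operator bounded on $L^2_w$ uniformly in $\epsilon$. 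Hence $\|h(y,\cdot)\|^2_{L^2_{y_0}} \lesssim \int |G_k(y,z)|^2 |X^\ast_k(z)|^2\,dz$, and integrating in $y$ together with the explicit Green-function estimates $\int |G_k(y,z)|^2\,dy \lesssim |k|^{-3}$ and $\int |\partial_y G_k(y,z)|^2\,dy \lesssim |k|^{-1}$ (read off from \eqref{elli11}) yields \eqref{LE10} with room to spare.

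For part (ii), the plan is to multiply \eqref{H0} by the cutoff $\varphi(y) := \Psi(b(y))$. Since assumption (A) and \eqref{rec0} give $\varphi(z) b''(z) = b''(z)$, I may freely insert $\varphi(z)$ inside the integrand, recasting \eqref{H0} as the fixed-point identity
\begin{equation*}
(I + T_{k, y_0, \iota\epsilon})\big[\varphi\,\psi^{\iota}_{k,\epsilon}(\cdot, y_0)\big](y) = \varphi(y)\, h(y, y_0),
\end{equation*}
with $T_{k,y_0,\epsilon}$ exactly the operator of \eqref{T1}. The spectral bound \eqref{T6} gives uniform invertibility of $I + T_{k,y_0,\iota\epsilon}$ on $H^1_k$, so $\|\varphi\,\psi^{\iota}_{k,\epsilon}(\cdot, y_0)\|_{H^1_k} \lesssim \|\varphi\,h(\cdot, y_0)\|_{H^1_k}$ pointwise in $y_0$. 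Multiplying through by $|k|$ to convert into the norm appearing in \eqref{H0.1}, squaring, integrating in $y_0$, and applying (i) (using $\|X^\ast_k\|_{L^2_y} \approx \|X_k\|_{L^2_v}$ since $b' \in [\vartheta_0, 1/\vartheta_0]$) yields \eqref{H0.1}.

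For \eqref{F10.4}, the support conditions place the supports of $b''$ and $X^\ast_k$ inside $[2\vartheta_0, 1-2\vartheta_0]$ and $[\vartheta_0, 1-\vartheta_0]$ respectively, while $y_0 \in [0, \vartheta_0/2] \cup [1-\vartheta_0/2, 1]$ stays at distance $\geq \vartheta_0/2$ from them. Combined with $b' \geq \vartheta_0$, this forces $|b(z) - b(y_0)| \geq \vartheta_0^2/2 > 0$ uniformly over $z$ in the relevant supports, so the integrand in \eqref{H0} is non-singular as $\epsilon \to 0^+$ and the two limits $\psi^\pm_{k,\epsilon}$ coincide.

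The one step that needs real care, rather than routine bookkeeping, is the insertion of $\varphi(z)$ inside the singular integral in \eqref{H0}: this is what converts \eqref{H0} into a genuine fixed-point problem amenable to Lemma \ref{T5}, and it works only because assumption (A) places the support of $b''$ strictly inside the region where $\Psi \circ b \equiv 1$. Once that reduction is made, the rest is a clean combination of $L^2$-boundedness of the Cauchy integral (for (i)) and the spectral bound \eqref{T6} (for (ii)).
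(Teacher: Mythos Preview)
Your proof is correct and follows essentially the same route as the paper: part (i) via $L^2$-boundedness of the Hilbert/Cauchy transform together with the Green's function bounds $\|G_k(\cdot,z)\|_{L^2_y}\lesssim|k|^{-3/2}$ and $\|\partial_yG_k(\cdot,z)\|_{L^2_y}\lesssim|k|^{-1/2}$, and part (ii) by recasting \eqref{H0} as $(I+T_{k,y_0,\iota\epsilon})[\varphi\psi^{\iota}_{k,\epsilon}(\cdot,y_0)]=\varphi h(\cdot,y_0)$ and invoking \eqref{T6}. One small point on \eqref{F10.4}: non-singularity of the integrand only shows that the $\iota=+$ and $\iota=-$ equations have the \emph{same limit}; to conclude the solutions coincide you still need uniqueness for the limiting equation, which follows immediately from the same lower bound in \eqref{T6} (it persists as $\epsilon\to 0$ since the operators converge in norm) combined with compactness of the limiting integral operator --- the paper sidesteps this by citing \cite[Lemma 4.1]{JiaG}.
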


\begin{proof} (i) Using $L^2$ boundedness of the Hilbert transform, we estimate 
\begin{equation*}
\begin{split}
\|h(y,y_0)\|_{L^2_{y,y_0}}&=\sup_{\|P\|_{L^2}\leq 1}\Big|\int_{[0,1]^3}P(y,y_0)G_k(y,z)\frac{X^{\ast}_k(z)e^{-ikab(z)}}{b(z)-b(y_0)+i\iota\epsilon}\,dzdydy_0\Big|\\
&\lesssim \sup_{\|P'\|_{L^2}\leq 1}\int_{[0,1]^2}\big|P'(y,z)G_k(y,z)X^{\ast}_k(z)\big|\,dzdy\\
&\lesssim |k|^{-3/2}\|X^\ast_k\|_{L^2},
\end{split}
\end{equation*}
where in the last inequality we used the bounds $\|G_k(y,z)\|_{L^2_y}\lesssim |k|^{-3/2}$ for any $z\in[0,1]$ (compare with \eqref{elli11}). The estimate on the second term in the left-hand side of \eqref{LE10} is similar, since $\|(\partial_yG_k)(y,z)\|_{L^2_y}\lesssim |k|^{-1/2}$.

(ii) The bounds \eqref{H0.1} follow from \eqref{T6} and \eqref{H0}--\eqref{LE9} (the functions $\Psi(b(.))\psi_{k,\eps}^\pm(.,y_0)$ are in $H^1_k(\mathbb{R})$ due to \eqref{F3.6}). The identities \eqref{F10.4} were proved in \cite[Lemma 4.1]{JiaG}.
\end{proof}

We now turn to the main case when $y_0\in[\vartheta_0/2,1-\vartheta_0/2]$.
Recall the change of variables \eqref{F8}, and set for $k\in\mathbb{Z}\backslash\{0\}, \iota\in\{\pm\}$ and sufficiently small $\epsilon\neq0$,
\begin{equation}\label{F9}
\phi^{\iota}_{k,\epsilon}(v,w):=\psi^{\iota}_{k,\epsilon}(y,y_0),\qquad v=b(y),\,w=b(y_0).
\end{equation}


The following lemma contains the main estimates for the generalized eigenfunctions.
\begin{lemma}\label{S1}
Define for $\iota\in\{\pm\}, k\in\mathbb{Z}\backslash\{0\}$ and sufficiently small $\epsilon>0$
\begin{equation}\label{F17.1}
\Pi^{\iota}_{k,\epsilon}(v,w):=\Psi(v+w)\phi^{\iota}_{k,\epsilon}(v+w,w)\Psi(w), \qquad {\rm for}\,\,v,w\in\R.
\end{equation}
If $W_k$ are weights satisfying \eqref{B22.1} then, for $\delta,\epsilon>0$ sufficiently small,
\begin{equation}\label{S2}
\left\|(|k|+|\xi|)W_k(\eta+ka)\,\widetilde{\,\Pi^{\iota}_{k,\epsilon}}(\xi,\eta)\right\|_{L^2_{\xi,\eta}}\lesssim_\delta \left\|W_k(\eta)\,\widetilde{X_k}(\eta)\right\|_{L^2_\eta}.
\end{equation}
\end{lemma}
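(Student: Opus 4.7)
My plan is to reformulate the generalized eigenfunction equation as a fixed-point problem of the form $(I+S'_{k,w,\iota\epsilon})\,\Pi^{\iota}_{k,\epsilon}(\cdot,w)=\mathcal{S}(\cdot,w)$, invoke the $H^1_k$ invertibility of $I+S'_{k,w,\iota\epsilon}$ supplied by Lemma~\ref{F16.1}, and then upgrade that inversion to the weighted Fourier bound \eqref{S2} using the commutator property \eqref{B22.1}. The first step is the reduction. Starting from \eqref{H0}, I would change variables to $V=b(y)$, $w=b(y_0)$ in the $y,z,y_0$ integrals. Using the identity $B_0''=B_0'\,\partial_vB_0'$, writing $dz=dv'/B_0'(v')$, and multiplying by $\Psi(V)$, I would use that $\partial_v B_0'$ is supported strictly inside $\{\Psi\equiv1\}$ to insert a second $\Psi$ inside the integrand without changing it, so that $\Psi\phi^{\iota}_{k,\epsilon}(\cdot,w)$ solves $(I+S_{k,w,\iota\epsilon})(\Psi\phi^{\iota}_{k,\epsilon})=\Psi\,h_0$ with
\begin{equation*}
h_0(V,w)=\int\mathcal{G}_k(V,v')\,\frac{X_k(v')\,e^{-ikav'}}{(v'-w+i\iota\epsilon)\,B_0'(v')}\,dv'.
\end{equation*}
The shift $V=v+w$ and multiplication by $\Psi(w)$ then convert this into $(I+S'_{k,w,\iota\epsilon})\,\Pi^{\iota}_{k,\epsilon}(\cdot,w)=\mathcal{S}(\cdot,w)$, where $\mathcal{S}(v,w):=\Psi(w)\Psi(v+w)\,h_0(v+w,w)$.

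The second step is to bound the source in the weighted Fourier norm $\|(|k|+|\xi|)\,W_k(\eta+ka)\,\widetilde{\mathcal{S}}(\xi,\eta)\|_{L^2_{\xi,\eta}}$ by $\|W_k\,\widetilde{X_k}\|_{L^2}$. I would take the joint Fourier transform in $(v,w)$ and exploit the pointwise decay \eqref{elli15} on the Fourier transform of the Green kernel, the Gevrey decay $|\widetilde{\Psi}|\lesssim e^{-\langle\cdot\rangle^{3/4}}$ from \eqref{rec0}, the Gevrey bounds \eqref{rew6.44} on $B_0'$ and $1/B_0'$, and the $L^2$-boundedness of the Hilbert-type multiplier $1/(u+i\iota\epsilon)$ uniformly in $\epsilon$. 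The shift $+ka$ appearing in the weight is precisely what accounts for the modulation $e^{-ikav'}$ carried by $X_k$: after the substitution $v'=w+u$ one gets a factor $e^{-ikaw}\cdot e^{-ikau}$, and the $e^{-ikaw}$ piece translates into a $w$-frequency shift of $\Pi^\iota_{k,\epsilon}$ by $-ka$ relative to $\widetilde{X_k}$, which is exactly compensated by evaluating $W_k$ at $\eta+ka$. The commutator estimate \eqref{B22.1}, combined with the Gevrey decay of the remaining Fourier factors, then lets one freely interchange $W_k(\eta+ka)$ with the convolutions at the cost of terms of order $\sqrt\delta+C(\delta)\langle k,\eta\rangle^{-1/8}$.

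The third step is the weighted inversion of $I+S'_{k,w,\iota\epsilon}$. I would argue by duality, testing the equation against an arbitrary function of unit weighted $L^2_{\xi,\eta}$ norm, and reduce to showing that the commutator $[W_k(\eta+ka),\,S'_{k,w,\iota\epsilon}]$, viewed as an operator on $L^2_{v,w}$, is small. The dependence of the kernel of $S'_{k,w,\iota\epsilon}$ on $w$ comes only through $\Psi(v+w)$, $\mathcal{G}_k(v+w,w+u)$, and $(\partial_u B_0')(w+u)$, each with Gevrey-decaying Fourier transforms, so \eqref{B22.1} yields a commutator norm of order $\sqrt\delta+C(\delta)\langle k,\eta\rangle^{-1/8}$. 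At high frequencies $\langle k,\eta\rangle\ge C(\delta)^{8}$ this is strictly smaller than the $|k|^{-1/3}$-contractivity of $S'_{k,w,\iota\epsilon}$ from \eqref{F16.6}, and a Neumann-type expansion closes; at low frequencies the weight $W_k$ is equivalent (up to constants depending on $\delta$) to a fixed Sobolev weight, and the invertibility \eqref{F16.6} directly yields the required bound.

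The main obstacle will be the interplay between the non-smooth Cauchy kernel $1/(u+i\iota\epsilon)$ and the weight $W_k$: a smooth kernel could be handled by expanding $W_k$ and using exponential Gevrey decay of its Fourier transform, but the Cauchy kernel has no such decay, so the commutator must be controlled using only \eqref{B22.1}. The delicate point is that \eqref{B22.1} is precisely calibrated so that the $\sqrt\delta$ piece can be absorbed by choosing $\delta$ small, and the $C(\delta)\langle k,\eta\rangle^{-1/8}$ piece can be absorbed by the $|k|^{-1/3}$ gain in \eqref{F16.6} together with the high-frequency restriction. Ensuring that all constants are uniform in the regularization $\epsilon$, and tracking them carefully so as not to lose the algebraic factor $(|k|+|\xi|)$ on the left of \eqref{S2}, is the remaining technical burden.
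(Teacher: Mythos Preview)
Your overall strategy --- recast the eigenfunction equation as $(I+S'_{k,w,\iota\epsilon})\Pi^{\iota}_{k,\epsilon}=\mathcal{S}$, bound the source in the weighted norm, and then invert $I+S'$ in the weighted space via a commutator argument based on \eqref{B22.1} --- is exactly the paper's approach. Two points in your Step~3, however, are genuinely off and would not close as written.

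First, the phrase ``$|k|^{-1/3}$-contractivity'' and the appeal to a Neumann-type expansion are misleading. For small $|k|$ (say $k=1$) the operator $S'_{k,w,\iota\epsilon}$ is \emph{not} a contraction on $H^1_k$: the first bound in \eqref{F16.6} carries an implicit constant that can exceed~$1$. What the paper uses is the second bound in \eqref{F16.6}, namely the direct invertibility $\|f\|_{H^1_k}\lesssim\|(I+S'_{k,w,\iota\epsilon})f\|_{H^1_k}$. Concretely, one applies the multiplier $W^a_k$ (acting in $w$) to the equation, obtaining $(I+S'_{k,w,\iota\epsilon})(W^a_k\Pi^{\iota}_{k,\epsilon})=F^{\iota}_{k,\epsilon}+\mathcal{C}^{\iota}_{k,\epsilon}$, and then inverts $I+S'$ at each fixed $w$. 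The commutator is bounded not as a single small operator but as a sum
\[
\|(|k|+|\partial_v|)\mathcal{C}^{\iota}_{k,\epsilon}\|_{L^2_{v,w}}\lesssim\sqrt{\delta}\,\|(|k|+|\partial_v|)W^a_k\Pi^{\iota}_{k,\epsilon}\|_{L^2_{v,w}}+C_\delta\,\|(|k|+|\partial_v|)\Pi^{\iota}_{k,\epsilon}\|_{L^2_{v,w}};
\]
only the first piece is absorbed into the left-hand side, while the second is controlled by the \emph{unweighted} bound \eqref{H0.1} and the hypothesis $W_k\ge1$. Your ``low-frequency'' clause (``$W_k$ is equivalent to a fixed Sobolev weight'') is not quite right either: the relevant smallness parameter is $\langle k,\eta'+ka\rangle$ (the argument of $W_k$), not $\langle k,\eta\rangle$.

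Second, your identification of the ``main obstacle'' is inverted. The Cauchy kernel $1/(v'+i\iota\epsilon)$ has \emph{no $w$-dependence}, hence commutes with the $w$-Fourier multiplier $W^a_k$. The commutator $[W^a_k,S'_{k,w,\iota\epsilon}]$ comes entirely from the $w$-dependence of the Gevrey-smooth factors $\Psi(v+w)$, $\mathcal{G}_k(v+w,v'+w)$, and $(\partial_{v'}B'_0)(v'+w)$; their Fourier decay supplies the $e^{-2\delta_0\langle\cdot\rangle^{1/2}}$ needed to pair with \eqref{B22.1}. The singular kernel enters only through the $v'$-integration and is absorbed by the $H^1_k$ structure already present in \eqref{F16.6}.

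Two further technical steps you omit: a cutoff $\Psi_0(w)$ is needed because \eqref{F16.6} is only available for $w\in[b(0),b(1)]$, and one must first argue with regularized weights $W_{k,\rho}=W_k/(1+\rho W_k)$ to ensure the a~priori finiteness of $\|(|k|+|\partial_v|)W^a_k\Pi^{\iota}_{k,\epsilon}\|_{L^2}$ required for the absorption.
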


\begin{proof}
Using \eqref{H0.1} and the definitions \eqref{F17.1}, we have the bounds
\begin{equation}\label{F17.3}
\left\|(|k|+|\partial_v|)\Pi^{\iota}_{k,\epsilon}\right\|_{L^2_{v,w}}\lesssim\left\|X_k\right\|_{L^2},
\end{equation}
which are useful to control the low frequency components of $\Pi^{\iota}_{k,\epsilon}$. We divide the rest of the proof into several steps.

{\bf Step 1.} We derive first the main equations for $\Pi^{\iota}_{k,\epsilon}(v,w)$. Using the definitions \eqref{F9}, we can reformulate equation \eqref{H0} as
\begin{equation}\label{F17}
\begin{split}
&\Psi(v)\phi^{\iota}_{k,\epsilon}(v,w)+\int_{\mathbb{R}}\Psi(v)\mathcal{G}_k(v,v')(\partial_{v'}B'_0)(v')\frac{\Psi(v')\phi^{\iota}_{k,\epsilon}(v',w)}{v'-w+i\iota\epsilon}\,dv'\\
&=\int_{\mathbb{R}}\Psi(v)\mathcal{G}_k(v,v')\frac{1}{B'_0(v')}\frac{X_k(v')e^{-ikav'}}{v'-w+i\iota\epsilon}\,dv',
\end{split}
\end{equation}
for $v\in\R$ and $w\in[\lv,\uv]$, since $\Psi\equiv 1$ on the support of $\partial_vB'_0$. Recall also that $\Psi\equiv 1$ on the support of $X_k$, and let $\mathcal{G}'_k(v,w):=\Psi(v)\mathcal{G}_k(v,w)\Psi(w)$. It follows that the function $\Pi^{\iota}_{k,\epsilon}(v,w)$ satisfies the more regular (in $w$) equation
\begin{equation}\label{F17.2}
\begin{split}
\Pi^{\iota}_{k,\epsilon}(v,w)&+\int_{\mathbb{R}}\mathcal{G}'_k(v+w,v'+w)(\partial_{v'}B'_0)(v'+w)\frac{\Pi^{\iota}_{k,\epsilon}(v',w)}{v'+i\iota\epsilon}\,dv'\\
&=\int_{\mathbb{R}}\mathcal{G}'_k(v+w,v'+w)\frac{\Psi(w)}{B'_0(v'+w)}\frac{X_k(v'+w)e^{-ika(v'+w)}}{v'+i\iota\epsilon}\,dv'.
\end{split}
\end{equation}

{\bf Step 2.} We now study the regularity of the functions $\Pi^{\iota}_{k,\epsilon}$ using equation \eqref{F17.2}. Define the operator $W_k^a$ by the Fourier multiplier
\begin{equation}\label{F24}
\widetilde{\,W^{a}_kh}(\eta):=W_k(\eta+ka)\widetilde{h}(\eta),\qquad{\rm for\,\,any\,\,}h\in L^2(\R).
\end{equation}
The basic idea is to use the limiting absorption principle in Lemma \ref{F16.1} to bound $\Pi^{\iota}_{k,\epsilon}$. We note that $\Pi^{\iota}_{k,\epsilon}$ is very smooth in $w$ but not so smooth in $v$, due to the presence of the singular factor $1/(v'+i\iota\epsilon)$. In order to prove Gevrey regularity of $\Pi^{\iota}_{k,\epsilon}$ in $w$, we apply the operator $W_k^a$, which acts on the variable $w$, to equation \eqref{F17.2} and obtain
\begin{equation}\label{F25}
\begin{split}
&W^a_k\,\Pi^{\iota}_{k,\epsilon}(v,w)+\int_{\mathbb{R}}\mathcal{G}'_k(v+w,v'+w)(\partial_{v'}B'_0)(v'+w)\frac{W^a_k\,\Pi^{\iota}_{k,\epsilon}(v',w)}{v'+i\iota\epsilon}\,dv'\\
&=W^a_k\Big[\int_{\mathbb{R}}\mathcal{G}'_k(v+\cdot,v'+\cdot)\frac{\Psi(\cdot)}{B'_0(v'+\cdot)}\frac{X_k(v'+\cdot)e^{-ika(v'+\cdot)}}{v'+i\iota\epsilon}\,dv'\Big](w)+\mathcal{C}^{\iota}_{k,\epsilon}(v,w)\\
&=:F^{\iota}_{k,\epsilon}(v,w)+\mathcal{C}^{\iota}_{k,\epsilon}(v,w),
\end{split}
\end{equation}
for $v,w\in\R$, where the commutator term $\mathcal{C}^{\iota}_{k,\epsilon}(v,w)$ is defined as
\begin{equation}\label{S26}
\begin{split}
\mathcal{C}^{\iota}_{k,\epsilon}(v,w):=&\int_{\mathbb{R}}\mathcal{G}'_k(v+w,v'+w)(\partial_{v'}B'_0)(v'+w)\frac{W^a_k\,\Pi^{\iota}_{k,\epsilon}(v',w)}{v'+i\iota\epsilon}\,dv'\\
&-W^a_k\Big[\int_{\mathbb{R}}\mathcal{G}'_k(v+\cdot,v'+\cdot)(\partial_{v'}B'_0)(v'+\cdot)\frac{\Pi^{\iota}_{k,\epsilon}(v',\cdot)}{v'+i\iota\epsilon}\,dv'\Big](w).
\end{split}
\end{equation}

We fix now a cutoff function $\Psi_0$ supported in $[b(\va_0/8),b(1-\va_0/8)]$, equal to $1$ in $[b(\va_0/4),b(1-\va_0/4)]$ and satisfying $\|e^{\langle \xi\rangle^{3/4}}\widetilde{\Psi_0}(\xi)\|_{L^\infty}\lesssim 1$. Applying \eqref{F16.6} for each $w$ and taking $L^2$ in $w$, we obtain from \eqref{F25} 
\begin{equation}\label{F26.1}
\left\|\Psi_0(w)(|k|+|\partial_v|)W^a_k\,\Pi^{\iota}_{k,\epsilon}\right\|_{L^2_{v,w}}\lesssim \left\|(|k|+|\partial_v|)F^{\iota}_{k,\epsilon}\right\|_{L^2_{v,w}}+\left\|(|k|+|\partial_v|)\mathcal{C}^{\iota}_{k,\epsilon}\right\|_{L^2_{v,w}}.
\end{equation}

{\bf Step 3.} We bound now the terms in the right-hand side of \eqref{F26.1}. We show first that
\begin{equation}\label{F28}
\left\|(|k|+|\partial_v|)F^{\iota}_{k,\epsilon}\right\|_{L^2_{v,w}}\lesssim_{\delta} \big\|W_k(\eta)\widetilde{X_k}(\eta)\big\|_{L^2_\eta}.
\end{equation}
Using \eqref{F25} and taking Fourier transform in $v,w$, we obtain that
\begin{equation}\label{F30}
\begin{split}
\widetilde{F^{\iota}_{k,\epsilon}}(\xi,\eta)&=CW_k(\eta+ka)\int_{\R^4}\widetilde{\mathcal{G}'_k}(\xi,\zeta)e^{-iw\eta+i\xi w+i\zeta(v'+w)}\Psi(w)\\
&\hspace{1in}\times\frac{X_k(v'+w)e^{-ika(v'+w)}}{B'_0(v'+w)} e^{i(v'+i\iota\epsilon)\gamma}\mathbf{1}_+(\iota\gamma)\,dv'dw d\zeta d\gamma\\
&=CW_k(\eta+ka)\int_{\R^2}\widetilde{\mathcal{G}'_k}(\xi,\zeta)\widetilde{h^a_k}(-\zeta-\gamma,\eta-\xi-\zeta)e^{-\epsilon\iota\gamma}\mathbf{1}_+(\iota\gamma)\,d\zeta d\gamma.
\end{split}
\end{equation}
where $\mathbf{1}_+$ denotes the characteristic function of the interval $[0,\infty)$ and
\begin{equation}\label{F31}
h^a_k(v,w):=\Psi(w)X_k(v+w)e^{-ika(v+w)}/B'_0(v+w),\qquad {\rm for}\,\,v,w\in\R.
\end{equation}
Since $\Psi\equiv 1$ on the support of $X_k$, we can write
\begin{equation}\label{F31.1}
h^a_k(v,w)=\Upsilon(v,w)X_k(v+w)e^{-ika(v+w)}, \qquad{\rm where}\,\,\Upsilon(v,w):=\Psi(w)\Psi(v+w)/B'_0(v+w).
\end{equation}
Using general properties of Gevrey spaces (Lemmas \ref{lm:Gevrey} and \ref{GPF}), and the regularity of $b$, see \eqref{IntB}--\eqref{IntB1}, we obtain that $\big|\widetilde{\,\Upsilon\,}(\xi,\eta)\big|\lesssim e^{-4\delta_0\langle\xi,\eta\rangle^{1/2}}$ for any $\xi,\eta\in\R$. Therefore
\begin{equation}\label{F31.3}
\big|\widetilde{\,h^a_k}(\xi,\eta)\big|\lesssim \int_{\R}e^{-4\delta_0\langle \xi-\alpha,\eta-\alpha\rangle^{1/2}}\big|\widetilde{X_k}(\alpha+ka)\big|\,d\alpha.
\end{equation}
As in \eqref{elli15}, in view of \cite[Lemma A3]{JiaG}, we have $\big|\widetilde{\mathcal{G}'_k}(\xi,\zeta)\big|\lesssim e^{-4\delta_0\langle\xi+\zeta\rangle^{1/2}}(k^2+|\xi|^2)^{-1}$. Using now \eqref{F30} it follows that
\begin{equation*}
\begin{split}
(k^2+\xi^2)\big|\widetilde{F^{\iota}_{k,\epsilon}}(\xi,\eta)\big|&\lesssim\int_{\R^3}W_k(\eta+ka)e^{-4\delta_0\langle\xi+\zeta\rangle^{1/2}}e^{-4\delta_0\langle -\zeta-\gamma-\alpha,\eta-\xi-\zeta-\alpha\rangle^{1/2}}\big|\widetilde{X_k}(\alpha+ka)\big|\,d\zeta d\gamma d\alpha\\
&\lesssim\int_{\R}W_k(\eta+ka)e^{-3\delta_0\langle\eta-\alpha\rangle^{1/2}}\big|\widetilde{X_k}(\alpha+ka)\big|\,d\alpha.
\end{split}
\end{equation*}
The desired bounds \eqref{F28} then follow since $W_k(\eta+ka)\lesssim_\delta W_k(\alpha+ka)e^{2\delta_0\langle\eta-\alpha\rangle^{1/2}}$.

{\bf{Step 4.}} We show now that the term $\mathcal{C}^{\iota}_{k,\epsilon}$ satisfies the bounds 
\begin{equation}\label{F29}
\|(|k|+|\partial_v|)\mathcal{C}^{\iota}_{k,\epsilon}\|_{L^2_{v,w}}\lesssim\delta^{1/2}\big\|(|k|+|\partial_v|)W^a_k\,\Pi^{\iota}_{k,\epsilon}\big\|_{L^2_{v,w}}+C_{\delta} \big\|(|k|+|\partial_v|)\Pi^{\iota}_{k,\epsilon}\big\|_{L^2_{v,w}}.
\end{equation}
Indeed, using the definition \eqref{S26} and expanding as in \eqref{F30}, we have 
\begin{equation*}
\begin{split}
\widetilde{\mathcal{C}^{\iota}_{k,\epsilon}}(\xi,\eta)&=C\int_{\R^3}\widetilde{\mathcal{G}'_k}(\xi,\zeta)\widetilde{\partial_{v}B'_0}(\alpha)\Big[W^a_k(\eta-\xi-\zeta-\alpha)-W^a_k(\eta)\Big]\\
&\qquad\qquad\times\widetilde{\,\Pi^{\iota}_{k,\epsilon}}(-\zeta-\gamma-\alpha,\eta-\xi-\zeta-\alpha)\mathbf{1}_+(\iota\gamma)e^{-\eps\iota\gamma}\,d\alpha d\zeta d\gamma.
\end{split}
\end{equation*}
Since $\big|\widetilde{\mathcal{G}'_k}(\xi,\zeta)\big|\lesssim e^{-4\delta_0\langle\xi+\zeta\rangle^{1/2}}(k^2+|\xi|^2)^{-1}$ and using also \eqref{B22.1}, we can estimate
\begin{equation*}
\begin{split}
&(k^2+\xi^2)\left|\widetilde{\mathcal{C}^{\iota}_{k,\epsilon}}(\xi,\eta)\right|\lesssim \int_{\R^3} e^{-2\delta_0\langle\xi+\zeta\rangle^{1/2}}e^{2\delta_0\langle\alpha\rangle^{1/2}}\big|\widetilde{\,\partial_{v}B'_0\,}(\alpha)\big|\left[\sqrt{\delta}+\frac{C_{\delta}}{\langle ka+\eta-\xi-\zeta-\alpha\rangle^{1/8}}\right]\\
&\hspace{1in} \times W_k(ka+\eta-\xi-\zeta-\alpha)\big|\widetilde{\,\Pi^{\iota}_{k,\epsilon}}(-\zeta-\gamma-\alpha,\eta-\xi-\zeta-\alpha)\big|\,d\alpha d\zeta d\gamma\\
&\lesssim \int_{\R^3}e^{-2\delta_0\langle \alpha,\zeta\rangle^{1/2}}\left[\sqrt{\delta}+\frac{C_{\delta}}{\langle ka+\eta-\zeta-\alpha\rangle^{1/8}}\right]W_k(ka+\eta-\zeta-\alpha)\big|\widetilde{\,\Pi^{\iota}_{k,\epsilon}}(\gamma,\eta-\zeta-\alpha)\big|\,d\alpha d\zeta d\gamma,
 \end{split}
\end{equation*}
from which \eqref{F29} follows.

{\bf{Step 5.}} 
We show now that
\begin{equation}\label{F100}
\begin{split}
&\left\|(|k|+|\partial_v|)W^a_k\,\Pi^{\iota}_{k,\epsilon}\right\|_{L^2_{v,w}}\lesssim \left\|\Psi_0(w)(|k|+|\partial_v|)W^a_k\,\Pi^{\iota}_{k,\epsilon}\right\|_{L^2_{v,w}}+C_{\delta}\left\|(|k|+|\partial_v|)\Pi^{\iota}_{k,\epsilon}\right\|_{L^2_{v,w}}.
\end{split}
\end{equation}
Indeed, for $v,w\in\R$ let
\begin{equation}\label{C32}
\begin{split}
H(v,w):=&(|k|+|\partial_v|)W^a_k\,\Pi^{\iota}_{k,\epsilon}(v,w)-(|k|+|\partial_v|)\Psi_0(w)W^a_k\,\Pi^{\iota}_{k,\epsilon}(v,w)\\
          =&(|k|+|\partial_v|)W^a_k\,(\Psi_0\Pi^{\iota}_{k,\epsilon})(v,w)-(|k|+|\partial_v|)\Psi_0(w)W^a_k\,\Pi^{\iota}_{k,\epsilon}(v,w).
\end{split}
\end{equation}
For the simplicity of notation we suppressed the dependence of $H$ on $\iota,\epsilon,k$ in the above definition. By the support property of $\Psi_0$ and the bounds \eqref{B22.1} we have
\begin{equation*}
\begin{split}
&\left|\widetilde{\,H\,}(\xi,\eta)\right|=(|k|+|\xi|)\left|\int_{\R}\widetilde{\,\Psi_0}(\zeta)\,\widetilde{\,\Pi^{\iota}_{k,\epsilon}}(\xi,\eta-\zeta)\Big[W_k(\eta+ka)-W_k(\eta+ka-\zeta)\Big]\,d\zeta\right|\\
&\lesssim \int_{\R}e^{-2\delta_0\langle\zeta\rangle^{1/2}}\,
\left[\sqrt{\delta}+C_{\delta}\langle ka+\eta-\zeta\rangle^{-1/8}\right](|k|+|\xi|)W_k(ka+\eta-\zeta)\big|\widetilde{\,\Pi^{\iota}_{k,\epsilon}}(\xi,\eta-\zeta)\big|\,d\zeta.
\end{split}
\end{equation*}
Therefore,
\begin{equation}\label{C33}
\big\|\widetilde{\,H\,}(\xi,\eta)\big\|_{L^2_{\xi,\eta}}\lesssim \sqrt{\delta} \left\|(|k|+|\partial_v|)W^a_k\,\Pi^{\iota}_{k,\epsilon}(v,w)\right\|_{L^2_{v,w}}+C_{\delta}\left\|(|k|+|\partial_v|)\Pi^{\iota}_{k,\epsilon}(v,w)\right\|_{L^2_{v,w}}.
\end{equation}
The bounds \eqref{F100} follow from \eqref{C32}--\eqref{C33}, provided that $\delta>0$ is sufficiently small (see also the qualitative bounds \eqref{F37.6} below).

{\bf Step 6.} We now complete the proof of \eqref{S2}. Using the bounds \eqref{F100}, \eqref{F26.1}, \eqref{F28}, and \eqref{F29}, we have
\begin{equation}\label{F37}
\begin{split}
\big\|(|k|&+|\partial_v|)W^a_k\,\Pi^{\iota}_{k,\epsilon}\big\|_{L^2_{v,w}}\lesssim \left\|\Psi_0(w)(|k|+|\partial_v|)W^a_k\,\Pi^{\iota}_{k,\epsilon}\right\|_{L^2_{v,w}}+C_{\delta}\left\|(|k|+|\partial_v|)\Pi^{\iota}_{k,\epsilon}\right\|_{L^2_{v,w}}\\
&\lesssim \left\|(|k|+|\partial_v|)F^{\iota}_{k,\epsilon}\right\|_{L^2_{v,w}}+\left\|(|k|+|\partial_v|)\mathcal{C}^{\iota}_{k,\epsilon}\right\|_{L^2_{v,w}}+C_{\delta}\left\|(|k|+|\partial_v|)\Pi^{\iota}_{k,\epsilon}\right\|_{L^2_{v,w}}\\
&\lesssim C_\delta\big\|W_k(\eta)\widetilde{X_k}(\eta)\big\|_{L^2_\eta}+\delta^{1/2}\big\|(|k|+|\partial_v|)W^a_k\,\Pi^{\iota}_{k,\epsilon}\big\|_{L^2_{v,w}}+C_{\delta}\left\|(|k|+|\partial_v|)\Pi^{\iota}_{k,\epsilon}\right\|_{L^2_{v,w}}.
\end{split}
\end{equation}
We would like to absorb the term $\delta^{1/2}\big\|(|k|+|\partial_v|)W^a_k\,\Pi^{\iota}_{k,\epsilon}\big\|_{L^2_{v,w}}$ into the left-hand side, and use \eqref{F17.3} to conclude the proof of the lemma. For this we need also the qualitative bounds
\begin{equation}\label{F37.6}
\big\|(|k|+|\partial_v|)W^a_k\,\Pi^{\iota}_{k,\epsilon}\big\|_{L^2_{v,w}}<\infty.
\end{equation}
We can arrange this by working first with the weights $W_{k,\rho}(\xi)=W_k(\xi)/(1+\rho W_k(\xi))$, $\rho>0$, which still satisfy the main bounds \eqref{B22.1} uniformly in $\rho$. The qualitative bounds \eqref{F37.6} are satisfied for these weights, due to \eqref{F17.3}. We can therefore prove the desired bounds \eqref{S2} for the weights $W_{k,\rho}$ uniformly in $\rho$, and then let $\rho\to 0$.
\end{proof}

\subsection{Proof of Lemma \ref{Le1}}
We can now complete the proof of Lemma \ref{Le1}. For a suitable constant $C_0$ we define
\begin{equation}\label{D1.1}
\Pi'_k(v,w,a):=\frac{i}{4\pi^2}\lim_{\epsilon_n\to 0+}\big[\Pi^{-}_{k,\epsilon_n}(v,w)-\Pi^{+}_{k,\epsilon_n}(v,w)\big],
\end{equation}
as a weak limit along along a subsequence (in fact the limit above exists in the strong sense, see \cite[Lemma 4.3]{JiaL}, but this is not needed here). The desired bounds \eqref{B5} follow from \eqref{S2}. To prove the representation formula \eqref{B7} we start from the identities \eqref{F3.4}. We make the change of variables $v=b(y), w=b(y_0)$ and use \eqref{F10.4} and \eqref{F17.1} to obtain
\begin{equation}\label{D2}
\begin{split}
\Psi(v)\varphi_k(t,v)&=-\frac{1}{2\pi i}\lim_{\epsilon_n\to0+}\int_{\R}e^{-ikwt}\Psi(v)\big[\phi^{-}_{k,\epsilon_n}(v,w)-\phi^{+}_{k,\epsilon_n}(v,w)\big]\Psi(w)\,dw\\
                &=\frac{i}{2\pi}\int_{\R}e^{-ikwt}(-4i\pi^2)\Pi'_k(v-w,w,a)\,dw.
\end{split}
\end{equation}
Hence 
\begin{equation}\label{D3}
\widetilde{\,\Psi \varphi_k}(t,\xi)=2\pi\widetilde{\,\Pi'_k}(\xi,\xi+kt,a).
\end{equation}
In view of the equation \eqref{B1} and the definition \eqref{D1.1}, we obtain that
\begin{equation}\label{D6}
\partial_t\big[e^{ikvt}g_k(t,v)\big]=ikB_0''(v)\varphi_k(t,v)e^{ikvt},\qquad {\rm for}\,\,v\in[\lv,\uv].
\end{equation}
We notice that $\Psi\equiv1$ on the support of $B''_0$. Therefore,
\begin{equation}\label{D7}
e^{ikvt}g_k(t,v)-g_k(0,v)=ik\int_0^tB_0''(v)\Psi(v)\varphi_k(\tau,v)e^{ikv\tau}\,d\tau.
\end{equation}
Using \eqref{D3}, we obtain
\begin{equation}\label{D8}
\begin{split}
\widetilde{g_k}(t,\xi-kt)-\widetilde{X_k}(\xi+ak)&=ik\int_0^t\int_{\R} \widetilde{B_0''}(\zeta) \widetilde{\,\,\Pi'_k}(\xi-\zeta-k\tau,\xi-\zeta,a)\,d\zeta d\tau,
\end{split}
\end{equation}
which gives \eqref{B7}. This completes the proof of Lemma \ref{Le1}.

\section{Proof of the main theorem}\label{ProofMainThm}

In this section we complete the proof of Theorem \ref{maintheoremINTRO}. We start with a local regularity lemma (see \cite[Lemma 3.1]{IOJI} for a simple proof adapted to our situation, or more general results on the Gevrey regularity of Euler flows in \cite{Foias,Vicol,Levermore}).

\begin{lemma}\label{lm:persistenceofhigherregularity}
Assume that $s\in[1/4,3/4]$, $\lambda_0\in(0,1)$, and that $\mathrm{supp}\,\omega_0\subseteq \T\times [2\vartheta_0,1-2\vartheta_0]$. Assume also that
\begin{equation}\label{ini1}
A:=\left\|\langle \nabla\rangle^3\,\omega_0\right\|_{\mathcal{G}^{\lambda_0,s}}<\infty,\qquad \int_{\mathbb{T}\times[0,1]}\omega_0(x,y)\,dxdy=0.
\end{equation}
 Let $\omega\in C([0,\infty):H^{10})$ denote the unique smooth solution of the system \eqref{Eur1}. Assume that for some $T>0$ and all $t\in[0,T]$,
 \begin{equation}\label{SupA1}
 \mathrm{supp}\,\omega(t)\subseteq \T\times [\vartheta_0,1-\vartheta_0].
 \end{equation}
   Then, for any smooth cutoff function $\Upsilon\in\mathcal{G}^{1,3/4}$ with ${\rm supp}\,\Upsilon\subseteq[\vartheta_0/20, 1-\vartheta_0/20]$ and $t\in[0,T]$ we have
\begin{equation}\label{ini2}
\begin{split}
\left\|\langle\nabla\rangle^5\,(\Upsilon\psi)(t)\right\|_{\mathcal{G}^{\lambda(t),s}}+\left\|\langle\nabla\rangle^3\,\omega(t)\right\|_{\mathcal{G}^{\lambda(t),s}}&\leq \exp{\Big[C_{\ast}\int_0^t(\|\omega(s)\|_{H^6}+1)ds\Big]}\|\langle\nabla\rangle^3\omega_0\|_{\mathcal{G}^{\lambda_0,s}},\\
\end{split}
\end{equation}
if we choose, for some large constant $C_\ast=C_\ast(\vartheta_0)>1$,
\begin{equation}\label{fdLam1}
\lambda(t):=\lambda_0\exp{\Big\{-C_{\ast}A\,t\exp\Big[C_\ast\int_0^t(\|\omega(s)\|_{H^6}+1)ds\Big]-C_{\ast}t\Big\}}.
\end{equation}
\end{lemma}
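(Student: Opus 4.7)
The plan is to set up a time-dependent Gevrey energy functional
\[
\mathcal{E}(t) := \|\langle\nabla\rangle^{3}\omega(t)\|_{\mathcal{G}^{\lambda(t),s}}^2 + \|\langle\nabla\rangle^{5}(\Upsilon\psi)(t)\|_{\mathcal{G}^{\lambda(t),s}}^2,
\]
differentiate it in time, and exploit the strict decrease of $\lambda$ to absorb the commutator losses produced by the transport nonlinearity. Writing $M_\sigma(t,\xi):=e^{\lambda(t)\langle\xi\rangle^s}\langle\xi\rangle^{\sigma}$, differentiation of the first summand yields the coercive ``good'' term $2\lambda'(t)\|\langle\nabla\rangle^{s/2}M_3(t,\nabla)\omega\|_{L^2}^2\le 0$, whose strength is chosen in \eqref{fdLam1} so that it can absorb a $\sqrt{|\lambda'|/\lambda}$-type high frequency loss. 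The transport term from \eqref{Eur1} is rewritten as transport by the total velocity $(b(y),0)+u$, and after integration by parts using $\nabla\cdot u=0$ and $\partial_x b\equiv 0$ the only surviving contribution is a commutator $\big([M_3,(b,0)+u)\cdot\nabla]\omega, M_3\omega\big)_{L^2}$.

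The commutator is estimated in Fourier space using the elementary bound $\big|\langle\xi\rangle^s-\langle\eta\rangle^s\big|\lesssim\langle\xi-\eta\rangle^s$ together with the subadditivity $\lambda\langle\xi\rangle^s\le\lambda\langle\eta\rangle^s+\lambda\langle\xi-\eta\rangle^s$, which redistributes the weight $e^{\lambda\langle\cdot\rangle^s}$ onto the two factors. The high-high interactions are controlled by pairing with the square root of the good term, losing only a factor of $\langle\xi-\eta\rangle^{s/2}$ that is absorbed by the exponential weight on the velocity side; the low-high interactions are bounded by $\|u\|_{H^6}\|\langle\nabla\rangle^3\omega\|_{\mathcal{G}^{\lambda,s}}$, which explains the appearance of $\|\omega(s)\|_{H^6}$ in \eqref{ini2}. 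The contribution from the $b''(y)\partial_x\psi$ linear term is handled identically, using the Gevrey bound \eqref{IntB1} on $b''$ together with the elliptic estimate for $\Upsilon\psi$ described below.

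For the second summand, since $\mathrm{supp}\,\omega\subseteq\mathbb{T}\times[\vartheta_0,1-\vartheta_0]$ while $\mathrm{supp}\,\Upsilon\subseteq[\vartheta_0/20,1-\vartheta_0/20]$, the cutoff stream function satisfies $\Delta(\Upsilon\psi)=\Upsilon\omega+2(\partial_y\Upsilon)(\partial_y\psi)+(\partial_y^2\Upsilon)\psi$, where the two commutator terms are supported in $[\vartheta_0/20,\vartheta_0]\cup[1-\vartheta_0,1-\vartheta_0/20]$, a region in which $\omega\equiv 0$ so $\psi$ is harmonic. Using the explicit Green's function representation \eqref{elli10}--\eqref{elli11.2} together with the exponential decay $|G_k(y,z)|\lesssim e^{-|k||y-z|}/|k|$ off the diagonal, one derives an elliptic bound of the schematic form
\[
\|\langle\nabla\rangle^{5}(\Upsilon\psi)\|_{\mathcal{G}^{\lambda,s}}\lesssim \|\langle\nabla\rangle^{3}\omega\|_{\mathcal{G}^{\lambda,s}},
\]
analogous to Lemma \ref{lm:elli3}, and a corresponding time-derivative estimate. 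Differentiating $\Upsilon\psi$ in $t$ and using the equation for $\omega$ then reduces the time derivative of the second summand to bounds already handled in the previous step.

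Putting everything together, one arrives at a differential inequality of the form
\[
\tfrac{d}{dt}\mathcal{E}(t)\le C_\ast\big(\|\omega(t)\|_{H^6}+1\big)\mathcal{E}(t) + C_\ast A\,\mathcal{E}(t),
\]
where $A=\|\langle\nabla\rangle^3\omega_0\|_{\mathcal{G}^{\lambda_0,s}}$ bounds the Gevrey norm of $u$ uniformly through the cutoff stream function estimate, and the coercive $\lambda'$ term has been used to kill the high-frequency commutator loss by the very choice \eqref{fdLam1} of $\lambda(t)$. Gronwall's inequality then yields \eqref{ini2}. The main technical obstacle is the elliptic step: one must show that the commutator terms $(\partial_y\Upsilon)(\partial_y\psi)$ and $(\partial_y^2\Upsilon)\psi$ inherit Gevrey regularity despite $\psi$ itself having no \emph{a priori} Gevrey bound near the boundary. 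This is resolved, as in Lemma \ref{lm:elli3}, by expressing these terms through the integral operator against $G_k(y,z)$ acting on the compactly supported $\omega$, whose kernel decays in a manner compatible with the Gevrey weight $e^{\lambda\langle\cdot\rangle^s}$ with $s\le 3/4<1$.
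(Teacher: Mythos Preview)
The paper does not supply its own proof of this lemma; it simply cites \cite[Lemma 3.1]{IOJI} and the classical Gevrey-propagation results \cite{Foias,Vicol,Levermore}. Your sketch reproduces exactly that standard strategy: a time-dependent Gevrey energy, the negative $\lambda'(t)$ term absorbing the half-derivative commutator loss from the transport nonlinearity, low-high interactions controlled by $\|\omega\|_{H^6}$, and the elliptic bound on $\Upsilon\psi$ via the explicit Green's function with off-diagonal exponential decay (precisely as in Lemma~\ref{lm:elli3}). So the approach is correct and matches what the cited reference does.

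One small imprecision worth tightening: in your final displayed inequality you write a term $C_\ast A\,\mathcal{E}(t)$ with $A$ the \emph{initial} Gevrey norm, saying ``$A$ bounds the Gevrey norm of $u$ uniformly''. That is circular as stated --- the current Gevrey norm of $u$ is controlled by $\sqrt{\mathcal{E}(t)}$, not by $A$, until \emph{after} you close the argument. The honest structure is a continuity/bootstrap: the raw inequality is
\[
\tfrac{d}{dt}\mathcal{E}(t)+2|\lambda'(t)|\,\|\langle\nabla\rangle^{s/2}M_3\omega\|_{L^2}^2 \le C_\ast(\|\omega\|_{H^6}+1)\mathcal{E}(t)+C\sqrt{\mathcal{E}(t)}\,\|\langle\nabla\rangle^{s/2}M_3\omega\|_{L^2}^2,
\]
and one first assumes $|\lambda'(t)|\ge C\sqrt{\mathcal{E}(t)}$ to drop the last term, then runs Gronwall to get $\sqrt{\mathcal{E}(t)}\le A\exp[C_\ast\int_0^t(\|\omega\|_{H^6}+1)]$, and finally observes that the choice \eqref{fdLam1} makes the assumed inequality on $|\lambda'|$ self-consistent. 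This is why $A$ appears in the definition of $\lambda(t)$ but \emph{not} in the exponent of \eqref{ini2}. With that clarification your argument is complete.
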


We note that an important aspect of the regularity theory for Euler equations in Gevrey spaces is the shrinking in time, at a fast rate, of the radius of convergence (the function $\lambda(t)$ in Lemma \ref{lm:persistenceofhigherregularity}). In our case, the support assumption (\ref{SupA1}) on $\omega(t)$ is satisfied if $T=2$, as a consequence of the smallness and the support assumptions on $\omega_0$, and the standard local well-posedness theory in Sobolev spaces of the Euler equation \eqref{Eur1}. In fact, as we show below, it is satisfied as part of the bootstrap argument for all $t\in[0,\infty)$.

\subsection{Proof of Theorem \ref{maintheoremINTRO}} For the purpose of proving continuity in time of the energy functionals $\mathcal{E}_g$ and $\mathcal{B}_g$, we make the {\it{a priori} } assumption that $\omega_0\in\mathcal{G}^{1,2/3}$. The argument is  similar to the argument in \cite[Section 3]{IOJI}, and we will be somewhat brief. We divide the proof in several steps.

{\bf{Step 1.}} Given small data $\omega_0$ satisfying \eqref{Eur0} we apply first Lemma \ref{lm:persistenceofhigherregularity}. Therefore $\omega\in C([0,2]:\G^{\lambda_1,2/3})$, $\lambda_1>0$, satisfies the quantitative estimates
\begin{equation}\label{smallnessofomega}
\sup_{t\in[0,2]}\big\|e^{\beta'_0\langle k,\xi\rangle^{1/2}}\widetilde{\omega}(t,k,\xi)\big\|_{L^2_{k,\xi}}\lesssim \eps,
\end{equation}
for some $\beta'_0=\beta'_0(\beta_0,\vartheta_0)>0$. In addition, letting $\Psi'\in\mathcal{G}^{1,3/4}$ denote a cutoff function supported in $[\vartheta_0/8,1-\vartheta_0/8]$ and equal to $1$ in $[\vartheta_0/4,1-\vartheta_0/4]$, the localized stream function $\Psi'\psi$ satisfies similar bounds,
\begin{equation}\label{smallnessofpsi}
\sup_{t\in[0,2]}\big\|\langle k,\xi\rangle^2e^{\beta'_0\langle k,\xi\rangle^{1/2}}\widetilde{(\Psi'\psi)}(t,k,\xi)\big\|_{L^2_{k,\xi}}\lesssim \eps.
\end{equation}

Recalling the definition \eqref{Gevr2}, and using the formula \eqref{rea15} and Lemmas \ref{lm:Gevrey}--\ref{GPF} it follows that there is a constant $K_1=K_1(\beta_0,\vartheta_0)$ such that 
\begin{equation}\label{proo1}
\|v(t,.)\|_{\widetilde{G}_{K_1}^{1/2}[0,1]}\lesssim 1,\qquad \|\mathcal{Y}(t,.)\|_{\widetilde{G}_{K_1}^{1/2}[b(0),b(1)]}\lesssim 1,
\end{equation}
for any $t\in[0,2]$, where $\mathcal{Y}(t,v)$ denotes the inverse of the function $y\to v(t,y)$. 

We would like to show now that 
\begin{equation}\label{init1}
\sum_{g\in\{F, F^\ast, F-F^\ast, \Theta, \Theta^\ast, B'_\ast, B''_\ast, V'_\ast,\mathcal{H}\}}\sup_{t\in[0,2]}\big\|e^{2\delta_0\langle k,\xi\rangle^{1/2}}\widetilde{g}(t,k,\xi)\big\|_{L^2_{k,\xi}}\lesssim \eps,
\end{equation}
for some constant $\delta_0=\delta_0(\beta_0,\vartheta_0)>0$ sufficiently small. Indeed, this follows using again Lemma \ref{GPF} and Lemma \ref{lm:Gevrey} (i) if $g\in\{F, \Theta, B'_\ast, B''_\ast, V'_\ast,\mathcal{H}\}$. To bound $F^{\ast}, F-F^{\ast}$, and $\Theta^\ast$ we use Green's functions.  Indeed, it follows from \eqref{Ph1} and the identity \eqref{elli12} that
\begin{equation}\label{phi'1}
\Psi(v)\phi'_k(t,v)=-\int_{\mathbb{R}}\mathcal{G}'_k(v,v')\frac{F_k(t,v')}{B'_0(v')}e^{ikt(v-v')}\,dv',
\end{equation}
where $\mathcal{G}'_k(v,w)=\Psi(v)\mathcal{G}_k(v,w)\Psi(w)$ as before. Using \eqref{elli13}--\eqref{elli15}, we obtain
\begin{equation}\label{phi'2}
\begin{split}
\left|\widetilde{(\Psi \phi'_k)}(t,\xi)\right|&=C\left|\int_{\mathbb{R}}K(\xi-kt,kt-\zeta)\widetilde{F_k}(t,\zeta)\,d\zeta\right|\lesssim \int_{\mathbb{R}}\frac{e^{-4\delta_0\langle\xi-\zeta\rangle^{1/2}}}{k^2+(\xi-kt)^2}\left|\widetilde{F_k}(t,\zeta)\right|\,d\zeta.
\end{split}
\end{equation}
for any $t\in[0,2]$. This gives \eqref{init1} for $g=\Theta^\ast$, and then for $g=F^\ast$, using \eqref{reb14}. This completes the proof of the desired bounds \eqref{init1}. In particular, the bounds \eqref{boot1} follow from \eqref{init1} if $\delta_0$ is $\eps_1\approx \eps^{2/3}$, see \eqref{reb11}-\eqref{reb13}.

{\bf{Step 2.}} Assume now that the solution $\omega$ satisfies the bounds in the hypothesis of Proposition \ref{MainBootstrap} on a given interval $[0,T]$, $T\geq 1$. We would like to show that the support of $\omega(t)$ is contained in $\T\times\big[3\vartheta_0/2,1-3\vartheta_0/2\big]$ for any $t\in[0,T]$. Indeed, for this we notice that only transportation in the $y$ direction, given by the term $u^y\,\partial_y\omega$, could enlarge the support of $\omega$ in $y$ outside $[b(2\vartheta_0),b(1-2\vartheta_0)]$. Notice that on $\mathbb{T}\times[\vartheta_0,1-\vartheta_0]$,
\begin{equation}\label{uyM}
u^y(t,x,y)=(\partial_x\psi)(t,x,y)=\partial_zP_{\neq0}\big(\Psi\phi\big)(t,x-tv(t,y),v(t,y)).
\end{equation}
Using the bound on $\mathcal{E}_{\Theta}$ from \eqref{boot2}, we can bound, for all $t\in[0,T]$,
\begin{equation}\label{integuy}
\sup_{(x,y)\in \mathbb{T}\times [\vartheta_0,1-\vartheta_0]}\big|u^y(t,x,y)\big|\lesssim \epsilon_1\langle t\rangle^{-2}.
\end{equation}
Since the support of $\omega(0)$ is contained in $\mathbb{T}\times[2\vartheta_0,1-2\vartheta_0]$, we can conclude that ${\rm supp}\,\omega(t)\subseteq \mathbb{T}\times\big[3\vartheta_0/2,1-3\vartheta_0/2\big]$ for any $t\in[0,T]$, 
as long as $\epsilon_1$ is sufficiently small.  

We can now use Proposition \ref{MainBootstrap} and a simple continuity argument to show that if $\omega_0\in\G^{1,2/3}$ has compact support in $\T\times [2\vartheta_0,1-2\vartheta_0]$ and satisfies the assumptions \eqref{Eur0}, then the solution $\omega$ is in $C([0,\infty):\G^{1,3/5})$, has compact support in $[\vartheta_0,1-\vartheta_0]$ and satisfies $\|\langle\omega\rangle(t)\|_{H^{10}}\lesssim\eps^{2/3}$  for all $t\in[0,\infty)$. Moreover, the variables $F, F^\ast, F-F^\ast, \Theta, \Theta^\ast, B'_\ast, B''_\ast, V'_\ast,\mathcal{H}$ satisfy the improved bounds \eqref{boot3}--\eqref{boot3'}. In particular, since $A_k(t,\xi)\ge e^{1.1\delta_0\langle k,\xi\rangle^{1/2}}$ and $A_R(t,\xi)\geq A_{NR}(t,\xi)\geq e^{1.1\delta_0\langle\xi\rangle^{1/2}}$, for any $t\in[0,\infty)$ we have
\begin{equation}\label{uniformf'}
\big\|e^{\delta_0\langle k,\xi\rangle^{1/2}}\widetilde{F}(t,k,\xi)\big\|_{L^2_{k,\xi}}+\langle t\rangle^2\big\|ke^{\delta_0\langle k,\xi\rangle^{1/2}}\widetilde{(\Psi\phi)}(t,k,\xi)\big\|_{L^2_{k,\xi}}\lesssim_{\delta} \epsilon_1^{3/2},
\end{equation}
and
\begin{equation}\label{uniformf}
\big\|V'_\ast(t)\big\|_{\mathcal{G}^{\delta_0,1/2}}+\big\|B'_\ast(t)\big\|_{\mathcal{G}^{\delta_0,1/2}}+\big\|B''_\ast(t)\big\|_{\mathcal{G}^{\delta_0,1/2}}+\langle t\rangle^{3/4}\big\|\mathcal{H}(t)\big\|_{\mathcal{G}^{\delta_0,1/2}}\lesssim \epsilon_1.
\end{equation}

{\bf{Step 3.}} We show now that for any $t\in[0,\infty)$ 
\begin{equation}\label{conv1}
\langle t\rangle \|\mathcal{H}(t)\|_{\mathcal{G}^{\delta_1,1/2}}+\langle t\rangle^2\|\dot{V}(t)\|_{\mathcal{G}^{\delta_1,1/2}}\lesssim \epsilon_1,
\end{equation}
where $\delta_1=\delta_1(\delta_0)>0$. We use the equation \eqref{rea25}, thus
\begin{equation}\label{conv1.2}
\partial_t(t\mathcal{H})=-t\dot{V}\partial_v\mathcal{H}+tV'\big\{-\big<\partial_vP_{\neq0}\phi\,\partial_zF\big>+\big<\partial_z\phi\,\partial_vF\big>\big\}=-t\dot{V}\partial_v\mathcal{H}+tV'\partial_v\big<\partial_z\phi F\big>.
\end{equation}
Since $V'=V'_\ast+B'_0$ and $V''=(1/2)\partial_v(V')^2$, it follows from \eqref{uniformf} and Lemma \ref{lm:Gevrey} that 
\begin{equation}\label{conv1.3}
\|V'(t)\|_{\widetilde{\mathcal{G}}^{1/2}_{K}[b(0),b(1)]}+\|V''(t)\|_{\widetilde{\mathcal{G}}^{1/2}_{K}[b(0),b(1)]}+\|B'(t)\|_{\widetilde{\mathcal{G}}^{1/2}_{K}[b(0),b(1)]}+\|B''(t)\|_{\widetilde{\mathcal{G}}^{1/2}_{K}[b(0),b(1)]}\lesssim 1
\end{equation}
for any $t\in [0,\infty)$, for some $K=K(\delta_0)>0$. Using also \eqref{rea26}, we have
\begin{equation}\label{conv1.32}
\big<\partial_z\phi F\big>=(V')^2\langle\partial_z\phi\cdot(\partial_v^2\phi-2t\partial_v\partial_z\phi)\rangle+V''\langle\partial_z\phi\cdot(\partial_v\phi-t\partial_z\phi)\rangle.
\end{equation}
In particular, using the bounds on $\phi$ in \eqref{uniformf'}, $\|\big<\partial_z\phi F\big>(t)\|_{\mathcal{G}^{\delta_0/2,1/2}}\lesssim\eps_1^2\langle t\rangle^{-3}$. Using also \eqref{uniformf'}--\eqref{uniformf} and the identity $\mathcal{H}=tV'\partial_v{\dot{V}}$, it follows from \eqref{uniformf} that $\big\|\partial_t(t\mathcal{H})(t)\big\|_{\mathcal{G}^{\delta_0/2,1/2}}\lesssim \epsilon_1\langle t\rangle^{-3/2}$ for any $t\in[0,\infty)$, and the desired bounds \eqref{conv1} follow.  

As a consequence, we also have the bounds
\begin{equation}\label{conv1.35}
\|v(t)\|_{\mathcal{G}^{1/2}_{K_2}[0,1]}+\langle t\rangle^2\|(\partial_tv)(t)\|_{\mathcal{G}^{1/2}_{K_2}[0,1]}\lesssim 1
\end{equation}
for any $t\in [0,\infty)$, for some $K_2=K_2(\delta_0)>0$. Indeed, the bounds on $v$ follow from the identity $\partial_yv(t,y)=V'(t,v(t,y))$, the bounds \eqref{conv1.3}, and Lemma \ref{GPF}. The bounds on $\partial_tv$ then follow using the identity $\partial_tv(t,y)=\dot{V}(t,v(t,y))$, the bounds \eqref{conv1}, and Lemma \ref{GPF}.

{\bf Step 4.} We prove now the conclusions of the theorem. Notice that 
\begin{equation}\label{LfM}
\partial_tF-B''\partial_z(\Psi\phi)-V'\partial_vP_{\neq 0}(\Psi\phi)\,\partial_zF+\dot{V}\,\partial_vF+V'\partial_z(\Psi\phi)\,\partial_vF=0,
\end{equation}
using (\ref{rea23.1}) and ${\rm supp}\,F(t)\subseteq[b(\vartheta_0),b(1-\vartheta_0)]$. Using the bounds \eqref{uniformf'}--\eqref{conv1}, and \eqref{conv1.3}, it follows that $\big\|\partial_tF\big\|_{\mathcal{G}^{\delta_2,1/2}}\lesssim \epsilon_1^{3/2}\langle t\rangle^{-2}$, for some $\delta_2>0$.
Moreover, the definitions \eqref{changeofvariables1}--\eqref{rea1} show that
\begin{equation*}
\omega(t,x+tb(y)+\Phi(t,y),y)=\omega(t,x+tv(t,y),y)=F(t,x,v(t,y)).
\end{equation*}
Using also \eqref{conv1.35}, we have
\begin{equation*}
\Big\|\frac{d}{dt}\big[\omega(t,x+tb(y)+\Phi(t,y),y)\big]\Big\|_{\mathcal{G}^{\delta_3,1/2}}\lesssim \epsilon_1^{3/2}\langle t\rangle^{-2},
\end{equation*}
for some $\delta_3=\delta_3(\delta_0)>0$, and the bounds \eqref{convergence} follow.

Moreover, we notice that
\begin{equation}\label{conv2.1}
\psi(t,x,y)=\phi(t,x-tv(t,y),v(t,y))
\end{equation}
Since $u^y=\partial_x\psi$ and $u^x=-\partial_y\psi$, the bounds \eqref{convergencetomean}--\eqref{convergenceuy} follow from the bounds on $\phi$ in \eqref{uniformf'} and the fact that $\psi(t)$ is harmonic in $\mathbb{T}\times\big\{[0,\vartheta_0]\cup[1-\vartheta_0,1]\big\}$. 

Finally, to prove \eqref{convergenceofux} we start from the formula $\langle u^x\rangle=-\langle\partial_y\psi\rangle$, thus $\partial_y\langle u^x\rangle=-\langle\omega\rangle$. Therefore, using the evolution equation \eqref{IEur1}, 
\begin{equation*}
\partial_t\partial_y\langle u^x\rangle=\langle-\partial_t\omega\rangle=\langle u^x\partial_x\omega+u^y\partial_y\omega\rangle=\langle -\partial_y\psi\partial_x\omega+\partial_x\psi\partial_y\omega\rangle=\partial_y\langle\omega\partial_x\psi\rangle.
\end{equation*}
Moreover, since $\psi(t,x,0)=\psi(t,x,1)=0$, for any $t\in [0,\infty)$ we have
\begin{equation}\label{conv2.2}
\int_{[0,1]}\langle u^x\rangle(t,y)\,dy=-\int_{[0,1]}\partial_y\langle\psi\rangle (t,y)\,dy=0.
\end{equation}
Moreover, $\langle\omega\partial_x\psi\rangle=\langle\partial_y^2\psi\partial_x\psi\rangle=\partial_y\langle\partial_y\psi\partial_x\psi\rangle$. These identities show that 
\begin{equation}\label{conv2.3}
\partial_t\langle u^x\rangle=\langle\omega\partial_x\psi\rangle\qquad\text{ in }\qquad[0,\infty)\times[0,1].
\end{equation}

Using the definitions \eqref{rea1}, we have
\begin{equation*}
(\partial_t\langle u^x\rangle)(t,y)=\frac{1}{2\pi}\int_{\mathbb{T}}\omega(t,x,y)\partial_x\psi(t,x,y)\,dx=\frac{1}{2\pi}\int_{\mathbb{T}}F(t,z,v(t,y))\partial_z\phi(t,z,v(t,y))\,dz.
\end{equation*}
Using now \eqref{conv1.32}, \eqref{conv1.35}, and the bounds on $\phi$ in \eqref{uniformf'} it follows that $\|(\partial_t\langle u^x\rangle)(t)\|_{\mathcal{G}^{\delta_4,1/2}}\lesssim\eps_1^2\langle t\rangle^{-3}$, for some $\delta_4=\delta_4(\delta_0)>0$. Moreover, using \eqref{convergence},
\begin{equation*}
\lim_{t\to\infty}\{\partial_y\langle u^x\rangle(t)+\partial_y^2\psi_\infty\}=\lim_{t\to\infty}\{-\langle\omega\rangle(t)+\langle F_\infty\rangle\}=0. 
\end{equation*}
The desired conclusion \eqref{convergenceofux} follows using also \eqref{conv2.2}.

\end{document}